\newcommand{\edit}[1]{{\color{black}#1}} 
\newcommand{\commen}[1]{}
\newcommand{\wt}{\operatorname{wt}}
\newcommand{\conv}{\operatorname{conv}}
\newcommand{\supp}{\operatorname{supp}}
\newcommand{\ch}{\operatorname{ch}}
\newcommand{\Z}{\ensuremath{\mathbb{Z}}}
\newcommand{\bff}{\ensuremath{\mathbf{f}^{(\lambda)}}}
\renewcommand{\subset}{\subseteq}
\renewcommand{\leq}{\leqslant}
\renewcommand{\geq}{\geqslant}
\newtheorem{utheorem}{\textrm{\textbf{Theorem}}}
\newtheorem{theorem}{Theorem}[section]
\newtheorem{cor}[theorem]{Corollary}
\newtheorem{lemma}[theorem]{Lemma}
\newtheorem{prop}[theorem]{Proposition}
\theoremstyle{definition}
\newtheorem{defn}[theorem]{Definition}
\newtheorem{remark}[theorem]{Remark}
\newtheorem{example}[theorem]{Example}
\newtheorem{question}[theorem]{Question}
\numberwithin{equation}{section}
\numberwithin{figure}{section}
\begin{document}

\vspace*{-7mm}

\title[Weight-formula for highest weight modules, higher order
parabolic category $\mathcal{O}$]{A weight-formula for all highest weight
modules, and\\ a higher order parabolic category $\mathcal{O}$}

\author{Apoorva Khare}
\address[A.~Khare]{Department of Mathematics, Indian Institute of
Science, Bangalore, India}
\email{\tt khare@iisc.ac.in}

\author{G.\ Krishna Teja}
\address[G.~Krishna Teja]{Statistics and Mathematics Unit, Indian
Statistical Institute, Bangalore, India}
\email{\tt tejag@alum.iisc.ac.in}

\date{\today}

\subjclass[2010]{22E47 (primary); 17B10, 17B67, 20C15 (secondary)}

\keywords{Kac--Moody algebra,
highest weight module,
parabolic Verma module,
higher order Verma module,
higher order approximation,
parabolic Weyl semigroup,
BGG resolution,
higher order integrability,
higher order parabolic category,
BGG reciprocity}

\begin{abstract}
Let $\mathfrak{g}$ be a complex Kac--Moody algebra, with Cartan
subalgebra $\mathfrak{h}$. Also fix a weight $\lambda \in
\mathfrak{h}^*$.
For $M(\lambda) \twoheadrightarrow V$ an arbitrary highest weight
$\mathfrak{g}$-module, we provide a cancellation-free, non-recursive
formula for the weights of $V$. This is novel even in finite type, and is
obtained from $\lambda$ and a collection $\mathcal{H} = \mathcal{H}_V$ of
independent sets in the Dynkin diagram of~$\mathfrak{g}$ that are
associated to $V$.

Our proofs use and reveal a finite family (for each $\lambda$) of
``higher order Verma modules'' $\mathbb{M}(\lambda, \mathcal{H})$ --
these are all of the universal modules for weight-considerations. They
(i)~generalize and subsume parabolic Verma modules $M(\lambda,J)$, and
(ii)~have pairwise distinct weight-sets, which exhaust the weight-sets of
all modules $M(\lambda) \twoheadrightarrow V$. As an application, 
we explain the sense in which the modules $M(\lambda)$ of Verma and
$M(\lambda, J_V)$ of Lepowsky are respectively the zeroth and first order
upper-approximations of every $V$, and continue to higher order
upper-approximations $\mathbb{M}_k(\lambda, \mathcal{H}_V)$. We
also determine the $k$th order integrability and lower-approximation of
$V$, for all $k \geq 0$.

We then introduce the category $\mathcal{O}^{\mathcal{H}} \subset
\mathcal{O}$, which is a higher order parabolic analogue that contains
the higher order Verma modules $\mathbb{M}(\lambda, \mathcal{H})$. We
show that $\mathcal{O}^{\mathcal{H}}$ has enough projectives, and also
initiate the study of BGG reciprocity, by proving it for all
$\mathcal{O}^{\mathcal{H}}$ over $\mathfrak{g} = \mathfrak{sl}_2^{\oplus
n}$. Finally, we provide a BGG resolution for {\color{black}our} universal modules
$\mathbb{M}(\lambda, \mathcal{H})$ in certain cases {\color{black}including $A_3$ and any decomposable rank 3 $\mathfrak{g}$}; this yields a
Weyl-type character formula for them, which involves the action of a
parabolic Weyl semigroup.
\end{abstract}
\maketitle


\settocdepth{section}
\tableofcontents

\section{Introduction}

Throughout this paper, and unless otherwise specified, we work over
$\mathbb{C}$, with
$\mathfrak{g}$ denoting an arbitrary Kac--Moody
algebra,\footnote{More precisely, we work with any Lie algebra
$\widetilde{\mathfrak{g}} \twoheadrightarrow \mathfrak{g}
\twoheadrightarrow \overline{\mathfrak{g}}$ for a given generalized
Cartan matrix. Thus $\mathfrak{g}$ lies in between
$\widetilde{\mathfrak{g}}$ generated purely by the Chevalley--Serre
relations, and the quotient $\overline{\mathfrak{g}}$ of
$\widetilde{\mathfrak{g}}$ by the maximal ideal trivially intersecting
$\mathfrak{h}$. Our results hold over all such intermediate Lie algebras
$\mathfrak{g}$.}
$U \mathfrak{g}$ its universal enveloping algebra,
$\mathfrak{h} \subset \mathfrak{g}$ a fixed Cartan subalgebra, and
$\lambda \in \mathfrak{h}^*$ an arbitrary (highest) weight.
As further notation: denote by $\Delta$ the root system,
$\Pi = \{ \alpha_i : i \in I \}$ a base of simple roots indexed by nodes
$I$,
$\{ e_i, f_i, \alpha_i^\vee : i \in I \}$ a set of Chevalley generators,
and $W$ the Weyl group generated by the simple reflections $\{ s_i : i
\in I \}$.
We will identify subsets $J \subset I$ with the corresponding Dynkin
sub-diagrams of the diagram on $I$ for $\mathfrak{g}$.

The reader immediately interested in the main results can skip directly
to Section \ref{S2}.

\subsection{Characters}

The study of semisimple, affine, and Kac--Moody Lie algebras
$\mathfrak{g}$ and their representations is a prominent theme in
mathematics, from early work by Cartan, Killing, and Weyl to the
Langlands program in recent times, with numerous other connections and
applications. A central question involves understanding the structure of
(simple) highest weight $\mathfrak{g}$-modules. In this work we focus on
their characters and associated information.

We begin when $\mathfrak{g}$ is simple and finite-dimensional. If the
highest weight $\lambda \in \mathfrak{h}^*$ is dominant integral --
denoted $\lambda \in P^+$ -- the character of the corresponding simple
module $L(\lambda)$ is given by the celebrated formula of Weyl
\cite{Weyl} (and variants by Freudenthal and others). In standard
notation:
\begin{equation}\label{EWeyl}
\lambda \in P^+ \quad \implies \quad \ch L(\lambda) = \sum_{w \in W}
\frac{(-1)^{\ell(w)} e^{w \bullet \lambda}}{\prod_{\alpha \in \Delta^+}
(1 -e^{-\alpha})},
\end{equation}
with $\bullet$ the dot-action.
In contrast, when $\lambda$ is ``generic'', the module itself is
a Verma module $M(\lambda)$ \cite{Ver}, with a transparent character
formula (related to the Kostant partition function \cite{Kos1}):
\begin{equation}\label{EVerma}
\lambda \in \mathfrak{h}^* \quad \implies \quad \ch M(\lambda) = 
\frac{e^\lambda}{\prod_{\alpha \in \Delta^+} (1 - e^{-\alpha})}.
\end{equation}

For arbitrary highest weights, one uses Kazhdan--Lusztig theory
\cite{BeBe,BrKa,KL1,Soergel} to write down the character. For instance,
if $\lambda$ is dominant integral and $w \in W$, then we have the simple
character
\begin{equation}\label{EKL}
\ch L(ww_\circ \bullet \lambda) = \sum_{x \leqslant w} (-1)^{\ell(w) -
\ell(x)} P_{x,w}(1) \ch M(xw_\circ \bullet \lambda),
\end{equation}
where $P_{x,w}$ denotes the relevant Kazhdan--Lusztig polynomial.
Notice that computing weight multiplicities -- or even the easier
question of which weights occur -- using these formulas for $L(\lambda)$
is hard for two reasons:
(a)~the presence of signs, leading to cancellations, and
(b)~furthermore for non-integrable modules, the recursive nature of
Kazhdan--Lusztig polynomials.

If $\mathfrak{g}$ is of infinite type, less is known. For symmetrizable
$\mathfrak{g}$, one uses the Weyl--Kac character formula, but character
formulas are not known for \textit{all} highest weights in affine type --
and indeed, simple modules with highest weight $\lambda$ at critical
level behave very differently from those with $\lambda$ at non-critical
level (see e.g.\ \cite{FG}).
For non-symmetrizable $\mathfrak{g}$, even the first step above is
challenging, i.e.\ it remains open if the maximal integrable module (for
$\lambda$ dominant integral) is simple.

Clearly, understanding arbitrary highest weight modules (i.e., quotients
of Verma modules) is harder -- even for $\mathfrak{g}$ of finite type,
hence for arbitrary Kac--Moody $\mathfrak{g}$.

\subsection{Weights}

We now turn to the theme of the present work. Closely associated to the
``quantitative'' Weyl character formula is a ``qualitative'' picture,
which was known from the outset -- the easier question of determining the
weights (i.e.\ ignoring multiplicities). As is folklore: the set of
weights of a simple finite-dimensional highest weight module $L(\lambda)$
is $W$-invariant with convex hull the polytope with vertices
$W(\lambda)$, and the weights are recovered by intersecting with the
$\lambda$-translate of the root lattice. A similar statement holds for
integrable $L(\lambda)$ over Kac--Moody $\mathfrak{g}$.\footnote{See also
the recent work \cite{BJK}, where the authors extend this to all Demazure
modules for classical simple $\mathfrak{g}$.}

The uniformity of this description turns out to hold more generally.
Recently in \cite{DK2,DK,Kh1}, Dhillon and Khare proved several positive
formulas for the weights of $L(\lambda)$ for arbitrary (including
non-integrable) simple modules over all Kac--Moody $\mathfrak{g}$. In
contrast to the above story for characters, these weight-formulas hold
uniformly, for all highest weights and across all types (for
$\mathfrak{g}$). One of these formulas exactly generalizes the above
result in terms of convex hulls (always in $\mathfrak{h}^*$): now one
works with a $W_J$-invariant polyhedral shape rather than a $W$-invariant
one, corresponding to the partial integrability $J \subset I$ of (the not
necessarily fully-integrable module) $L(\lambda)$.

We now present one of these formulas; this serves to motivate our main
result, as well as to introduce some of the necessary notation. As this
result -- and our paper -- makes extensive use of parabolic Verma modules
\cite{GL,lepo1}, we begin by setting notation for them.

\begin{defn}\label{Dpvm}
Given a subset $S \subset \mathbb{R}$ and subsets $X,Y$ of a real vector
space, $SX$ will denote the set of finite $S$-linear combinations of
elements of $X$, with the empty sum denoting $0$. Moreover,
\[
X \pm Y := \{ x \pm y : x \in X, \ y \in Y \}, \qquad X \setminus Y := \{
x \in X : x \not\in Y \}
\]
will denote the Minkowski sum and difference, and set difference,
respectively.

For $J \subset I$, define $\Pi_J := \{ \alpha_j : j \in J \}$ and
$\Delta_J = \Delta_J^+ \sqcup \Delta_J^-$ to be $\Delta \cap \Z \Pi_J$.
Now let the Levi subalgebra
$\displaystyle \mathfrak{l}_J := \mathfrak{h} + \bigoplus_{\alpha \in
\Delta_J} \mathfrak{g}_\alpha$,
and let $\mathfrak{g}_J := \mathfrak{g}(A_{J \times J})$, where $A
= A_{I \times I}$ is the generalized Cartan matrix for $\mathfrak{g}$.
Also fix a (non-canonical) realization of $\mathfrak{g}_J$ as a
subalgebra of $\mathfrak{g}$. Next, for an $\mathfrak{h}$-module $V$,
denote by $\wt V := \{ \mu \in \mathfrak{h}^* : V_\mu \neq 0 \}$ its
\textit{set of weights}, where $V_\mu := \{ v \in V : h \cdot v = \mu(h)
v\ \forall h \in \mathfrak{h} \}$. E.g.\ for $\mathfrak{a} =
\mathfrak{g}, \mathfrak{g}_J, \mathfrak{l}_J$ (over ${\rm ad}\,
\mathfrak{h}$) and $\alpha \in \Delta$, $\mathfrak{a}_\alpha$ denotes the
$\alpha$-root space.

Given $\lambda \in \mathfrak{h}^*$, define its {\em integrability}
$J_\lambda$ to be:
\begin{equation}
J_\lambda := \{ i \in I \ | \ \langle \lambda, \alpha_i^\vee \rangle
\in \Z_{\geq 0} \},
\end{equation}
where $\langle \cdot , \cdot \rangle$ denotes the evaluation map $:
\mathfrak{h}^* \times \mathfrak{h} \to \mathbb{C}$. For $J \subset
J_\lambda$, define the \textit{parabolic Verma module} $M(\lambda,J) :=
{\rm Ind}^{\mathfrak{g}}_{\mathfrak{p}_J} L_J^{\max}(\lambda)$, where
$\mathfrak{p}_J := \mathfrak{l}_J + \mathfrak{n}^+$ is a parabolic
subalgebra of $\mathfrak{g}$, and $L_J^{\max}(\lambda)$ is the maximal
integrable highest weight $\mathfrak{l}_J$-module, which is given a
$\mathfrak{p}_J$-module structure via
$(\mathfrak{p}_J)_\alpha \cdot L_J^{\max}(\lambda) = 0$ for $\alpha
\not\in \Delta_J$. Note that $M(\lambda, J) \cong U \mathfrak{g}
\otimes_{U \mathfrak{p}_J} L_J^{\max}(\lambda)$.
\end{defn}\medskip

Both the ``quantitative'' and ``qualitative'' pictures are well known for
parabolic Verma modules. For the former, we mention a variant (see e.g.\
\cite{DK}) that extends the Atiyah--Bott version of the Weyl--Kac
character formula \cite{atbo} and hence subsumes Equations~\eqref{EWeyl}
and~\eqref{EVerma}. Namely, for an arbitrary parabolic Verma module over
Kac--Moody $\mathfrak{g}$, one has
\begin{equation}\label{EAtiyahBott}
\ch M(\lambda,J) = \sum_{w \in W_J} \frac{(-1)^{\ell(w)} e^{w \bullet
\lambda}}{\prod_{\alpha \in \Delta^+} (1 - e^{-\alpha})^{\dim
\mathfrak{g}_\alpha}}, \qquad \forall \lambda \in \mathfrak{h}^*, \ J
\subset J_\lambda,
\end{equation}
where $W_J$ is the corresponding parabolic Weyl subgroup.

For the latter picture, one has Minkowski difference
formulas obtained from parabolic induction:
\begin{align}\label{Epvm}
\begin{aligned}
\wt M(\lambda, J) = &\ \wt L_J^{\max}(\lambda) - \Z_{\geq 0} (\Delta^+
\setminus \Delta_J^+)\\
= &\ ((\lambda - \Z_{\geq 0} \Delta^+) \cap \conv W_J(\lambda) )
- \Z_{\geq 0} (\Delta^+ \setminus \Delta_J^+).
\end{aligned}
\end{align}

Resuming the above discussion, we write a positive weight-formula for
simple modules:

\begin{theorem}[Khare~\cite{Kh1}, Dhillon--Khare~\cite{DK}]\label{Tdk}
Let $\mathfrak{g}$ and $\lambda \in \mathfrak{h}^*$ be arbitrary. Then,
\begin{equation}\label{EdkL}
\wt L(\lambda) = \wt M(\lambda, J_\lambda).
\end{equation}
\end{theorem}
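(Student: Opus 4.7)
The plan is to establish both containments. The inclusion $\wt L(\lambda) \subseteq \wt M(\lambda, J_\lambda)$ is routine: for each $i \in J_\lambda$, the vector $f_i^{\langle \lambda, \alpha_i^\vee \rangle + 1} v_\lambda \in M(\lambda)$ is singular (since $\langle \lambda, \alpha_i^\vee \rangle \in \Z_{\geq 0}$), hence lies in the maximal proper submodule of $M(\lambda)$. So the surjection $M(\lambda) \twoheadrightarrow L(\lambda)$ factors through $M(\lambda, J_\lambda)$, exhibiting $L(\lambda)$ as a quotient and yielding the weight-set containment.

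For the reverse containment $\wt M(\lambda, J_\lambda) \subseteq \wt L(\lambda)$, I would invoke the Minkowski description \eqref{Epvm} to reduce to two sub-claims: (a)~$\wt L_{J_\lambda}^{\max}(\lambda) \subseteq \wt L(\lambda)$; and (b)~for every $\mu \in \wt L_{J_\lambda}^{\max}(\lambda)$ and every $\gamma \in \Z_{\geq 0}(\Delta^+ \setminus \Delta_{J_\lambda}^+)$, the weight $\mu - \gamma$ lies in $\wt L(\lambda)$.

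Sub-claim (a) follows by identifying the $\mathfrak{l}_{J_\lambda}$-submodule $U(\mathfrak{l}_{J_\lambda}) \cdot v_\lambda \subseteq L(\lambda)$: it is a highest weight $\mathfrak{l}_{J_\lambda}$-module of highest weight $\lambda$ that is $J_\lambda$-integrable (the relations $f_i^{\langle \lambda, \alpha_i^\vee \rangle + 1} v_\lambda = 0$ for $i \in J_\lambda$ already hold in $L(\lambda)$), so by universality it coincides with $L_{J_\lambda}^{\max}(\lambda)$.

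Sub-claim (b) is where I expect the main obstacle. I would proceed by induction on the height of $\gamma$, picking $\beta \in \Delta^+ \setminus \Delta_{J_\lambda}^+$ occurring in some expansion of $\gamma$ so that $\gamma - \beta$ has smaller height and still lies in $\Z_{\geq 0}(\Delta^+ \setminus \Delta_{J_\lambda}^+)$. The inductive step then reduces to the core ``descent'' question: given a nonzero $v \in L(\lambda)_\nu$ and a positive root $\beta \in \Delta^+ \setminus \Delta_{J_\lambda}^+$ with $\nu - \beta \in \wt M(\lambda, J_\lambda)$, produce a nonzero element of $L(\lambda)_{\nu - \beta}$. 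The difficulty concentrates in directions $\beta$ outside $\Delta_{J_\lambda}^+$, where $\mathfrak{sl}_2$-integrability along $\beta$ is unavailable and the usual reflection-invariance argument fails. My plan here is to combine the $W_{J_\lambda}$-invariance of $\wt L(\lambda)$ (coming from (a)) with the parabolic-hull description of $\conv \wt L(\lambda)$ (the second line of \eqref{Epvm}) to argue that any such vanishing would force $\nu - \beta$ outside the hull $\conv W_{J_\lambda}(\lambda) - \mathbb{R}_{\geq 0}(\Delta^+ \setminus \Delta_{J_\lambda}^+)$, contradicting the assumption $\nu - \beta \in \wt M(\lambda, J_\lambda)$.
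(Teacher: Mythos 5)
Your proof of the easy inclusion and of sub-claim~(a) are both correct and standard. However, your proposed argument for sub-claim~(b) has a genuine logical gap: you want to conclude from the hypothetical vanishing $L(\lambda)_{\nu-\beta} = 0$ (with $\nu - \beta$ still in $\wt M(\lambda, J_\lambda)$) that $\nu - \beta$ lies outside $\conv \wt L(\lambda)$, and derive a contradiction. But membership in the convex hull does not imply membership in the weight set -- that implication is precisely the content of the theorem you are trying to prove (restricted to simple modules), and it is exactly the ``hole'' phenomenon that this paper is built around: a point can lie in $\conv(\wt V)$ without lying in $\wt V$. So your descent step assumes the conclusion. Moreover, the identification $\conv \wt L(\lambda) = \conv W_{J_\lambda}(\lambda) - \mathbb{R}_{\geq 0}(\Delta^+ \setminus \Delta_{J_\lambda}^+)$ is itself a nontrivial theorem of Dhillon--Khare sitting at roughly the same depth as the statement at hand, so even invoking it would be borderline circular.

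The approach in the cited works, reproduced in this paper as the special case $V = L(\lambda)$ of Proposition~\ref{Pinclusion}, proceeds in a different order and avoids this trap. Rather than first filling out $\wt L_{J_\lambda}^{\max}(\lambda)$ and then subtracting $\Z_{\geq 0}(\Delta^+ \setminus \Delta_{J_\lambda}^+)$, one first shows the ``semi-infinite string'' $\lambda - \Z_{\geq 0}\Pi_{J_\lambda^c} \subseteq \wt L(\lambda)$ by applying the no-holes theorem (Theorem~\ref{Tnoholes}) over the sub-Kac--Moody algebra $\mathfrak{g}_{J_\lambda^c}$, where $\lambda$ has empty integrability and hence the highest weight $\mathfrak{g}_{J_\lambda^c}$-module generated by $v_\lambda$ has no holes, so has full Verma weight-set. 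Then one observes that each nonzero vector at weight $\lambda - \xi$ (for $\xi \in \Z_{\geq 0}\Pi_{J_\lambda^c}$) is a maximal vector for the Levi $\mathfrak{l}_{J_\lambda}$, hence generates an $\mathfrak{l}_{J_\lambda}$-module whose weights contain $\wt L_{J_\lambda}^{\max}(\lambda - \xi)$. Finally the integrable slice decomposition (Lemma~\ref{Lslice}) assembles these slices into all of $\wt M(\lambda, J_\lambda)$. The crucial reordering is that the integrability along $J_\lambda$ is applied \emph{after} descending in the non-integrable directions, starting each time from a fresh $\mathfrak{l}_{J_\lambda}$-maximal vector, rather than descending from a weight that already used the $J_\lambda$-integrability. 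To repair your write-up, replace your sub-claim~(b) with this slice-decomposition argument; the no-holes input over $\mathfrak{g}_{J_\lambda^c}$ is the ingredient your plan is missing.
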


Given the uniformity of this weight-formula for all $L(\lambda)$,
$\lambda \in \mathfrak{h}^*$ (via \eqref{Epvm}), our original goal in
this work was a more challenging result: a positive formula for the
weights of an arbitrary highest weight module $V$. We make a few remarks
here, addressing the treatment and proofs below.

First, a weight-formula for $V$ was unknown beyond simple and parabolic
Verma modules, even in finite type. We provide a uniform, positive
formula for all highest weight modules over all $\mathfrak{g}$.
Perhaps one ``miracle'' here is that such a formula exists in the first
place, and it uses simply the Dynkin diagram and the images in $V$ of
some lines in $M(\lambda)$ killed by $\mathfrak{n}^+$! See
Theorem~\ref{T1}.

Second is the ``next'' question, of characters. Our quest for a formula
for $\wt V$ also yields rewards on this side. Even more: we discovered a
novel (to our knowledge) family of ``higher order Verma modules''
$\mathbb{M}(\lambda, \mathcal{H})$ which ``cover'' {\color{black}all highest weight
modules $V$ -- one for each weight-set $\wt V$! --} and subsume parabolic Verma modules
$M(\lambda,J)$. 
(Over $\mathfrak{sl}_2^{\oplus n}$, these modules $\mathbb{M}(\lambda,
\mathcal{H})$ comprise \textit{all} highest weight modules.)
A question of future interest is to study the family $\mathbb{M}(\lambda,
\mathcal{H})$, starting with their characters and BGG-type resolutions
(we begin this study by obtaining such a resolution in all {\color{black} decomposable rank 3 cases of $\mathfrak{g}$  and in type $A_3$}, in
Section~\ref{Sfinal}), as well as their geometric counterparts via
localization.

{\color{black}Curious byproducts of our weight-based approach are : 1)~A} family of Minkowski
difference formulas for all parabolic Verma modules.
 These
subsume~\eqref{Epvm} as a special case, and interestingly, do not hold in
general on the {\color{black}module level} via parabolic induction
(except~\eqref{Epvm}).
{\color{black}2)~A family of simples whose weight-sets patch up to the whole $\wt V$ for each highest weight $\mathfrak{g}$-module $V$, strengthening the ``local Jordan--H\"{o}lder series'' \cite[Lemma 9.6]{Kac book} of $V$; yielding formulas for $\wt V$.}

\subsection{Holes{\color{black}, and motivations behind them}}
We lead up to our main results by introducing -- by example -- another
key ingredient.

As mentioned above, Dhillon and Khare \cite{DK,Kh1} showed that for all
$\lambda \in \mathfrak{h}^*$, the convex hull of weights of a simple
highest weight $\mathfrak{g}$-module $L(\lambda)$ recovers $\wt
L(\lambda)$, by intersecting a suitable $W_{J_\lambda}$-invariant shape
with the root lattice-translate $\lambda + \Z \Delta$. However, the same
does not hold for all highest weight modules $V$. 
For instance if $\mathfrak{g} = \mathfrak{sl}_4$ and $\langle
\lambda, \alpha_i^\vee \rangle = 1, 0$ for $i=1,3$, then $\wt
\left( M(\lambda) / U \mathfrak{g} \cdot f_1^2 f_3 \cdot
M(\lambda)_\lambda \right) \subsetneq \wt M(\lambda)$, but their convex
hulls are equal. The key fact underlying this is an even simpler example
over the non-simple Lie algebra $\mathfrak{g} = \mathfrak{sl}_2 \oplus
\mathfrak{sl}_2$. Namely, the module
\begin{equation}\label{EM00}
V_{00} := M(0,0) / M(-2,-2)
\end{equation}
has a ``hole'' inside the hull: its weights are precisely the lattice
points along the boundary of $\conv (\wt V_{00})$,
i.e., $-\Z_{\geq 0} \alpha_1 \cup -\Z_{\geq 0} \alpha_2$,
and all interior lattice points $(-2 \Z_{>0})^2$ lie in $\conv (\wt
V_{00})$ but not in $\wt V_{00}$. This simple example is at the heart of
progress in multiple directions, below.

More generally, holes can occur in a $\mathfrak{g}$-module $M(\lambda)
\twoheadrightarrow V$ as follows. Suppose $\{ \alpha_h : h \in H \}$ is a
set of pairwise ``orthogonal'' roots such that $\langle \lambda,
\alpha_h^\vee \rangle \in \Z_{\geq 0}$ for all $h$ -- i.e., $H$ is an
independent subset of $J_\lambda$. If $f_h \in \mathfrak{g}_{-\alpha_h}$
denotes a Chevalley generator, then applying the lowering
operator-product
\[
\prod_{h \in H} f_h^{\langle \lambda, \alpha_h^\vee \rangle + 1}
\]
to the highest weight line $V_\lambda$ can \textit{sometimes} yield
zero.\footnote{We will index simple roots in such ``holes'' by $h \in H
\subset I$. This index should not be confused with elements of
$\mathfrak{h}$.} Whenever this happens, letting $\mathfrak{l}_H :=
\mathfrak{sl}_2^{\oplus H} + \mathfrak{h}$ denote the corresponding Levi,
the set $\wt U(\mathfrak{l}_H) V_\lambda$ comprises the $\lambda$-shifted
root-lattice points along a ``thickening'' of the boundary of its convex
hull -- i.e., there is a hole in the interior of the convex hull.
Clearly, whether or not this happens depends on 
(a)~the highest weight module $V$, and
(b)~the independent subset $H \subset J_\lambda$ of nodes.
The latter also shows that there are at most finitely many holes in $V$,
each corresponding to a {\color{black}maximal} one-dimensional weight space of $M(\lambda)$ that
consists of maximal vectors (i.e., vectors annihilated by all raising
operators $e_i$).
{\color{black} Conversely, all the above products that do not kill $V_{\lambda}$-line (which are the non-relations in $V$ that) determine $\wt V$.
In addition to these two perspectives or importance of holes (broadly, weights supported over independent sets - lost and surviving) in our study, we have from the previous work \cite{MDWF} of the second author : Raising operator actions (or integrability theory) on weight spaces lead to -- i.e., $\mathfrak{n}^+ V_{\mu}$ for each $\mu\in \wt V$ contain -- non-hole 
1-dim. weights.}

\section{Main results}\label{S2}

In all results below, $V$ denotes a general nonzero highest weight
$\mathfrak{g}$-module over an arbitrary Kac--Moody algebra
$\mathfrak{g}$, with a general (fixed) highest weight $\lambda \in
\mathfrak{h}^*$.
The reader can go through the entire work assuming $\mathfrak{g} =
\mathfrak{sl}_{n+1}$, or $\mathfrak{g}$ semisimple, without losing (most
of) the novelty in the work.

With the motivation and background given above, we begin by formalizing
the above notion of holes. In what follows, recall $J_\lambda := \{ i \in
I \ | \ \langle \lambda, \alpha_i^\vee \rangle \in \Z_{\geq 0} \}$.

\begin{defn}\label{Dholes}
For a module $M(\lambda) \twoheadrightarrow V$ over Kac--Moody
$\mathfrak{g}$, the {\bf set of holes} in $V$ or $\wt V$ is:
\begin{equation}
\mathcal{H}_V := \left\{ H \subset J_\lambda \ \bigg| \text{ the Dynkin
subdiagram on } H \text{ has no edge and } \left( 
\prod_{h \in H} f_h^{\langle \lambda, \alpha_h^\vee \rangle + 1} \right)
V_\lambda = 0 \right\}.
\end{equation}
\end{defn}

We take the empty product to be $1 \in U \mathfrak{g}$ here, so that
$\emptyset \in \mathcal{H}_V$ if and only if $V=0$.

\begin{remark}\label{RT1}
Explicitly, the set of holes $\mathcal{H}_V$ in $V$ consists of
precisely those independent sets $H \subset J_\lambda$, for which
defining $\lambda_H = \lambda - \sum_{h \in H} (\langle \lambda,
\alpha_h^\vee \rangle + 1 ) \alpha_h = \prod_{h \in H} s_h \bullet
\lambda$, one has:
(i)~the weight space $M(\lambda)_{\lambda_H}$ is one-dimensional (via
Kostant's function, this is equivalent to $H$ having no edge),
(ii)~this weight space consists of maximal vectors (via
$\mathfrak{sl}_2$-theory), and
(iii)~the same weight space in $V$ is $V_{\lambda_H} = 0$.
\end{remark}

Now our first result is a positive formula for $\wt V$. Recall,
$M(\lambda,J)$ is a parabolic Verma module.

\begin{utheorem}\label{T1}
Fix a complex Kac--Moody Lie algebra $\mathfrak{g}$, and $\mathfrak{h}$,
$\Delta, \Pi = \{ \alpha_i : i \in I \}$ as above.
Let $\lambda \in \mathfrak{h}^*$ and let $M(\lambda) \twoheadrightarrow
V$ be nonzero. Then
\begin{equation}\label{Eweights}
\wt V = \bigcup_{J \subset J_\lambda\, :\, J \cap H
\neq \emptyset\; \forall H \in \mathcal{H}_V} \wt M(\lambda,J)
\end{equation}
if $\mathcal{H}_V$ is nonempty. Otherwise, $\wt V = \wt M(\lambda)$.
\end{utheorem}

An immediate consequence of Theorem~\ref{T1} is the following ``geometric
combinatorial'' formula:

\begin{cor}
For all $\lambda \in \mathfrak{h}^*$ and nonzero modules
$M(\lambda) \twoheadrightarrow V$ with $\mathcal{H}_V$ nonempty,
\begin{equation}
\wt V = \bigcup_{J \subset J_\lambda\, :\, J \cap H \neq \emptyset\;
\forall H \in \mathcal{H}_V} \wt L_J^{\max}(\lambda) - \Z_{\geq 0}
(\Delta^+ \setminus \Delta^+_J).
\end{equation}
\end{cor}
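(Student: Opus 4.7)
The plan is essentially a direct substitution. The corollary is advertised as an immediate consequence of Theorem A, so the proof should simply reconcile the right-hand side of \eqref{Eweights} with the Minkowski-difference description of $\wt M(\lambda,J)$.

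First, I would invoke Theorem A (T1): since $\mathcal{H}_V$ is nonempty, we have
\[
\wt V = \bigcup_{J \subset J_\lambda\, :\, J \cap H \neq \emptyset\; \forall H \in \mathcal{H}_V} \wt M(\lambda, J).
\]
Next, for each $J \subset J_\lambda$ appearing in this union, I would replace $\wt M(\lambda,J)$ by the right-hand side of the parabolic-induction formula \eqref{Epvm}, which says
\[
\wt M(\lambda, J) = \wt L_J^{\max}(\lambda) - \Z_{\geq 0} (\Delta^+ \setminus \Delta_J^+).
\]
This formula is standard and comes from the PBW decomposition: $M(\lambda, J) \cong U(\mathfrak{n}^-)/U(\mathfrak{n}_J^-) \otimes L_J^{\max}(\lambda)$ as $\mathfrak{h}$-modules, where the first factor has weights $-\Z_{\geq 0}(\Delta^+ \setminus \Delta_J^+)$ by the Poincar\'e--Birkhoff--Witt theorem. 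Substituting this into the union yields precisely the stated formula.

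Taking the union commutes with the pointwise substitution, so there is no subtlety in combining the two inputs, and no obstacle to speak of: the entire content of the corollary is packaged in Theorem A together with the well-known formula \eqref{Epvm}. The only small point to verify is that \eqref{Epvm} applies to every $J$ appearing in the union, but this is automatic since the union already ranges over $J \subset J_\lambda$, and the condition $J \subset J_\lambda$ is exactly what is needed to make sense of $L_J^{\max}(\lambda)$ and hence of $M(\lambda, J)$.
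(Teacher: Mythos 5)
Your proposal is correct and coincides with what the paper intends: the corollary is stated immediately after Theorem~\ref{T1} without a separate proof, precisely because it follows by substituting the Minkowski-difference formula~\eqref{Epvm} for each $\wt M(\lambda,J)$ in the union from~\eqref{Eweights}, exactly as you do.
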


\begin{remark}\label{Rintuit}
We provide some intuition behind the formula~\eqref{Eweights}. Suppose
$\wt M(\lambda, J) \subset \wt V$ for some $J \subset J_\lambda$. Then
one has an inclusion of holes (i.e., of the lost one-dimensional
weight spaces killed by $\mathfrak{n}^+$ -- or \textit{highest weight
lines}): $\mathcal{H}_V \subset \mathcal{H}_{M(\lambda, J)}$. By the
universality -- or the $U \left( \mathfrak{n}^-_{\Delta^- \setminus
\Delta_J^-} \right)$-freeness -- of $M(\lambda, J)$, the line $\prod_{h
\in H} f_h^{\langle \lambda, \alpha_h^\vee \rangle + 1} \cdot
M(\lambda)_\lambda$ being quotiented out of $M(\lambda, J)$ implies some
$h \in J$. Therefore $J \cap H \neq \emptyset\ \forall H \in
\mathcal{H}_V$ -- which explains the necessity of this condition in the
union in~\eqref{Eweights}. Theorem~\ref{T1} says that firstly, this
condition also guarantees $\wt M(\lambda, J) \subset \wt V$; and
secondly, such considerations recover \textit{all} weights of $V$.
\end{remark}

\begin{remark}[Alternate formulas]\label{Remark alternate wt-forms.}
For computational purposes, one can work with the subset
$\mathcal{H}_V^{\min}$ consisting of the minimal sets in $\mathcal{H}_V$
under inclusion -- i.e., replace respectively in~\eqref{Eweights}:
\[
\mathcal{H}_V \leadsto
\mathcal{H}_V^{\min},\qquad
J_\lambda \leadsto \cup_{H \in \mathcal{H}_V^{\min}} H.
\]
Indeed, we work with $\mathcal{H}_V^{\min}$ in Sections~\ref{SO}
and~\ref{Sfinal}; for now, observe that if $V = M(\lambda,J_0)$, then
working with minimal holes yields exactly one term on the right-hand side
in~\eqref{Eweights}: $J = J_0$.
Section~\ref{S5final} also provides two ``$k$th order'' weight-formulas
for $\wt V$ (for all $\mathfrak{g}, \lambda, V$, and $k \geq 1$), which
extend Theorem~\ref{T1} above and Theorem~\ref{T2} below. Furthermore,
there is yet another formulation of Equation~\eqref{Eweights} in terms of
the ``higher order parabolic category'' $\mathcal{O}^{\mathcal{H}_V}$ --
see~\eqref{Ealtwts}.
\end{remark}

We make two observations before proceeding further. First, the formula in
Theorem~\ref{T1} is clearly uniform across all types for $\mathfrak{g}$
and all highest weights $\lambda$. Second, in each such case, the formula
is visibly positive as well as non-recursive. 
These are in contrast to
the situation for characters, in which case one does not even have
conjectural formulas in all cases, or weight multiplicities even for all
integrable simple highest weight modules $L(\lambda)$ if $\mathfrak{g}$
is non-symmetrizable. 
{\color{black} Following Theorem \ref{T1} and to strengthen it, we solve in our third result Theorem \ref{Theorem wt-form. by composition series simples}, the below problem.}
\begin{question}\label{Question Theorem A converse}
   {\color{black} Given a weight $\mu\in \wt V$, for which sets $J$ in \eqref{Eweights} we have $\mu\in\wt M(\lambda, J)$?}
\end{question}\smallskip

Our second result shows the existence of a \textit{finite} collection of
``uniform'' highest weight modules $M(0) \twoheadrightarrow
\mathbb{M}_t$, such that for every pair -- $\lambda \in \mathfrak{h}^*$
and a module $M(\lambda) \twoheadrightarrow V$ -- there exists $t$ such
that
\[
\wt V = \wt L_{J_\lambda}^{\max}(\lambda) + \wt \mathbb{M}_t.
\]
In particular, $\wt V$ combines $\wt L_{J_\lambda}^{\max}(\lambda)$
(which is a fundamental object -- in fact, a parabolic Verma
$\mathfrak{l}_{J_\lambda}$-module) with the weights of some
$\mathbb{M}_t$ from a finite collection \textit{that works for all
$\lambda \in \mathfrak{h}^*$ and all $V$}. (That said, which
$\mathbb{M}_t$ to use does depend on $(\lambda,V)$, as we explain.)

To define these modules $\mathbb{M}_t$ and state our {\color{black}second weight-formula}, additional
notation is required.

\begin{defn}\hfill
\begin{enumerate}
\item Given a subset of simple roots (indexed by) $J \subset I$, define
${\rm Indep}(J)$ to comprise the collection of independent subsets $H
\subset J$, i.e.\ whose induced Dynkin subgraph in the Dynkin diagram of
$\mathfrak{g}$ has no edges. Note that $\emptyset$ and $\{ j \}$
are in ${\rm Indep}(J)$ for all $J \subset I$ and $j \in J$.

\item Given $J \subset I$ and a subset $\mathcal{H}$ of ${\rm Indep}(I)$,
define the highest weight $\mathfrak{g}$-module
\begin{equation}\label{EMH}
\mathbb{M}(\mathcal{H}) := \frac{M(0)}{\displaystyle \sum_{H \in
\mathcal{H}} U\mathfrak{g} \left( \prod_{h \in H} f_h \right) M(0)_0},
\end{equation}
where $M(0)_0$ is the highest weight line in the Verma module $M(0)$. If
$H = \emptyset$, define the empty product $\prod_{h \in H} f_h$ to be
$1$. (Thus, if $H = \emptyset \in \mathcal{H}$ then
$\mathbb{M}(\mathcal{H}) = 0$.)

\item Recall, a subset $\mathcal{H}$ of a poset $(\mathcal{P}, \preceq)$
is \textit{upper-closed} if
$H \in \mathcal{H}$, $H \preceq H' \text{ in } \mathcal{P}$ imply
$H' \in \mathcal{H}$.
\end{enumerate}
\end{defn}

The modules $\mathbb{M}(\mathcal{H})$ are studied in the next section;
they comprise the sought-for finite set $\{ \mathbb{M}_t \}_t$:

\begin{utheorem}\label{T2}
Fix $\lambda \in \mathfrak{h}^*$. Let $M(\lambda) \twoheadrightarrow V$
be nonzero, and $\mathcal{H}_V$ be as in Theorem~\ref{T1}. Then
$\mathcal{H}_V$ is a proper, upper-closed subset of ${\rm
Indep}(J_\lambda)$, and
\begin{equation}\label{ET2}
\wt V = \wt L_{J_\lambda}^{\max}(\lambda) + \wt \mathbb{M}(\mathcal{H}_V).
\end{equation}

\noindent Moreover, the finite collection of upper-closed subsets of
${\rm Indep}(J_\lambda)$ is in bijection with the set
$\{ \wt V : M(\lambda) \twoheadrightarrow V \}$, via $\Psi_\lambda :
\mathcal{H} \mapsto \wt L_{J_\lambda}^{\max}(\lambda) + \wt
\mathbb{M}(\mathcal{H})$. In particular, for $M(0)
\twoheadrightarrow V$ we have:
\begin{equation}\label{Etop}
\wt V = \wt \mathbb{M}(\mathcal{H}_V).
\end{equation}
\end{utheorem}

In fact, we extend~\eqref{Etop} to all modules $M(\lambda)
\twoheadrightarrow V$ (for all $\lambda \in \mathfrak{h}^*$) in the next
section. Thus, \eqref{Etop} and its extension reveal a second ``miracle''
about the sets $\wt V$ (cf.\ a few lines below Theorem~\ref{Tdk}): the
``obvious'' (see Remark~\ref{Rtop}) holes $\mathcal{H}_V$ in the
weight-set obtained from the top -- i.e.\ the line $V_\lambda$ -- are the
\textit{only} ones, for every highest weight module over every Kac--Moody
algebra.

\begin{remark}
Akin to the explicit weight-formula in Theorem~\ref{T1}, the finite
number of weight-sets for each $\lambda \in \mathfrak{h}^*$ (in
Theorem~\ref{T2}) is also in stark contrast to the situation for
characters. For instance, let $\mathfrak{g}$ be of infinite type, and
consider any sequence of increasing words in the Weyl group:
\[
w_1 := s_{i_1}, \quad w_2 := s_{i_2} s_{i_1}, \quad \dots\ ; \qquad
\ell(w_n) = n\ \forall n \geq 1.
\]
Then the modules $M(0) / M(w_n \bullet 0)$ have pairwise distinct
characters. Theorem~\ref{T2} nevertheless shows that they -- and all
other modules $M(0) \twoheadrightarrow V$ -- collectively yield only
finitely many weight-sets, $\wt \mathbb{M}(\mathcal{H})$.
(In particular, the weight-sets of $M(0) / M(w_n \bullet 0)$ eventually
stabilize.)
\end{remark}

\begin{remark}
It is natural to ask if~\eqref{ET2} specializes to the Minkowski
difference formula~\eqref{Epvm} for $V$ a parabolic Verma module. This is
not true on the nose; rather, in Proposition~\ref{Pminkowski} we exhibit
a \textit{family} of Minkowski difference formulas for each parabolic
Verma module, of which~\eqref{Epvm} is one extreme. An interesting
feature is: these formulas (except~\eqref{Epvm}) hold for weights, but do
\textit{not} lift in general to the level of parabolic induction.
Following this family of formulas in Proposition~\ref{Pminkowski}, we
explain how~\eqref{ET2} generalizes the Minkowski difference formula at
the other extreme to~\eqref{Epvm}.
\end{remark}
{\color{black} Our third weight-formula is more conceptual and emphizes the need to work with holes $\mathcal{H}_V$ for studying highest weight $\mathfrak{g}$-modules $V$.
While the above weight formulas and those in Subsection~\ref{S5final} are formulated using the lost-weights/holes in $V$, the one below uses crucially all the surviving non-holes.
Recall, the standard result \cite[Lemma 9.6]{Kac book} says each $V_{\mu}$ weight space is spread across the subquotients $\frac{V_i}{V_{i+1}}$ in a filtration $V_{k+1}\subset V_k \subset \cdots \subset V_1\subset V=V_0$ of $V$ with $\Big(\frac{V_i}{V_{i+1}}\Big)_{\mu}\neq 0\implies \frac{V_i}{V_{i+1}} = L(\lambda_i)$ for some $\lambda_i\preceq \lambda$.
Here ` $\preceq$ ' is the usual partial order on $\mathfrak{h}^*$. 
However it is not known in the literature (to the best of our knowledge), an example of such a simple $\frac{V_i}{V_{i+1}}=L(\lambda_i)$ containing any fixed weight $\mu\in \wt V$.
Solutions to this problem in Theorem \ref{Theorem wt-form. by composition series simples}(b) also solve Question \ref{Question Theorem A converse}.} 
\begin{utheorem}\label{Theorem wt-form. by composition series simples}{\color{black}
Let $\mathfrak{g}$ and $V\twoheadleftarrow M(\lambda)$ be as in Theorem \ref{T2}, and we define involutions $s_H:=\prod\limits_{h\in H}s_h$ for each independent set $H\subseteq I$. \quad (a) We have the following weight-formula  \begin{equation}\label{Eqn wt-form. by non-hole composition series simples}
 \wt V  \ = \qquad \quad  \bigcup\limits_{\mathclap{H \in \mathrm{Indep}(J_{\lambda})\setminus \mathcal{H}_V }} \qquad \wt L\big(s_H \bullet \lambda\big)
\end{equation} Every composition series for $V$ over semisimple $\mathfrak{g}$, or its local version over Kac--Moody $\mathfrak{g}$, involves all the above simples.\smallskip\\
(b) For any fixed $\mu= \lambda-\sum\limits_{i\in I }c_i\alpha_i\in \wt V$, strengthening \eqref{Eqn wt-form. by non-hole composition series simples},  we now obtain a simple module in this formula that contains $\mu$.
Let $J\subseteq J_{\lambda}$ be any set with all the properties in \eqref{Eweights} and such that the dominant integral weight in the orbit $W_J\mu$ is non-degenerate (see \cite[Proposition 11.2 and (11.2.1)]{Kac book}) w.r.t. the $J^c$-projection $\lambda-\sum\limits_{i\in J^c}c_i\alpha_i$; such a set $J$ exists by Theorem \ref{T1}.
\begin{itemize}
\item If $\supp(\lambda-\mu)\subseteq J$, we immediately have $\mu\in \wt L^{\max}_{J}(\lambda)\subseteq \wt L(\lambda)$.
\item Assume that $J^c\cap \supp(\lambda-\mu)\neq \emptyset$.
Applying \cite[Algorithm 7.3]{MDWF} to the pair \big($J^c$ and sequence $(c_i)_{i\in J^c}$\big) yields an enumeration for $J^c\cap J_{\lambda}$   which terminates in an independent set (say) $\emptyset\neq H\subseteq J^c$.
Then $\mu\in \wt L\big( s_H\bullet \lambda\big)$.
\end{itemize}}
\end{utheorem}
{\color{black}
\begin{remark}
    Theorem \ref{Theorem wt-form. by composition series simples} brings down the complexity of computing the weight-set $\wt V$ of any $V\twoheadleftarrow M(\lambda)$, to that of writing $\wt L(\Lambda)$ for dominant integral $\Lambda\in P^+$; more precisely of $\wt L_{J_{\lambda}}^{\max}(\lambda)$.
    Unlike weight formula \eqref{Eweights} which runs only over the minimal holes $\mathcal{H}^{\min}$ in $\mathcal{H}=\mathcal{H}_V$, as explained in Remark \ref{Remark alternate wt-forms.}, formula 
    \eqref{Eqn wt-form. by non-hole composition series simples} involves all the holes in $\mathcal{H}=\mathrm{Indep}(J_{\lambda})\setminus \mathcal{H}_V$.
\end{remark}
}

\noindent
Our next result is an application {\color{black}of Theorem \ref{T2}, and is a bird's eye view point of Theorem \ref{Theorem wt-form. by composition series simples}.}

\begin{utheorem}\label{T3}
Fix a nonzero $\mathfrak{g}$-module $M(\lambda) \twoheadrightarrow V$ and
set $S := \wt V$. There exist unique maximum and minimum modules
$V^{\max}(S), V^{\min}(S)$ with highest weight $\lambda$, which satisfy
the property:

A module $M(\lambda) \twoheadrightarrow V'$ has $\wt V' = S$, if and only
if $V^{\max}(S) \twoheadrightarrow V' \twoheadrightarrow V^{\min}(S)$.

\noindent In particular, $\wt V^{\max}(S) = \wt V^{\min}(S) = S$.
\end{utheorem}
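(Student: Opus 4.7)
The plan is to leverage Theorem~\ref{T2}: since the map $\Psi_\lambda : \mathcal{H} \mapsto \wt L_{J_\lambda}^{\max}(\lambda) + \wt \mathbb{M}(\mathcal{H})$ is a bijection, the condition $\wt V' = S$ on a quotient $M(\lambda) \twoheadrightarrow V'$ is equivalent to the equality of hole-sets $\mathcal{H}_{V'} = \mathcal{H}_V$. The task thus reduces to constructing universal modules that pin down $\mathcal{H}_{V'}$. For $V^{\max}(S)$ I would set
\[
V^{\max}(S) := M(\lambda) \bigg/ \sum_{H \in \mathcal{H}_V} U\mathfrak{g} \left( \prod_{h \in H} f_h^{\langle \lambda, \alpha_h^\vee \rangle + 1} \right) M(\lambda)_\lambda,
\]
i.e.\ the $\lambda$-analogue of $\mathbb{M}(\mathcal{H}_V)$. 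Any $V'$ with $\wt V' = S$ kills each of these hole-vectors, so $M(\lambda) \twoheadrightarrow V'$ factors through $V^{\max}(S)$. In particular, $V$ itself is a quotient of $V^{\max}(S)$; since hole-sets only grow under surjections, $\mathcal{H}_V \subseteq \mathcal{H}_{V^{\max}(S)} \subseteq \mathcal{H}_V$, forcing equality and $\wt V^{\max}(S) = S$ by Theorem~\ref{T2}.

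For $V^{\min}(S)$ I would first observe that Remark~\ref{RT1}, combined with the surjection $M(\lambda) \twoheadrightarrow V^{\max}(S)$ and the equality $\mathcal{H}_{V^{\max}(S)} = \mathcal{H}_V$, ensures that for each $H \in {\rm Indep}(J_\lambda) \setminus \mathcal{H}_V$ the weight space $V^{\max}(S)_{\lambda_H}$ is one-dimensional. I would then define
\[
N^{\min} := \sum \{ N \subseteq V^{\max}(S) \text{ a submodule} : N \cap V^{\max}(S)_{\lambda_H} = 0\ \forall H \in {\rm Indep}(J_\lambda) \setminus \mathcal{H}_V \},
\]
and put $V^{\min}(S) := V^{\max}(S)/N^{\min}$. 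The engine for well-definedness is that the intersection condition is preserved under arbitrary sums: any element of $(\sum_i N_i) \cap V^{\max}(S)_{\lambda_H}$ equals its own $\lambda_H$-weight component $\sum_i (n_i)_{\lambda_H}$, and since each $N_i$ is $\mathfrak{h}$-stable we have $(n_i)_{\lambda_H} \in N_i \cap V^{\max}(S)_{\lambda_H} = 0$. Therefore $N^{\min}$ itself satisfies the condition and is the maximal such submodule, so $\mathcal{H}_{V^{\min}(S)} = \mathcal{H}_V$ and $\wt V^{\min}(S) = S$ by Theorem~\ref{T2}.

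To finish, I would verify the universal property. Given any $V'$ with $\wt V' = S$, the surjection $\pi : V^{\max}(S) \twoheadrightarrow V'$ from the first step has $\ker \pi \cap V^{\max}(S)_{\lambda_H} = 0$ for $H \notin \mathcal{H}_V$ (otherwise $V'_{\lambda_H} = 0$, placing $H$ in $\mathcal{H}_{V'} = \mathcal{H}_V$, a contradiction); by maximality, $\ker \pi \subseteq N^{\min}$, so $\pi$ factors as $V^{\max}(S) \twoheadrightarrow V' \twoheadrightarrow V^{\min}(S)$. Conversely, any $V'$ sandwiched in this way satisfies $\mathcal{H}_V \subseteq \mathcal{H}_{V'} \subseteq \mathcal{H}_{V^{\min}(S)} = \mathcal{H}_V$, whence $\wt V' = S$ by Theorem~\ref{T2}. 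Uniqueness of $V^{\max}(S)$ and $V^{\min}(S)$ then follows formally from their universal characterizations as initial and terminal objects in $\{V' : \wt V' = S\}$. The chief obstacle is the existence step for $V^{\min}(S)$: checking that the arbitrary sum $N^{\min}$ preserves trivial intersection with each one-dimensional weight space $V^{\max}(S)_{\lambda_H}$. This is essentially the only place where module-theoretic input beyond Theorem~\ref{T2} is used, via $\mathfrak{h}$-stability of submodules.
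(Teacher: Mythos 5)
Your proposal is correct and takes essentially the same approach as the paper: $V^{\max}(S) = \mathbb{M}(\lambda,\mathcal{H}_V)$, and $V^{\min}(S)$ is the quotient by the sum of all submodules whose $\lambda_H$-weight spaces vanish for $H \notin \mathcal{H}_V$ (you do this inside $V^{\max}(S)$ rather than inside $M(\lambda)$ as the paper does, but these are equivalent since the kernel of $M(\lambda)\twoheadrightarrow V^{\max}(S)$ already satisfies the vanishing condition). Your explicit $\mathfrak{h}$-stability argument showing the sum is well-defined is a nice detail the paper leaves implicit.
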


Theorem~\ref{T3} has several consequences that are explored in
Section~\ref{ST3}:
\begin{enumerate}
\item We define the $k$th order upper- and lower-approximations
$\mathbb{M}_k(\lambda, \mathcal{H}_V)$ and $\mathbb{L}_k(\lambda,
\mathcal{H}_V)$ of every highest weight module $M(\lambda)
\twoheadrightarrow V$. See Definition~\ref{Dapprox}.

\item We isolate the common universal property of these two
approximations, i.e.\ the \textit{$k$th order integrability of $V$}. See
Proposition~\ref{PMkLk} and Definition~\ref{Dhigher}.

\item This leads to a ``repeated-stratification'' of the poset (under
quotienting) of all highest weight $\mathfrak{g}$-modules -- not just of
their sets of weights. See Remark~\ref{Rstrata}.
\end{enumerate}

\begin{remark}[Working over quotient Kac--Moody
algebras]\label{Rworking1}
Our results until Section~\ref{SO} are valid independent of which
Kac--Moody quotient algebra
$\widetilde{\mathfrak{g}} \twoheadrightarrow \mathfrak{g}
\twoheadrightarrow \overline{\mathfrak{g}}$ (all associated to a given
generalized Cartan matrix) is used. This is because (see e.g.\ \cite{DK})
$\wt M(\lambda,J)$, hence $\wt L(\lambda)\ \forall \lambda \in
\mathfrak{h}^*$, does not change across all such $\mathfrak{g}$. Note,
this extends the folklore result that $\wt L(\lambda)$ does not change
across all $\mathfrak{g}$, for $\lambda \in P^+$. Theorem~\ref{T1}
extends these facts to say that $\wt \mathbb{M}(\lambda,\mathcal{H})$
does not change across all such $\mathfrak{g}$, for every $\mathcal{H}$.
(Section~\ref{SO} works in finite type, and for results in
Section~\ref{Sfinal}, see Remark~\ref{Rworking2}.)
Similarly, the computation of weights of highest weight modules over
$\mathfrak{l}_J \subset \mathfrak{g}$ and over $\mathfrak{g}(A_{J \times
J})$ yield the same results modulo identifying $\mathfrak{g}(A_{J \times
J}) \cong \mathfrak{g}_J \subset \mathfrak{l}_J$.
\end{remark}
{\color{black}All of our above weight-formulas are summarized in Subsection \ref{Subsection conclusion} in Section \ref{Sfinal}}  

Having discussed weights in detail, in the final two sections we initiate
the study of two facets on the representation side. First, we introduce
and study ``higher order versions'' of the parabolic category
$\mathcal{O}^{\mathfrak{p}_J}$, corresponding to hole-sets $\mathcal{H}$
with higher order{\color{black}/size} holes:

\begin{defn}\label{DOH}
Given a complex semisimple Lie algebra $\mathfrak{g}$ with simple roots
$\Pi = \{ \alpha_i : i \in I \}$, and a subset $\mathcal{H} \subset {\rm
Indep}(I)$, the \textit{higher order parabolic category
$\mathcal{O}^{\mathcal{H}}$} is the full subcategory of objects in
$\mathcal{O}$ on which the following lowering operator-products all act
locally nilpotently:
\[
{\bf f}_H = {\bf f}^{(0)}_H := \prod_{h \in H} f_h, \qquad H \in
\mathcal{H}.
\]
\end{defn}

\noindent Zeroth and first order special cases are $\mathcal{O}$ and
$\mathcal{O}^{\mathfrak{p}_J}$, respectively. 

Now for the next main result. Notice, Theorem~\ref{Tdk} says that the
weights of a simple module $L(\lambda)$ agree with those of its universal
highest weight cover $M(\lambda, J_\lambda)$ in the parabolic category
$\mathcal{O}^{\mathfrak{p}_{J_\lambda}}$. 
We extend this to every highest weight module $V$, inside the higher
order parabolic category $\mathcal{O}^{\mathcal{H}_V}$.
We also show each $\mathcal{O}^\mathcal{H}$ has enough projectives, and
in a special case, a variant of BGG reciprocity.

\begin{utheorem}\label{T4}
Fix a complex semisimple Lie algebra $\mathfrak{g}$ and a subset
$\mathcal{H} \subset {\rm Indep}(I)$.
\begin{enumerate}
\item With notation from Definition~\ref{DOH}, the category
$\mathcal{O}^{\mathcal{H}}$ is an abelian subcategory of $\mathcal{O}$,
which has enough projectives and enough injectives.

\item The weights of every highest weight module in $\mathcal{O}$, say
$M(\lambda) \twoheadrightarrow V$, agree with those of the universal
highest weight cover of $L(\lambda)$ in $\mathcal{O}^{\mathcal{H}_V}$.

\item If $\mathfrak{g} = \mathfrak{sl}_2^{\oplus n}$, then every
projective module in $\mathcal{O}^{\mathcal{H}}$ has a ``standard
filtration'', and a variant of BGG reciprocity holds. See
Theorem~\ref{TBGGrec} for the details.

\item Suppose $\mathfrak{g} = \mathfrak{sl}_2^{\oplus n}$. Let
$S_\mathcal{H} \subset \mathfrak{h}^*$ comprise the weights $\lambda$
such that $L(\lambda) \in \mathcal{O}^\mathcal{H}$, and let
$P^\mathcal{H}(\lambda)$ denote the projective cover of $L(\lambda)$ in
$\mathcal{O}^\mathcal{H}$. If
\begin{equation}\label{ECartanmatrix}
C = (c_{\lambda \mu})_{\lambda, \mu \in S_\mathcal{H}}, \qquad
c_{\lambda \mu} := [P^\mathcal{H}(\lambda) : L(\mu)]
\end{equation}
is the corresponding {\em ``Cartan matrix''} of Jordan--H\"older
multiplicities, then $C$ is symmetric.
\end{enumerate}
\end{utheorem}

\begin{remark}
We make two points here. First, over $\mathfrak{g} =
\mathfrak{sl}_2^{\oplus 2}$ and over every $\mathfrak{g}$ of rank $\geq
3$, we show that the category $\mathcal{O}^{\mathcal{H}}$ is \textit{not}
always a highest weight category \cite{CPS}. The reason is that
filtrations for different projectives can feature multiple standard
objects with the same highest weight. See Section~\ref{SBGG}.
Next, recall the original goal of the Bernstein--Gelfand--Gelfand paper
\cite{BGG} was to construct a category $\mathcal{O}$ of
$\mathfrak{g}$-modules with enough projectives, whose ``Cartan matrix''
is symmetric. This was parallel to two situations (in positive
characteristic), over finite groups \cite{CR} and over semisimple Lie
algebras \cite{H71}. In all three cases, the Cartan matrix for a suitable
category is symmetric because there is an intermediate class of ``Verma''
modules $M_i$ with ``reciprocity'', i.e.\ $C = D^T D$ for $D = [M_i :
L_j]$. Moreover, $C = D^T D$ in all parabolic/\textit{first order}
categories $\mathcal{O}^{\mathfrak{p}_J}$ \cite{Rocha}. In higher order,
as noted above, not every $\mathcal{O}^\mathcal{H}$ satisfies BGG
reciprocity ``on the nose'' -- i.e.\ we do not get $C = D^T D$ in
$\mathcal{O}^\mathcal{H}$. Nevertheless, the Cartan matrix $C$ as in
\eqref{ECartanmatrix} is symmetric.
\end{remark}

The second study we initiate is that of the \textit{characters} of
``higher order Verma modules''. These include parabolic Vermas and
the family $\mathbb{M}(\mathcal{H})$ above, for which we provide
BGG-type resolutions and Weyl character type formulas -- over Kac--Moody
$\mathfrak{g}$ -- in {\color{black} settings (1)--(4) elaborated below}:\qquad 
{\color{black} The first two cases are when there are no edges between minimal holes -}

(1)~for $\lambda$ with arbitrary integrable roots $J_\lambda$, and
pairwise orthogonal minimal holes in $\mathcal{H}^{\min}$;

(2)~for $\lambda$ with pairwise orthogonal integrable roots $J_\lambda$,
and arbitrary minimal holes in $\mathcal{H}^{\min}$.

\noindent {\color{black}Case (2)} includes (a BGG resolution of) every
highest weight module over $\mathfrak{sl}_2^{\oplus n}$.
{\color{black}The following two theorems are stated for $\lambda = 0$, and we show the analogous results for all $\lambda \in \mathfrak{h}^*$, in
Section~\ref{Sfinal}.}

\begin{utheorem}\label{T5}
In the two settings {\color{black}(1) and (2)} above, there exists a \underline{parabolic Weyl semigroup
$(W_{\mathcal{H}}, \ell_{\mathcal{H}})$}, and a BGG resolution of the
module $\mathbb{M}(\mathcal{H})$ of the form
\begin{equation}\label{EBGG Theorem F}
0 \longrightarrow
M_k \overset{d_k}{\longrightarrow}
M_{k-1} \overset{d_{k-1}}{\longrightarrow}
\cdots \overset{d_2}{\longrightarrow}
M_1 \overset{d_1}{\longrightarrow}
M_0 \overset{d_0}{\longrightarrow}
\mathbb{M}(\mathcal{H}) \to 0,
\end{equation}
with $k = |\mathcal{H}^{\min}|$ and
$M_t \cong \bigoplus_{w \in W_{\mathcal{H}} \; : \;
\ell_{\mathcal{H}}(w) = t} M(w \bullet 0) \ \forall t$.
This implies the Weyl--Kac character formula
\begin{equation}\label{EWKformula}
\ch \mathbb{M}(\mathcal{H}) = \sum_{w \in W_{\mathcal{H}} }
\frac{(-1)^{\ell_{\mathcal{H}}(w)} e^{w \bullet 0}}{\prod_{\alpha
\in \Delta^+} (1 - e^{-\alpha})^{\dim \mathfrak{g}_\alpha}},
\end{equation}
formulated in the spirit of the classical character formulas above.

In particular, if the holes in $\mathcal{H}^{\min}$ are pairwise
orthogonal, then the character of $\mathbb{M}(\mathcal{H})$ is
``$W_{\mathcal{H}}$-invariant'':
\begin{equation}\label{EWchar}
w(\ch \mathbb{M}(\mathcal{H})) = (-1)^{\ell(w) - \ell_\mathcal{H}(w)} \ch
\mathbb{M}(\mathcal{H}), \quad \forall w \in W_\mathcal{H}.
\end{equation}
\end{utheorem}
{\color{black} To go to the next two settings, note over any rank 3 decomposable $\mathfrak{g}= \mathfrak{g}_{\{1\}}\oplus \mathfrak{g}_{\{2,3\}}$ with nodes $2,3\in I=\{1,2,3\}$ having at least one edge in-between -- for e.g. $\mathfrak{g}$ of types $A_1\times A_2$, $A_1\times B_2,\  A_1\times \widehat{A_1},\ldots$ -- we have two non-isomorphic second order Vermas with highest weight $0$: 
\[
\frac{M(0)}{\Big\langle f_1f_2 M(0)_0 , \ f_1f_3 M(0)_0 \Big\rangle}\quad\text{ and }\quad \frac{M(0)}{\Big\langle f_1f_3 M(0)_0 , \ f_2 M(0)_0 \Big\rangle}.
\]
Above, node $1$ is in every size 2 hole.
Next, given two size 2 holes $H_1\neq H_2\in \mathcal{H}^{\min}$, all the edges in the Dynkin subgraph on $H_1\cup H_2$ occur between a fixed pair of nodes one each from $H_1$ and $H_2$.  \\
More generally, we consider -

(3)~Cases of any $\lambda$, holes in $\mathcal{H}^{\min}$ having sizes at most 2, and with a distinguished node $1\in I$ contained in all the size 2 minimal holes.
 
 (4)~The earliest case with edges from a hole incident upon both the nodes of another size 2 hole.
 E.g. over $\mathfrak{sl}_4(\mathbb{C})$, $\lambda=0$ and $\mathcal{H}=\mathcal{H}^{\min}=\big\{  \{1,3 \}\ , \ \{2\} \big\}$; with $2\in I$ the non-leaf in Dynkin graph.
 }
\begin{utheorem}\label{Theorem character of second order Verma via parabolic Vermas}
{\color{black} 
Fix the second order Verma $\mathfrak{g}$-module $\mathbb{M}(\mathcal{H})$ over any symmetrizable Kac--Moody algebra $\mathfrak{g}$, with its minimal holes $\mathcal{H}^{\min}$ as in setting (3) above. 
Let $J$ be the union of all the singleton holes and $H$ be the union of all the size two holes in $\mathcal{H}^{\min}$.
Then the analogue of the Weyl group (and parabolic Weyl subgroups, and of Weyl semigroups in Theorem \ref{T5}) here is $W_{\mathcal{H}} :=W_J\sqcup \ s_1\cdot\big( W_{J\sqcup H}\setminus W_J\big)$.
And $\ell_{\mathcal{H}}$ is the usual length function $\ell$ \big(on $W_{J\sqcup H}$\big) with ignoring the occurrences of $s_1$ in every $w\in W_{\mathcal{H}}$; this can be read off from \eqref{Character numerator in Setting (3)} below.
For this pair $(W_{\mathcal{H}}, \ell_{\mathcal{H}})$, the Weyl--Kac type character formula analogous to \eqref{EWKformula} holds true for $\mathbb{M}(\mathcal{H})$, with the usual denominator and the following character numerator
	\begin{equation}\label{Character numerator in Setting (3)}
\sum\limits_{w\in W_J} (-1)^{\ell(w)}e^{w\bullet \lambda} +\sum\limits_{w\in  W_{H \sqcup J}\setminus W_J}(-1)^{\ell(w)} e^{s_1w \bullet 0}.
	\end{equation} 
    The BBG type resolution \eqref{Eqn character formula upper half} (in Section \ref{Sfinal}) analogous to \eqref{EBGG Theorem F} also holds true for $\mathbb{M}(\mathcal{H})$.}
    \end{utheorem}
    {\color{black}The final setting reveals more interesting and insightful case of $(W_{\mathcal{H}}, \ell_{\mathcal{H}})$ -- which seems to have not appeared in the literature --  enumerating the character numerators of higher order Vermas.}
\begin{prop}\label{Prop A3 second order holes character}
{\color{black}	In Setting (4) over $\mathfrak{sl}_4(\mathbb{C})$, the second order Verma $\mathbb{M}\big( \big\{ \{1,3\} , \{2\} \big\}\big)$ admits a Weyl--Kac character type formula, with the (usual denominator and) numerator:}
	\begin{center}
		${\color{black}\bf   e^0 - e^{s_2\bullet \lambda} - e^{s_1s_3\bullet 0} + e^{s_1s_3\bullet s_2\bullet 0}+ e^{s_2 \bullet s_1s_3\bullet 0}}$\\ \smallskip
		$\displaystyle {\color{black}\bf +e^{s_3\bullet s_2\bullet s_1\bullet 0}+ e^{s_1\bullet s_2\bullet s_3 \bullet 0 }}$ \\ \smallskip
		$\displaystyle {\color{black}\bf -e^{s_2\bullet s_3\bullet s_2\bullet s_1\bullet 0}   -e^{s_1\bullet s_3\bullet s_2\bullet s_1\bullet 0}-   e^{s_2\bullet s_1\bullet s_2\bullet s_3 \bullet 0 } -e^{s_3\bullet s_1\bullet s_2\bullet s_3\bullet 0} }$\\ \smallskip
		$\displaystyle {\color{black}\bf + e^{s_2\bullet s_1\bullet s_3\bullet s_2\bullet s_1\bullet 0} +  e^{s_2\bullet s_3\bullet s_1\bullet s_2\bullet s_3\bullet 0} }$\\ \smallskip
		$\displaystyle {\color{black} - e^{s_2\bullet s_1s_3\bullet s_2\bullet 0} + e^{s_1s_3\bullet s_2\bullet s_1 s_3\bullet 0}  }$\\ \smallskip
		$\displaystyle {\color{black}\bf - e^{w_0\bullet 0} }$
	\end{center}
\end{prop}
 {\color{black}$(W_{\mathcal{H}}, \ell_{\mathcal{H}})$ in this case can be derived from this numerator expansion.}

{\color{black}\begin{question}
 Does $(W_{\mathcal{H}},\ell_{\mathcal{H}})$ in Proposition \ref{Prop A3 second order holes character} yield the BGG resolution in \eqref{EBGG Theorem F} for $\mathbb{M}(\mathcal{H})$?   
\end{question}
}
We conclude this section on a philosophical note. The recent papers
\cite{DK2,DK,Kh1,Kh2,Teja} obtained information about
(i)~the weights of simple modules $L(\lambda)$ (for all $\lambda \in
\mathfrak{h}^*$), and
(ii)~the convex hull of $\wt V$ and its face lattice for all highest
weight modules,
using the ``first order information'' associated to every module $V$ --
namely, its \textit{integrability}, defined as:
\begin{equation}\label{Eint}
J_V := \{ h \in J_\lambda \ | \ f_h^{\langle \lambda, \alpha_h^\vee
\rangle + 1} V_\lambda = 0 \}.
\end{equation}
This first order information corresponds to precisely the ``singleton
holes'' in $\mathcal{H}_V$. Moreover, our results on $\wt V$ specialize
to their analogues in the above works when $\mathcal{H}_V$ is the
upper-closure of its singleton elements.
However, a general module $M(\lambda) \twoheadrightarrow V$ can involve
``higher order holes''. That is: the above papers operated using
integrability, i.e., $\mathfrak{sl}_2$-theory. In contrast, we use higher
order integrability -- see Definition~\ref{Dhigher} -- and
``$\mathfrak{sl}_2 \oplus \cdots \oplus \mathfrak{sl}_2$'' theory (in
that each hole $H \in \mathcal{H}_V$ corresponds to a
line in the Verma submodule $U\mathfrak{g}_H \cdot M(\lambda)_\lambda$
killed by $\mathfrak{n}^+$, and $\mathfrak{g}_H \simeq
\mathfrak{sl}_2^{\oplus H}$). For another, ``higher level'' use of this
theory, see the character formulas in Section~\ref{Sfinal}.

\subsection{Organization}
In each of the next sections, we prove one of the theorems above.
In Section~\ref{S3} we show Theorem~\ref{T1} as well as~\eqref{Etop}, and
introduce the higher order Verma modules $\mathbb{M}(\lambda,
\mathcal{H})$.

Section~\ref{ST2} proves Theorems~\ref{T2} {\color{black}and \ref{Theorem wt-form. by composition series simples},} and a set of Minkowski
differences for each parabolic Verma module.

Section~\ref{ST3} shows Theorem~\ref{T3} and provides formulas for the
$k$th order upper- and lower-approximations $\mathbb{M}_k(\lambda,
\mathcal{H}_V)$ and $\mathbb{L}_k(\lambda, \mathcal{H}_V)$ of each
highest weight module $V$ -- these include the Verma module $M(\lambda)$
and simple module $L(\lambda)$ when $k=0$, and the parabolic Verma cover
$M(\lambda, J_V)$ when $k=1$.
We also identify in Section~\ref{Sspeculate} the ``higher order
integrability'' that is preserved by the interval of modules
$[\mathbb{L}_k(\lambda, \mathcal{H}_V), \mathbb{M}_k(\lambda,
\mathcal{H}_V)]$ for each $k \geq 0$ and each $V$.
We end with two ``$k$th order'' weight-formulas in Section~\ref{S5final},
which specialize to Theorems~\ref{T1} and~\ref{T2} for $k = 1, \infty$
respectively.

In Section~\ref{SO}, which is over $\mathfrak{g}$ of finite type, we
study the higher order parabolic categories $\mathcal{O}^{\mathcal{H}}$.
We identify the simples and their standard covers in
$\mathcal{O}^{\mathcal{H}}$, and show $\mathcal{O}^{\mathcal{H}}$ has
enough projectives. We also prove BGG reciprocity in all
$\mathcal{O}^{\mathcal{H}}$ over $\mathfrak{g} = \mathfrak{sl}_2^{\oplus
n}$ -- leading to a potential formula in general -- and explain why
$\mathcal{O}^{\mathcal{H}}$ is not always a highest weight category, over
$\mathfrak{sl}_2^{\oplus 2}$ and each higher rank $\mathfrak{g}$.

In Section~\ref{Sfinal}, we provide a BGG resolution and Weyl character
formula for the higher order Verma modules $\mathbb{M}(\lambda,
\mathcal{H})$ in {\color{black} our four settings (1)--(4) above}. That is, we prove the extension
to general $\lambda \in \mathfrak{h}^*$ of Theorem{\color{black}s~\ref{T5} and \ref{Theorem character of second order Verma via parabolic Vermas} and Proposition \ref{Prop A3 second order holes character}. We end by
discussing a speculative BGG resolution for higher order Vermas in general case.}

\subsection{Acknowledgments}
We thank Akaki Tikaradze and Jugal K.\ Verma for useful conversations
regarding Koszul resolutions; Gurbir Dhillon{\color{black}, Sankaran Viswanath and Amritanshu Prasad}  for many valuable
discussions{\color{black}; and K. Hariram for his help in Sage Math coding.}
 This work is partially supported by
Ramanujan Fellowship SB/S2/RJN-121/2017,
SwarnaJayanti Fellowship grants SB/SJF/2019-20/14 and DST/SJF/MS/2019/3,
and the FIST program 2021 [TPN--700661]
from SERB and DST (Govt.~of India). {\color{black}The second author also acknowledges his NBHM Postdoc. fellowship (Ref. 0204/13/2023/R{\&}D-II/8206)}.

\section{Theorem \ref{T1}: {\color{black}Weight-formula - by parabolic and higher order Verma
modules}}\label{S3}

\subsection{Higher order Verma modules: examples}

We first show Theorem~\ref{T1}. Begin with Theorem~\ref{T2}, which says
the weight-sets of all $M(\lambda) \twoheadrightarrow V$ are of the form
$\wt L_{J_\lambda}^{\max}(\lambda) + \wt \mathbb{M}(\mathcal{H}_V)$.
Associated to this formula, there is a universal module of highest weight
$\lambda$ that we now introduce.

\begin{defn}\label{DMlambdaH}
Given $\lambda \in \mathfrak{h}^*$ and a subset $\mathcal{H} \subset {\rm
Indep}(J_\lambda)$, define the module
\begin{equation}\label{Euhwm}
\mathbb{M}(\lambda, \mathcal{H}) := \frac{M(\lambda)}{\displaystyle
\sum_{H \in \mathcal{H}} U\mathfrak{g} \left( \prod_{h \in H}
f_h^{\langle \lambda, \alpha_h^\vee \rangle + 1} \right)
M(\lambda)_\lambda}.
\end{equation}
\end{defn}

We term these objects \textit{higher order Verma modules}.
These are fundamental objects which include all $M(\lambda)$ and
$M(\lambda, J)$, see below -- and they are indispensable to understanding
all highest weight modules, for several reasons:
(a)~In Theorem~\ref{T1}, we show that $\wt V = \mathbb{M}(\lambda,
\mathcal{H}_V)$.
(b)~In Theorem~\ref{T2}, we show $\wt \mathbb{M}(\lambda, \mathcal{H}) =
\wt L_{J_\lambda}^{\max}(\lambda) + \wt \mathbb{M}(\mathcal{H}_V)$, and
these exhaust all weight-sets $\wt V$.
(c)~In Theorem~\ref{T3}, given $V$, $\mathbb{M}(\lambda, \mathcal{H}_V)$
turns out to be the maximum module $V^{\max}(\wt V)$ (shown below).
Despite these attractive properties -- and notwithstanding
Example~\ref{Exeveryhw} over $\mathfrak{sl}_2^{\oplus n}$ -- to the
best of our knowledge the nontrivial among these modules have not been
studied in the literature. The only ones that have been studied are the
``easy'' case -- parabolic Verma modules $M(\lambda,J)$ -- and the
original inspiration for these modules: $M(0,0) / M(-2,-2)$ over
$\mathfrak{sl}_2^{\oplus 2}$ (in e.g.\ previous work \cite{Kh1,DK}).
Thus we seek to understand these modules -- their characters,
resolutions, etc. -- before general highest weight modules and others in
$\mathcal{O}$. We begin their study in this paper.

\begin{remark}\label{Ruhwm}
Some clarifying observations:
(1)~$\mathbb{M}(\lambda, \mathcal{H})$ is unchanged if one replaces
$\mathcal{H}$ by its upper-closure in ${\rm Indep}(J_\lambda)$, or by any
set in between.
(2)~In ``reverse'', replacing $\mathcal{H}$ by its ``minimal'' elements
$\mathcal{H}^{\min}$ does not change $\mathbb{M}(\lambda, \mathcal{H})$.
Thus the modules $\mathbb{M}(\lambda, \mathcal{H})$ subsume the usual
parabolic Verma modules $M(\lambda,J)$, in the special case that
$\mathcal{H}$ is the upper-closure of the singleton sets in it.
(3)~The modules $\mathbb{M}(\lambda, \mathcal{H})$ specialize to
$\mathbb{M}(\mathcal{H})$ in~\eqref{EMH} via $\lambda \leadsto 0$.
(4)~The extreme cases are:
(i)~$\mathbb{M}(\lambda, \mathcal{H}) = 0 \Longleftrightarrow H =
\emptyset \in \mathcal{H}$,
(ii)~$\mathbb{M}(\lambda, \mathcal{H}) = M(\lambda) \Longleftrightarrow
\mathcal{H} = \emptyset$.
(iii)~As the simple module $L(\lambda)$ has maximum possible
integrability $J_\lambda$, similarly it has maximum possible
$\mathcal{H}$-set, ${\rm Indep}(J_\lambda) \setminus \{ \emptyset \}$.
\end{remark}

\begin{remark}[Weak Minkowski decomposition]
Like their first order versions $M(\lambda, J)$, the modules
$\mathbb{M}(\lambda, \mathcal{H})$ have a freeness property over $U
\mathfrak{a}_\mathcal{H}$, for a Lie subalgebra $\mathfrak{a}_\mathcal{H}
\subset \mathfrak{n}^-$. Namely, define
\[
\mathfrak{a}_\mathcal{H} := \bigoplus_{\alpha \not\in
\Delta_{\widetilde{H} }} \mathfrak{n}^-_\alpha, \qquad
\text{where} \qquad \widetilde{H} := \bigcup_{H \in
\mathcal{H}^{\min}} H \ \subseteq \ I.
\]
Also define $\bff_H := \prod_{h \in H} f_h^{\langle \lambda,
\alpha_h^\vee \rangle + 1}$ for $H \in {\rm Indep}(J_\lambda)$. Then by
the PBW theorem,
\[
\mathbb{M}(\lambda, \mathcal{H}) \ \cong_{\mathfrak{a}_\mathcal{H}} \
\frac{U \mathfrak{a}_\mathcal{H} \otimes_\mathbb{C} U
\mathfrak{n}^-_{\Delta_{\widetilde{H}} }}{\displaystyle U
\mathfrak{a}_\mathcal{H} \otimes_\mathbb{C} \sum_{H \in
\mathcal{H}^{\min}} U \mathfrak{n}^-_{\Delta_{\widetilde{H} }}
\cdot \bff_H} \ \cong_{\mathfrak{a}_\mathcal{H}} \ U
\mathfrak{a}_\mathcal{H} \otimes_\mathbb{C} \frac{U
\mathfrak{n}^-_{\Delta_{\widetilde{H}} }}{\displaystyle
\sum_{H \in \mathcal{H}^{\min}} U
\mathfrak{n}^-_{\Delta_{\widetilde{H}} } \cdot \bff_H}.
\]
Hence by Theorem~\ref{T1}, the weights of every highest weight
$\mathfrak{g}$-module $M(\lambda) \twoheadrightarrow V$ satisfy:
\begin{equation}
\wt V = \wt_J V - \mathbb{Z}_{\geq 0} (\Delta^+ \setminus \Delta^+_J),
\qquad \forall J \supseteq \bigcup_{H \in \mathcal{H}_V^{\min}} H.
\end{equation}
Note that the special case $J = J_\lambda$ {\color{black} -- among a large collections of subsets $J$, including $J=$ the support/union of all the holes in $\mathcal{H}^{\min}_V$ in the terminology of this paper --} was one of the main results in
previous work {\color{black}\cite[Theorems A and C(CV)]{MDWF}}.
\end{remark}

Next, here are some examples of highest weight modules, including
$\mathbb{M}(\lambda, \mathcal{H})$ (beyond the ``obvious'' cases
$M(\lambda,J)$ in Remark~\ref{Ruhwm}(2)), in order to build more
intuition.

\begin{example}\label{Exrank2}
As above: the first nontrivial example is in~\eqref{EM00}.
(This was originally used in \cite{Kh1} by the first author to observe
that the convex hull of $\wt V$ does not always yield $\wt V$.) In this
example, $\mathfrak{g} = \mathfrak{sl}_2 \oplus \mathfrak{sl}_2$ and
$\lambda = (0,0)$. Setting $\mathcal{H} = \{ \{ 1, 2 \} \} = \{ J_\lambda
\}$ yields $\mathbb{M}(\lambda, \mathcal{H}) = M(0,0) / M(-2,-2)$. This
is the ``simplest'' module whose weights are not determined by their
convex hull. It is also the prototypical module for all such cases; see
Theorem~\ref{Tlepowsky} and the subsequent lines. By Theorem~\ref{T1},
\[
\wt M(0,0) / M(-2,-2) = \wt M((0,0), \{ 1 \}) \cup \wt M((0,0), \{ 2 \})
= - \Z_{\geq 0} \alpha_2 \cup - \Z_{\geq 0} \alpha_1.
\]
\end{example}

\begin{example}[All highest weight modules over
$\mathfrak{sl}_2^{\oplus n}$]\label{Exeveryhw}
Let $\mathfrak{g} = \mathfrak{sl}_2^{\oplus n}$. For $n=1$, every highest
weight $\mathfrak{g}$-module is either Verma or finite-dimensional --
i.e.\ a parabolic Verma module. What about higher $n$?
We claim, every module $M(\lambda) \twoheadrightarrow V$ equals
$\mathbb{M}(\lambda, \mathcal{H})$ for some $\mathcal{H} \subset {\rm
Indep}(J_\lambda) = 2^{J_\lambda}$ -- adding to the fundamental nature of
these modules. The claim follows by noting that if $0 \to N \to
M(\lambda) \to V \to 0$, then $N$ is generated by weight \textit{spaces}
(since $\mathfrak{g} = \mathfrak{sl}_2^{\oplus n}$), and hence by maximal
weight vectors. By $\mathfrak{sl}_2^{\oplus n}$-theory, these are
precisely $\prod_{h \in H} f_h^{\langle \lambda, \alpha_h^\vee \rangle +
1} \cdot M(\lambda)_\lambda$ for $H \subset J_\lambda$.
\end{example}

\begin{example}[Some rank-$4$ examples]\label{Exrank4}
Let $\mathfrak{g}=\mathfrak{sl}_5$, $I=\{1,2,3,4\}$, and
$\lambda=\varpi_1-\varpi_4$, with $\varpi_i$ the fundamental weights.
Note that 
$J_{\lambda}=\{1,2,3\}$ and
${\rm Indep}(J_\lambda) = \{\{1\},\, \{2\},\, \{3\},\, \{1,3\} \}$.
Consider 
\[
\mathbb{M}\left(\lambda, \{\{2\},\{1,3 \} \} \right) =
\frac{M(\lambda)}{U(\mathfrak{g}) f_2 \cdot M(\lambda)_{\lambda} \, + \,
U(\mathfrak{g}) f_1^2f_3 \cdot M(\lambda)_{\lambda}}.
\]
Notice, $\mathcal{H} = \{\{2\},\,\{1,3\}\}$ is upper-closed in ${\rm
Indep}(J_{\lambda})$, and $\mathbb{M}(\lambda,\mathcal{H})$ is a quotient
of the parabolic Verma module $M(\lambda,\{2\})$. Now Theorem~\ref{T1}
yields:
\[
\wt 
\mathbb{M}\left(\lambda, \{\{2\},\{1,3 \} \} \right) \ = \ \wt
M(\lambda,\{1,2\} )\cup \wt M(\lambda,\{2,3\}) \cup \wt M(\lambda, \{ 1,
2, 3 \}),
\]
where we can omit the final set as it lies in the first two terms; see
Remark~\ref{Ralt}.

Similarly, consider the following two modules:
\[
V_1 = \frac{M(\lambda)}{U(\mathfrak{g})f_2^2f_3 \cdot
M(\lambda)_{\lambda}}, \qquad V_2 =
\frac{M(\lambda)}{U(\mathfrak{g})f_1^2 \cdot M(\lambda)_{\lambda} +
U(\mathfrak{g})f_2^2f_3 \cdot M(\lambda)_{\lambda}}.
 \]
Then $J_{V_1}=\emptyset$, $J_{V_2}=\{1\}$, and the sets
$\mathcal{H}_{V_1} = \emptyset$, $\mathcal{H}_{V_2} =
\{\{1\},\,\{1,3\}\}$ are upper-closed in ${\rm Indep}(J_{\lambda})$. As
the nodes $2,3$ are adjacent in the Dynkin diagram, $\{2,3\}$ does not
contribute to a hole in $\mathcal{H}_{V_1}$ and $\mathcal{H}_{V_2}$.
Hence by Theorem \ref{T1}, $\wt V_1\ = \ \wt M(\lambda)$ and $\wt V_2 \ =
\ \wt M(\lambda,\{1\})$.\qed
\end{example}

Following these examples, we add to the intuition behind Theorem~\ref{T1}
in Remark~\ref{Rintuit}. First note an elementary lemma, which follows by
considering the Kostant partition function.

\begin{lemma}\label{Lkostant}
Fix Kac--Moody $\mathfrak{g}$ and $\lambda \in \mathfrak{h}^*$. The
weight space $M(\lambda)_\mu$ of the Verma module is one-dimensional if
and only if $\mu = \lambda - \sum_{h \in H} n_h \alpha_h$, where $H
\subset I$ is independent and all $n_h \in \Z_{>0}$.
\end{lemma}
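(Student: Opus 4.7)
The plan is to reduce everything to the Kostant partition function. Because $\dim M(\lambda)_{\mu}$ equals the number of ways of writing $\nu := \lambda - \mu$ as a $\Z_{\geq 0}$-combination of positive roots (counted with root-space multiplicities $\dim \mathfrak{g}_\alpha$), and because $\dim \mathfrak{g}_{\alpha_i} = 1$ for every simple root, the question becomes purely combinatorial. If $\nu \notin \Z_{\geq 0}\Pi$, then both sides of the biconditional fail (the weight space is zero, and $\mu$ cannot be written in the claimed form); so I reduce to $\nu = \sum_i m_i \alpha_i$ with $m_i \geq 0$, and set $H := \{ i \in I : m_i > 0 \}$. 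The task becomes: the Kostant partition function at $\nu$ equals $1$ if and only if $H$ is independent in the Dynkin diagram.

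For the direction $(\Leftarrow)$, I would invoke the standard Kac--Moody fact that the support of every positive root is connected in the Dynkin diagram. When $H$ is independent, every positive root occurring in a decomposition of $\nu$ must have support contained in a single connected component of $H$, i.e.\ a single node $\{h\}$; and the only positive root with singleton support $\{h\}$ is $\alpha_h$ itself (via the $\mathfrak{sl}_2$-copy at $h$, which rules out $n \alpha_h$ being a root for $n \geq 2$). Hence the only decomposition of $\nu$ into positive roots is $\nu = \sum_{h \in H} n_h \alpha_h$, so the Kostant count is exactly $1$.

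For $(\Rightarrow)$, I would argue the contrapositive by exhibiting two distinct decompositions whenever $H$ is not independent. Fix an edge $\{i, j\}$ inside $H$, so $m_i, m_j \geq 1$; then $\alpha_i + \alpha_j$ is a positive root (since $i, j$ are adjacent). The decompositions $\nu = \sum_k m_k \alpha_k$ and $\nu = (m_i - 1) \alpha_i + (m_j - 1) \alpha_j + (\alpha_i + \alpha_j) + \sum_{k \neq i, j} m_k \alpha_k$ are both valid and visibly distinct, yielding $\dim M(\lambda)_\mu \geq 2$.

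The only mild obstacle is to justify the two elementary Kac--Moody inputs underlying both directions: (a)~the support of every positive root is connected in the Dynkin diagram, and (b)~$\alpha_i + \alpha_j$ is a root whenever $i, j$ are adjacent. Both follow from analyzing the rank-$2$ subalgebra generated by $\{e_i, f_i, e_j, f_j\}$ via the Chevalley--Serre relations, and they hold uniformly for every intermediate Kac--Moody algebra $\widetilde{\mathfrak{g}} \twoheadrightarrow \mathfrak{g} \twoheadrightarrow \overline{\mathfrak{g}}$ in the sense of the paper's footnote, so the lemma goes through in the full generality claimed.
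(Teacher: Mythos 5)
Your proof is correct and fleshes out exactly the approach the paper gestures at — the paper merely remarks that the lemma ``follows by considering the Kostant partition function,'' and you supply the two Kac--Moody inputs (connectedness of supports of positive roots, and $\alpha_i + \alpha_j \in \Delta^+$ for adjacent $i,j$) that make both directions go through. The reduction to $\nu \in \Z_{\geq 0}\Pi$, the handling of root multiplicities, and the two distinct decompositions in the $(\Rightarrow)$ direction are all sound.
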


\begin{remark}\label{Rtop}
Returning to Theorem~\ref{T1}, suppose $0 \to N \to M(\lambda) \to V \to
0$. It is not clear if weights are lost upon quotienting $M(\lambda)$ by
maximal vectors in $N$ corresponding to non-independent nodes -- e.g.\
they are not lost in Example~\ref{Exrank4} with $f_2^2 f_3
M(\lambda)_\lambda$ (modulo proving Theorem~\ref{T1}). However,
$1$-dimensional weight spaces $M(\lambda)_\mu \subset N$ are clearly
sensitive for $\wt V$, since then $V_\mu = 0$ and weights are lost, by
$\mathfrak{sl}_2^{\oplus n}$-theory.
In proving Theorem~\ref{T1}, we show that $\wt V = \wt
\mathbb{M}(\lambda, \mathcal{H}_V)$ (see~\eqref{Euhwm}). This shows the
converse to the above application of $\mathfrak{sl}_2^{\oplus n}$-theory
(and extends~\eqref{Etop}):
weights are lost when passing from $M(\lambda)$ to $V$ (i.e.\ now, when
passing to $\mathbb{M}(\lambda, \mathcal{H}_V)$) \textit{only} if one
proceeds as in Example~\ref{Exeveryhw}. That is, only if one quotients
out $1$-dimensional weight spaces in $M(\lambda)$ spanned by
\textit{maximal} vectors for $\mathfrak{sl}_2^{\oplus H}$, for some
independent set/hole -- by Lemma~\ref{Lkostant} -- $H \in \mathcal{H}_V$.
\end{remark}

We continue with a ``computational'' remark and some examples.

\begin{remark}\label{Ralt}
One can work with fewer sets $J \subset J_\lambda$ in~\eqref{Eweights},
as $M(\lambda,J) \twoheadrightarrow M(\lambda, J')$ for $J \subset J'
\subset J_\lambda$.
Also, $J$ only needs to intersect the minimal holes $H \in
\mathcal{H}_V^{\min}$, so one can just use the subsets $J \subset \cup_{H
\in \mathcal{H}_V^{\min}} H$. Next, if the holes in
$\mathcal{H}_V^{\min}$ are pairwise disjoint, one uses exactly $\prod_{H
\in \mathcal{H}_V^{\min}} |H|$-many transversal sets $J$
in~\eqref{Eweights} -- as in the previous two examples -- by selecting
one node from each $H$ in $J$.
Of course, this does not always hold -- e.g.\ for $\mathfrak{g} =
\mathfrak{sl}_2^{\oplus 3}$ and $\mathcal{H}_V^{\min} = \{ \{ 1, 2 \}, \{
2, 3 \}, \{ 1, 3 \} \}$, the three sets $J = \{ 1, 2 \}, \{ 2, 3 \}, \{
1, 3 \}$ suffice.
(This example also features below, see~\eqref{E122331}.)
\end{remark}

\begin{example}
Suppose $V = L(\lambda)$ is simple. Then for each integrable direction $j
\in J_\lambda$, $\{ j \} \in \mathcal{H}_{L(\lambda)}$ since
$f_j^{\langle \lambda, \alpha_j^\vee \rangle + 1} \cdot
L(\lambda)_\lambda = 0$ (as its preimage generates a proper submodule of
$M(\lambda)$). Moreover, every hole $H \subset J_\lambda$. Thus there is
a unique set $J$ in the union in~\eqref{Eweights} for $V = L(\lambda)$,
namely, $J = J_\lambda$. Hence Theorem~\ref{Tdk} is an immediate
consequence of Theorem~\ref{T1}: $\wt L(\lambda) = \wt M(\lambda,
J_\lambda)$.
\end{example}

\begin{example}[Multiplicity-free character formula for
$\mathbb{M}(\lambda, \mathcal{H})$]
Let $\mathfrak{g}=\mathfrak{sl}_2^{\oplus n}$, $\lambda \in
\mathfrak{h}^*$, and let $\emptyset \neq \mathcal{H} \subset {\rm
Indep}(J_\lambda) = 2^{J_\lambda}$. We compute $\ch
\mathbb{M}(\lambda,\mathcal{H})$, or simply the weights of
$\mathbb{M}(\lambda, \mathcal{H})$, since all highest weight
$\mathfrak{sl}_2^{\oplus n}$-modules have one-dimensional weight spaces.
Enumerate the subsets of $J_\lambda$ that intersect all holes in
$\mathcal{H}$ as $\{J_1,\ldots, J_l \}$. (We may consider only the
minimal such subsets.) Then
\[ 
\wt \mathbb{M}(\lambda,\mathcal{H})\ =\ \bigcup\limits_{i=1}^l (\wt
L_{J_i}^{\max}(\lambda) - \mathbb{Z}_{\geq
0}(\Delta^+\setminus\Delta_{J_i}^+))\ =\ \bigcup\limits_{i=1}^l (\wt
L_{J_i}^{\max}(\lambda) - \mathbb{Z}_{\geq 0}\Pi_{J_i^c})
\] 
by Theorem~\ref{T1}. Over $\mathfrak{sl}_2^{\oplus n}$, it is not hard to
show that
\[
\big(\wt L_{J_i}^{\max}(\lambda) - \mathbb{Z}_{\geq 0}\Pi_{J_i^c}
\big)\cap \big(\wt L_{J_j}^{\max}(\lambda) - \mathbb{Z}_{\geq
0}\Pi_{J_j^c} \big)= \big(\wt L_{J_i\cup J_j}^{\max}(\lambda) -
\mathbb{Z}_{\geq 0}\Pi_{\left(J_i\cup J_j\right)^c} \big).
\]
By the inclusion-exclusion principle, and since all weight spaces of
$\mathbb{M}(\lambda, \mathcal{H})$ are one-dimensional,
\begin{equation}
\ch \mathbb{M}(\lambda,\mathcal{H})\ =\ \sum_{\emptyset \neq S\subseteq
\{ 1, \dots, l \}} (-1)^{|S|-1} \ch M(\lambda, \cup_{i\in S} J_i), \qquad
\mathcal{H} \neq \emptyset.
\end{equation}
This provides an alternating formula for $\ch \mathbb{M}(\lambda,
\mathcal{H})$ in terms of the sets $J_i$ that are transversing the holes
in $\mathcal{H}$ (equivalently, in $\mathcal{H}^{\min}$) -- over
$\mathfrak{g} = \mathfrak{sl}_2^{\oplus n}$.
This picture is ``orthogonal'' to the alternating
formula~\eqref{EcharMlH} that we obtain below -- that formula is
alternating in terms of the holes in $\mathcal{H}^{\min}$, and follows
from the BGG-type resolution \eqref{EBGGH2} for the same module
$\mathbb{M}(\lambda, \mathcal{H})$ over $\mathfrak{g} =
\mathfrak{sl}_2^{\oplus n}$. 
\end{example}

\subsection{Proof of Theorem~\ref{T1}: reverse inclusion}

We now turn to the proof of Theorem~\ref{T1}. It is clear that for any
highest weight module $V$, the universal module
$\mathbb{M}(\lambda, \mathcal{H}_V) \twoheadrightarrow V$, implying an
inclusion of their weight-sets. We will show this inclusion is in fact an
\textit{equality} (extending~\eqref{Etop}), so that
$\mathbb{M}(\lambda, \mathcal{H}_V)$ is the ``universal highest weight
cover'' of $V$. First, an observation useful below:

\begin{lemma}\label{Luhwm}
Fix $\lambda$ and $\mathcal{H} \subset {\rm Indep}(J_\lambda)$, and let
$M := \mathbb{M}(\lambda, \mathcal{H})$.
Then $\mathcal{H}_M$ equals the upper-closure of $\mathcal{H}$. In
particular, for all nonzero modules $M(\lambda) \twoheadrightarrow V$, one
has $\mathcal{H}_V = \mathcal{H}_N$ for $N = \mathbb{M}(\lambda,
\mathcal{H}_V)$.
\end{lemma}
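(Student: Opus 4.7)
The plan is to establish both inclusions between $\mathcal{H}_M$ and the upper-closure of $\mathcal{H}$ in ${\rm Indep}(J_\lambda)$. For the easy direction ``upper-closure $\subset \mathcal{H}_M$'', suppose $H' \in {\rm Indep}(J_\lambda)$ contains some $H \in \mathcal{H}$. Since $H'$ is an independent set, the Chevalley generators $\{f_h : h \in H'\}$ pairwise commute, so one can factor
\[
\bff_{H'} \ = \ \left( \prod_{h' \in H' \setminus H} f_{h'}^{\langle \lambda, \alpha_{h'}^\vee \rangle + 1} \right) \cdot \bff_H.
\]
Applying this to the highest weight line shows $\bff_{H'} M(\lambda)_\lambda$ lies in the submodule
\[
N \ := \ \sum_{H \in \mathcal{H}} U\mathfrak{g} \cdot \bff_H M(\lambda)_\lambda,
\]
hence is zero in $M = M(\lambda)/N$. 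Thus $H' \in \mathcal{H}_M$.

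For the reverse inclusion $\mathcal{H}_M \subset$ upper-closure of $\mathcal{H}$, fix $H' \in \mathcal{H}_M$. The key observation is that each generator $\bff_H v_\lambda$ of $N$ (with $v_\lambda$ a highest weight vector of $M(\lambda)$) is a \emph{maximal} vector in $M(\lambda)$, by Remark~\ref{RT1}(ii) (equivalently, by the $\mathfrak{sl}_2^{(h)}$-theory applied to each $h \in H$ together with the independence of $H$). Consequently, the cyclic submodule $U\mathfrak{g}\cdot\bff_H v_\lambda$ is a homomorphic image of the Verma module $M(\lambda_H)$, where $\lambda_H := \lambda - \sum_{h \in H}(\langle \lambda, \alpha_h^\vee\rangle +1)\alpha_h$, and in particular its weights all lie in $\lambda_H - \Z_{\geq 0} \Pi$. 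Hence every weight of $N$ lies in $\bigcup_{H \in \mathcal{H}} (\lambda_H - \Z_{\geq 0}\Pi)$.

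Now by Lemma~\ref{Lkostant}, since $H'$ is independent, the weight space $M(\lambda)_{\lambda_{H'}}$ is one-dimensional, spanned by $\bff_{H'} v_\lambda$. Our hypothesis $H' \in \mathcal{H}_M$ places this vector in $N$, so $\lambda_{H'} \in \lambda_H - \Z_{\geq 0}\Pi$ for some $H \in \mathcal{H}$. Writing $n_i := \langle \lambda, \alpha_i^\vee\rangle + 1 \geq 1$ for $i \in J_\lambda$, a direct computation yields
\[
\lambda_H - \lambda_{H'} \ = \ \sum_{h' \in H' \setminus H} n_{h'} \alpha_{h'} \ - \ \sum_{h \in H \setminus H'} n_h \alpha_h.
\]
For this difference to lie in $\Z_{\geq 0} \Pi$, the negative summand must vanish, forcing $H \setminus H' = \emptyset$, i.e., $H \subset H'$. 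Thus $H'$ lies in the upper-closure of $\mathcal{H}$, completing the reverse inclusion. The ``in particular'' statement follows immediately once one notes that $\mathcal{H}_V$ is itself upper-closed in ${\rm Indep}(J_\lambda)$ -- which is stated in Theorem~\ref{T2} but is also visible from the factorization argument in the first paragraph applied to the kernel of $M(\lambda) \twoheadrightarrow V$.

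The main obstacle is the reverse inclusion: given that $\bff_{H'} v_\lambda \in N$, one must extract a specific $H \in \mathcal{H}$ with $H \subset H'$. This is where the two structural inputs enter decisively -- the maximal-vector property of each $\bff_H v_\lambda$ (ensuring $N$ is a sum of Verma-type images with controlled weights) and Kostant's one-dimensionality of $M(\lambda)_{\lambda_{H'}}$ (ruling out cancellations that could make $\bff_{H'} v_\lambda$ a combination of contributions from several Verma images at the same weight).
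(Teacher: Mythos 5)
Your proof is correct and complete, and since the paper states Lemma~\ref{Luhwm} without giving a proof, yours fills a genuine gap. Both inclusions are argued soundly: the easy direction uses commutativity of the $f_h$ for $h$ in an independent set to factor $\bff_{H'}$ through $\bff_H$, and the reverse direction combines the maximality of each $\bff_H v_\lambda$ (so that $\wt N \subset \bigcup_{H\in\mathcal{H}}(\lambda_H - \Z_{\geq 0}\Pi)$) with Kostant's one-dimensionality of $M(\lambda)_{\lambda_{H'}}$ to force $\lambda_{H'}$ into one of those sets, and then the sign/support analysis correctly forces $H\subset H'$ since $n_h \geq 1$ for all $h \in J_\lambda$. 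One small caution: for the ``in particular'' clause you should rely on the direct factorization argument for upper-closedness of $\mathcal{H}_V$ rather than citing Theorem~\ref{T2} (whose proof invokes Lemma~\ref{Luhwm}, so the citation would be circular) -- but you already note the direct argument, so the logic is fine.
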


Returning to our present goal, in the course of proving Theorem~\ref{T1}
we study three sets of weights:
\begin{equation}\label{E3sets}
\wt \mathbb{M}(\lambda, \mathcal{H}_V), \qquad \wt V, \qquad \text{and}
\qquad S(\lambda, \mathcal{H}_V),
\end{equation}
where for convenience we define for any subset $\mathcal{H} \subset {\rm
Indep}(J_\lambda)$:
\begin{equation}\label{ESH}
S(\lambda, \mathcal{H}) := \begin{cases}
\wt M(\lambda), & \text{if }\mathcal{H}=\emptyset,\\
\emptyset, & \text{if }\emptyset \in \mathcal{H},\\
\displaystyle \bigcup_{J \subset J_\lambda\, :\, J \cap H \neq
\emptyset\; \forall H \in \mathcal{H}} \wt M(\lambda,J), \qquad \qquad
& \text{if }\mathcal{H}\neq \emptyset\text{ and } \emptyset \notin
\mathcal{H}.
\end{cases}
\end{equation}

Theorem~\ref{T1} asserts that the final two sets of weights
in~\eqref{E3sets} coincide, but here we also claim the added equality
$\wt V = \wt \mathbb{M}(\lambda, \mathcal{H}_V)$ for all nonzero
modules $M(\lambda) \twoheadrightarrow V$ (which extends~\eqref{Etop}
and generalizes~\eqref{EdkL}, and) which is repeatedly used in later
sections. Since $\mathbb{M}(\lambda, \mathcal{H}_V) \twoheadrightarrow
V$, the first set in~\eqref{E3sets} contains the second. The next step
is:

\begin{prop}\label{Pinclusion}
If $\lambda \in \mathfrak{h}^*$ and $M(\lambda) \twoheadrightarrow V$ is
nonzero, then $\wt V \supseteq S(\lambda, \mathcal{H}_V)$.
\end{prop}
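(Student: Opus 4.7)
The plan is to split into the cases $\mathcal{H}_V = \emptyset$ and $\mathcal{H}_V \neq \emptyset$; since $V \neq 0$ implies $V_\lambda \neq 0$, the branch $\emptyset \in \mathcal{H}_V$ of the definition of $S(\lambda, \mathcal{H}_V)$ does not arise.

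For $\mathcal{H}_V = \emptyset$ the claim reduces to $\wt V = \wt M(\lambda)$: no weight is lost in passing from $M(\lambda)$ to $V$. Let $K := \ker(M(\lambda) \twoheadrightarrow V)$, which is generated as a $\mathfrak{g}$-module by its maximal vectors. Suppose for contradiction some $\mu \in \wt M(\lambda) \setminus \wt V$, so that $K_\mu = M(\lambda)_\mu$. Arguing inductively on $\lambda - \mu$ and bookkeeping $U(\mathfrak{n}^-)$-descendants from maximal generators of $K$, one forces at least one such generator to lie in a one-dimensional weight space of $M(\lambda)$. By Lemma~\ref{Lkostant} combined with $\mathfrak{sl}_2^{\oplus H}$-theory applied in the independent direction through which that weight space is reached, any such maximal vector equals, up to scalar, $v_H = \prod_{h \in H} f_h^{\langle \lambda, \alpha_h^\vee \rangle + 1}v_\lambda$ for some independent $H \subset J_\lambda$ --- forcing $H \in \mathcal{H}_V$ and contradicting the hypothesis.

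For $\mathcal{H}_V \neq \emptyset$, fix $J \subset J_\lambda$ with $J \cap H \neq \emptyset$ for every $H \in \mathcal{H}_V$, and let $\mu \in \wt M(\lambda, J)$. By~\eqref{Epvm} write $\mu = \nu - \xi$ with $\nu \in \wt L_J^{\max}(\lambda)$ and $\xi \in \Z_{\geq 0}(\Delta^+ \setminus \Delta_J^+)$. The plan is first to show $\nu \in \wt V$ and then to lift this to $\nu - \xi \in \wt V$. For the first, the $\mathfrak{l}_J$-submodule $U(\mathfrak{l}_J)V_\lambda \subset V$ is a highest weight $\mathfrak{l}_J$-module with highest weight $\lambda|_{\mathfrak{l}_J}$, so its maximal integrable $\mathfrak{l}_J$-quotient is precisely $L_J^{\max}(\lambda)$, producing a surjection $U(\mathfrak{l}_J)V_\lambda \twoheadrightarrow L_J^{\max}(\lambda)$ and hence $\wt L_J^{\max}(\lambda) \subset \wt U(\mathfrak{l}_J)V_\lambda \subset \wt V$. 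For the second, pick a nonzero $\bar x \in V_\nu$, lift to $x \in M(\lambda)_\nu$, choose $y \in U(\mathfrak{n}^-_{\Delta^- \setminus \Delta_J^-})_{-\xi}$ appropriately, and argue $y \bar x \neq 0$ in $V$. Vanishing of $y \bar x$ would, by the same maximal-vector analysis as in the $\mathcal{H}_V = \emptyset$ case, force an obstructing hole $H \in \mathcal{H}_V$ whose associated weight $\lambda_H$ dominates $\mu$ in $\Z_{\geq 0}\Delta^+$; comparing simple-root supports --- $\lambda - \nu$ is supported on $\Pi_J$ whereas $\xi$ is supported on roots involving $\Pi \setminus \Pi_J$ --- and invoking $J \cap H \neq \emptyset$ yields a contradiction, via the $\alpha_j$-coordinate (for $j \in J \cap H$) of $\lambda_H - \mu$ being forced negative.

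The hardest step is this lifting in the main case: controlling which one-dimensional weight spaces of $M(\lambda)$ can have their $U(\mathfrak{n}^-)$-descendants intersect the cone $\nu - \Z_{\geq 0}(\Delta^+ \setminus \Delta_J^+)$ and thereby obstruct $y \bar x$ in $V$. The crucial combinatorial lever throughout is the hitting property on $J$, which --- via a careful Kostant-partition and simple-root support argument --- rules out the obstructing holes from $\mathcal{H}_V$.
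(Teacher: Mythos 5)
The two cases are the right split, and you correctly observe that $\emptyset\in\mathcal H_V$ cannot occur for $V\neq 0$. But the main case contains a genuine gap, and it stems from the order of your descent.

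You parametrize $\wt M(\lambda,J)$ via the Minkowski difference $\wt L_J^{\max}(\lambda)-\Z_{\geq 0}(\Delta^+\setminus\Delta_J^+)$, first establish $\wt L_J^{\max}(\lambda)\subset\wt V$, and then try to descend from $\nu\in\wt L_J^{\max}(\lambda)$ by an arbitrary $\xi\in\Z_{\geq 0}(\Delta^+\setminus\Delta_J^+)$. The claimed ``combinatorial lever'' for ruling out an obstruction from $\mathcal H_V$ does not hold: if the obstructing hole produces a weight $\lambda_H$ with $\lambda_H\geq\mu=\nu-\xi$, then for $j\in J\cap H$ the $\alpha_j$-coefficient of $\lambda_H-\mu$ is $c_j+d_j-m_j$, where $c_j\geq 0$ is the $\alpha_j$-coefficient of $\lambda-\nu\in\Z_{\geq 0}\Pi_J$, $d_j\geq 0$ comes from $\xi$, and $m_j=\langle\lambda,\alpha_j^\vee\rangle+1$. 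Nothing bounds $c_j$ by $m_j$ — e.g.\ already for $\mathfrak{l}_J$ of type $A_2$ and $\lambda=2\rho_J$ the depth $c_j$ exceeds $m_j$ — so this coefficient need not be negative and your contradiction evaporates. Moreover the step ``vanishing of $y\bar x$ forces an obstructing hole $H\in\mathcal H_V$ with $\lambda_H\geq\mu$'' is itself asserted, not proved; this is a nontrivial structural claim about $\ker(M(\lambda)\twoheadrightarrow V)$, not a bookkeeping observation. (A smaller issue: the maximal integrable $\mathfrak{l}_J$-quotient of $U(\mathfrak{l}_J)V_\lambda$ need not be $L_J^{\max}(\lambda)$ — it is a priori only a quotient of it. What you actually have is $U(\mathfrak{l}_J)V_\lambda\twoheadrightarrow L_J(\lambda)$ plus the Dhillon--Khare equality $\wt L_J(\lambda)=\wt L_J^{\max}(\lambda)$, i.e.\ Theorem~\ref{Tdk} over the Levi.)

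The paper avoids all of this by descending in the opposite order, and by making the hitting hypothesis do work at the \emph{first} step rather than the last. It uses the disjoint-union form of Lemma~\ref{Lslice}: $\wt M(\lambda,J)=\bigsqcup_{\xi\in\Z_{\geq 0}\Pi_{J^c}}\wt L_J^{\max}(\lambda-\xi)$. Since every $H\in\mathcal H_V$ meets $J$, no $H$ is contained in $J^c$, so the highest weight $\mathfrak g_{J^c}$-module $U(\mathfrak g_{J^c})V_\lambda$ has no holes; applying Theorem~\ref{Tnoholes} (the $\mathcal H_V=\emptyset$ case) over the sub-Kac--Moody $\mathfrak g_{J^c}$ then gives $\lambda-\Z_{\geq 0}\Pi_{J^c}\subset\wt V$ for free. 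Afterwards, any nonzero $x\in V_{\lambda-\xi}$ is $\mathfrak l_J$-maximal, and one applies the Levi to get $\wt L_J^{\max}(\lambda-\xi)\subset\wt V$. No lifting is needed, and the only nontrivial black box is Theorem~\ref{Tnoholes}, which the paper cites from \cite{Teja} rather than reproving — your sketch of a proof of that base case (forcing a generator of $\ker$ into a one-dimensional weight space) is likewise far from complete. I'd suggest rebuilding your argument on the slice decomposition and the $\mathfrak g_{J^c}$-restriction trick.
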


The proof appeals to a result from previous work, which is the special
case $\mathcal{H}_V = \emptyset$ of Theorem~\ref{T1}:

\begin{theorem}[{\cite[Theorem B(B0)]{MDWF}}]\label{Tnoholes}
Let $\lambda\in \mathfrak{h}^*$, and $M(\lambda) \twoheadrightarrow V$.
Suppose there are no holes in $\wt V$, i.e., $\mathcal{H}_V=\emptyset$.
Then $\wt V=\wt M(\lambda)$.
\end{theorem}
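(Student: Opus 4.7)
The plan is to prove the contrapositive: assume $V_{\mu_0} = 0$ for some $\mu_0 \in \wt M(\lambda)$, and construct a nonempty independent set $H \subseteq J_\lambda$ witnessing $H \in \mathcal{H}_V$, contradicting the hypothesis that $\mathcal{H}_V = \emptyset$.

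I would pick $\mu_0$ maximal in the root partial order among weights of $M(\lambda)$ with $V_{\mu_0} = 0$, and write $\mu_0 = \lambda - \sum_{j \in J} n_j \alpha_j$ with $J := \supp(\lambda - \mu_0)$ and each $n_j \geq 1$. By maximality, every weight strictly above $\mu_0$ lies in $\wt V$. The core claim I aim for is that $J$ is Dynkin-independent, $J \subseteq J_\lambda$, and $n_j = \langle \lambda, \alpha_j^\vee \rangle + 1$ for every $j \in J$. Granted this claim, Lemma~\ref{Lkostant} makes $M(\lambda)_{\mu_0}$ one-dimensional, spanned by $\prod_{j \in J} f_j^{\langle \lambda, \alpha_j^\vee \rangle + 1} v_\lambda$; since $V_{\mu_0} = 0$, this element vanishes in $V$, so $H := J \in \mathcal{H}_V$.

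The key claim would follow by an $\mathfrak{sl}_2$-reduction in each simple direction $j \in J$. For each $j$, I form $w_j := \prod_{k \in J \setminus \{j\}} f_k^{n_k} \cdot v_\lambda \in M(\lambda)_{\mu_0 + n_j \alpha_j}$ in a fixed order. Once $J$ is known to be independent, Lemma~\ref{Lkostant} makes this weight space one-dimensional and spanned by $w_j$, and maximality of $\mu_0$ forces $w_j \neq 0$ in $V$. Independence also yields $[e_j, f_k] = 0$ for $k \in J \setminus \{j\}$, so $e_j w_j = 0$, making $w_j$ an $\mathfrak{sl}_2^{(j)}$-highest-weight vector of $\alpha_j^\vee$-weight $\langle \lambda, \alpha_j^\vee \rangle$. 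Meanwhile $f_j^{n_j} w_j \in M(\lambda)_{\mu_0}$ vanishes in $V$. The standard $\mathfrak{sl}_2$-identity
\[
e_j f_j^{n_j} w_j = n_j \bigl( \langle \lambda, \alpha_j^\vee \rangle - n_j + 1 \bigr) f_j^{n_j - 1} w_j,
\]
combined with $f_j^{n_j - 1} w_j \neq 0$ in $V$ (by the same maximality argument applied to the weight $\mu_0 + \alpha_j$, whose $M(\lambda)$-weight space is again one-dimensional by Lemma~\ref{Lkostant}), would force $n_j = \langle \lambda, \alpha_j^\vee \rangle + 1$ and $\langle \lambda, \alpha_j^\vee \rangle \in \mathbb{Z}_{\geq 0}$, i.e.\ $j \in J_\lambda$.

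The principal obstacle is establishing Dynkin-independence of $J$ at the outset, since the $\mathfrak{sl}_2^{(j)}$-reduction presupposes it. To address this bootstrap, I would rule out adjacency in $J$ directly: if $i, j \in J$ were Dynkin-adjacent, then $\dim M(\lambda)_{\mu_0} \geq 2$ by Lemma~\ref{Lkostant}. Combining this with the classical fact that the subspace of $\mathfrak{n}^+$-maximal vectors of a given weight in a Verma module is at most one-dimensional (i.e., $\dim \mathrm{Hom}(M(\nu), M(\lambda)) \leq 1$, which holds in the Kac--Moody setting as well), and carefully tracking how $M(\lambda)_{\mu_0} \subseteq \ker(M(\lambda) \twoheadrightarrow V)$ must be built up from maximal vectors at strictly higher weights, one aims to produce some $\mu_0' > \mu_0$ with $V_{\mu_0'} = 0$, contradicting the choice of $\mu_0$. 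Making this final inference precise over all Kac--Moody $\mathfrak{g}$ is the technical heart of the argument in~\cite{Teja}.
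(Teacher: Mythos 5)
There is a genuine gap, and you have in fact flagged it yourself: the entire proof reduces to showing that for a maximal lost weight $\mu_0$, the support $J = \supp(\lambda-\mu_0)$ is Dynkin-independent, and this is precisely the step you do not carry out. Everything downstream of that claim is fine: once $J$ is independent, Lemma~\ref{Lkostant} gives one-dimensional weight spaces at $\mu_0$, $\mu_0+\alpha_j$ and $\mu_0+n_j\alpha_j$, the vectors $w_j$ survive in $V$ by maximality, and the $\mathfrak{sl}_2$-computation correctly forces $n_j=\langle\lambda,\alpha_j^\vee\rangle+1$ and $j\in J_\lambda$, hence $J\in\mathcal{H}_V$. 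But the bootstrap is the theorem. Your proposed route to independence -- if $i,j\in J$ are adjacent then $\dim M(\lambda)_{\mu_0}\geq 2$, and one should ``track how $M(\lambda)_{\mu_0}\subseteq\ker(M(\lambda)\twoheadrightarrow V)$ is built up from maximal vectors at strictly higher weights'' to manufacture a higher lost weight $\mu_0'$ -- is not an argument as stated, and the mechanism it gestures at is unsound in general: the kernel of $M(\lambda)\twoheadrightarrow V$ need not be generated by maximal vectors (this holds over $\mathfrak{sl}_2^{\oplus n}$, as in Example~\ref{Exeveryhw}, but not for general $\mathfrak{g}$), and even if the kernel does contain a maximal vector of weight $\mu_0'>\mu_0$, that only kills a line in $M(\lambda)_{\mu_0'}$, not the whole weight space, so it does not contradict the maximality of $\mu_0$ (which requires $V_{\mu_0'}=0$). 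The auxiliary fact you invoke, that maximal vectors of a fixed weight in a Verma module span at most a line, is also delicate in the generality claimed (arbitrary, possibly non-symmetrizable Kac--Moody $\mathfrak{g}$ and the intermediate quotients the paper allows), and in any case it does not by itself produce the needed contradiction.

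For context: the paper does not prove Theorem~\ref{Tnoholes} at all; it imports it verbatim from \cite[Proposition~2.7]{Teja}, and your own closing sentence defers ``the technical heart'' to that same reference. So what you have written is a correct reduction of the no-holes theorem to the independence claim, together with an acknowledgement that the independence claim is the hard part -- not a proof. To close the gap you would need an honest argument (as in \cite{Teja}) showing that a weight $\mu_0$ whose full, at least two-dimensional $M(\lambda)$-weight space dies in $V$ forces a higher weight to die as well, or some other device that never presupposes the kernel is generated by maximal vectors.
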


For completeness -- and as this is a special case of
Theorem~\ref{T1} -- we provide a proof in Appendix~\ref{Anoholes}.

\begin{proof}[Proof of Proposition~\ref{Pinclusion}]
Notice $\emptyset\notin \mathcal{H}_V$ since $V \neq 0$. If
$\mathcal{H}_V=\emptyset$, the result follows from
Theorem~\ref{Tnoholes}. Thus, assume henceforth that $\emptyset\notin
\mathcal{H}_V$ and $\emptyset\neq \mathcal{H}_V$. Pick $J \subseteq
J_\lambda$ with the property that $J\cap H\neq\emptyset$ $\forall$
$H\in\mathcal{H}_V$; thus $J^c \not\supseteq H\ \forall H\in
\mathcal{H}_V$. Hence by Theorem \ref{Tnoholes} over
$\mathfrak{g}_{J^c}$,
\[
\lambda-\mathbb{Z}_{\geq 0}\Pi_{J^c} = \wt
U(\mathfrak{g}_{J^c})V_{\lambda} \subseteq \wt V.
\]
Next for any $\xi\in \mathbb{Z}_{\geq 0}\Pi_{J^c}$, every nonzero vector
$x\in V_{\lambda-\xi}$ is a maximal vector for the action of the Levi
$\mathfrak{l}_J$, so it generates the highest weight
$\mathfrak{l}_{J}$-module $U(\mathfrak{l}_J)x$ of highest weight
$\lambda-\xi$. Hence $\wt L_J^{\max}(\lambda-\xi)\subseteq \wt
\left(U(\mathfrak{l}_J)x\right) \subseteq \wt V$. Now by the integrable
slice decomposition (see Lemma~\ref{Lslice}),
\[
\wt M(\lambda,J)=\bigsqcup_{\xi\in \mathbb{Z}_{\geq 0}\Pi_{J^c}}\wt
L_J^{\max}(\lambda-\xi)\subseteq \wt V.
\]
As $J$ is arbitrary, this yields $S(\lambda,\mathcal{H}_V)\subseteq \wt
V$.
\end{proof}

\subsection{Proof of Theorem~\ref{T1}: forward inclusion}

Returning to the discussion preceding Proposition~\ref{Pinclusion}, the
proof of Theorem~\ref{T1} is completed by showing the other inclusion:

\begin{theorem}\label{Tinclusion}
For all $\mathfrak{g}$, weights $\lambda$, and nonzero modules
$M(\lambda) \twoheadrightarrow V$, $\wt V \subset S(\lambda,
\mathcal{H}_V)$.
\end{theorem}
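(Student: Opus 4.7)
My plan is to reduce to the universal modules and then establish the forward inclusion as an injectivity statement. Since $\mathbb{M}(\lambda, \mathcal{H}_V) \twoheadrightarrow V$ gives $\wt V \subseteq \wt \mathbb{M}(\lambda, \mathcal{H}_V)$, and Lemma~\ref{Luhwm} identifies the hole-sets as the same upper-closed subset of $\mathrm{Indep}(J_\lambda)$, it suffices to show $\wt \mathbb{M}(\lambda, \mathcal{H}) \subseteq S(\lambda, \mathcal{H})$ for every proper, upper-closed $\mathcal{H} \subseteq \mathrm{Indep}(J_\lambda)$.

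For the core inclusion, I would use the following embedding strategy. For each $J \subseteq J_\lambda$ intersecting every $H \in \mathcal{H}$, pick some $h_0 = h_0(H) \in J \cap H$; since the Serre relations yield $[f_h, f_{h_0}] = 0$ for $h \in H \setminus \{h_0\}$ (as $H$ is independent), we have
\[
\bff_H v_\lambda \; = \; \Bigl( \prod_{h \in H \setminus \{h_0\}} f_h^{\langle \lambda, \alpha_h^\vee\rangle+1} \Bigr) \bff_{h_0} v_\lambda \; \in \; N_J,
\]
where $N_J := \sum_{j \in J} U(\mathfrak{g}) \bff_j v_\lambda$ is the kernel of $M(\lambda) \twoheadrightarrow M(\lambda, J)$. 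Consequently $N_{\mathcal{H}} := \sum_{H \in \mathcal{H}} U(\mathfrak{g})\bff_H v_\lambda$ is contained in each such $N_J$, yielding canonical surjections $\mathbb{M}(\lambda, \mathcal{H}) \twoheadrightarrow M(\lambda, J)$. Assembling these produces a $\mathfrak{g}$-module morphism
\[
\phi \; : \; \mathbb{M}(\lambda, \mathcal{H}) \; \longrightarrow \; \bigoplus_{J\, :\, J \cap H \neq \emptyset \; \forall H \in \mathcal{H}} M(\lambda, J),
\]
and injectivity of $\phi$ would immediately force $\wt \mathbb{M}(\lambda, \mathcal{H}) \subseteq \bigcup_J \wt M(\lambda, J) = S(\lambda, \mathcal{H})$, completing the proof.

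The main obstacle is precisely the injectivity of $\phi$, equivalently the intersection identity $\bigcap_J N_J = N_{\mathcal{H}}$ (only the inclusion $\subseteq$ needs argument). For $\mathfrak{g} = \mathfrak{sl}_2^{\oplus n}$ this is immediate via PBW: every weight space of $M(\lambda)$ is one-dimensional, and a monomial $\prod_i f_i^{m_i} v_\lambda$ lies in every $N_J$ (for $J$ meeting all holes) iff $\{i : m_i \geq \langle \lambda, \alpha_i^\vee \rangle + 1\}$ contains some hole $H \in \mathcal{H}$, whereupon one factors $\bff_H$ on the right to place the monomial into $N_{\mathcal{H}}$. For general $\mathfrak{g}$, where weight spaces have higher dimension and the PBW basis involves noncommuting root vectors, I plan to induct on $|\mathcal{H}^{\min}|$ (base case $\mathcal{H} = \emptyset$ being Theorem~\ref{Tnoholes}), peeling off one minimal hole $H_\star$ via the short exact sequence
\[
0 \longrightarrow \frac{U(\mathfrak{g}) \, \bff_{H_\star} v_\lambda + N_{\mathcal{H}'}}{N_{\mathcal{H}'}} \longrightarrow \mathbb{M}(\lambda, \mathcal{H}') \longrightarrow \mathbb{M}(\lambda, \mathcal{H}) \longrightarrow 0,
\]
where $\mathcal{H}'$ is the upper-closure of $\mathcal{H}^{\min} \setminus \{H_\star\}$, and then combining the integrable slice decomposition (Lemma~\ref{Lslice}) for each $M(\lambda, J)$ with the one-dimensionality of the hole-lines (Lemma~\ref{Lkostant}) to track how the additional relation excises weights. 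The delicate point is that multi-dimensional weight spaces of $M(\lambda)$ may split nontrivially across the summands of $\phi$, so the PBW analysis must be carried out with respect to an ordering separating $\Delta_J$-directions from $\Delta^+ \setminus \Delta_J^+$ in order to verify that no weight is lost outside the image of $\phi$.
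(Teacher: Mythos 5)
Your overall strategy is to reduce to the universal module $\mathbb{M}(\lambda,\mathcal{H})$ (correct, and the paper also does this via Lemma~\ref{Luhwm}), and then to prove the forward inclusion by showing the natural map $\phi : \mathbb{M}(\lambda,\mathcal{H}) \to \bigoplus_{J} M(\lambda,J)$ is injective, equivalently $\bigcap_J N_J \subseteq N_{\mathcal{H}}$. This is a genuinely different route from the paper's, and the $\mathfrak{sl}_2^{\oplus n}$ case you do carry out is correct: weight spaces are one-dimensional, $U\mathfrak{n}^-$ is a polynomial ring, the $N_J$ are monomial ideals, and your transversal argument (if no hole lies in the ``large exponent'' support set $S$, then $S^c\cap J_\lambda$ is itself admissible, contradiction) does the job cleanly.

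The gap is that for general $\mathfrak{g}$ you never establish the injectivity of $\phi$, which is the entire crux of your approach. You say you will induct on $|\mathcal{H}^{\min}|$ by ``peeling off one minimal hole $H_\star$'' and invoking the integrable slice decomposition, but this is a plan, not an argument; and you yourself flag the ``delicate point'' that weight spaces of $M(\lambda)$ split nontrivially across the summands of $\phi$ -- which is exactly where the work lies. Notice moreover that injectivity of $\phi$ is a \emph{strictly stronger} assertion than the theorem (it would give a multiplicity statement, not just a weight statement), so you are signing up to prove more than you need, and the proof in fact has to control the kernel at the level of every weight space with its full multiplicity. It is telling that the paper only obtains resolution-type control over $\mathbb{M}(\lambda,\mathcal{H})$ by Verma modules in restricted settings (Section~\ref{Sfinal}: pairwise orthogonal holes, or $\mathfrak{g}_{J_\lambda}\cong\mathfrak{sl}_2^{\oplus n}$), so the general intersection identity, even if true, is unlikely to fall out of a short induction. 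The paper's actual proof sidesteps this entirely: it inducts on $\mathrm{ht}(\lambda-\mu)$ over triples $(\lambda,V,\mu)$, picks a PBW monomial for a given weight vector, and -- when the $\mathfrak{l}_K$-piece falls outside $\wt L_K^{\max}(\lambda)$ -- passes to the submodule $V' = U(\mathfrak{g})f_i^{m_i}V_\lambda$. The crucial input there is Lemma~\ref{LSerre} (a Serre-relation commutation), used in Step~4 to show that every hole $H\in\mathcal{H}_V$ yields $H\setminus\{i\}\in\mathcal{H}_{V'}$, which is what makes the induction close. That mechanism has no counterpart in your sketch and is what you would need to supply -- or else a genuinely new argument for the intersection identity -- before your proof can be considered complete.
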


\noindent Recall that $\mathcal{H}_V$ and $S(\lambda, \mathcal{H}_V)$
were defined in Definition~\ref{Dholes} and Equation~\eqref{ESH},
respectively.

Theorem~\ref{Tinclusion} not only implies Theorem~\ref{T1} (given
Proposition~\ref{Pinclusion}), but together with Lemma~\ref{Luhwm} it
also implies the remaining sought-for inclusion in~\eqref{E3sets}:
\[
\wt M \subset S(\lambda, \mathcal{H}_M) = S(\lambda, \mathcal{H}_V),
\qquad \text{for } M = \mathbb{M}(\lambda, \mathcal{H}_V).
\]

In the rest of this section, we show Theorem~\ref{Tinclusion}. The proof
uses a fundamental result from~\cite{DK}:

\begin{lemma}[Integrable slice decomposition, Khare \cite{Kh1},
Dhillon--Khare \cite{DK}]\label{Lslice}
If $J \subset J_\lambda$, then
\begin{equation}\label{Eslice}
\wt M(\lambda,J)\ = \ \wt L_J^{\max}(\lambda)-\mathbb{Z}_{\geq
0}(\Delta^+\setminus \Delta^+_J)\ = \ \bigsqcup_{\xi\in \mathbb{Z}_{\geq
0}\Pi_{I\setminus J}}\wt L_J^{\max}(\lambda-\xi).
\end{equation}
\end{lemma}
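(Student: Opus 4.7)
The plan is to establish both equalities of Lemma~\ref{Lslice} by combining the PBW theorem (for the first) with a $\Pi_{I \setminus J}$-graded $\mathfrak{l}_J$-slice decomposition of $M(\lambda, J)$ (for the second). Throughout, set $\mathfrak{n}^-_{\mathrm{nil}} := \bigoplus_{\alpha \in \Delta^+ \setminus \Delta_J^+} \mathfrak{g}_{-\alpha}$; this is stable under the adjoint $\mathfrak{l}_J$-action. The parabolic induction $M(\lambda, J) = U(\mathfrak{g}) \otimes_{U(\mathfrak{p}_J)} L_J^{\max}(\lambda)$ is then isomorphic, as an $\mathfrak{h}$-graded vector space, to $U(\mathfrak{n}^-_{\mathrm{nil}}) \otimes_{\mathbb{C}} L_J^{\max}(\lambda)$ via PBW. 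Since the weight set of $U(\mathfrak{n}^-_{\mathrm{nil}})$ is precisely $-\Z_{\geq 0}(\Delta^+ \setminus \Delta_J^+)$ (the empty monomial contributing $0$), Minkowski-summing the weights yields the first equality.

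For the second equality, disjointness is immediate: any $\mu \in \wt M(\lambda, J)$ admits a unique decomposition $\lambda - \mu = \eta + \xi$ with $\eta \in \Z_{\geq 0}\Pi_J$ and $\xi \in \Z_{\geq 0}\Pi_{I \setminus J}$, while $\wt L_J^{\max}(\lambda - \xi) \subset (\lambda - \xi) - \Z_{\geq 0}\Pi_J$ has its $\Pi_{I \setminus J}$-part pinned to $\xi$. To identify each $\xi$-slice of $\wt M(\lambda, J)$ with $\wt L_J^{\max}(\lambda - \xi)$, note that $\mathfrak{l}_J$-weights lie in $\Z\Pi_J$, so each slice is an $\mathfrak{l}_J$-submodule of $M(\lambda,J)$. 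For the $\supseteq$ inclusion I would first produce the PBW monomial $\prod_{i \in I \setminus J} f_i^{m_i} v_\lambda$ (where $\xi = \sum m_i \alpha_i$): this is $\mathfrak{n}_J^+$-maximal because $[e_j, f_i] = 0$ for $j \in J$, $i \in I \setminus J$, and has weight $\lambda - \xi$. The $\mathfrak{l}_J$-submodule it generates recovers part of $\wt L_J^{\max}(\lambda - \xi)$; the remaining weights are picked up by further PBW monomials using root vectors $f_\alpha$ for $\alpha \in \Delta^+ \setminus \Delta_J^+$ with mixed support in $\Pi_J$ and $\Pi_{I \setminus J}$.

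The main obstacle will be the $\subseteq$ inclusion: given $\mu = \nu - \beta$ with $\nu \in \wt L_J^{\max}(\lambda)$ and $\beta \in \Z_{\geq 0}(\Delta^+ \setminus \Delta_J^+)$ of $\Pi_{I \setminus J}$-part $\xi$, one must show $\mu \in \wt L_J^{\max}(\lambda - \xi)$. Writing $\beta = \beta_J + \xi$ with $\beta_J \in \Z_{\geq 0}\Pi_J$ and $\nu = \lambda - \eta_\nu$, the claim reduces to $(\lambda - \xi) - (\eta_\nu + \beta_J) \in \wt L_J^{\max}(\lambda - \xi)$; a priori the combined $\Pi_J$-shift $\eta_\nu + \beta_J$ could sit too deep along the $\Pi_J$-ladder for integrability. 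The saving inequality is $\langle \lambda - \xi, \alpha_j^\vee \rangle \geq \langle \lambda, \alpha_j^\vee \rangle$ for $j \in J$, which holds because $\langle \alpha_i, \alpha_j^\vee \rangle \leq 0$ for $i \in I \setminus J$: this shows $\lambda - \xi$ is at least as $J$-integrable as $\lambda$, so $\wt L_J^{\max}(\lambda - \xi)$ extends sufficiently far along the $\Pi_J$-ladder to contain $\mu$. To close the argument I would invoke either the Weyl--Kac description of $\wt L_J^{\max}(\lambda - \xi)$ over the Kac--Moody Levi $\mathfrak{l}_J$, or apply Theorem~\ref{Tnoholes} internally to $\mathfrak{l}_J$ to reduce to the ``no holes'' case there.
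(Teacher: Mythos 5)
This lemma is cited from~\cite{DK} and not proved in the present paper, so there is no in-paper proof to compare against; I will evaluate your sketch on its merits. Your first equality via PBW is correct and standard. For the second (integrable-slice) equality your plan --- disjointness plus a two-sided inclusion slice by slice --- is the right one, and you correctly identify the key inequality $\langle \lambda-\xi, \alpha_j^\vee\rangle \geq \langle\lambda,\alpha_j^\vee\rangle$. However, both inclusions hinge on a fact you never invoke: $M(\lambda,J)$ is \emph{integrable} as an $\mathfrak{l}_J$-module, since under PBW it is the tensor product of the $\operatorname{ad}(\mathfrak{l}_J)$-integrable module $U(\mathfrak{n}^-_{\mathrm{nil}})$ with the integrable $L_J^{\max}(\lambda)$.

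Once one has this, your hedging in the $\supseteq$ direction is unnecessary: the cyclic $\mathfrak{l}_J$-module generated by $\prod_{i\in I\setminus J} f_i^{m_i} v_\lambda$ is automatically an integrable highest weight $\mathfrak{l}_J$-module of $J$-dominant integral highest weight $\lambda-\xi$, and all such nonzero modules share the weight set $\wt L_J^{\max}(\lambda-\xi)$ (the classical fact that $\wt L_J(\nu) = \wt L_J^{\max}(\nu)$ for $\nu$ $J$-dominant integral), so no ``further PBW monomials'' are needed. The genuine gap is that the $\subseteq$ direction is not actually closed: writing $\mu = \nu - \beta$ and noting the inequality does not give membership in $\wt L_J^{\max}(\lambda-\xi)$, because $\mu$ and $\nu$ are not $W_J$-related; and neither of your two suggested closings works --- the ``Weyl--Kac description'' is essentially what must be established, while Theorem~\ref{Tnoholes} compares $\wt V$ to $\wt M(\lambda)$ rather than locating a weight inside $\wt L_J^{\max}(\lambda-\xi)$. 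What closes it is again $\mathfrak{l}_J$-integrability: the $\xi$-slice of $\wt M(\lambda,J)$ is $W_J$-stable, so one may replace $\mu$ by its $J$-dominant representative $\mu^+ \in W_J\mu$, which lies in the same slice, is $J$-dominant integral, and satisfies $\mu^+ \leq \lambda-\xi$ in $\Z_{\geq 0}\Pi_J$; the standard saturation property of integrable highest weight module weight-sets then gives $\mu^+ \in \wt L_J^{\max}(\lambda-\xi)$, hence $\mu$ too. So the sketch has the correct shape and the right inequality, but it is missing the $\mathfrak{l}_J$-integrability / $W_J$-invariance step on which both inclusions rest.
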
 

We will also need the following application of $\mathfrak{sl}_3$- (or
rank-$2$) theory, aka the Serre relations:

\begin{lemma}\label{LSerre}
Fix Kac--Moody $\mathfrak{g}$, integers $k, M, N \geq 0$, and pairwise
distinct nodes $h_1, \dots, h_k, h, i \in I$, such that $h$ is adjacent
to $i$ but not to $h_1, \dots, h_k$.
Say $Y \in U\mathfrak{n}^-$ is a linear combination of words in the
Chevalley generators $f_i, f_{h_1}, \dots, f_{h_k}$, with $N$ occurrences
of $f_i$ in each word. Then
\[
f_h^{M - N \langle \alpha_i, \alpha_h^\vee \rangle} \cdot Y = X \cdot
f_h^M,
\]
for $X \in U\mathfrak{n}^-$ again a linear combination of words with $N$
occurrences of $f_i$ in each word.
\end{lemma}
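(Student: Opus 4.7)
The plan is a double induction: on $N$, reducing everything to the case $N = 1$, and then a strong induction on $M$ within that. Set $c := -\langle \alpha_i, \alpha_h^\vee \rangle$, which is a positive integer since $h$ is adjacent to $i$; the identity to establish reads $f_h^{M + Nc} \cdot Y = X \cdot f_h^M$, with $X$ a linear combination of words each containing exactly $N$ occurrences of $f_i$.

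For the outer induction on $N$, the base $N = 0$ is immediate: since $h$ is not adjacent to any of $h_1, \dots, h_k$, the Serre relations force $[f_h, f_{h_j}] = 0$ for every $j$, so $f_h^M$ commutes past $Y$ and I may take $X := Y$. For the step $N \geq 1$, by linearity assume $Y$ is a single word; split it at its leftmost $f_i$ as $Y = Y_1 f_i Y_2$ with $Y_1$ a word only in $f_{h_1}, \dots, f_{h_k}$, commute $f_h^{M + Nc}$ freely through $Y_1$, then invoke the $N = 1$ statement on $f_h^{(M + (N-1)c) + c} \cdot f_i$ and the inductive hypothesis on $f_h^{M + (N-1)c} Y_2$; the $f_i$-counts then add to $0 + 1 + (N - 1) = N$.

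The crux is thus the case $N = 1$: I claim $f_h^{M + c} f_i = X_M \cdot f_h^M$ for some $X_M$ in the subalgebra generated by $f_h, f_i$ with a single occurrence of $f_i$. The key device is the Serre element $g := (\mathrm{ad}\, f_h)^c(f_i) = \sum_{j=0}^{c} (-1)^j \binom{c}{j} f_h^{c-j} f_i f_h^j$, which satisfies $[f_h, g] = 0$ because $(\mathrm{ad}\, f_h)^{c+1}(f_i) = 0$. Isolating the first term in $g$ gives
\[
f_h^{c} f_i \;=\; g \;+\; \sum_{j=1}^{c} (-1)^{j+1} \binom{c}{j} \, f_h^{c - j} f_i f_h^{j}.
\]
I then strong-induct on $M$ (base $M = 0$ being trivial). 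Multiplying the displayed identity on the left by $f_h^M$: the term $f_h^M g = g \, f_h^M$ already has the desired shape. For each summand $f_h^{M + c - j} f_i f_h^j$ with $1 \leq j \leq c$, if $j \geq M$, rewrite it as $\bigl(f_h^{M + c - j} f_i f_h^{j - M}\bigr) f_h^M$; if $j < M$, then $M + c - j \geq c + 1$, so apply the strong inductive hypothesis to $M' := M - j < M$, obtaining $f_h^{M' + c} f_i = X_{M'} f_h^{M'}$, and hence $f_h^{M + c - j} f_i f_h^j = X_{M'} f_h^M$.

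The main obstacle I foresee is the bookkeeping in this inner induction: keeping track that $X$ lies in the single-$f_i$ span while all the ``excess'' factors of $f_h$ are pushed all the way to the right. The structural reason it all goes through is that the commutativity $[f_h, g] = 0$ exactly decouples a single Serre move from the remainder of the word, which is what lets the strong induction on $M$ close.
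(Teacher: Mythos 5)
Your proof is correct. The overall strategy is close to the paper's -- both ultimately rest on commuting powers of $f_h$ past a single $f_i$ and exploiting that $f_h$ commutes with the $f_{h_j}$ -- but the route to the crucial $N=1$ identity differs. The paper simply quotes the general associative-algebra formula $f_h^l f_i = \sum_{j=0}^{c} \binom{l}{j}(\mathrm{ad}\,f_h)^j(f_i)\,f_h^{l-j}$ (valid for $l \geq c$, the sum truncating by Serre), reads off that every summand ends in $f_h^{l-j}$ with $l-j \geq l-c$, and then marches $f_h$-powers left-to-right through $Y = Y_1 f_i Y_2 \cdots f_i Y_{N+1}$ in a single pass with exponents $l = M - (N-j)\langle\alpha_i,\alpha_h^\vee\rangle$. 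You instead re-derive the $N=1$ case from scratch by a strong induction on $M$, using the Serre element $g = (\mathrm{ad}\,f_h)^c(f_i)$ and the key fact $[f_h,g]=0$, and then bootstrap to general $N$ by an outer induction. Your version is more self-contained (no appeal to the textbook commutator identity) at the cost of a double induction and a little more bookkeeping in the $j<M$ versus $j\geq M$ split; the paper's is shorter once the binomial formula is granted. Both establish exactly the right claim about $X$ having precisely $N$ occurrences of $f_i$.
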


\begin{proof}
It suffices to work with $Y$ a single word in the given alphabet. The
next calculation (found in textbooks) holds in any associative algebra,
and is specialized here to $U(\mathfrak{n}^-)$:
\begin{equation}\label{ESerre2}
f_h^l f_i = \sum_{j=0}^{- \langle \alpha_i, \alpha_h^\vee \rangle}
\binom{l}{j} (\text{ad}\, f_h)^j(f_i) f_h^{l-j} = X_1 f_h^{l + \langle
\alpha_i, \alpha_h^\vee \rangle}, \qquad \text{for all } l \geq - \langle
\alpha_i, \alpha_h^\vee \rangle.
\end{equation}
Here $X_1 \in U(\mathfrak{n}^-)$ is a linear combination of words in $\{
f_l : l \in I \}$, each containing just one $f_i$ -- and the sum stops
where it does due to the Serre relations. Now suppose
$Y = Y_1 f_i Y_2 f_i \cdots f_i Y_{N+1},$
with each $Y_j$ a word in the $f_{h_t}$. Successively
applying~\eqref{ESerre2} with $l = M - (N-j) \langle \alpha_i,
\alpha_h^\vee \rangle$ for $j = 0, 1, \dots$,
\begin{align*}
f_h^{M - N \langle \alpha_i, \alpha_h^\vee \rangle} \cdot Y = &\ Y_1
\cdot X_1 f_h^{M - (N-1) \langle \alpha_i, \alpha_h^\vee \rangle} \cdot
Y_2 f_i \cdots f_i Y_{N+1}\\
= &\ Y_1 X_1 Y_2 \cdot X_2 f_h^{M - (N-2) \langle \alpha_i, \alpha_h^\vee
\rangle} \cdot Y_3 f_i \cdots f_i Y_{N+1}\\
= &\ \cdots\\
= &\ Y_1 X_1 \cdots Y_N \cdot X_N f_h^M \cdot Y_{N+1}.
\end{align*}
Setting $X := Y_1 X_1 \cdots Y_N X_N Y_{N+1}$, we are done, since
each $X_t$ contains exactly one $f_i$.
\end{proof}

\noindent With Lemmas~\ref{Lslice} and~\ref{LSerre} in hand, we complete
the proof of the remaining half of Theorem \ref{T1}.

\begin{proof}[Proof of Theorem \ref{Tinclusion}]
When $\mathcal{H}_V=\emptyset$, Theorem~\ref{Tnoholes} implies $\wt V
=\wt M(\lambda)= S(\lambda,\mathcal{H}_V)$. We now assume
$\mathcal{H}_V\neq\emptyset$, and also $\emptyset\notin \mathcal{H}_V$
(else the result is trivial). Notice, for each $H\in {\rm
Indep}(J_\lambda)$ the vector $\bigg(\prod\limits_{h\in H}
f_h^{\langle\lambda,\alpha_h^\vee\rangle+1}\bigg) m_{\lambda}$ is a
maximal vector in the $\left(\prod_{h\in H}s_h\right)\bullet
\lambda$-weight space of $M(\lambda)$.

We now turn to the proof, with $\mathcal{H}_V\neq\emptyset$,
$\emptyset\notin \mathcal{H}_V$. The idea is to work with \textit{all}
triples $(\lambda',V', \mu')$, where $\lambda' \in \mathfrak{h}^*$,
$M(\lambda') \twoheadrightarrow V'$ (with holes $\mathcal{H}_{V'}$), and
$\mu' \in \wt V'$ are arbitrary. We make the following\medskip

\noindent \textbf{Claim.}
\textit{For every triple $(\lambda',V',\mu')$ as above,
$\mu'\in S(\lambda',\mathcal{H}_{V'})$}.\medskip

This claim -- which implies the theorem -- is now shown by induction on
${\rm ht}(\lambda'-\mu')\geq 0$. In the base case, $\mu' = \lambda'$, and
so the claim holds trivially.\medskip

\noindent \textbf{Induction step:}
Fix an arbitrary $(\lambda,V,\mu)$ as above, with ${\rm
ht}(\lambda-\mu)>0$ (and assume the result is true for all triples
$(\lambda', V', \mu')$ with smaller ${\rm ht}(\lambda' - \mu')$). We
introduce notation for a set in the union in $S(\lambda, \mathcal{H}_V)$
in~\eqref{ESH}:
\begin{equation}\label{EJV}
\mathfrak{J}(V) := \{ J \subset J_\lambda \ | \ J \cap H \neq \emptyset \
\forall H \in \mathcal{H}_V \}.
\end{equation}

Thus the goal is to find $J \in \mathfrak{J}(V)$ such that $\mu \in \wt
M(\lambda, J)$. This would imply $\mu \in S(\lambda, \mathcal{H}_V)$ and
hence show the induction step.

We now break up the remainder of the induction step into (sub-)steps, in
the interest of clarity. The reader may find it helpful to first read the
plan for all steps, before reading the details.\medskip

\noindent \textit{\underline{Step 1:}
First choose an arbitrary element $K \in \mathfrak{J}(V)$ -- for
instance, $K = J_\lambda$. We produce a PBW monomial $F_1 \in
U(\mathfrak{n}^-_K)$ such that $\mu \leq \nu \leq \lambda$, where $\nu :=
\lambda + \wt(F_1) \in \wt V$.}\smallskip

To do so, write $\mathfrak{n^-}$ as the direct sum of the two Lie
subalgebras $\mathfrak{n}' := \bigoplus\limits_{\alpha\in
\Delta^-\setminus \Delta^-_K}\mathfrak{n}^-_{\alpha}$ and
$\mathfrak{n}_K^-$. Via the PBW theorem, fix a basis for
$U(\mathfrak{n}^-)$ consisting of monomials in negative root vectors such
that in each monomial, elements from $\mathfrak{n}'$ always occur to the
left of those from $\mathfrak{n}^-_K$. Now fix a nonzero highest weight
vector $v_\lambda \in V_\lambda$, and pick a nonzero weight vector 
\[
z:= F_2 \cdot F_1 \cdot v_{\lambda} \in V_\mu, \quad \text{ for PBW
monomials }F_1\in U(\mathfrak{n}^-_K)\text{ and }F_2\in U (\mathfrak{n}')
\]
with $\lambda+\wt(F_1)+\wt(F_2)=\mu$. Note that
$\wt(F_1)\in-\mathbb{Z}_{\geq 0}\Pi_K$ and $\wt(F_2)\in\mathbb{Z}_{\geq
0}\left(\Delta^-\setminus \Delta_K^-\right)$. Now set $\nu :=
\lambda+\wt(F_1)$, and note that $F_1 v_{\lambda}\in (V_K)_{\nu} =
V_\nu$, where $V_K := U(\mathfrak{g}_K) v_\lambda$.\medskip

\noindent \textit{\underline{Step 2:}
There are now two cases. If $\nu \in \wt L_K^{\max}(\lambda)$, then $\mu
\in S(\lambda, \mathcal{H}_V)$.}\smallskip

Indeed, $\mu = \nu + \wt(F_2) \in \wt M(\lambda, K)$ (by~\eqref{Epvm}),
and this is in $S(\lambda, \mathcal{H}_V)$ since $K \in
\mathfrak{J}(V)$.\medskip

\noindent \textit{\underline{Step 3:}
Thus, henceforth $\nu \not\in \wt L_K^{\max}(\lambda)$. We claim there
exists $i \in K \subset J_\lambda$ such that
(i)~$V_{s_i \bullet \lambda} = f_i^{m_i} V_\lambda \neq 0$, where
$m_i := \langle \lambda, \alpha_i^\vee \rangle + 1$, and
(ii)~$V' := U (\mathfrak{g}) \cdot V_{s_i \bullet \lambda}$ has a nonzero
$\mu$-weight space.}\smallskip

To see why, choose and fix the highest weight vector $m_\lambda \in
M(\lambda)_\lambda$ which maps to $v_\lambda$ under $M(\lambda)
\twoheadrightarrow V$. Also let $M_K(\lambda) \cong U(\mathfrak{n}_K^-)$
be the Verma $\mathfrak{g}_K$-module. Since
\[
\nu \not\in \wt L_K^{\max}(\lambda) =
\wt\left[ \frac{M_K(\lambda)}{\sum\limits_{t\in K}
U(\mathfrak{n}^-_K)f_t^{m_t} m_\lambda}\right],
\]
the $\nu$-weight space of $M_K(\lambda)$ equals that of its submodule
$\sum_{t\in K}U(\mathfrak{n}^-_K)f_t^{m_t} m_\lambda$. Hence the
$\nu$-weight space of $V_K = U(\mathfrak{g}_K) v_\lambda$ equals that of
$\sum_{t\in K}U(\mathfrak{n}^-_K)f_t^{m_t} v_\lambda$. Now $F_1 v_\lambda
\in V_\nu = (V_K)_\nu$ is a linear combination of vectors $X_t f_t^{m_t}
v_\lambda$, with $X_t \in U(\mathfrak{n}^-_K)$. Since $F_2 F_1 \cdot
v_\lambda \neq 0$, it follows that there exist a node $i\in K$ and a PBW
monomial $F_3$ in $U(\mathfrak{n}_K^-)$ such that
\[
z' := F_2 \cdot F_3 \cdot f_i^{m_i} v_{\lambda} \in V_\mu
\]
is nonzero. Defining $V' := U(\mathfrak{g}) \cdot f_i^{m_i} v_\lambda$,
this proves both assertions~(i) and~(ii), since $z' \in V'_\mu$.\medskip

\noindent \textit{\underline{Step 4:}
If $H \in \mathcal{H}_V$ is a hole, then $\emptyset \neq H \setminus \{ i
\} \in \mathcal{H}_{V'}$, where $V' \neq 0$ is as in Step~3.}\smallskip

The previous three steps helped us arrive at $V' = U(\mathfrak{g})
f_i^{m_i} V_\lambda$, to which we now apply the induction hypothesis. The
present step does not use the previous three steps, except the definition
of $V'$. We begin with the definition of $\mathcal{H}_{V'}$, using the
description of the highest weight line $V'_{s_i \bullet \lambda}$:
\[
\mathcal{H}_{V'} = \left\{ H' \in {\rm Indep}(J_{s_i \bullet \lambda}) \
\bigg| \ \left( \prod_{h \in H'} f_h^{\langle s_i \bullet \lambda,
\alpha_h^\vee \rangle + 1} \right) f_i^{m_i} \cdot V_\lambda = 0
\right\}.
\]

Fix a hole $H \in \mathcal{H}_V$. Since $V'$ (or its highest weight line)
is nonzero, $\{ i \} \not\in \mathcal{H}_V$, and hence $H \setminus \{ i
\} \neq \emptyset$. Clearly, $H \setminus \{ i \}$ is also independent,
and it is easy to verify that
\[
H \setminus \{ i \} \subset J_\lambda \setminus \{ i \} \subset J_{s_i
\bullet \lambda}.
\]

It remains to verify the final defining condition for $\mathcal{H}_{V'}$
(above), for $H' := H \setminus \{ i \}$. If all nodes in $H'$ are
disconnected from $i$ -- which includes the case $i \in H$ -- then all
$f_h$ commute with $f_i$ and $\langle s_i \bullet \lambda, \alpha_h^\vee
\rangle = \langle \lambda, \alpha_h^\vee \rangle$, and the desired
equality follows from the definition of $\mathcal{H}_V$:
\begin{equation}\label{ESerre}
\left( \prod_{h \in H'} f_h^{\langle s_i \bullet
\lambda, \alpha_h^\vee \rangle + 1} \right) f_i^{m_i} \cdot V_\lambda = 0
\end{equation}

Otherwise, $i \not\in H$ (so $H' = H$) and at least one node $h \in H'$
is adjacent to $i$ in the Dynkin diagram of $\mathfrak{g}$. In this case,
it suffices to show that
\[
\left( \prod_{h \in H'} f_h^{\langle s_i \bullet
\lambda, \alpha_h^\vee \rangle + 1} \right) f_i^{m_i} \cdot V_\lambda \in
U(\mathfrak{n}^-) \prod_{h \in H'} f_h^{\langle \lambda, \alpha_h^\vee
\rangle + 1} \cdot V_\lambda.
\]

In what follows, define and use
\[
c_h := \langle s_i \bullet \lambda, \alpha_h^\vee \rangle + 1, \qquad
\forall h \in H' = H \setminus \{ i \} = H.
\]
(Recall, $m_h := \langle \lambda, \alpha_h^\vee \rangle + 1$.) Since $i
\not\in H$, one has
\begin{equation}\label{Ecalculation}
c_h = \langle \lambda, \alpha_h^\vee \rangle + 1 - m_i \langle \alpha_i,
\alpha_h^\vee \rangle = m_h + m_i | \langle \alpha_i, \alpha_h^\vee
\rangle |.
\end{equation}

Now fix any ordering of $H' = H$, say $H' = \{ h_1,
\dots, h_k \}$, and apply Lemma~\ref{LSerre}, with $M = m_{h_1}$ and $N =
m_i$. Then via~\eqref{Ecalculation},
\[
f_{h_1}^{c_{h_1}} f_i^{m_i} \cdot V_\lambda = X_1 f_{h_1}^{m_{h_1}} \cdot
V_\lambda,
\]
with $X_1 \in U(\mathfrak{n}^-)$ a linear combination of words, each with
exactly $m_i$ occurrences of $f_i$. Next,
\[
f_{h_2}^{c_{h_2}} f_{h_1}^{c_{h_1}} f_i^{m_i} \cdot V_\lambda =
f_{h_2}^{c_{h_2}} X_1 f_{h_1}^{m_{h_1}} \cdot V_\lambda =
X_2 f_{h_2}^{m_{h_2}} f_{h_1}^{m_{h_1}} \cdot V_\lambda,
\]
for some $X_2 \in U(\mathfrak{n}^-)$ as above -- again applying
Lemma~\ref{LSerre}. Repeating this procedure,
\[
\left( \prod_{h \in H'} f_h^{\langle s_i \bullet
\lambda, \alpha_h^\vee \rangle + 1} \right) f_i^{m_i} \cdot V_\lambda
\in U(\mathfrak{n}^-) \prod_{t=1}^k f_{h_t}^{m_{h_t}} \cdot V_\lambda,
\]
and this vanishes, by the definition of $H = H' \in
\mathcal{H}_V$.\medskip

\noindent \textit{\underline{Step 5:}
Concluding the proof.}\smallskip

By Step~3(ii) and the induction hypothesis for $(s_i \bullet \lambda, V',
\mu)$, there exists $J' \in \mathfrak{J}(V')$ such that $\mu \in \wt
M(s_i \bullet \lambda, J')$. 
Define $J := J' \cap J_\lambda$; then $i \not\in J$ since $i \not\in
J_{s_i \bullet \lambda}$. Now using the integrable slice
decomposition~\eqref{Eslice} twice,
\begin{align*}
\wt M(\lambda, J) = \quad \bigsqcup_{\xi\in \mathbb{Z}_{\geq 0}
\Pi_{I\setminus J}} \quad \wt L_J^{\max}(\lambda-\xi)
\supseteq &\ \bigsqcup_{m_i \alpha_i \leq \xi\in \mathbb{Z}_{\geq 0}
\Pi_{I\setminus J}} \wt L_J^{\max}(\lambda-\xi)\\
= & \ \quad \bigsqcup_{\xi'\in \mathbb{Z}_{\geq 0} \Pi_{I \setminus J}}
\quad \, \wt L_J^{\max}((\lambda-m_i \alpha_i) - \xi')\\
= &\ \wt M(s_i \bullet \lambda, J) \supseteq \wt M(s_i \bullet \lambda,
J'),
\end{align*}
noting that $J \subset J' \subset J_{s_i \bullet \lambda}$.
Thus $\mu \in \wt M(s_i \bullet \lambda, J') \subset \wt M(\lambda, J)$.
We now assert that $J \in \mathfrak{J}(V)$, which concludes the proof of
the induction step in the claim at the beginning. Indeed, if $H \in
\mathcal{H}_V$,
\[
\emptyset \subsetneq J' \cap (H \setminus \{ i \}) \subset J' \cap H
= J' \cap (J_\lambda \cap H) = J \cap H,
\]
where the first inclusion follows from Step~4, since $J' \in
\mathfrak{J}(V')$.
\end{proof}

\begin{remark}
In the spirit of Remark~\ref{Rworking1}, the above proof should have
worked with quadruples $(\mathfrak{g}, \lambda', V', \mu')$, where
$\widetilde{\mathfrak{g}} \twoheadrightarrow \mathfrak{g}
\twoheadrightarrow \overline{\mathfrak{g}}$ for a fixed generalized
Cartan matrix. We have suppressed the additional variable $\mathfrak{g}$,
since the proof only uses weight-sets of parabolic Verma modules in the
proof (and in the formula for $S(\lambda, \mathcal{H})$), and these
remain invariant across $\mathfrak{g}$.
\end{remark}

\section{{\color{black}Theorems~\ref{T2} \& \ref{Theorem wt-form. by composition series simples}: Weight formulas - Minkowski sum type \&  composition series based}}\label{ST2}

Following the proof of Theorem~\ref{T1}, this section quickly shows
Theorems\ref{T2} {\color{black} and \ref{Theorem wt-form. by composition series simples}}. We begin by isolating a key {\color{black}result}, which is
interesting in its own right: a family of Minkowski difference formulas
for parabolic Verma modules $M(\lambda,J)$, of which \eqref{Epvm} is one
extremal case.

\begin{prop}\label{Pminkowski}
Suppose $\lambda \in \mathfrak{h}^*$ and $J \subset J_\lambda$. Then for
all subsets $J'$ between $J$ and $J_\lambda$,
\begin{equation}\label{Epvm2}
\wt M(\lambda, J) = \wt L_{J'}^{\max}(\lambda) - \Z_{\geq 0} (\Delta^+
\setminus \Delta^+_J).
\end{equation}
\end{prop}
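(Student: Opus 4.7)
The plan is to prove the two inclusions of \eqref{Epvm2} separately, in each case reducing to the known extremal case \eqref{Epvm}.

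For the reverse inclusion $\supseteq$: since $J \subseteq J'$, the universal surjection $M(\lambda, J) \twoheadrightarrow M(\lambda, J')$ (obtained by imposing the extra integrability relations for $j \in J' \setminus J$) yields $\wt M(\lambda, J) \supseteq \wt M(\lambda, J') \supseteq \wt L_{J'}^{\max}(\lambda)$. Combined with the fact -- immediate from \eqref{Epvm} -- that $\wt M(\lambda, J)$ is closed under subtracting elements of $\Z_{\geq 0}(\Delta^+ \setminus \Delta_J^+)$, this at once gives $\wt M(\lambda, J) \supseteq \wt L_{J'}^{\max}(\lambda) - \Z_{\geq 0}(\Delta^+ \setminus \Delta_J^+)$.

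For the forward inclusion $\subseteq$: by \eqref{Epvm}, the task reduces to the single key claim $\wt L_J^{\max}(\lambda) \subseteq \wt L_{J'}^{\max}(\lambda)$. To prove this, I would pass to the Levi $\mathfrak{l}_{J'}$ and consider its parabolic Verma module $M_{J'}(\lambda, J) := {\rm Ind}_{\mathfrak{l}_J + \mathfrak{n}^+_{J'}}^{\mathfrak{l}_{J'}} L_J^{\max}(\lambda)$, which surjects onto $L_{J'}^{\max}(\lambda)$ with kernel $K$ generated as an $\mathfrak{l}_{J'}$-module (by $\mathfrak{sl}_2$-theory as in Remark~\ref{RT1}) by the maximal vectors $f_j^{\langle \lambda, \alpha_j^\vee \rangle + 1} v_\lambda$ for $j \in J' \setminus J$. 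Two PBW-style observations then conclude the argument. First, on the ``slice'' $\lambda - \Z_{\geq 0}\Pi_J$, the weight spaces of $M_{J'}(\lambda, J)$ and $L_J^{\max}(\lambda)$ coincide, since the complementary PBW factor $U\bigl(\bigoplus_{\alpha \in \Delta^-_{J'} \setminus \Delta^-_J} \mathfrak{g}_\alpha\bigr)$ contributes only weights involving some $\alpha_k$ with $k \in J' \setminus J$. Second, each generator of $K$ has weight $\lambda - m_j \alpha_j$ with $j \notin J$ (where $m_j := \langle \lambda, \alpha_j^\vee \rangle + 1$), and generates an $\mathfrak{l}_{J'}$-highest weight submodule whose weights all have strictly positive $\alpha_j$-coefficient; hence $K$ has no weights in the slice $\lambda - \Z_{\geq 0}\Pi_J$. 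Consequently, $\wt L_{J'}^{\max}(\lambda) \cap (\lambda - \Z_{\geq 0}\Pi_J) = \wt L_J^{\max}(\lambda)$, which in particular gives the desired inclusion.

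The reverse direction is essentially formal; the content of the proposition sits in the forward direction, and specifically in the kernel-vanishing step on the $(\lambda - \Z_{\geq 0}\Pi_J)$-slice. This step is a direct root-system computation, guided by the $\mathfrak{sl}_2$-theory identification of the kernel generators as maximal vectors, and is the only point where anything nontrivial needs to be verified.
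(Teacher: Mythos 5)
Your proof is correct, and it takes a genuinely different route from the paper's in both halves. For the inclusion $\supseteq$, the paper first reduces by sandwiching to the extremal case $J' = J_\lambda$, then observes that the parabolic Verma module over $\mathfrak{g}_{J_\lambda}$ for $(\lambda,J)$ surjects onto $L_{J_\lambda}^{\max}(\lambda)$, giving $\wt L_{J_\lambda}^{\max}(\lambda) \subset \wt L_J^{\max}(\lambda) - \Z_{\geq 0}(\Delta_{J_\lambda}^+ \setminus \Delta_J^+)$, and finishes by subtracting $\Z_{\geq 0}(\Delta^+ \setminus \Delta_J^+)$. Your argument instead uses the surjection $M(\lambda,J) \twoheadrightarrow M(\lambda,J')$ together with closedness of $\wt M(\lambda,J)$ under subtraction; this handles all intermediate $J'$ at once without the reduction step. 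For the inclusion $\subseteq$, both routes boil down to $\wt L_J^{\max}(\lambda) \subseteq \wt L_{J'}^{\max}(\lambda)$. The paper simply calls this ``obvious'' -- which implicitly appeals to the fact that the $\mathfrak{l}_J$-submodule of $L_{J'}^{\max}(\lambda)$ generated by the highest weight line is an integrable highest weight $\mathfrak{l}_J$-module with the same weight-set as $L_J^{\max}(\lambda)$ (by Theorem~\ref{Tdk} over $\mathfrak{l}_J$, or uniqueness of the finite-dimensional simple when $\mathfrak{l}_J$ is of finite type). Your kernel-vanishing computation on the slice $\lambda - \Z_{\geq 0}\Pi_J$ is a self-contained, PBW-level proof of this inclusion -- indeed of the stronger slice equality $\wt L_{J'}^{\max}(\lambda) \cap (\lambda - \Z_{\geq 0}\Pi_J) = \wt L_J^{\max}(\lambda)$, which is the $\xi = 0$ term of Lemma~\ref{Lslice} applied over $\mathfrak{l}_{J'}$. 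It costs a few more lines but avoids citing the Dhillon--Khare weight theorem and makes explicit what the paper treats as folklore. Both arguments are sound.
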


Before proving \eqref{Epvm2}, notice that if $L_{J'}^{\max}(\lambda)
\not\cong L_J^{\max}(\lambda)$ then \eqref{Epvm2} does not extend to the
level of representations via parabolic induction. For instance, in the
simplest case of $\mathfrak{g} = \mathfrak{sl}_2(\mathbb{C})$, $0 \neq
\lambda \in P^+$ (so $J_\lambda = I = \{ \alpha_1 \}$
by abuse of notation), $J = \emptyset$, and $J' = J_\lambda$, one has
\[
U\mathfrak{g} \otimes_{U\mathfrak{p}_J} L_J^{\max}(\lambda) = M(\lambda)
\cong M(0) \otimes \mathbb{C}_\lambda,
\]
where $\mathbb{C}_\lambda$ is a one-dimensional $\mathfrak{h}$-module
with eigenvalue $\lambda$. In contrast,
\[
U\mathfrak{g} \otimes_{U\mathfrak{p}_J} L_{J_\lambda}^{\max}(\lambda)
\cong M(0) \otimes L(\lambda),
\]
and this has a strictly larger character than $M(\lambda)$. Thus, the
family~\eqref{Epvm2} of Minkowski difference formulas appears to be a
novel one, and is valid on the level of weights but not for characters.

\begin{proof}[Proof of Proposition~\ref{Pminkowski}]
The formula for all $J'$ follows from the ones for $J' = J$
in~\eqref{Epvm} and for $J' = J_\lambda$, by sandwiching. Thus it
suffices to prove~\eqref{Epvm2} for $J' = J_\lambda$, i.e.\ that
\[
\wt L_J^{\max}(\lambda) - \Z_{\geq 0} (\Delta^+ \setminus \Delta^+_J) =
\wt L_{J_\lambda}^{\max}(\lambda) - \Z_{\geq 0} (\Delta^+ \setminus
\Delta^+_J).
\]
The forward inclusion is obvious, since $J \subset J_\lambda$. Next, the
parabolic Verma module over $\mathfrak{g}_{J_\lambda}$ for $(\lambda,J)$
surjects onto the maximal integrable $\mathfrak{g}_{J_\lambda}$-module
$L_{J_\lambda}^{\max}(\lambda)$. Hence by~\eqref{Epvm},
\[
\wt L_{J_\lambda}^{\max}(\lambda) \subset \wt L_J^{\max}(\lambda) -
\Z_{\geq 0} (\Delta^+_{J_\lambda} \setminus \Delta^+_J) \subset \wt
L_J^{\max}(\lambda) - \Z_{\geq 0} (\Delta^+ \setminus \Delta^+_J).
\]
Subtracting $\Z_{\geq 0} (\Delta^+ \setminus \Delta^+_J)$ from both
sides proves the reverse inclusion.
\end{proof}

\begin{remark}
We take a moment to explain how Theorem~\ref{T2} generalizes one case
in Proposition~\ref{Pminkowski}. Let $V = M(\lambda,J)$ in
Theorem~\ref{T1}. Then $\mathcal{H}_V$ is the upper-closure in ${\rm
Indep}(J_\lambda)$ of $\{ \{ j \} : j \in J \}$. Hence
$\mathbb{M}(\mathcal{H}_V) = M(0,J)$ in Theorem~\ref{T2}, and so
by~\eqref{Epvm} we recover the $J' = J_\lambda$ case of~\eqref{Epvm2}:
\[
\wt M(\lambda,J) = \wt V = \wt L_{J_\lambda}^{\max}(\lambda) + \wt M(0,J)
= \wt L_{J_\lambda}^{\max}(\lambda) - \Z_{\geq 0} (\Delta^+ \setminus
\Delta^+_J).
\]
\end{remark}


\begin{proof}[Proof of Theorem~\ref{T2}]
By Theorem~\ref{T1}, and Proposition~\ref{Pminkowski} for $J' =
J_\lambda$, and recalling $\mathfrak{J}(V)$ from~\eqref{EJV},
\[
\wt V
= \bigcup_{J \in \mathfrak{J}(V)} \left( \wt
L_{J_\lambda}^{\max}(\lambda) - \Z_{\geq 0} (\Delta^+ \setminus
\Delta^+_J) \right)
= \wt L_{J_\lambda}^{\max}(\lambda) + \bigcup_{J \in \mathfrak{J}(V)}
- \Z_{\geq 0} (\Delta^+ \setminus \Delta^+_J).
\]

Next, consider the highest weight module $M =
\mathbb{M}(\mathcal{H}_V)$. Here $\lambda = 0$, and $\mathcal{H}_M =
\mathcal{H}_V$ by Lemma~\ref{Luhwm}. Again applying Theorem~\ref{T1},
this time to the right-hand side of~\eqref{ET2},
\begin{equation}\label{EaltB}
\wt L_{J_\lambda}^{\max}(\lambda) + \wt \mathbb{M}(\mathcal{H}_V) = 
\wt L_{J_\lambda}^{\max}(\lambda) + \bigcup_{J \in \mathfrak{J}(V)} \wt
M(0,J),
\end{equation}
and via~\eqref{Epvm}, this equals the final expression in the previous
computation.

This shows~\eqref{ET2}. For the penultimate assertion, Theorem~\ref{T1}
yields 
\begin{equation}\label{Eqn wt form. by higher order Vermas}
\wt \mathbb{M}(\lambda, \mathcal{H}_V)\  =\  \wt V
\end{equation}
(which also
implies the final assertion of course), so the map $\Psi_\lambda$ is
surjective. Also by Remark~\ref{Ruhwm}(4), the upper-closed subset
$\mathcal{H} \subset {\rm Indep}(J_\lambda)$ is proper, if and only if
$\emptyset \not\in \mathcal{H}$, if and only if $\mathbb{M}(\lambda,
\mathcal{H}) \neq 0$. Now to show injectivity, suppose $\mathcal{H}_1
\neq \mathcal{H}_2$ are proper upper-closed subsets of ${\rm
Indep}(J_\lambda)$.
Choose a minimal set $H$ in their symmetric difference, say $H \in
\mathcal{H}_1$. Then the one-dimensional weight space
\[
\prod_{h \in H} f_h^{\langle \lambda, \alpha_h^\vee \rangle + 1} \cdot
M(\lambda)_\lambda
\]
is easily seen to be quotiented in $\mathbb{M}(\lambda, \mathcal{H}_1)$
but not in $\mathbb{M}(\lambda, \mathcal{H}_2)$. Thus (e.g.\ by
Remark~\ref{RT1}),
\[
\lambda - \sum_{h \in H} (\langle \lambda, \alpha_h^\vee \rangle + 1)
\alpha_h \in \wt \mathbb{M}(\lambda, \mathcal{H}_2) \setminus \wt
\mathbb{M}(\lambda, \mathcal{H}_1),
\]
and so the map $\Psi_\lambda$ is injective as well.
\end{proof}
\begin{proof}[{Proof of Theorem \ref{Theorem wt-form. by composition series simples}}]
 {\color{black} 
 Observe for any $M(\lambda)\twoheadrightarrow V$,  the result in part (b) of the theorem immediately shows (a).
The proof of part (b) involves some standard arguments for integrable simples over parabolic subalgebras, and the proof strategy in \cite[Proposition 1.16]{MDWF}.
Recall the usual $\mathbb{Z}_{\leq 0}\Pi$-gradation of $U(\mathfrak{n}^-)$.\\
Useful \underline{fact} : \ \ \ Fix $\lambda'\in \mathfrak{h}^*$ and let $\phi: M(\lambda')\longrightarrow L(\lambda')$ be the canonical $\mathfrak{g}$-module map.
Then
                 \begin{equation}\label{Eqn simplicity criterion}
\text{we have \ for any } x \in M(\lambda') \ \ -\qquad  \phi(x)\neq 0 \ \text{in}\  L(\lambda') \quad \iff \quad  m_{\lambda'}\ \in  \ U(\mathfrak{n}^+)\cdot x.
\end{equation}
By the simplicity of $L(\lambda')$, the same result holds true if we replace $m_{\lambda'}$ on the r.h.s. of the above equivalence by any $y\in M(\lambda')$ with $\phi(y)\neq 0$; since $L(\lambda')_{\lambda'}\subseteq U(\mathfrak{n}^+)\phi(y)$.
To see fact \eqref{Eqn simplicity criterion}, let $X:=U(\mathfrak{n}^+)x\  \supset\  \{x\}\ \neq 0$.
$U(\mathfrak{n}^-)X$ is a submodule of $M(\lambda')$ by the triangular decomposition of $\mathfrak{g}$.
Further, $\big(U(\mathfrak{n}^-)X\big)_{\lambda'}\subseteq X_{\lambda'}$, since $\wt\big(U(\mathfrak{n}^-)X\big) \preceq \wt X \preceq \lambda'$.
If both $\phi(x)\neq 0 \text{ in }L(\lambda')$ and $X_{\lambda'}=0$ happen, then $U(\mathfrak{n}^-)\phi(X)\neq 0$ leads to a proper submodule in $L(\lambda')$ $\Rightarrow\!\Leftarrow$.
So
\begin{equation}\label{Eqn raising property of simples for proposition (b)}
\text{ given } \ 0\neq \ v \ \in \ L(\lambda'), \qquad\quad  e_{j_1}\ \cdots \ e_{j_r} \ \cdot \ v \ \neq 0 \ \  \in L(\lambda)_{\lambda} \quad \text{ for some }\ \ j_1,\ldots, j_r\ \in\ I.
\end{equation}
Conversely, if $X_{\lambda'}\neq 0$, then $U(\mathfrak{n}^-)X = M(\lambda')$ by definitions, and so $\phi(x)\neq 0\text{ in } L(\lambda')$.

We begin the proof of part (b) by fixing $\mu\longleftrightarrow (c_i)_{i\in I},\  J,\ H$ (possibly $=\emptyset$) as in the theorem.
Note $\lambda-\mathbb{Z}_{\geq 0}\Pi_{J^c}\subseteq \wt V$ as $J^c$ avoids every minimal hole of $V$, as explained in the proof of Proposition~\ref{Pinclusion}.
Next we fix the maximal submodule $V_1$ of the highest weight (sub-)module $U(\mathfrak{n}^-)\prod_{h\in H}f_h^{\lambda(\alpha_h^{\vee})+1}v_{\lambda}$ inside $V$; for a highest weight vector $0\neq v_{\lambda}\in V_{\lambda}$.
  By the $W_J$-conjugation if necessary, we assume $\mu$ to be dominant integral w.r.t. $\Pi_J$.
 The non-degeneracy of $\mu$ w.r.t. $\lambda-\sum_{i\in J^c}c_i\alpha_i\in \wt V$ implies by \cite[Proposition 11.2]{Kac book} over $\mathfrak{g}_J$ that $\mu\in \wt L_J\Big(\lambda -\sum_{i\in J^c}c_i\alpha_i\Big)$.
 We assume henceforth $J^c\cap \supp(\lambda-\mu)\neq \emptyset$; else we are done by $\mu\in \wt L_J(\lambda)\subseteq \wt L(\lambda)$.

Let us fix using \cite[Algorithm 7.3]{MDWF} --  or \cite[Algorithm 1.15]{MDWF} when $\lambda\in P^+$ -- an enumeration $\big(J^c \cap \supp(\lambda-\mu) \big)\cap J_{\lambda}=\{i_1,\ldots, i_n\}$ and its terminating independent set $H=\{i_{m+1}, \ldots i_n\}$ for $m< n$;
we assume $H\neq \emptyset$ as otherwise we are done as in the last line of the above paragraph. 
Next when $\big(J^c\cap \supp(\lambda-\mu)\big)\cap J_{\lambda}^c\neq \emptyset$, we fix for it any enumeration $\{i_{n+1},\ldots, i_r\}$.
By these enumerations, we have that the weight vector $v:= f_{i_1}^{c_{i_1}}\cdots f_{i_m}^{c_{i_m}} \Big(\prod\limits_{h\in H}f_h^{c_h}\Big) \big(f_{i_{n+1}}^{c_{i_{n+1}}}\cdots f_{i_r}^{c_{i_r}}\big) v_{\lambda}$ survives in $V_{\lambda-\sum_{i\in J^c}c_i\alpha_i }$. 
The proof of this is the same as that of \cite[Non-vanishing property/equation (7.12)]{MDWF}, namely: $\Big( \prod_{h\in H}e_h^{c_{h}-\lambda\big(\alpha_h^{\vee}\big)-1} \Big)\big(e_{i_{n+1}}^{c_{i_{n+1}}}\cdots e_{i_r}^{c_{i_r}}\big)  \big(e_{i_m}^{c_{i_m}} \cdots e_{i_1}^{c_{i_1}}\big)\cdot v \ \in \big(\mathbb{C}\setminus\{0\}\big)\prod\limits_{h\in H}f_h^{\lambda(\alpha_h^{\vee})+1}v_{\lambda}\ = V_{s_{H}\bullet \lambda}\setminus\{0\}$.
So by \eqref{Eqn simplicity criterion}, $v$ must survive in the simple subquotient $\frac{U(\mathfrak{n}^-)\prod\limits_{h\in H}f_h^{\lambda\big(\alpha_h^{\vee}\big)+1}v_{\lambda}}{V_1}\underset{\mathfrak{g}-mod}{\simeq} L\big(s_H \bullet\lambda\big)$.
This means \big(as in the proof of  \cite[Proposition 1.16 (c)]{MDWF}\big)  $\lambda-\sum_{i\in J^c}c_i\alpha_i\in \wt L\big(s_H\bullet \lambda\big)\subseteq \wt V$.
On the other hand, observe that the weight vector $v\in V_{\lambda-\sum_{i\in J^c}c_i\alpha_i}$ is maximal for the $\mathfrak{n}^+_J$-action, and so it generates the highest weight module $U(\mathfrak{n}^-_J)v$ over $\mathfrak{g}_J$.
So $\wt L_J\Big(\lambda -\sum_{i\in 
 J^c}c_i\alpha_i\Big)\ \subseteq \wt \big(U(\mathfrak{n}^+_J)v\big)\ \subseteq \wt V$.
 Moreover by $\mu\in \wt L_J\Big(\lambda-\sum_{i\in J^c}c_i\alpha_i\Big)$, we have a homogeneous element $F\in U(\mathfrak{n}^-)_{-\sum_{j\in J}c_j\alpha_j}$ and (not necessarily distinct) nodes $j_1,\ldots j_r\in J$ with $e_{j_1}\cdots e_{j_r} F v\in (\mathbb{C}\setminus \{0\})v$.
Putting together the above observed non-vanishings under the raising actions along both $J$ and $J^c$ directions, fact \eqref{Eqn simplicity criterion} shows $\mu\in \wt L\big(s_H\bullet \lambda\big)$, as required.
 }
\end{proof}

\section{Theorem~\ref{T3}: Higher order approximations, integrability,
and weight-formulas}\label{ST3}

This section begins by proving our next main result in somewhat greater
detail than may be necessary, to help understand the subsequent examples.
Following the examples, the three subsections discuss the other parts of
the section-title, for every highest weight $\mathfrak{g}$-module.

\begin{proof}[Proof of Theorem~\ref{T3}]
Given two highest weight modules $V,V'$ with common highest weight
$\lambda$, Theorem~\ref{T2} asserts:
(1)~$\wt V = \wt V'$ if and only if $\mathcal{H}_V = \mathcal{H}_{V'}$;
and (2)~$\mathbb{M}(\lambda, \mathcal{H}_V) \twoheadrightarrow V$, with
equality of weights. This reasoning implies that $V^{\max}(\wt V) =
\mathbb{M}(\lambda, \mathcal{H}_V)$.

To construct $V^{\min}$, we adopt a more ``natural''
notation. By Theorem~\ref{T2}, instead of $S = \wt V$, one can
equivalently work via $\Psi_\lambda^{-1}$ with proper upper-closed
subsets $\mathcal{H}'$ of ${\rm Indep}(J_\lambda)$.
(Thus $V^{\max}(\mathcal{H}') = \mathbb{M}(\lambda, \mathcal{H}')$.)
Now the construction of $V^{\min}(\mathcal{H}')$ is a familiar one: first
define $N(\lambda, \mathcal{H}') \subset M(\lambda)$ to be the sum of all
submodules $N$ of $M(\lambda)$ for which the weight space
\begin{equation}\label{Econdition}
N_{\lambda_H} = 0, \ \ \text{where} \ \ \lambda_H = \lambda -
\sum_{h \in H} (\langle \lambda, \alpha_h^\vee \rangle + 1 ) \alpha_h,
\qquad \forall H \in {\rm Indep}(J_\lambda) \setminus \mathcal{H}'.
\end{equation}
Next, define the highest weight module
\begin{equation}
\mathbb{L}(\lambda, \mathcal{H}') := M(\lambda) / N(\lambda,
\mathcal{H}').
\end{equation}

We claim this is precisely $V^{\min}(\mathcal{H}')$. Indeed, note
$\mathcal{H}_{\mathbb{L}(\lambda, \mathcal{H}')} = \mathcal{H}',$ which
shows via $\Psi_\lambda$ that $\wt \mathbb{L}(\lambda, \mathcal{H}') =
\wt \mathbb{M}(\lambda, \mathcal{H}')$, proving one implication in the
desired property. 
%
Next, if $\wt V = \wt \mathbb{M}(\lambda, \mathcal{H}')$, consider the
exact sequence $0 \to N_V \to M(\lambda) \to V \to 0$. Since
$\mathcal{H}_V = \mathcal{H}'$ (via $\Psi_\lambda$), the definition of
$\mathcal{H}_V$ implies~\eqref{Econdition} for $N = N_V$. But then $N_V
\subset N(\lambda, \mathcal{H}')$, so $V \twoheadrightarrow
\mathbb{L}(\lambda, \mathcal{H}')$.
\end{proof}

\begin{remark}\label{RLwts}
Since $\wt \mathbb{L}(\lambda, \mathcal{H}_V) = \wt V = \wt
\mathbb{M}(\lambda, \mathcal{H}_V)$, the finite collection $\{ \wt
\mathbb{L}(\lambda, \mathcal{H}_V) \}_V = \{ \wt \mathbb{L}(\lambda,
\mathcal{H}) \}_{\mathcal{H}}$ also exhausts all weight-sets of highest
weight modules $M(\lambda) \twoheadrightarrow V$.
\end{remark}

\begin{example}
As a special case of Remark~\ref{RLwts}: say $\mathfrak{g} =
\mathfrak{sl}_2^{\oplus n}$ for some $n \geq 1$. Then for every highest
weight module $M(\lambda) \twoheadrightarrow V$, we have
$\mathbb{M}(\lambda, \mathcal{H}_V) = V = \mathbb{L}(\lambda,
\mathcal{H}_V)$, since their weights agree and all weight spaces are
$1$-dimensional. (In particular, $\mathbb{M}(\lambda, \mathcal{H}) =
\mathbb{L}(\lambda, \mathcal{H})$ for all $\lambda \in \mathfrak{h}^*$
and $\mathcal{H}$.)
\end{example}

As mentioned in the proof, the construction of $\mathbb{L}(\lambda,
\mathcal{H}')$ should sound familiar to the reader. We illustrate with
three special cases from previous literature, classical and modern; the
third provides an alternate proof/solution to a question (unpublished)
posed by Lepowsky, as we explain below.

\begin{example}
The original ``zeroth order'' construction (as is explained presently)
along these lines is that of the simple module $L(\lambda)$. Indeed, that
is the special case where one quotients $M(\lambda)$ by the sum of all
proper submodules $N$, i.e.\ submodules $N$ for which $N_\lambda = 0$.
Thus,
\begin{equation}\label{E0thorder}
L(\lambda) = \mathbb{L}(\lambda, \mathcal{H}'), \quad \text{where} \quad
\mathcal{H}' = {\rm Indep}(J_\lambda) \setminus \{ \emptyset \}.
\end{equation}
\end{example}

\begin{example}
The second-named author recently showed in \cite[Theorem B]{MDWF} the
existence of $V^{\max}(S), V^{\min}(S)$ for $S = \wt M(\lambda,J)$.
Clearly, this is a special case of Theorem~\ref{T3}.
\end{example}

The third example is slightly different in flavor (and motivates the next
two subsections); it comes from \cite{DK}. The authors first explain that
associated to every module is its \textit{first order information}:

\begin{theorem}[Dhillon--Khare, \cite{DK}]\label{T1storder}
Given $\lambda \in \mathfrak{h}^*$ and a highest weight
$\mathfrak{g}$-module $M(\lambda) \twoheadrightarrow V$, the following
``first order data'' are equivalent, i.e., can each be recovered from the
others:
\begin{enumerate}
\item The integrability,
$J_V := \{ i \in J_\lambda \ | \ f_i^{\langle \lambda, \alpha_i^\vee
\rangle + 1} V_\lambda = 0 \}$ (as in~\eqref{Eint}).

\item The Weyl group symmetry of $\wt V$.

\item The convex hull $\conv (\wt V)$.
\end{enumerate}
Thus the parabolic Verma module $M(\lambda, J_V) \twoheadrightarrow V$,
and they have the same convex hull of weights.

Moreover, for simple or parabolic Verma modules $V$, these data are
further equivalent to $(4)$~the weights of $V$.
\end{theorem}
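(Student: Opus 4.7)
The plan is to centerpiece the fact that $M(\lambda, J_V) \twoheadrightarrow V$, from which the three equivalences and the equality of convex hulls follow. For the surjection: by the definition of $J_V$, for each $i \in J_V$ the vector $f_i^{\langle \lambda, \alpha_i^\vee \rangle + 1} m_\lambda \in M(\lambda)$ is a maximal vector (by $\mathfrak{sl}_2$-theory, since $i \in J_\lambda$) whose image in $V$ vanishes. Hence $M(\lambda) \twoheadrightarrow V$ factors through the quotient by the submodule generated by these vectors as $i$ ranges over $J_V$, which is exactly $M(\lambda, J_V)$. This immediately yields $\wt V \subset \wt M(\lambda, J_V)$ and hence the forward inclusion $\conv(\wt V) \subset \conv(\wt M(\lambda, J_V))$.

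Next I would prove $(1) \Leftrightarrow (2)$ using $\mathfrak{sl}_2$-theory. The direction $(1) \Rightarrow (2)$ is standard: for each $i \in J_V$, the finite-dimensional $\mathfrak{sl}_{2,i}$-module generated by $V_\lambda$ propagates via the Chevalley relations to a locally $\mathfrak{sl}_{2,i}$-integrable action on $V$, so $s_i$ permutes $\wt V$, and thus $\wt V$ is $W_{J_V}$-invariant. For $(2) \Rightarrow (1)$: if $s_i$ stabilizes $\wt V$ for some $i \in J_\lambda$, then for any $k > \langle \lambda, \alpha_i^\vee \rangle$ the image $s_i(\lambda - k\alpha_i) = \lambda + (k - \langle \lambda, \alpha_i^\vee \rangle)\alpha_i$ would lie outside $\lambda - \Z_{\geq 0}\Pi \supseteq \wt V$, so $V_{\lambda - k\alpha_i} = 0$ for all such $k$, yielding $i \in J_V$. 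The implication $(2) \Rightarrow (3)$ is immediate, and $(3) \Rightarrow (2)$ uses the same flavor of argument: if $s_i$ preserved $\conv(\wt V)$ for some $i \in J_\lambda \setminus J_V$, then (since $i \notin J_V$) $\lambda - (\langle \lambda, \alpha_i^\vee \rangle + 1)\alpha_i \in \wt V$, but its $s_i$-image $\lambda + \alpha_i$ would have to lie in $\conv(\wt V) \subset \lambda - \mathbb{R}_{\geq 0}\Pi$, a contradiction.

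For the equality $\conv(\wt V) = \conv(\wt M(\lambda, J_V))$, by~\eqref{Epvm} the reverse inclusion amounts to $\conv(W_{J_V}\lambda) - \mathbb{R}_{\geq 0}(\Delta^+ \setminus \Delta^+_{J_V}) \subset \conv(\wt V)$. The polytope part $\conv(W_{J_V}\lambda) \subset \conv(\wt V)$ follows from the $W_{J_V}$-invariance of $\wt V$ established above, together with $\lambda \in \wt V$. For the cone direction I would apply Theorem~\ref{Tnoholes} to suitable sub-Levi modules $U(\mathfrak{g}_{J_V \cup K}) v_\lambda \subset V$ for $K \subset J_V^c$: along any $\alpha \in \Delta^+ \setminus \Delta^+_{J_V}$ the relevant sub-Levi submodule has no holes in the $K$-directions (since $K \cap J_V = \emptyset$, so the singleton obstructions vanish), and this produces enough rays in $\wt V$ to fill the claimed cone at the convex-hull level. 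This cone-filling step is the main obstacle, as a direct lattice-level comparison can fail due to interior holes in $\wt V$; however, these holes are absorbed upon passing to the convex hull.

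Finally, the moreover part is essentially immediate: for $V = M(\lambda, J)$ one has $J_V = J$ directly, so $\wt V = \wt M(\lambda, J)$ is determined by $(\lambda, J_V)$ via~\eqref{Epvm}; and for $V = L(\lambda)$, $J_V = J_\lambda$, so Theorem~\ref{Tdk} gives $\wt L(\lambda) = \wt M(\lambda, J_\lambda)$, again determined by $(\lambda, J_V)$.
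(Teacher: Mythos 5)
This is a cited theorem from Dhillon--Khare~\cite{DK}; the paper records it as a black box and does not include a proof, so there is nothing in the text to compare your attempt against directly. That said, there is a concrete gap in your argument, localized exactly where you flag the "main obstacle."

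Your reduction $M(\lambda, J_V) \twoheadrightarrow V$, the equivalence $(1) \Leftrightarrow (2)$ via $\mathfrak{sl}_2$-theory, and the recovery of $J_V$ from the simple-reflection stabilizers of $\wt V$ and $\conv(\wt V)$ are all fine, as is the "moreover" part. The problem is the reverse inclusion $\conv(\wt M(\lambda, J_V)) \subset \conv(\wt V)$. You propose to apply Theorem~\ref{Tnoholes} to sub-Levi modules $U(\mathfrak{g}_{J_V \cup K}) v_\lambda$ with $K \subset J_V^c$, asserting that these modules "have no holes in the $K$-directions (since $K \cap J_V = \emptyset$, so the singleton obstructions vanish)." But the hypothesis of Theorem~\ref{Tnoholes} is $\mathcal{H} = \emptyset$, i.e.\ \emph{no holes of any order}, and the absence of singleton holes does not preclude higher-order holes. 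The paper's own running example is a counterexample to your claim: over $\mathfrak{g} = \mathfrak{sl}_2 \oplus \mathfrak{sl}_2$ with $\lambda = (0,0)$ and $V = V_{00} = M(0,0)/M(-2,-2)$ one has $J_V = \emptyset$, so taking $K = \{1,2\}$ gives $U(\mathfrak{g}_K) v_\lambda = V_{00}$ itself, which has the hole $H = \{1,2\}$ -- so Theorem~\ref{Tnoholes} does not apply, and $\wt U(\mathfrak{g}_K) v_\lambda \subsetneq \wt M_K(0,0)$. You then say that these holes "are absorbed upon passing to the convex hull," but that assertion is exactly the content of the implication $(1) \Rightarrow (3)$ that you are trying to establish, so as written the argument is circular. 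To close the gap one needs a genuine argument that the holes are confined to the relative interior of $\conv(\wt M(\lambda, J_V))$ -- for instance via an explicit description of the face structure and a ray-filling argument for each $\alpha \in \Delta^+ \setminus \Delta^+_{J_V}$ (noting such $\alpha$ has a node in its support outside $J_V$, along which $f$-powers of $v_\lambda$ never vanish), propagated by $W_{J_V}$-symmetry -- rather than an appeal to Theorem~\ref{Tnoholes} whose hypothesis fails.
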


Dhillon--Khare next write down what is our third example. Notice in the
definition of $\mathcal{H}_V$ in Theorem~\ref{T1}, the highest weight
$\lambda$ is the ``$0$th order hole'' in $V$. Given
Theorem~\ref{T1storder}, we would similarly like to call the integrable
simple directions $J_V$ the ``$1$st order holes''. This is supported by

\begin{example}\label{ExDK}
Dhillon--Khare \cite{DK} (following Khare \cite{Kh1} in some cases)
showed the existence of unique largest and smallest modules $M(\lambda,
J)$ and $L(\lambda,J)$, respectively, with a prescribed integrability $J
\subset J_\lambda$ -- or by Theorem~\ref{T1storder}, with a prescribed
shape of the convex hull of weights. The construction is as above:
$M(\lambda,J) = \mathbb{M}(\lambda, \{ \{ j \} : j \in J \}) =
\mathbb{M}(\lambda, \{ H \in \mathcal{H}_{M(\lambda,J)} : |H| \leq 1
\})$, and the authors introduced $L(\lambda, J)$. In the language of this
paper,
\begin{equation}\label{E1storder}
L(\lambda,J) = \mathbb{L}(\lambda, \mathcal{H}'), \quad \text{where} \quad
\mathcal{H}' = {\rm Indep}(J_\lambda) \setminus \left( \{ \emptyset \}
\sqcup \{ \{ i \} : i \in J_\lambda \setminus J \} \right).
\end{equation}
Note that $\mathcal{H}'$ is indeed upper-closed in ${\rm
Indep}(J_\lambda)$.
\end{example}

\subsection{Universal modules approximating a highest weight module}

Example~\ref{ExDK} refines the familiar chain of surjections
$M(\lambda) \twoheadrightarrow V \twoheadrightarrow L(\lambda)$ to
\[
M(\lambda) \twoheadrightarrow M(\lambda,J_V) \twoheadrightarrow V
\twoheadrightarrow L(\lambda,J_V) \twoheadrightarrow L(\lambda).
\]
The ``zeroth''/outermost boundary-terms share the same highest weight as
$V$, while the ``first''/next inner terms share the same integrability as
well. Above, we have now produced a refinement of this chain, by
replacing the innermost $V$ by the surjections
\[
\cdots \twoheadrightarrow \mathbb{M}(\lambda, \mathcal{H}_V)
\twoheadrightarrow V
\twoheadrightarrow \mathbb{L}(\lambda, \mathcal{H}_V) 
\twoheadrightarrow \cdots
\]
These refinements, and the comments after Theorem~\ref{T1storder},
motivate us to define the following chain of highest weight modules:

\begin{defn}\label{Dapprox}
Fix $\mathfrak{g}, \lambda$, and an upper-closed subset $\mathcal{H}
\subset {\rm Indep}(J_\lambda)$, and denote by $\mathcal{H}^c$ its
complement. Given an integer $0 \leq k \leq \infty$, define the universal
``upper'' and ``lower'' modules
\begin{align}
\mathbb{M}_k(\lambda, \mathcal{H}) := &\ \mathbb{M}(\lambda, \{ H \in
\mathcal{H} : |H| \leq k \}),\label{EMk}\\
\mathbb{L}_k(\lambda, \mathcal{H}) := &\ \mathbb{L}(\lambda, \{ H \in
\mathcal{H}^c : |H| \leq k \}^c).\label{ELk}
\end{align}
(Note, the $\mathcal{H}$-set on the right in~\eqref{ELk} is
upper-closed.)
Now given a module $M(\lambda) \twoheadrightarrow V$, define its
\textit{$k$th order upper- and lower-approximations} to be
$\mathbb{M}_k(\lambda, \mathcal{H}_V)$
and $\mathbb{L}_k(\lambda, \mathcal{H}_V)$, respectively.
\end{defn}

Once the definitions are in place, the following is straightforward.

\begin{prop}\label{P01approx}
For any nonzero module $M(\lambda) \twoheadrightarrow V$,
\begin{alignat}{3}
& \mathbb{M}_0(\lambda, \mathcal{H}_V) = M(\lambda), & \qquad &
\mathbb{M}_1(\lambda, \mathcal{H}_V) = M(\lambda, J_V),\\
& \mathbb{L}_0(\lambda, \mathcal{H}_V) = L(\lambda), & &
\mathbb{L}_1(\lambda, \mathcal{H}_V) = L(\lambda, J_V).
\end{alignat}
\end{prop}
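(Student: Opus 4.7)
The plan is to unwind the definitions of the upper and lower approximations (Definition~\ref{Dapprox}) for $k=0,1$, and match them against the characterizations of $M(\lambda), M(\lambda,J_V), L(\lambda), L(\lambda, J_V)$ already recorded earlier in the paper.

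For the upper approximations, I would start by noting that the only independent set of size $\leq 0$ is $\emptyset$, and since $V \neq 0$ we have $\emptyset \notin \mathcal{H}_V$ by Remark~\ref{Ruhwm}(4)(i). Hence $\{H \in \mathcal{H}_V : |H| \leq 0 \} = \emptyset$, and by Remark~\ref{Ruhwm}(4)(ii), $\mathbb{M}_0(\lambda, \mathcal{H}_V) = \mathbb{M}(\lambda, \emptyset) = M(\lambda)$. For $k=1$, the relevant subset is $\{\emptyset\} \cup \{\{i\}: i \in J_\lambda,\ \{i\} \in \mathcal{H}_V\}$; discarding the empty set and invoking the definition of $\mathcal{H}_V$, the singleton $\{i\}$ lies in $\mathcal{H}_V$ iff $f_i^{\langle \lambda, \alpha_i^\vee\rangle +1} V_\lambda = 0$, i.e.\ iff $i \in J_V$. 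Hence $\{H \in \mathcal{H}_V : |H| \leq 1\} = \{\{j\} : j \in J_V\}$, and by Remark~\ref{Ruhwm}(2) this gives $\mathbb{M}_1(\lambda, \mathcal{H}_V) = M(\lambda, J_V)$.

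For the lower approximations, I would use the same unpacking but complement. Since $\emptyset \in \mathcal{H}_V^c$, the set $\{H \in \mathcal{H}_V^c : |H| \leq 0\}$ is exactly $\{\emptyset\}$, so $\mathbb{L}_0(\lambda, \mathcal{H}_V) = \mathbb{L}(\lambda, {\rm Indep}(J_\lambda) \setminus \{\emptyset\})$. This equals $L(\lambda)$ by~\eqref{E0thorder}. For $k=1$, one gets $\{H \in \mathcal{H}_V^c : |H| \leq 1\} = \{\emptyset\} \sqcup \{\{i\} : i \in J_\lambda \setminus J_V\}$ by the same integrability computation as above, whose complement in ${\rm Indep}(J_\lambda)$ is exactly the set $\mathcal{H}'$ appearing in~\eqref{E1storder}. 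Therefore $\mathbb{L}_1(\lambda, \mathcal{H}_V) = L(\lambda, J_V)$.

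The only step that requires any thought (rather than set-theoretic bookkeeping) is the identification of the singletons in $\mathcal{H}_V$ with $J_V$; but this is immediate from the definitions in Theorem~\ref{T1} and~\eqref{Eint}, so the proof really is just a definition-chase. No serious obstacle is expected.
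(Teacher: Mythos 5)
Your proof is correct and is exactly the definition-chase the paper intends: the paper itself gives no explicit argument beyond declaring the result ``straightforward,'' and your unwinding of Definition~\ref{Dapprox}, Remark~\ref{Ruhwm}, and the displays~\eqref{E0thorder}, \eqref{E1storder} supplies precisely those omitted details, including the key identification of the size-$1$ holes in $\mathcal{H}_V$ with $J_V$.
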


This explains the precise sense in which Verma modules and parabolic
Verma modules are the $0$th and $1$st order upper-approximations,
respectively, of every highest weight module.

\begin{remark}
The modules $\mathbb{M}_k, \mathbb{L}_k$ clearly refine the above chain
of surjections, since the $\mathcal{H}$-sets in~\eqref{EMk} are
increasing in $k$, and in~\eqref{ELk} are decreasing in $k$. That is, the
$\mathcal{H}$-sets in all terms in
\[
M(\lambda) \twoheadrightarrow M(\lambda,J_V) \twoheadrightarrow
\cdots \twoheadrightarrow \mathbb{M}(\lambda, \mathcal{H}_V)
\twoheadrightarrow V
\twoheadrightarrow \mathbb{L}(\lambda, \mathcal{H}_V) 
\twoheadrightarrow \cdots
\twoheadrightarrow L(\lambda,J_V) \twoheadrightarrow L(\lambda)
\]
(except the central $V$) increase from $\emptyset$ at the left, to ${\rm
Indep}(J_\lambda) \setminus \{ \emptyset \}$ at the right.
Moreover, the above chain of the $\mathbb{M}_k$ stabilizes, in that
$\mathbb{M}(\lambda, \mathcal{H}_V) = \mathbb{M}_\infty(\lambda,
\mathcal{H}_V) = \mathbb{M}_K(\lambda, \mathcal{H}_V)$, where $K$ is the
size of any largest (in size) independent set in $J_\lambda$. Similarly
for the chain of $\mathbb{L}_k$.
\end{remark}

As an \textbf{application} of these modules, we (re-)solve when the
integrability of a highest weight module determines its weights. Here is
one of the main results of Dhillon--Khare \cite{DK}:

\begin{theorem}\label{Tlepowsky}
Fix ($\mathfrak{g}$ and) a highest weight $\lambda$.
The integrability $J \subset J_\lambda$ of a highest weight module
determines its weights, if and only if the Dynkin diagram of $J_\lambda
\setminus J$ is a complete graph.
\end{theorem}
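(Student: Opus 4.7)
My plan is to reduce the claim to a purely combinatorial question about proper upper-closed subsets of ${\rm Indep}(J_\lambda)$, via Theorem~\ref{T2}. That theorem gives a bijection $\Psi_\lambda$ between such subsets and the weight-sets of highest weight modules $M(\lambda) \twoheadrightarrow V$, sending $\mathcal{H}_V \mapsto \wt V$. Unpacking the definition of $\mathcal{H}_V$, for $i \in J_\lambda$ one has $\{i\} \in \mathcal{H}_V$ if and only if $i \in J_V$. Hence ``integrability $J$ determines $\wt V$'' is equivalent to the statement that there is a unique proper upper-closed $\mathcal{H} \subset {\rm Indep}(J_\lambda)$ whose set of singletons equals $\{\{j\} : j \in J\}$.

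Next I would identify the minimal such upper-closed set as
\[
\mathcal{H}_{\min}(J) := \{H \in {\rm Indep}(J_\lambda) : H \cap J \neq \emptyset\},
\]
and observe that any strictly larger upper-closed set with the same singletons must contain some $H$ with $|H| \geq 2$ and $H \cap J = \emptyset$; such an $H$ is precisely an independent subset of the Dynkin subdiagram on $J_\lambda \setminus J$ of size at least two. Such an $H$ exists if and only if this subdiagram contains a non-edge, i.e., is not complete.

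The forward direction (complete $\Rightarrow$ determined) will then be immediate: no such extra $H$ exists, so $\mathcal{H}_{\min}(J)$ is the only option. For the converse, assuming $J_\lambda \setminus J$ is not complete, I would pick non-adjacent $i \neq i'$ in $J_\lambda \setminus J$ and construct the distinct proper upper-closed set
\[
\mathcal{H}' := \mathcal{H}_{\min}(J) \cup \{H \in {\rm Indep}(J_\lambda) : \{i, i'\} \subset H\},
\]
which has the same singletons as $\mathcal{H}_{\min}(J)$. Theorem~\ref{T2} then produces highest weight modules realizing $\mathcal{H}_{\min}(J)$ and $\mathcal{H}'$ as their hole-sets; these share the integrability $J$ but have distinct weight-sets. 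The main (and essentially only) point to verify is the dictionary between ``integrability'' and ``singletons in $\mathcal{H}_V$'' together with bijectivity of $\Psi_\lambda$; once that is in place the argument reduces to elementary bookkeeping on independent sets in the Dynkin diagram, so I do not foresee any serious obstacle beyond the reduction step itself.
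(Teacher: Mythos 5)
Your proposal is correct and takes essentially the same approach as the paper: both reduce the claim via Theorem~\ref{T2} and the bijection $\Psi_\lambda$ to the elementary combinatorial statement that there is a unique proper upper-closed subset of ${\rm Indep}(J_\lambda)$ with prescribed singletons iff ${\rm Indep}(J_\lambda\setminus J)$ contains nothing beyond $\emptyset$ and singletons. The paper packages the criterion as the equality of two extremal upper-closed sets (corresponding to $M(\lambda,J)$ and $L(\lambda,J)$), whereas you exhibit a competitor $\mathcal{H}'$ directly, but this is only a cosmetic difference.
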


This affirmatively answers a question asked by Lepowsky \cite{lepo2} (to
Khare) in connection with Theorem~\ref{T1storder}.
Namely, Lepowsky asked (in the language of this paper) whether or not the
holes in the weight-sets obtained from ``potential integrability'' as in
Example~\ref{Exrank2}, are the only obstructions to determining the set
of weights. Theorem~\ref{Tlepowsky} is now transparent from our main
results and higher order Verma modules -- indeed, the calculation now
reduces to one of set-theory:

\begin{proof}
By the construction of $L(\lambda,J)$ (in \cite{DK}, or
see~\eqref{E1storder}), it suffices to understand if $\wt M(\lambda, J) =
\wt L(\lambda, J)$ (or from above, when $\wt \mathbb{M}_1(\lambda,
\mathcal{H}) = \wt \mathbb{L}_1(\lambda, \mathcal{H})$). Via
Theorem~\ref{T2} and $\Psi_\lambda^{-1}$, this is if and only if the
upper-closure of the singleton sets in $J$ equals the right-hand side
of~\eqref{E1storder}, i.e., 
\[
{\rm Indep}(J_\lambda) \setminus {\rm Indep}(J_\lambda \setminus J) =
{\rm Indep}(J_\lambda) \setminus \left( \{ \emptyset \}
\sqcup \{ \{ i \} : i \in J_\lambda \setminus J \} \right).
\]
Taking complements, this happens if and only if $J_\lambda \setminus J$
is complete.
\end{proof}

\subsection{Higher order integrability, and stratification of the set of
highest weight modules}\label{Sspeculate}

The vigilant reader may have noticed that for the titular ``universal''
modules $\mathbb{M}_k(\lambda, \mathcal{H}), \mathbb{L}_k(\lambda,
\mathcal{H})$, while Theorem~\ref{T3} describes how they are individually
universal, we have not mentioned the sense in which (for each fixed $k$)
they are jointly so! (And indeed, the individual universalities disagree,
since the upper-closed -- equivalently, minimal -- $\mathcal{H}$-sets for
$\mathbb{M}_k(\lambda, \mathcal{H})$ and $\mathbb{L}_k(\lambda,
\mathcal{H})$ are unequal.)

We now explain the sought-for common universality, for each $k>0$ (for
$k=0$ it is simply the identification of the highest weight). The next
result shows in particular that this universality refines the common
universal property for the equal-weighted modules $\mathbb{M}(\lambda,
\mathcal{H}), \mathbb{L}(\lambda, \mathcal{H})$.

\begin{prop}[$k$th order universal property]\label{PMkLk}
Fix Kac--Moody $\mathfrak{g}$, a weight $\lambda \in \mathfrak{h}^*$, and
an integer $k \geq 1$. For a subset $\mathcal{X} \subset {\rm
Indep}(J_\lambda)$, write $\mathcal{X}_{\leq k}$ for the subset $\{ H \in
\mathcal{X} : |H| \leq k \}$.
\begin{enumerate}
\item Given an upper-closed subset $\mathcal{H} \subset {\rm
Indep}(J_\lambda)$, there exist unique smallest and largest upper-closed
subsets $\underline{\mathcal{H}}, \overline{\mathcal{H}}$ respectively,
such that
$\underline{\mathcal{H}}_{\leq k} = \mathcal{H}_{\leq k} =
\overline{\mathcal{H}}_{\leq k}$.

Moreover, these sets are precisely the upper-closures of the ones
occurring in the definitions of the modules
$\mathbb{M}_k(\lambda, \mathcal{H}), \mathbb{L}_k(\lambda, \mathcal{H})$
respectively, i.e.\ in Equations~\eqref{EMk} and~\eqref{ELk}.

\item In particular, for each upper-closed subset $\mathcal{H}$,
$\mathbb{M}_k(\lambda, \mathcal{H})$ and $\mathbb{L}_k(\lambda,
\mathcal{H})$ are the unique largest and smallest highest weight modules
with ``$k$th order integrability data'' $\mathcal{H}_{\leq k}$.
For $|J_\lambda| \leq k \leq \infty$, this specializes to
Theorem~\ref{T3}, i.e.\ the common universal property for 
$\mathbb{M}(\lambda, \mathcal{H}), \mathbb{L}(\lambda, \mathcal{H})$.

\item Any two intervals $[\mathbb{L}_k(\lambda, \mathcal{H}),
\mathbb{M}_k(\lambda, \mathcal{H})]$ and $[\mathbb{L}_k(\lambda,
\mathcal{H}'), \mathbb{M}_k(\lambda, \mathcal{H}')]$ are disjoint or
equal.
\end{enumerate}
\end{prop}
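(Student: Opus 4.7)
The plan is to exhibit $\underline{\mathcal{H}}$ and $\overline{\mathcal{H}}$ explicitly and then verify extremality. I take
\[
\underline{\mathcal{H}} := \text{upper-closure of } \mathcal{H}_{\leq k} \text{ in } {\rm Indep}(J_\lambda), \qquad \overline{\mathcal{H}} := \mathcal{H}_{\leq k} \cup \{ H \in {\rm Indep}(J_\lambda) : |H| > k \}.
\]
Upper-closedness of $\overline{\mathcal{H}}$ is immediate because any independent superset of an $H$ with $|H| > k$ again has size $> k$. Taking upper-closures in a poset graded by cardinality cannot introduce new elements of size $\leq k$, so $\underline{\mathcal{H}}_{\leq k} = \mathcal{H}_{\leq k}$; and $\overline{\mathcal{H}}_{\leq k} = \mathcal{H}_{\leq k}$ by construction. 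For extremality, any upper-closed $\mathcal{H}'$ with $\mathcal{H}'_{\leq k} = \mathcal{H}_{\leq k}$ must contain $\mathcal{H}_{\leq k}$ and hence its upper-closure $\underline{\mathcal{H}}$; conversely the only constraints on $\mathcal{H}'$ concern sets of size $\leq k$, so $\mathcal{H}' \subseteq \overline{\mathcal{H}}$. The match of $\underline{\mathcal{H}}$ with the $\mathcal{H}$-set in~\eqref{EMk} (upper-closed) is by definition, and unpacking the complement in~\eqref{ELk} gives exactly $\overline{\mathcal{H}}$. By Remark~\ref{Ruhwm}(1), $\mathbb{M}_k$ and $\mathbb{L}_k$ are unchanged when we replace their defining sets by these upper-closures.

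\textbf{Part (2).} The key tools are (1), Theorem~\ref{T3}, and the monotonicity of the constructions $\mathcal{H} \mapsto \mathbb{M}(\lambda,\mathcal{H})$ and $\mathcal{H} \mapsto \mathbb{L}(\lambda,\mathcal{H})$: enlarging $\mathcal{H}$ kills strictly more in~\eqref{Euhwm}, and correspondingly relaxes strictly more vanishing conditions~\eqref{Econdition}, so
\[
\mathcal{H} \subseteq \mathcal{H}' \ \implies \ \mathbb{M}(\lambda,\mathcal{H}) \twoheadrightarrow \mathbb{M}(\lambda,\mathcal{H}') \ \text{ and } \ \mathbb{L}(\lambda,\mathcal{H}) \twoheadrightarrow \mathbb{L}(\lambda,\mathcal{H}').
\]
Now suppose $M(\lambda) \twoheadrightarrow V$ has $(\mathcal{H}_V)_{\leq k} = \mathcal{H}_{\leq k}$. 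By Theorem~\ref{T2} $\mathcal{H}_V$ is upper-closed, so part (1) yields $\underline{\mathcal{H}} \subseteq \mathcal{H}_V \subseteq \overline{\mathcal{H}}$, and Theorem~\ref{T3} chains together
\[
\mathbb{M}_k(\lambda,\mathcal{H}) = \mathbb{M}(\lambda,\underline{\mathcal{H}}) \twoheadrightarrow \mathbb{M}(\lambda,\mathcal{H}_V) \twoheadrightarrow V \twoheadrightarrow \mathbb{L}(\lambda,\mathcal{H}_V) \twoheadrightarrow \mathbb{L}(\lambda,\overline{\mathcal{H}}) = \mathbb{L}_k(\lambda,\mathcal{H}).
\]
That $\mathbb{M}_k$ and $\mathbb{L}_k$ themselves lie in the class follows from Lemma~\ref{Luhwm} (giving $\mathcal{H}_{\mathbb{M}_k} = \underline{\mathcal{H}}$) and from the identity $\mathcal{H}_{\mathbb{L}(\lambda,\mathcal{H}')} = \mathcal{H}'$ established in the proof of Theorem~\ref{T3} (giving $\mathcal{H}_{\mathbb{L}_k} = \overline{\mathcal{H}}$); both restrict to $\mathcal{H}_{\leq k}$ in degree $\leq k$. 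Uniqueness is automatic for extremal elements in a surjection poset. For $k \geq |J_\lambda|$ one has $\underline{\mathcal{H}} = \overline{\mathcal{H}} = \mathcal{H}$, and Theorem~\ref{T3} is recovered.

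\textbf{Part (3).} Both endpoints depend only on $\mathcal{H}_{\leq k}$: $\mathbb{M}_k(\lambda,\mathcal{H})$ by definition~\eqref{EMk}, and $\mathbb{L}_k(\lambda,\mathcal{H})$ because its defining $\mathcal{H}$-set equals $\overline{\mathcal{H}}$ by part~(1), which is manifestly a function of $\mathcal{H}_{\leq k}$ alone. If $V$ lies in both intervals $[\mathbb{L}_k(\lambda,\mathcal{H}),\mathbb{M}_k(\lambda,\mathcal{H})]$ and $[\mathbb{L}_k(\lambda,\mathcal{H}'),\mathbb{M}_k(\lambda,\mathcal{H}')]$, then part~(2) forces $(\mathcal{H}_V)_{\leq k} = \mathcal{H}_{\leq k} = \mathcal{H}'_{\leq k}$, so the two intervals coincide.

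The main obstacle is the set-theoretic bookkeeping in part (1) -- correctly pinning down which upper-closed sets have prescribed truncation to size $\leq k$, and matching them to the $\mathcal{H}$-sets appearing in~\eqref{EMk} and~\eqref{ELk} via Remark~\ref{Ruhwm}(1). Once that is clean, parts (2) and (3) follow mechanically from the monotonicity of $\mathbb{M}(\lambda,-), \mathbb{L}(\lambda,-)$ in $\mathcal{H}$ together with Theorem~\ref{T3}.
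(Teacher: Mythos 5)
Your proof is correct and follows the paper's outline: exhibit explicit formulas for $\underline{\mathcal{H}}$ and $\overline{\mathcal{H}}$, verify they match the $\mathcal{H}$-sets appearing in~\eqref{EMk} and~\eqref{ELk}, and deduce~(2) and~(3) from Theorem~\ref{T3} together with the observation that $\mathbb{M}_k$ and $\mathbb{L}_k$ depend on $\mathcal{H}$ only through $\mathcal{H}_{\leq k}$. You have filled in considerably more detail than the paper's one-line proof of~(1) -- which only says ``once there is a claimed formula \dots it is easily verified'' -- in particular by making explicit the formula $\overline{\mathcal{H}} = \mathcal{H}_{\leq k}\cup\{H:|H|>k\}$, the matching of this with the complement-set in~\eqref{ELk}, and the monotonicity of $\mathbb{M}(\lambda,-)$ and $\mathbb{L}(\lambda,-)$ in $\mathcal{H}$ used to chain the surjections in~(2).
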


\begin{proof}
Briefly: once there is a claimed formula for the sets
$\underline{\mathcal{H}}_{\leq k}, \overline{\mathcal{H}}_{\leq k}$, it
is easily verified. This shows~(1).
The first claim in~(2) is now the common universal property of the pair
$(\mathbb{M}_k(\lambda, \mathcal{H}), \mathbb{L}_k(\lambda,
\mathcal{H}))$, and follows from Theorem~\ref{T3}. The second claim
follows from the definitions, since $\underline{\mathcal{H}}_{\leq k} =
\overline{\mathcal{H}}_{\leq k} = \mathcal{H}$ if $k \geq |J_\lambda|$.
Finally, (3) is immediate: if $M(\lambda) \twoheadrightarrow V$ is such
that $(\mathcal{H}_V)_{\leq k} = \mathcal{H}_{\leq k} =
\mathcal{H}'_{\leq k}$, then
\[
\mathbb{M}_k(\lambda, \mathcal{H}) =
\mathbb{M}(\lambda, \mathcal{H}_{\leq k}) =
\mathbb{M}(\lambda, \mathcal{H}'_{\leq k}) =
\mathbb{M}_k(\lambda, \mathcal{H}')
\]
where $\mathcal{H}, \mathcal{H}'$ are upper-closed without loss of
generality. A similar proof works for the $\mathbb{L}_k$.
\end{proof}

If the restriction of upper-closedness is removed from
Proposition~\ref{PMkLk}(1), then $\overline{\mathcal{H}}_{\leq k}$
remains unchanged. Moreover, the $\mathbb{M}_k$-module would remain
unchanged even if $\underline{\mathcal{H}}_{\leq k}$ is reduced to the
subsets of $\mathcal{H}^{\min}$ of size $\leq k$, where
$\mathcal{H}^{\min}$ denotes the subset of $\mathcal{H}$ of minimal
(hence pairwise incomparable) elements. As a consequence of this and
Proposition~\ref{PMkLk}, we now introduce

\begin{defn}\label{Dhigher}
Given $\mathfrak{g}, \lambda$, and a module $M(\lambda)
\twoheadrightarrow V$, define the \textit{$k$th order integrability} of
$V$ for an integer $k \geq 0$, to be $(\mathcal{H}^{\min}_V)_{\leq k}$ if
$k>0$ and $\lambda$ if $k=0$.
\end{defn}

This is precisely what is captured by the Verma module $M(\lambda)$ when
$k=0$, and by the parabolic Verma module $M(\lambda, J_V)$ when $k=1$.

\begin{remark}\label{Rstrata}
The modules $\mathbb{M}_k, \mathbb{L}_k$ serve to ``stratify'' the poset
(under surjection) $\mathfrak{X} := {\tt HW}(\mathfrak{g})$ of highest
weight $\mathfrak{g}$-modules with all highest weights. Clearly,
$U\mathfrak{g} / \sum_{i \in I} (U \mathfrak{g}) e_i$
is the ``infinity'' element, surjecting onto all of $\mathfrak{X}$.
At the zeroth level, $\mathfrak{X} = \bigsqcup_{\lambda \in
\mathfrak{h}^*} \mathfrak{X}_\lambda$, with $\mathfrak{X}_\lambda =
[L(\lambda), M(\lambda)]$ the interval of modules with highest weight
$\lambda$. 
(Moreover, Theorem~\ref{T2} shows passing to weights yields a finite set
from each $\mathfrak{X}_\lambda$.)
At the next level, the modules in $\mathfrak{X}_\lambda$ are partitioned
by integrability:
\[
\mathfrak{X}_\lambda = \bigsqcup_{J \subset J_\lambda}
[L(\lambda, J), M(\lambda, J)]
 = \bigcup_{\substack{\mathcal{H} \subset \, {\rm Indep}(J_\lambda) \\
 {\rm upper\ closed}} }
[\mathbb{L}_1(\lambda, \mathcal{H}), \mathbb{M}_1(\lambda, \mathcal{H})]
\]
One can continue sub-stratifying each stratum, via
Proposition~\ref{PMkLk}, with the following results.
(i)~At each stage, one obtains a partition into intervals
$[\mathbb{L}_k(\lambda, \mathcal{H}), \mathbb{M}_k(\lambda,
\mathcal{H})]$.
(ii)~One sub-stratifies $\mathfrak{X}_\lambda$ only $K$-many times, with
$K$ the size of any largest (in size) independent set in $J_\lambda$.
(iii)~At the final, innermost level, every partitioned set
$[\mathbb{L}(\lambda, \mathcal{H}), \mathbb{M}(\lambda, \mathcal{H})]$
comprises highest weight modules with the same set of weights (and hence,
the same apex $\lambda$, convex hull/integrability
$(\mathcal{H}_V^{\min})_{\leq 1}$, \dots).
\end{remark}

Remark~\ref{Rstrata} leads to the following concrete recipe. To
successively better approximate the set $\wt V = \wt \mathbb{M}(\lambda,
\mathcal{H}_V) = \wt \mathbb{M}_\infty(\lambda, \mathcal{H}_V)$, one
starts with $U \mathfrak{g} / \sum_{i \in I} (U \mathfrak{g}) e_i$. The
$0$th order hole, i.e.\ $\lambda$, uniquely fixes the apex of the highest
weight cone containing $\wt V$. Next, the $1$st order upper-approximation
refines the dimension $1$ faces, i.e.\ edges, of the $0$th order shape
$\conv (\wt M(\lambda))$ by truncating the semi-infinite rays at
$\lambda$ along the $J_V$ directions. And so on.

Viewed representation-theoretically, the integrability refines the
weights of some of the rank-$1$ highest weight submodules $\{ U
\mathfrak{g}_i \cdot M(\lambda)_\lambda : i \in J_V \}$.
Similarly, the $k$th order approximation/integrability refines the
submodules $\{ U \mathfrak{g}_H \cdot M(\lambda)_\lambda : H \in
(\mathcal{H}_V^{\min})_{\leq k} \}$ for \textit{minimal} holes $H$ of
size at most $k$, by truncating an interior portion of the corresponding
faces of $\conv (\wt V)$ containing $\lambda$ (and more from the
interior of $\conv( \wt V)$). This refining is transferred to other
vertices / faces of $\conv(\wt V)$ via the Weyl group symmetries of $\wt
V$.

\subsection{$k$th order weight-formulas}\label{S5final}

In the above spirit, we end this section with ``$k$th order
weight-formulas'' that specialize to Theorems~\ref{T1} and~\ref{T2}.
First recall from the proof of Theorem~\ref{T1} that
\begin{equation}\label{Eadm}
\wt V \quad = \bigcup_{J \subset J_\lambda\, :\, J \cap H \neq
\emptyset\; \forall H \in \mathcal{H}_V} \wt M(\lambda,J) \quad = \quad
\wt \mathbb{M}(\lambda, \mathcal{H}_V).
\end{equation}

Given the above discussion in this section, the first equality shows $\wt
V$ to be a union of weights of many ``first order Verma
modules'', i.e.\ parabolic Vermas $M(\lambda, J) = \mathbb{M}(\lambda, \{
\{ j \} : j \in J \})$. The second shows $\wt V$ to be the weight-set of
exactly one ``$\infty$-order Verma module'', i.e.\ $\mathbb{M}(\lambda,
\mathcal{H}_V)$.

\begin{remark}
Just as a $0$th order Verma module $M(\lambda)$ is also a $1$st order
Verma module $M(\lambda, J)$, we adopt the convention that
$\mathbb{M}(\lambda, \mathcal{H})$ is a $k$th order Verma module (for $0
\leq k \leq \infty$) if $|H| \leq k$ for all $H \in \mathcal{H}^{\min}$.
This is compatible with the $k$th order approximations $\mathbb{M}_k,
\mathbb{L}_k$ earlier in this section.
\end{remark}

Given these weight-formulas and remarks, it is natural to ask if there
exist ``intermediate'' $k$th order formulas for each $k \geq 1$. Namely,
formulas that show $\wt V$ is a finite union of sets of the form $\wt
\mathbb{M}(\lambda, \mathcal{H}^{(k)}_i)$, $i \geq 1$ where each hole in
each $\mathcal{H}^{(k)}_i \subset {\rm Indep}(J_\lambda)$ has size at
most $k$.

Our next result provides a positive answer for each $k$.
These are positive weight-formulas for $\wt V$ that naturally interpolate
from only singletons in each $\mathcal{H}^{(1)}_i$ (i.e.\ parabolic
Vermas $\wt M(\lambda, J)$ in~\eqref{Eweights}) to the unique set
$\mathcal{H}^{(\infty)} = \mathcal{H}_V$ (i.e.\ $\wt V = \wt
\mathbb{M}(\lambda, \mathcal{H}_V)$). We need the following notion.

\begin{defn}\label{Dadm}
Enumerate a subset $\mathcal{H}_0 = \{ H_1, \dots, H_N \} \subset {\rm
Indep}(I)$. Given an integer $1 \leq k \leq \infty$, we say a set
$\mathcal{H}^{(k)} \subset {\rm Indep}(I)$ is
\textit{$\mathcal{H}_0$-admissible of order $k$} if there are subsets
$H'_t \subset H_t$ of size $\min(k, |H_t|)$, such that
$\mathcal{H}^{(k)}$ consists of the distinct sets among $H'_1, \dots,
H'_N$.
\end{defn}

\begin{example}\label{Exadm}
Here are two examples where $\mathcal{H}_0 = \mathcal{H}_V$ for a highest
weight $\mathfrak{g}$-module $M(\lambda) \twoheadrightarrow V$. First, if
$k=1$ then an order $1$ $\mathcal{H}_V$-admissible set
$\mathcal{H}^{(1)}$ consists of singleton sets $\{ j \}$ for $j \in
J_\lambda$, such that every hole in $\mathcal{H}_V$ contains one or more
of these $j$, and every $j$ is in at least one $H \in \mathcal{H}_V$. The
universal module for this first order hole-set is indeed the ``first
order'' parabolic Verma module: $\mathbb{M}(\lambda, \mathcal{H}^{(1)}) =
M(\lambda, J)$, where $J = \{ j : \{ j \} \in \mathcal{H}^{(1)} \}$.

The second example is when $k = \infty$ (or $k \gg 0$). Then there is
only one $\mathcal{H}_V$-admissible set of order $k$:
$\mathcal{H}^{(\infty)} = \mathcal{H}_V$ itself. The corresponding
higher order Verma module is $\mathbb{M}(\lambda, \mathcal{H}_V)$.
\end{example}

\begin{prop}\label{Padm}
Given a $\mathfrak{g}$-module $M(\lambda) \twoheadrightarrow V$ (for
arbitrary $\mathfrak{g}, \lambda$), and an integer $1 \leq k \leq
\infty$,
\begin{equation}\label{Eadm2}
\wt V = \bigcup_{\mathcal{H}^{(k)}}
\wt \mathbb{M}(\lambda, \mathcal{H}^{(k)}),
\end{equation}
where the union runs over all $\mathcal{H}_V$-admissible sets of order
$k$.
\end{prop}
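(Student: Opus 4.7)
The plan is to prove both inclusions in~\eqref{Eadm2} by reducing everything to Theorem~\ref{T1} applied to $V$ and to the various modules $\mathbb{M}(\lambda, \mathcal{H}^{(k)})$, and translating between the ``intersect-every-hole'' subsets $J \subset J_\lambda$ appearing in Theorem~\ref{T1} and the $\mathcal{H}_V$-admissible sets of Definition~\ref{Dadm}. The degenerate cases $\mathcal{H}_V = \emptyset$, $\emptyset \in \mathcal{H}_V$, and $k = \infty$ are immediate from Example~\ref{Exadm} and Theorem~\ref{T1}; thus I restrict to $\mathcal{H}_V$ nonempty with no empty hole, and $1 \leq k < \infty$. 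Enumerate $\mathcal{H}_V = \{H_1, \dots, H_N\}$ as in Definition~\ref{Dadm}.

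For the inclusion $\supseteq$ in~\eqref{Eadm2}, fix an $\mathcal{H}_V$-admissible set $\mathcal{H}^{(k)}$ of order $k$, consisting of the distinct sets among $H'_1, \dots, H'_N$ with $H'_t \subseteq H_t$ of size $\min(k, |H_t|)$. By Lemma~\ref{Luhwm} applied to $M := \mathbb{M}(\lambda, \mathcal{H}^{(k)})$, its hole-set $\mathcal{H}_M$ is the upper-closure of $\mathcal{H}^{(k)}$, so Theorem~\ref{T1} (applied to $M$) gives
\[
\wt \mathbb{M}(\lambda, \mathcal{H}^{(k)}) = \bigcup_{J \subset J_\lambda \,:\, J \cap H' \neq \emptyset\; \forall H' \in \mathcal{H}^{(k)}} \wt M(\lambda, J).
\]
Any such $J$ automatically satisfies $J \cap H_t \supseteq J \cap H'_t \neq \emptyset$ for every $t$, and hence by Theorem~\ref{T1} applied to $V$ itself, $\wt M(\lambda, J) \subseteq \wt V$. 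Unioning over all admissible $\mathcal{H}^{(k)}$ yields $\supseteq$.

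For the reverse inclusion $\subseteq$, Theorem~\ref{T1} expresses $\wt V$ as the union of $\wt M(\lambda, J)$ over subsets $J \subset J_\lambda$ meeting every $H_t$. Since $J \subseteq J'$ implies a surjection $M(\lambda, J) \twoheadrightarrow M(\lambda, J')$ and so $\wt M(\lambda, J') \subseteq \wt M(\lambda, J)$, I may restrict the union to the (minimal) transversal choices $J = \{j_1, \dots, j_N\}$ with $j_t \in H_t$. Given such a transversal, for each $t$ I pick a subset $H'_t \subseteq H_t$ of size $\min(k, |H_t|)$ containing $j_t$ (always possible since $|H_t| \geq \min(k, |H_t|) \geq 1$), and take $\mathcal{H}^{(k)}$ to be the collection of distinct $H'_t$'s. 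By construction $\mathcal{H}^{(k)}$ is $\mathcal{H}_V$-admissible of order $k$, and $J$ meets each $H'_t$ via $j_t$; so Theorem~\ref{T1} applied to $\mathbb{M}(\lambda, \mathcal{H}^{(k)})$ (together with Lemma~\ref{Luhwm}, exactly as above) gives $\wt M(\lambda, J) \subseteq \wt \mathbb{M}(\lambda, \mathcal{H}^{(k)})$. Taking the union over transversal $J$ completes the argument.

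Once Theorem~\ref{T1} and Lemma~\ref{Luhwm} are available the proof is essentially combinatorial book-keeping, and I do not anticipate a genuine obstacle. The only two points requiring care are (i) the reduction to transversal $J$'s in the reverse inclusion, which uses only the surjection $M(\lambda, J) \twoheadrightarrow M(\lambda, J')$ for $J \subseteq J'$; and (ii) the construction of $H'_t \ni j_t$ of the prescribed size $\min(k, |H_t|)$, which is elementary. These two manipulations exhibit the $\mathcal{H}_V$-admissible sets of order $k$ as the ``right'' combinatorial objects interpolating between the extremes $k=1$ and $k=\infty$ displayed in Example~\ref{Exadm}.
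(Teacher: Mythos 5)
Your argument is correct; it proves both inclusions in~\eqref{Eadm2} directly for each fixed $k$, translating between admissible hole-sets $\mathcal{H}^{(k)}$ and the subsets $J$ meeting all holes, using Theorem~\ref{T1} together with Lemma~\ref{Luhwm}. The paper's proof is organized differently: writing $\Psi(k)$ for the right side of~\eqref{Eadm2}, it establishes the chain $\Psi(1) \subset \Psi(2) \subset \cdots \subset \Psi(\infty)$ by showing that any order-$k'$ admissible set $\{H'_t\}$ can be enlarged coordinatewise to an order-$k''$ admissible set with $H'_t \subset H''_t \subset H_t$, giving a surjection $\mathbb{M}(\lambda, \mathcal{H}^{(k'')}) \twoheadrightarrow \mathbb{M}(\lambda, \mathcal{H}^{(k')})$, and then sandwiches between the already-known endpoints $\Psi(1) = \wt V = \Psi(\infty)$ from~\eqref{Eadm}. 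The underlying combinatorial move (enlarging holes inside $\mathcal{H}_V$ to obtain a surjection of higher order Verma modules) is the same in both arguments; the paper packages it as monotonicity in $k$, so that only the extremes $k=1,\infty$ need to be checked once, whereas you argue at each $k$ from scratch, which is slightly more work per $k$ but avoids having to set up the interpolating chain. One small simplification for your $\supseteq$ direction: since each $H'_t \in \mathcal{H}^{(k)}$ is contained in some $H_t \in \mathcal{H}_V$, one has a surjection $\mathbb{M}(\lambda, \mathcal{H}_V) \twoheadrightarrow \mathbb{M}(\lambda, \mathcal{H}^{(k)})$ directly, giving $\wt \mathbb{M}(\lambda, \mathcal{H}^{(k)}) \subset \wt \mathbb{M}(\lambda, \mathcal{H}_V) = \wt V$ without a second invocation of Theorem~\ref{T1}.
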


Notice that the weight-formulas~\eqref{Eadm} in (the proof of)
Theorem~\ref{T1} are the $k = 1, \infty$ special cases of
Proposition~\ref{Padm}, in light of Example~\ref{Exadm}.

\begin{proof}
Denote by $\Psi(k)$ the right-hand side of~\eqref{Eadm2}, which the
result claims is independent of $k$:
\begin{equation}
\Psi(k) := \bigcup_{\mathcal{H}^{(k)}} \wt \mathbb{M}(\lambda,
\mathcal{H}^{(k)}), \qquad 1 \leq k \leq \infty.
\end{equation}

We now claim the inclusions
$\Psi(1) \subset \Psi(2) \subset \cdots \subset \Psi(\infty)$.
The result then follows from~\eqref{Eadm} (shown while proving
Theorem~\ref{T1}), which says: $\Psi(\infty) = \wt V = \Psi(1)$. To be
precise, \eqref{Eadm} does not explicitly say $\wt V = \Psi(1)$, but one
then notes that defining $\supp \mathcal{H}_V := \bigcup_{H \in
\mathcal{H}_V} H$, the set of $J$ in the union in~\eqref{Eadm} may be
reduced to only the $J \subset \supp \mathcal{H}_V$ -- equivalently,
every $J$ may be replaced by $J \cap \supp \mathcal{H}_V$, since $\wt
M(\lambda, J) \subset \wt M(\lambda, J \cap \supp \mathcal{H}_V)$. From
this one checks: $\wt V = \Psi(1)$.

To show the claim, fix $k' < k''$ in $[1,\infty]$, and list all order
$k'$ and order $k''$ $\mathcal{H}_V$-admissible sets as
\[
\mathcal{H}^{(k')}_1, \dots, \mathcal{H}^{(k')}_{m'};
\qquad \text{respectively,} \qquad
\mathcal{H}^{(k'')}_1, \dots, \mathcal{H}^{(k'')}_{m''}.
\]

Also list $\mathcal{H}^{(\infty)} = \mathcal{H}_V = \{ H_1, \dots, H_N
\}$. Now given $i' \in [1,m']$, use Definition~\ref{Dadm} to write the
elements of $\mathcal{H}^{(k')}_{i'}$ as a multiset,
$\mathcal{H}^{(k')}_{i'} \longleftrightarrow H'_1, \dots, H'_N$ with
$H'_t \subset H_t\ \forall t \in [1,N]$. Now arbitrarily choose $H''_t$
of size $\min(k'', |H_t|)$ such that $H'_t \subset H''_t \subset H_t\
\forall t \in [1,N]$. Then the distinct sets among the $H''_t$ comprise
$\mathcal{H}^{(k'')}_{\varphi(i')}$ for some function $\varphi : [1,m']
\to [1,m'']$. This implies
\[
\mathbb{M}(\lambda, \mathcal{H}^{(k'')}_{\varphi(i')}) \twoheadrightarrow
\mathbb{M}(\lambda, \mathcal{H}^{(k')}_{i'}) \qquad \implies \qquad
\wt \mathbb{M}(\lambda, \mathcal{H}^{(k')}_{i'}) \subset \wt
\mathbb{M}(\lambda, \mathcal{H}^{(k'')}_{\varphi(i')}).
\]
As this holds for all $1 \leq k' < k'' \leq \infty$ and $i' \in [1,m']$,
\[
\Psi(k') = \bigcup_{i'=1}^{m'} \wt \mathbb{M}(\lambda,
\mathcal{H}^{(k')}_{i'}) \subset \bigcup_{i'=1}^{m'} \wt
\mathbb{M}(\lambda, \mathcal{H}^{(k'')}_{\varphi(i')}) \subset
\bigcup_{i''=1}^{m''} \wt \mathbb{M}(\lambda, \mathcal{H}^{(k'')}_{i''})
= \Psi(k'').
\]
This shows the claim, and completes the proof.
\end{proof}

A consequence of Proposition~\ref{Padm} is a family of weight-formulas
that generalizes Theorem~\ref{T2}. We begin with the extreme cases, which
were shown in~\eqref{EaltB}:
\begin{equation}\label{EaltBB}
\wt V = \wt L_{J_\lambda}^{\max}(\lambda) + \wt \mathbb{M}(0,
\mathcal{H}_V) = \wt L_{J_\lambda}^{\max}(\lambda) + \bigcup_{J \subset
J_\lambda\, :\, J \cap H \neq \emptyset\; \forall H \in \mathcal{H}_V}
\wt M(0,J).
\end{equation}

These turn out to be the $k=\infty$ and $k=1$ cases of the following
result:

\begin{cor}
Given a $\mathfrak{g}$-module $M(\lambda) \twoheadrightarrow V$ (for
arbitrary $\mathfrak{g}, \lambda$), and an integer $1 \leq k \leq
\infty$,
\begin{equation}
\wt V = \wt L_{J_\lambda}^{\max}(\lambda) + \bigcup_{\mathcal{H}^{(k)}}
\wt \mathbb{M}(0, \mathcal{H}^{(k)}),
\end{equation}
where the union runs over all $\mathcal{H}_V$-admissible sets of order
$k$.
\end{cor}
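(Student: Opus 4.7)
The plan is to obtain this as an essentially immediate consequence of Proposition~\ref{Padm} combined with Theorem~\ref{T2}, with only minor bookkeeping. Proposition~\ref{Padm} already gives
\[
\wt V = \bigcup_{\mathcal{H}^{(k)}} \wt \mathbb{M}(\lambda, \mathcal{H}^{(k)}),
\]
where the union runs over all $\mathcal{H}_V$-admissible sets of order $k$. So it suffices to prove, for each such $\mathcal{H}^{(k)}$, the single-term identity
\[
\wt \mathbb{M}(\lambda, \mathcal{H}^{(k)}) = \wt L_{J_\lambda}^{\max}(\lambda) + \wt \mathbb{M}(0, \mathcal{H}^{(k)}),
\]
and then distribute the Minkowski sum over the union.

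To establish the single-term identity, I would apply Theorem~\ref{T2} to the highest weight module $V' := \mathbb{M}(\lambda, \mathcal{H}^{(k)})$. By Lemma~\ref{Luhwm}, its hole-set $\mathcal{H}_{V'}$ equals the upper-closure of $\mathcal{H}^{(k)}$ in ${\rm Indep}(J_\lambda)$; but by Remark~\ref{Ruhwm}(1) the module $\mathbb{M}(0, \mathcal{H})$ only sees $\mathcal{H}$ through its upper-closure, so after identifying $\mathbb{M}(\mathcal{H}) = \mathbb{M}(0, \mathcal{H})$ via Remark~\ref{Ruhwm}(3), Theorem~\ref{T2} yields exactly the displayed identity. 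Before invoking Theorem~\ref{T2} I should verify that $V'$ is nonzero, i.e.\ that $\emptyset \notin \mathcal{H}^{(k)}$. This uses that $V \neq 0$ (so all $H \in \mathcal{H}_V$ satisfy $|H| \geq 1$), together with the hypothesis $k \geq 1$: each representative $H'_t \subset H_t$ from Definition~\ref{Dadm} has size $\min(k, |H_t|) \geq 1$.

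Finally, the Minkowski sum distributes over unions, so
\[
\wt V = \bigcup_{\mathcal{H}^{(k)}} \left( \wt L_{J_\lambda}^{\max}(\lambda) + \wt \mathbb{M}(0, \mathcal{H}^{(k)}) \right) = \wt L_{J_\lambda}^{\max}(\lambda) + \bigcup_{\mathcal{H}^{(k)}} \wt \mathbb{M}(0, \mathcal{H}^{(k)}),
\]
which is the desired formula. As a sanity check, one should recover~\eqref{EaltBB}: the $k = \infty$ case collapses the union to the single admissible set $\mathcal{H}_V$, giving $\wt V = \wt L_{J_\lambda}^{\max}(\lambda) + \wt \mathbb{M}(0, \mathcal{H}_V)$, while the $k = 1$ case replaces each $\wt \mathbb{M}(0, \mathcal{H}^{(1)})$ by $\wt M(0,J)$ (via Example~\ref{Exadm}), matching the Minkowski-sum form in~\eqref{EaltBB}. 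Since the corollary is a direct consequence of results already in place, there is no genuine obstacle here; the only point requiring care is keeping track of upper-closures so that Theorem~\ref{T2} applies uniformly to each admissible higher order Verma module $\mathbb{M}(\lambda, \mathcal{H}^{(k)})$.
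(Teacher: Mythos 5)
Your proof is correct and uses exactly the same two ingredients as the paper's one-line argument -- Proposition~\ref{Padm} and the Minkowski decomposition of Theorem~\ref{T2}/\eqref{EaltBB} -- just applied in the reverse order: you apply Proposition~\ref{Padm} to $V$ first and then apply Theorem~\ref{T2} to each resulting $\mathbb{M}(\lambda, \mathcal{H}^{(k)})$, whereas the paper applies~\eqref{EaltBB} first and then Proposition~\ref{Padm} to $(0, \mathbb{M}(0, \mathcal{H}_V))$. Your bookkeeping about upper-closures and nonvanishing is also correct and mirrors what the paper handles implicitly via Lemma~\ref{Luhwm}.
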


This follows from~\eqref{EaltBB} and Proposition~\ref{Padm},
using $(\lambda, V) \leadsto (0, \mathbb{M}(0, \mathcal{H}_V))$ (via
e.g.\ Lemma~\ref{Luhwm}).

\section{Theorem~\ref{T4}: Higher order parabolic category $\mathcal{O}$,
enough projectives, and BGG reciprocity}\label{SO}

We next discuss refinements of the BGG Category $\mathcal{O}$ \cite{BGG}.
In keeping with the perspective of higher order approximations and
integrability, we begin by noting how the usual Category $\mathcal{O}$
and the parabolic Category $\mathcal{O}^{\mathfrak{p}_J}$ are zeroth and
first order special cases of the categories
\[
\mathcal{O}^{\mathcal{H}}, \qquad \mathcal{H} \subset {\rm Indep}(I)
\]
which we introduced in Definition~\ref{DOH}: the objects in $\mathcal{O}$
on which all ${\bf f}_H$, $H \in \mathcal{H}$ act locally nilpotently.
As mentioned there, \textbf{in this section
$\mathfrak{g}$ is assumed to be of finite type}.

Indeed, Definition~\ref{DOH} of $\mathcal{O}^{\mathcal{H}}$ specializes
to the usual and parabolic categories as follows:
\begin{enumerate}
\item If $\mathcal{H}$ is empty, then $\mathcal{O}^\emptyset$ is just
Category $\mathcal{O}$, and contains all Verma modules
$\mathbb{M}(\lambda, \emptyset) = M(\lambda)$.

\item More generally, if $\mathcal{H}_J = \{ \{ j \} : j \in J \}$ for $J
\subset I$, then (see e.g.\ \cite[Section 9.3]{H2} and use that
$\mathfrak{g}$ is semisimple, to show that) $\mathcal{O}^{\mathcal{H}_J}$
is precisely the parabolic Category $\mathcal{O}^{\mathfrak{p}_J}$, and
it contains the parabolic Verma modules $M(\lambda, J) =
\mathbb{M}(\lambda, \mathcal{H}_J)$ for all $J$-dominant integral weights
$\lambda$.
\end{enumerate}

To the best of our knowledge, these subcategories of $\mathcal{O}$ have
not been studied beyond the parabolic categories
$\mathcal{O}^{\mathfrak{p}_J}$. Yet, they naturally generalize
$\mathcal{O}^{\mathfrak{p}_J}$, and are intimately linked with
higher order Verma modules -- having higher order holes/integrability.
This section initiates their study.

\subsection{Enough projectives}

We show the first half of Theorem~\ref{T4} -- $\mathcal{O}^{\mathcal{H}}$
has enough projectives -- following several intermediate results.
In this section and the next, Remark~\ref{Ruhwm}(2) is useful: we will
often work not with $\mathcal{H}$, but instead with
$\mathcal{H}^{\min}$,
the collection of minimal holes in $\mathcal{H}$, which form a pairwise
incomparable collection. Thus, e.g.\ the final assertion in
Theorem~\ref{T2} says that the finite collection of ``sets of
incomparable subsets of ${\rm Indep}(J_\lambda)$'' is in bijection with
$\{ \wt V : M(\lambda) \twoheadrightarrow V \}$.

Coming to properties of $\mathcal{O}^{\mathcal{H}}$, a first ``sanity
check'' is that Definition~\ref{DOH} of $\mathcal{O}^{\mathcal{H}}$ fits
well with minimal elements and upper-closures, just like the modules
$\mathbb{M}(\lambda, \mathcal{H})$ do (see Remark~\ref{Ruhwm}):

\begin{lemma}\label{Lsanity}
If $\mathcal{H} \subset {\rm Indep}(I)$,
$\mathcal{O}^{\mathcal{H}} = \mathcal{O}^{\mathcal{H}^{\min}} =
\mathcal{O}^{\overline{\mathcal{H}} }$,
where $\overline{\mathcal{H}}$ is the upper-closure of $\mathcal{H}$
in ${\rm Indep}(I)$.
\end{lemma}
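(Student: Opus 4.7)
\emph{Proof plan.} The entire lemma reduces to a single elementary observation about commuting operators. For any $H' \subseteq H$ in ${\rm Indep}(I)$, the set $H$ is independent, so the Dynkin subdiagram on $H$ has no edges, and hence the Chevalley generators $\{ f_h : h \in H \}$ commute pairwise in $U \mathfrak{n}^-$. This gives a factorization ${\bf f}_H = {\bf f}_{H'} \cdot {\bf f}_{H \setminus H'}$ of commuting elements, and therefore
\[
{\bf f}_H^n \;=\; {\bf f}_{H \setminus H'}^n \cdot {\bf f}_{H'}^n \qquad \forall \, n \geq 0.
\]
Consequently, on any object of $\mathcal{O}$, if ${\bf f}_{H'}^n v = 0$ for some vector $v$ then ${\bf f}_H^n v = 0$ as well. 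Thus local nilpotence of ${\bf f}_{H'}$ propagates to every ${\bf f}_H$ with $H \supseteq H'$ in ${\rm Indep}(I)$.

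With this in hand, the three inclusions follow quickly. The chain $\mathcal{H}^{\min} \subseteq \mathcal{H} \subseteq \overline{\mathcal{H}}$ yields the tautological inclusions
\[
\mathcal{O}^{\overline{\mathcal{H}}} \;\subseteq\; \mathcal{O}^{\mathcal{H}} \;\subseteq\; \mathcal{O}^{\mathcal{H}^{\min}},
\]
since requiring local nilpotence for a larger collection of operators is a stronger condition. For the reverse containment $\mathcal{O}^{\mathcal{H}^{\min}} \subseteq \mathcal{O}^{\overline{\mathcal{H}}}$, I would take any $M \in \mathcal{O}^{\mathcal{H}^{\min}}$ and any $H'' \in \overline{\mathcal{H}}$. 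By the definition of upper-closure there exists $H \in \mathcal{H}$ with $H \subseteq H''$, and by finiteness of ${\rm Indep}(I)$ (this is where finite type enters) one may descend further to some $H' \in \mathcal{H}^{\min}$ with $H' \subseteq H \subseteq H''$. The observation above then shows that ${\bf f}_{H''}$ acts locally nilpotently on $M$, placing $M$ in $\mathcal{O}^{\overline{\mathcal{H}}}$.

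The only substantive step is the commuting-factor identity for ${\bf f}_H$ versus ${\bf f}_{H'}$; the poset manipulations that follow are purely formal. I do not anticipate a genuine obstacle, since independence of $H$ forces the relevant generators to commute exactly, and finiteness of ${\rm Indep}(I)$ makes the passage between $\mathcal{H}$, $\mathcal{H}^{\min}$, and $\overline{\mathcal{H}}$ immediate.
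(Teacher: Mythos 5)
Your proof is correct and takes essentially the same approach as the paper: the key observation in both is the commuting factorization ${\bf f}_H^n = {\bf f}_{H \setminus H'}^n {\bf f}_{H'}^n$ for $H' \subseteq H$ independent, which lets local nilpotence propagate upward in the poset ${\rm Indep}(I)$. (The paper proves $\mathcal{O}^{\mathcal{H}^{\min}} \subseteq \mathcal{O}^{\mathcal{H}}$ and then notes $(\overline{\mathcal{H}})^{\min} = \mathcal{H}^{\min}$ to conclude, whereas you go straight to $\mathcal{O}^{\mathcal{H}^{\min}} \subseteq \mathcal{O}^{\overline{\mathcal{H}}}$; and your parenthetical about ``finite type'' is really just finiteness of $I$ — but neither of these is a substantive difference.)
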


\begin{proof}
Here is a proof for completeness. From the definitions,
$\mathcal{O}^{\overline{\mathcal{H}} } \subset \mathcal{O}^{\mathcal{H}}
\subset \mathcal{O}^{\mathcal{H}^{\min}}$.
Now it suffices to show the reverse inclusion to the second one, since
$({\overline{\mathcal{H}} })^{\min} = \mathcal{H}^{\min}$. Let $M \in
\mathcal{O}^{\mathcal{H}^{\min}}$, $H \in \mathcal{H}$, and let $v$ be a
weight vector in $M$. It suffices to show that ${\bf f}_H$ acts
nilpotently on $v$. Choose a minimal hole $H_0 \subset H$ with $H_0 \in
\mathcal{H}^{\min}$; now ${\bf f}_{H_0}^n v = 0$ for some $n$. But then
${\bf f}_H^n v = 0$.
\end{proof}

Next, these generalizations of the parabolic category
$\mathcal{O}^{\mathfrak{p}_J}$ share the same basic properties as it:

\begin{lemma}\label{Lbasic}
Fix a subset $\mathcal{H} \subset {\rm Indep}(I)$.
Then $\mathcal{O}^{\mathcal{H}}$ is an abelian subcategory of
$\mathcal{O} \subset \mathfrak{g}$-${\rm Mod}$ that is closed under:
taking submodules, quotients, finite direct sums, extensions in
$\mathcal{O}$, restricted duals, and tensor products with finite
dimensional $\mathfrak{g}$-modules.
In particular, if $M \in \mathcal{O}^\mathcal{H}$ splits in $\mathcal{O}$
according to the action of the center $Z(U \mathfrak{g})$ into $M =
\bigoplus_\chi M^\chi$, then each $M^\chi$ also lies in
$\mathcal{O}^\mathcal{H}$.
\end{lemma}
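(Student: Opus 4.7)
The plan is to verify each closure property separately, using the defining condition that each $\mathbf{f}_H = \prod_{h \in H} f_h$ (for $H \in \mathcal{H}$) acts locally nilpotently on an object. Closure under submodules, quotients, and finite direct sums is immediate, since local nilpotency of a fixed operator on weight vectors is inherited by all three constructions. Combined with the abelian structure of $\mathcal{O}$, this makes $\mathcal{O}^\mathcal{H}$ a full abelian subcategory (kernels and cokernels in $\mathcal{O}$ agree with those in $\mathfrak{g}$-$\mathrm{Mod}$, and the above closure keeps them in $\mathcal{O}^\mathcal{H}$). The central-character decomposition follows at once: each $M^\chi$ is a direct summand, hence a submodule, of $M$ in $\mathcal{O}$, and therefore lies in $\mathcal{O}^\mathcal{H}$.

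For an extension $0 \to M' \to M \to M'' \to 0$ with $M', M'' \in \mathcal{O}^\mathcal{H}$, given $v \in M$ and $H \in \mathcal{H}$, I would first pick $n$ with $\mathbf{f}_H^n$ killing the image of $v$ in $M''$, so that $\mathbf{f}_H^n v \in M'$, then apply a further power from local nilpotency on $M'$. For tensor products with a finite-dimensional $\mathfrak{g}$-module $F$: the $f_h$ for $h \in H$ commute by the Serre relations (since $H$ is independent), so the coproduct factors as
\[
\Delta(\mathbf{f}_H^n) = \prod_{h \in H} \sum_{k_h = 0}^{n} \binom{n}{k_h}\, f_h^{k_h} \otimes f_h^{n - k_h}.
\]
Choose $m$ with $f_h^m F = 0$ for every $h$ occurring in any $H \in \mathcal{H}$. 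Applied to $x \otimes v$, only multi-indices $(k_h)$ with all $k_h < m$ contribute. For each such $\mathbf{k}$, write $\prod_h f_h^{n - k_h} v = \mathbf{f}_H^{n - m + 1} \cdot \prod_h f_h^{m - 1 - k_h} v$; since there are only finitely many vectors of the form $\prod_h f_h^{b_h} v$ with each $b_h \in [0, m-1]$, local nilpotency on this finite set produces a uniform $N$ with $\mathbf{f}_H^N (x \otimes v) = 0$, and hence local nilpotency on $F \otimes M$.

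The main obstacle is closure under the restricted duality $M \mapsto M^\vee$. A naive approach via the Chevalley anti-involution $\tau$ translates local nilpotency of $\mathbf{f}_H$ on $M^\vee$ into the condition that the images $\mathbf{e}_H^n (M_{\mu - n\sigma_H}) \subset M_\mu$ (with $\sigma_H = \sum_{h \in H} \alpha_h$) eventually lie inside any prescribed hyperplane of $M_\mu$ — this is not transparent at the level of operators, since local $\mathbf{f}_H$-nilpotency on $M$ bounds $\mathbf{f}_H$-powers out of $M_\mu$ rather than $\mathbf{e}_H$-powers into it. The plan is therefore to reduce to simple modules. In finite type, every cyclic submodule $U\mathfrak{g} \cdot v \subset M$ has finite length (a standard fact in category $\mathcal{O}$); combined with the extension-closure just proved, this gives the characterization that $M \in \mathcal{O}^\mathcal{H}$ if and only if every simple subquotient $L(\mu)$ of $M$ lies in $\mathcal{O}^\mathcal{H}$. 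Since the duality functor is exact and satisfies $L(\mu)^\vee \cong L(\mu)$, it preserves this class of simples and hence preserves $\mathcal{O}^\mathcal{H}$ itself.
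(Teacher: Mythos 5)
Your proof is correct and matches the paper's strategy. The one genuinely delicate point — closure under restricted duality — is handled exactly as in the paper: since $\mathfrak{g}$ is of finite type, $M$ has finite length, every simple subquotient $L(\mu)$ of $M$ lies in $\mathcal{O}^{\mathcal{H}}$ by closure under subquotients, and since $(-)^\vee$ is exact with $L(\mu)^\vee \cong L(\mu)$, the dual $M^\vee$ is built from the same simples by extensions and hence lies in $\mathcal{O}^{\mathcal{H}}$ by your extension-closure step. Your tensor argument is also fine, though a bit longer than necessary: the paper simply observes that if $\mathbf{f}_H^K$ kills $m_\mu$ and $f_h^N$ kills the finite-dimensional factor's weight vector for every $h\in H$, then in the multinomial expansion of $\mathbf{f}_H^n(m_\mu\otimes v_\nu)$ with $n\geq K+N-1$, each multi-index has either some $n-k_h\geq N$ (killing the second factor) or all $k_h\geq K$ (killing the first), so no finite set or uniform bound over $\{\prod_h f_h^{b_h}v\}$ is needed — but your version reaches the same conclusion correctly.
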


\begin{proof}
We only outline the proof for restricted duals $M \mapsto M^\vee$ and
tensoring $M \mapsto M \otimes L(\lambda)$ for $\lambda$ dominant
integral, assuming the closure of $\mathcal{O}^{\mathcal{H}}$ under the
other operations is shown.

First, duals. Since $\mathfrak{g}$ is of finite type, every module $M \in
\mathcal{O}^{\mathcal{H}}$ has finite length, and via the other
operations listed, every simple subquotient is in
$\mathcal{O}^{\mathcal{H}}$. Construct $M^\vee \in \mathcal{O}$ by
dualizing a Jordan--H\"older series for $M$; every simple factor is
$L(\mu)^\vee \cong L(\mu)$. Now if ${\bf f}_H$ acts locally
nilpotently on each such factor, then it does so on extensions in
$\mathcal{O}$ between them, and hence on $M^\vee$ as desired.

Next, tensoring. Given weights $\mu, \nu \in \mathfrak{h}^*$ and nonzero
weight vectors $m_\mu \in M_\mu, v_\nu \in L(\lambda)_\nu$, it suffices
to show that ${\bf f}_H$ acts nilpotently on $m_\mu \otimes v_\nu$ for
every hole $H \in \mathcal{H}$. By definition, there exists $K > 0$
such that ${\bf f}_H^K m_\mu = 0$; similarly, there exists $N > 0$ such
that $f_h^N v_\nu = 0\ \forall h \in H$. Now,
\[
{\bf f}_H^n (m_\mu \otimes v_\nu) = \sum_{k_h \in [0,n]\, \forall h \in
H} \ \prod_{h \in H} \binom{n}{k_h} \cdot \left( {\textstyle \prod_{h \in
H} f_h^{k_h} \cdot m_\mu \otimes \prod_{h \in H} f_h^{n-k_h} \cdot v_\nu}
\right), \qquad \forall n \geq 0.
\]
Hence, every term on the right vanishes if $n \geq K+N-1$.
\end{proof}

We next identify the highest weight modules, simple objects and their
universal covers in $\mathcal{O}^{\mathcal{H}}$.

\begin{defn}
Given $\lambda \in \mathfrak{h}^*$ and $\mathcal{H} \subset {\rm
Indep}(I)$, extend Definition~\ref{DMlambdaH} and define
$\mathcal{H}'_\lambda$ as follows:
\begin{equation}\label{EHprimelambda}
\mathbb{M}(\lambda, \mathcal{H}) := \frac{M(\lambda)}{\displaystyle
\sum_{H \in \mathcal{H}^{\min}} U \mathfrak{g} \left( \prod_{h \in
J_\lambda \cap H} f_h^{\langle \lambda, \alpha_h^\vee \rangle + 1}
\right) M(\lambda)_\lambda}, \qquad
\mathcal{H}'_\lambda := \{ J_\lambda \cap H : H \in \mathcal{H}^{\min}
\}.
\end{equation}
Thus $\mathbb{M}(\lambda, \mathcal{H}) = \mathbb{M}(\lambda,
\mathcal{H}'_\lambda)$ for all $\lambda$ and $\mathcal{H}$; we use this
fact below without reference. In particular, if $J_\lambda$ does not
intersect some $H \in \mathcal{H}^{\min}$, then $\mathbb{M}(\lambda,
\mathcal{H}) = 0$.
\end{defn}

\begin{prop}\label{Psimple}
Fix a nonempty subset $\mathcal{H} \subset {\rm Indep}(I)$
and a weight $\lambda \in \mathfrak{h}^*$.
\begin{enumerate}
\item The module $L(\lambda) \in \mathcal{O}^{\mathcal{H}}$ if and only
if $J_\lambda \cap H \neq \emptyset\ \forall H \in \mathcal{H}^{\min}$,
if and only if $\mathbb{M}(\lambda, \mathcal{H}) \neq 0$.

\item In this case, the universal highest weight cover in
$\mathcal{O}^{\mathcal{H}}$ of $L(\lambda)$ is $\mathbb{M}(\lambda,
\mathcal{H}) = \mathbb{M}(\lambda, \mathcal{H}'_\lambda)$.

\item Suppose $L(\lambda) \in \mathcal{O}^{\mathcal{H}}$.
A highest weight module $M(\lambda) \twoheadrightarrow V$
belongs to $\mathcal{O}^{\mathcal{H}}$ if and only if
$\mathcal{H}'_\lambda \subset \mathcal{H}_V$,
if and only if $\mathbb{M}(\lambda, \mathcal{H}_V) \in
\mathcal{O}^{\mathcal{H}}$.
\end{enumerate}
\end{prop}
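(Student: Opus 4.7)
The plan is to establish (1), (2)(a), and both directions of (3) in sequence; part (2)(b) then reduces to the forward direction of (3) (which gives the defining relations of $\mathbb{M}(\lambda, \mathcal{H})$ inside every $V \in \mathcal{O}^{\mathcal{H}}$), and the final equivalence in (3) follows by applying its first equivalence to $V = \mathbb{M}(\lambda, \mathcal{H}_V)$ together with Lemma \ref{Luhwm}. For (1), the equivalence ``$\mathbb{M}(\lambda, \mathcal{H}) \neq 0$ if and only if $J_\lambda \cap H \neq \emptyset$ for all $H \in \mathcal{H}^{\min}$'' is immediate from \eqref{EHprimelambda}, since an empty intersection makes the corresponding empty product equal $1$ and collapses $\mathbb{M}(\lambda, \mathcal{H})$ to zero. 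For $L(\lambda) \in \mathcal{O}^{\mathcal{H}}$: the easier direction picks $h_0 \in H \cap J_\lambda$ for each $H \in \mathcal{H}^{\min}$ and combines local nilpotence of $f_{h_0}$ on $L(\lambda)$ with the commutativity of $\{f_h : h \in H\}$ from independence; the converse observes that if some $H \cap J_\lambda = \emptyset$, the submodule $U(\mathfrak{g}_H)v_\lambda \subset L(\lambda)$ is a highest weight $\mathfrak{sl}_2^{\oplus H}$-module of non-integrable highest weight, so by Example \ref{Exeveryhw} it equals the corresponding Verma, in which $\prod_{h \in H} f_h^n v_\lambda \neq 0$ for every $n$.

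For (2)(a), fix $H \in \mathcal{H}^{\min}$, set $H' := H \cap J_\lambda$, and take a weight vector $u = X v_\lambda$ with $X \in U(\mathfrak{n}^-)$ a PBW monomial. The commutativity $[f_h, f_{h'}] = 0$ for distinct $h,h' \in H$ lets us factor $\mathbf{f}_H^n = \prod_{h \in H'} f_h^n \cdot \prod_{h \in H \setminus H'} f_h^n$, and Lemma \ref{LSerre} applied iteratively commutes each $f_h^n$ $(h \in H')$ past $Y := \prod_{h \in H \setminus H'} f_h^n \cdot X$ to produce $\mathbf{f}_H^n u = Y'' \cdot \prod_{h \in H'} f_h^{n - C_h} v_\lambda$ for some $Y'' \in U(\mathfrak{n}^-)$ and constants $C_h$. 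The key observation is that since $H$ is independent, $\langle \alpha_{h'}, \alpha_h^\vee \rangle = 0$ for distinct $h,h' \in H$; so commuting past the factor $\prod_{h' \in H \setminus H'} f_{h'}^n$ costs nothing, and each $C_h$ depends only on the counts of $f_i$ in $X$ for $i \notin H$ (and not on $n$). For $n \geq \max_{h \in H'}(C_h + m_h)$ we may factor out $\prod_{h \in H'} f_h^{m_h} v_\lambda$, which vanishes in $\mathbb{M}(\lambda, \mathcal{H})$ by construction. The identical argument with $V$ in place of $\mathbb{M}(\lambda, \mathcal{H})$ proves the $\Leftarrow$ of (3), since it uses only the relations $\prod_{h \in H'} f_h^{m_h} v_\lambda = 0$.

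For the $\Rightarrow$ direction of (3), fix $H \in \mathcal{H}^{\min}$ and consider $W := U(\mathfrak{g}_H)v_\lambda \subset V$; since $\mathfrak{g}_H \cong \mathfrak{sl}_2^{\oplus H}$ by independence, Example \ref{Exeveryhw} applied to $\mathfrak{g}_H$ identifies $W$ with the analogous module $\mathbb{M}(\lambda|_{\mathfrak{h}_H}, \mathcal{H}_W)$ over $\mathfrak{g}_H$ for some upper-closed $\mathcal{H}_W \subset 2^{H'}$ (using $J_{\lambda|_{\mathfrak{h}_H}} = H'$). The hypothesis gives $\prod_{h \in H} f_h^n v_\lambda = 0$ in $W$ for some $n$. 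Using the tensor decomposition $M_H(\lambda|_{\mathfrak{h}_H}) \cong \bigotimes_{h \in H} M_{\mathfrak{sl}_2^h}(\lambda|_{\mathfrak{h}^h})$ together with the one-dimensionality of the relevant weight space (Lemma \ref{Lkostant}), one sees that this vanishing forces some $K \in \mathcal{H}_W^{\min}$ to satisfy $n \geq m_h$ for all $h \in K$; necessarily $K \subset H'$. Upper-closedness of $\mathcal{H}_W$ among subsets of $H'$ then yields $H' \in \mathcal{H}_W$, i.e., $\prod_{h \in H'} f_h^{m_h} v_\lambda = 0$ in $W \subset V$, so $H' \in \mathcal{H}_V$ as required. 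The main obstacle is the Serre-relation bookkeeping in (2)(a), where independence of $H$ is precisely what guarantees that $C_h$ does not grow with $n$; the tensor-product weight analysis in (3)$\Rightarrow$ is routine given the explicit structure afforded by Example \ref{Exeveryhw}.
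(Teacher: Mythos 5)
Your proof is correct, and while it arrives at the same conclusions, it substitutes different technical tools in two key places. First, for the converse of (1) and the forward direction of (3), the paper applies raising operators $\prod_h e_h^n$ via identity \eqref{Esl2} and then invokes Lemma~\ref{L3}, which is a direct $\mathfrak{sl}_2$-computation of minimal nilpotence degree; you instead consider the highest weight $\mathfrak{g}_H$-submodule $U(\mathfrak{g}_H)v_\lambda$, classify it as a higher order Verma module via Example~\ref{Exeveryhw} over $\mathfrak{sl}_2^{\oplus H}$, and read off the conclusion from a monomial-ideal divisibility argument in $U(\mathfrak{n}^-_H) \cong \mathbb{C}[f_h : h \in H]$. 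Both are $\mathfrak{sl}_2^{\oplus n}$-theory in essence; yours makes the polynomial-ring structure over $\mathfrak{g}_H$ explicit (which connects nicely to the Taylor resolution used later in the paper), while the paper's Lemma~\ref{L3} route is shorter once \eqref{Esl2} is in place. Second, for $\Leftarrow$ of (3), you re-run the local-nilpotence argument from (2)(a), whereas the paper simply surjects $\mathbb{M}(\lambda,\mathcal{H}'_\lambda) \twoheadrightarrow V$ and invokes closure of $\mathcal{O}^{\mathcal{H}}$ under quotients (Lemma~\ref{Lbasic}); the paper's is slicker but both are fine given the dependencies you set up.

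One minor caution on the step where you commute $\prod_{h \in H'} f_h^n$ past $Y$: Lemma~\ref{LSerre} as stated handles words in a single node $i$ adjacent to $h$ together with non-adjacent nodes, whereas a general PBW monomial $X$ can contain several distinct simple root vectors adjacent to $h$. The iteration you invoke does work (the cost $C_h$ is a finite sum $-\sum_i N_i \langle \alpha_i, \alpha_h^\vee\rangle$ over all adjacent $i$, bounded independently of $n$ precisely because the exponent-$n$ factors $f_{h'}$, $h' \in H\setminus\{h\}$, are non-adjacent by independence of $H$), but the statement of Lemma~\ref{LSerre} needs a small extension to cover it. The paper's own phrasing "arguments similar to the proof of Lemma~\ref{LSerre}" tacitly allows for this, so the gap is common to both and not a flaw in your argument specifically; just be aware you are using a mild strengthening of the quoted lemma.
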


Note that the condition in part~(1) is reminiscent of the set
$\mathfrak{J}(V)$ used in proving Theorem~\ref{T1}. This is made precise
in Equation~\eqref{Ealtwts}.

As a special case, recall that the simples in
$\mathcal{O}^{\mathfrak{p}_J}$ are $L(\lambda)$ for $\lambda$
$J$-dominant integral; and their universal covers are the parabolic Verma
modules $M(\lambda,J)$. This follows from Proposition~\ref{Psimple}:
set
\[
\mathcal{H} = \mathcal{H}_J = \{ \{ j \} : j \in J \} =
\mathcal{H}^{\min},
\]
in which case $\mathcal{H}'_\lambda = \mathcal{H}_J$ as well, and so
$\mathbb{M}(\lambda, \mathcal{H}'_\lambda) = M(\lambda,J)$.

\begin{defn}\label{Dunivcover}
Given a subset $\mathcal{H} \subset {\rm Indep}(I)$, and a weight
$\lambda$ such that $L(\lambda) \in \mathcal{O}^{\mathcal{H}}$, define
its \textit{universal cover} in $\mathcal{O}^{\mathcal{H}}$ to be
$\mathbb{M}(\lambda, \mathcal{H}'_\lambda)$ (see
Proposition~\ref{Psimple}(2)). Also define \textit{standard objects} to
be all modules $\mathbb{M}(\lambda, \mathcal{H}_0) \in
\mathcal{O}^{\mathcal{H}}$.
\end{defn}

The proof of Proposition~\ref{Psimple} requires one last lemma, in
addition to the two above:

\begin{lemma}\label{L3}
Suppose $\lambda \in \mathfrak{h}^*$, $H \in {\rm Indep}(J_\lambda)$, and
$M(\lambda) \twoheadrightarrow V$ is a nonzero highest weight module. If
${\bf f}_H$ is nilpotent on the highest weight line $V_\lambda$, then $H
\in \mathcal{H}_V$.
\end{lemma}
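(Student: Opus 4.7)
The plan is to reduce the statement to $\mathfrak{sl}_2$-theory, one node at a time, exploiting the independence of $H$. Set $m_h := \langle \lambda, \alpha_h^\vee\rangle$; since $H \subset J_\lambda$, each $m_h \in \mathbb{Z}_{\geq 0}$. The goal is to show $\prod_{h \in H} f_h^{m_h+1} v_\lambda = 0$ for a nonzero spanning vector $v_\lambda \in V_\lambda$, as this is precisely the defining condition for $H \in \mathcal{H}_V$.

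First I would observe that since the Dynkin subdiagram on $H$ has no edges, $\langle \alpha_h, \alpha_{h'}^\vee\rangle = 0$ for distinct $h, h' \in H$, so the Serre relations force $[f_h, f_{h'}] = [e_h, e_{h'}] = [e_h, f_{h'}] = 0$. In particular the $f_h$ ($h \in H$) pairwise commute, so ${\bf f}_H^n v_\lambda = \prod_{h \in H} f_h^n v_\lambda$ for every $n \geq 0$. Since $V_\lambda$ is one-dimensional and ${\bf f}_H$ acts nilpotently on it, I may pick an integer $n \geq 1 + \max_{h \in H} m_h$ such that this product vanishes.

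Next, I would apply the raising operator $\prod_{h \in H} e_h^{n-m_h-1}$ to the identity $\prod_{h \in H} f_h^n v_\lambda = 0$. The key input is the textbook $\mathfrak{sl}_2$-identity: if $w$ is a maximal vector of weight $m \in \mathbb{Z}_{\geq 0}$ for an $\mathfrak{sl}_2$-triple $(e,f,\alpha^\vee)$, then for $n > m$,
\[
e^{n-m-1} f^n w = c_{n,m} f^{m+1} w, \qquad c_{n,m} := \frac{n!}{(m+1)!}(-1)^{n-m-1}(n-m-1)! \neq 0.
\]
I apply this identity iteratively, one $h \in H$ at a time, noting at each stage that any partial product of the form $\prod_{h' \in H'} f_{h'}^{k_{h'}} v_\lambda$ with $H' \subset H \setminus \{h\}$ remains a maximal vector for the $h$-copy of $\mathfrak{sl}_2$ of $\alpha_h^\vee$-weight $m_h$: the weight is preserved because $\langle \alpha_{h'}, \alpha_h^\vee\rangle = 0$, and $e_h$ commutes past every $f_{h'}$ with $h' \neq h$ before annihilating $v_\lambda$. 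Carrying out these $|H|$-many applications (and freely rearranging, by the commutation relations above) yields
\[
0 \;=\; \prod_{h \in H} e_h^{n-m_h-1} \cdot \prod_{h \in H} f_h^n v_\lambda \;=\; \Bigl(\prod_{h \in H} c_{n, m_h}\Bigr) \prod_{h \in H} f_h^{m_h+1} v_\lambda,
\]
and since the scalar is nonzero, the claimed vanishing follows.

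The only real obstacle is the bookkeeping in the middle step — verifying that the operators $e_h$, $f_{h'}$ for distinct $h, h' \in H$ can be rearranged freely, and that the partial products of $f_{h'}^n$'s remain $\mathfrak{sl}_2^{(h)}$-maximal of the expected weight. Both statements reduce immediately to the independence of $H$ via the Serre relations, so no further input is needed.
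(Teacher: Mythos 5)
Your proof is correct and follows the same essential strategy as the paper's: starting from ${\bf f}_H^n v_\lambda = 0$, apply raising operators $e_h$ node by node, using $\mathfrak{sl}_2$-theory and the orthogonality of $H$ to lower the exponents down to $\langle\lambda,\alpha_h^\vee\rangle+1$. The only difference is that you fix $n$ large enough from the outset (exceeding every $\langle\lambda,\alpha_h^\vee\rangle+1$) and thereby land directly on $\prod_{h\in H} f_h^{\langle\lambda,\alpha_h^\vee\rangle+1} v_\lambda = 0$, i.e.\ $H\in\mathcal{H}_V$, whereas the paper takes $n$ minimal, which forces a case split of $H$ into nodes with $n\geq\langle\lambda,\alpha_h^\vee\rangle+1$ and the rest, yields $H_1\in\mathcal{H}_V$ for a nonempty $H_1\subseteq H$, and then invokes upper-closedness of $\mathcal{H}_V$ — a step your choice of $n$ renders unnecessary.
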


This result and Proposition~\ref{Psimple} repeatedly use the following
fact, as useful in each $\mathcal{O}^{\mathcal{H}}$ as it was in
$\mathcal{O}$. Namely, if $v_\lambda$ is a maximal vector for a raising
operator $e_h$, with weight $\lambda$, then
\begin{equation}\label{Esl2}
e_h^n \cdot f_h^n v_\lambda \in \mathbb{C}^\times v_\lambda, \qquad
\text{whenever }
\langle \lambda, \alpha_h^\vee \rangle \not\in \Z_{\geq 0} \ni n \
\text{ or } \ 0 \leq n \leq \langle \lambda, \alpha_h^\vee \rangle \in
\Z.
\end{equation}

\begin{proof}
Let $n > 0$ denote the smallest power such that ${\bf f}_H^n \cdot
v_\lambda = 0$, where we fix a nonzero highest weight vector $v_\lambda
\in V_\lambda$. Also define $m_h := \langle \lambda, \alpha_h^\vee
\rangle + 1 \in \Z_{>0}$ for $h \in J_\lambda$. Now if $n < m_h\
\forall h \in H$, then applying $e_h^n$ for all $h$ to the equation ${\bf
f}_H^n v_\lambda = 0$ yields $v_\lambda = 0$, which is false. Thus $H_1
:= \{ h  \in H : n \geq m_h \}$ is nonempty. Applying $\prod_{h \in H_1}
e_h^{n - m_h} \prod_{h \in H \setminus H_1} e_h^n$ to ${\bf f}_H^n
V_\lambda = 0$ -- via~\eqref{Esl2} -- yields $\prod_{h \in H_1} f_h^{m_h}
\cdot V_\lambda = 0$. Thus $H_1$ lies in the upper-closed set
$\mathcal{H}_V$, hence so does $H$.
\end{proof}

\begin{proof}[Proof of Proposition~\ref{Psimple}]\hfill
\begin{enumerate}
\item First suppose $J_\lambda$ intersects every hole in
$\mathcal{H}^{\min}$. Then ${\bf f}_H = {\bf f}_{J_\lambda \cap H} {\bf
f}_{H \setminus J_\lambda}\ \forall H \in \mathcal{H}$, where the two
factors on the right commute and ${\bf f}_{J_\lambda \cap H}$ acts
nilpotently on the highest weight line $L(\lambda)_\lambda$. Hence it
acts nilpotently on other vectors in $L(\lambda)$ as well, using
arguments similar to the proof of Lemma~\ref{LSerre}. But then so does
${\bf f}_H$. Hence $L(\lambda) \in \mathcal{O}^{\mathcal{H}}$.

Conversely, suppose $L(\lambda) \in \mathcal{O}^{\mathcal{H}}$, and say
there exists $H \in \mathcal{H}^{\min}$ which is disjoint from
$J_\lambda$. If ${\bf f}_H^n \cdot L(\lambda)_\lambda = 0$, then applying
$\prod_{h \in H} e_h^n$ via~\eqref{Esl2} yields: $L(\lambda)_\lambda =
0$, a contradiction.

\item We claim the upper-closure of $\mathcal{H}'_\lambda$ -- defined in
\eqref{EHprimelambda} -- is the smallest upper-closed subset
$\mathcal{H}_0 \subset {\rm Indep}(I)$ such that $\mathbb{M}(\lambda,
\mathcal{H}_0) \in \mathcal{O}^{\mathcal{H}}$.
To see why, first fix a nonzero highest weight vector $v_\lambda \in
\mathbb{M}(\lambda, \mathcal{H}'_\lambda)_\lambda$. For $H \in
\mathcal{H}^{\min}$, write ${\bf f}_H = {\bf f}_{J_\lambda \cap H} {\bf
f}_{H \setminus J_\lambda}$ as above. Then $J_\lambda \cap H \in
\mathcal{H}'_\lambda$, so ${\bf f}_H$ acts nilpotently on $v_\lambda$,
hence acts locally nilpotently on $\mathbb{M}(\lambda,
\mathcal{H}'_\lambda)$ -- for all $H \in \mathcal{H}^{\min}$. Thus,
$\mathbb{M}(\lambda, \mathcal{H}'_\lambda) \in
\mathcal{O}^{\mathcal{H}^{\min}} = \mathcal{O}^{\mathcal{H}}$ (by
Lemma~\ref{Lsanity}).

Now suppose $\mathbb{M}(\lambda, \mathcal{H}_0) \in
\mathcal{O}^{\mathcal{H}}$, and assume henceforth that $\mathcal{H}_0$ is
upper-closed. We claim that $\mathcal{H}'_\lambda \subset \mathcal{H}_0$.
To see why, fix a nonzero highest weight vector $v_\lambda \in
\mathbb{M}(\lambda, \mathcal{H}_0)_\lambda$, and let $H \in
\mathcal{H}^{\min}$. Then ${\bf f}_H = {\bf f}_{J_\lambda \cap H} {\bf
f}_{H \setminus J_\lambda}$ acts nilpotently on $v_\lambda$.
Say its $n$th power annihilates $v_\lambda$. Applying $\prod_{h \in H
\setminus J_\lambda} e_h^n$ as above via~\eqref{Esl2}, ${\bf
f}_{J_\lambda \cap H}$ acts nilpotently on $v_\lambda$. Now applying
Lemma~\ref{L3} with $J_\lambda \cap H$ in place of $H$ shows that
$J_\lambda \cap H \in \mathcal{H}_0$. As this holds for all $H \in
\mathcal{H}^{\min}$, the desired conclusion follows:
$\mathcal{H}'_\lambda \subset \mathcal{H}_0$.

\item If $\mathcal{H}'_\lambda \subset \mathcal{H}_V$ then by the
definitions, $\mathbb{M}(\lambda, \mathcal{H}'_\lambda)
\twoheadrightarrow \mathbb{M}(\lambda, \mathcal{H}_V) \twoheadrightarrow
V$, and the first of these lies in $\mathcal{O}^{\mathcal{H}}$, which is
closed under quotienting. Conversely, say $V \in
\mathcal{O}^{\mathcal{H}}$, and $H \in \mathcal{H}^{\min}$ (so $J_\lambda
\cap H \neq \emptyset$ by part (1)). Now ${\bf f}_H$ is nilpotent on the
highest weight line $V_\lambda$, hence so is ${\bf f}_{J_\lambda \cap H}$
by~\eqref{Esl2}. By Lemma~\ref{L3}, $J_\lambda \cap H \in \mathcal{H}_V$
for all $H \in \mathcal{H}^{\min}$, which finishes the proof.
\qedhere
\end{enumerate}
\end{proof}

With these results now shown, we begin proving our next main theorem, on
$\mathcal{O}^{\mathcal{H}}$.

\begin{proof}[Proof of Theorem~\ref{T4}, first part]
The third assertion -- involving BGG reciprocity -- is shown in
Theorem~\ref{TBGGrec}. Here we prove the rest, starting by showing that
$\mathcal{O}^{\mathcal{H}}$ has enough projectives.
Recall that the BGG Category $\mathcal{O}$ decomposes
as a direct sum over twisted $W$-orbits: $\mathcal{O} = \bigoplus
\mathcal{O}^{W \bullet \lambda}$, using central characters and
Harish-Chandra's theorem. Hence so does the subcategory
$\mathcal{O}^{\mathcal{H}} \subset \mathcal{O}$, via Lemma~\ref{Lbasic}.
It suffices to work in one such intersection
\begin{equation}
\mathcal{A} :=
\mathcal{O}^{\mathcal{H}} \cap \mathcal{O}^{W \bullet \lambda},
\end{equation}
where we \textbf{fix} $\lambda \in \mathfrak{h}^*$ satisfying:
$L(\lambda) \in \mathcal{O}^{\mathcal{H}}$.
Indeed, if we show there exist enough projectives $P$ in each such
category, and run over all dot-orbits $W \bullet \lambda$, then all such
$P$ are in fact projectives in $\mathcal{O}^{\mathcal{H}}$.

Now one shows -- using Proposition~\ref{Psimple} -- that
$\mathcal{A}$ has enough projectives. This is via the sufficient
criterion in \cite[Theorem 3.2.1]{BGS}, wherein one verifies five
conditions (not six, by Ringel's subsequent remark in \cite{BGS}).
As the verification is mostly standard, it is deferred to
Appendix~\ref{Sappendix} -- we do include it because for some simples,
there are multiple standard objects that get used in BGG reciprocity --
and because the proof of BGG reciprocity also uses similar arguments, see
Section~\ref{SBGG2}.

This proves the first assertion; we now turn to the second. We claim the
following equalities for every highest weight module $M(\lambda)
\twoheadrightarrow V$:
\begin{equation}\label{Ealtwts}
\wt \mathbb{M}(\lambda, (\mathcal{H}_V)'_\lambda) = \wt V =
\bigcup_{K \subset J_\lambda\, :\, L(w_{J_\lambda \setminus K} \bullet
\lambda) \in \mathcal{O}^{\mathcal{H}_V}} \wt M(\lambda, K),
\end{equation}
where $w_J$ is the longest element of $W_J$ for any $J \subset I$.
Note that the second equality yields an alternate weight-formula to
Theorem~\ref{T1}.

We begin with the first equality, which proves the second assertion in
the theorem via Proposition~\ref{Psimple}(2). This equality follows
because as shown above, $\wt V = \wt \mathbb{M}(\lambda, \mathcal{H}_V)$;
now apply Proposition~\ref{Psimple}(3) using that $V \in
\mathcal{O}^{\mathcal{H}_V}$.

This completes the proof of Theorem~\ref{T4}(2). We conclude with the
proof of the second equality in~\eqref{Ealtwts}; this follows from the
claim that
\[
\mathfrak{J}(V) = \{ K \subset J_\lambda \ | \ L(w_{J_\lambda \setminus
K} \bullet \lambda) \in \mathcal{O}^{\mathcal{H}_V} \},
\]
where $\mathfrak{J}(V)$ is as in~\eqref{EJV}. To see the claim, first
note that $H \subset J_\lambda$ if $H \in \mathcal{H}_V^{\min}$. By this
and Proposition~\ref{Psimple}(1), $L(w_{J_\lambda \setminus K} \bullet
\lambda) \in \mathcal{O}^{\mathcal{H}_V}$ if and only if $J_{w_{J_\lambda
\setminus K} \bullet \lambda} \cap (J_\lambda \cap H) \neq \emptyset$ for
all $H \in \mathcal{H}_V^{\min}$. Now by~\eqref{EJV}, it suffices to show
for $K \subset J_\lambda$ that $J_{w_{J_\lambda \setminus K} \bullet
\lambda} \cap J_\lambda = K$. One inclusion is because $\lambda$ is
$J_\lambda \setminus K$-dominant integral, so $-w_{J_\lambda \setminus K}
\bullet \lambda$ is strictly dominant integral for $J_\lambda \setminus
K$. The reverse inclusion follows from writing $w_{J_\lambda \setminus K}
\bullet \lambda = \lambda - \sum_{j \in J_\lambda \setminus K} l'_j
\alpha_j$ for some $l'_j \in \Z_{\geq 0}$, and evaluating against
$\langle -, \alpha_k^\vee \rangle$ for $k \in K$.
\end{proof}

We conclude with a well known consequence of standard facts on finite
length abelian categories $\mathcal{A}$ with finitely many simple objects
and enough projectives.

\begin{cor}
Every simple object $L(\lambda) \in \mathcal{O}^{\mathcal{H}}$ has a
projective cover $P^{\mathcal{H}}(\lambda) \in
\mathcal{O}^{\mathcal{H}}$, and so
\[
P^{\mathcal{H}}(\lambda) \twoheadrightarrow \mathbb{M}(\lambda,
\mathcal{H}'_\lambda) \twoheadrightarrow L(\lambda).
\]
Moreover, for all objects $M \in \mathcal{O}^{\mathcal{H}}$, one has
$\dim {\rm Hom}_{\mathcal{O}^{\mathcal{H}} } (P^{\mathcal{H}}(\lambda),
M) = [M : L(\lambda)]$, the number of Jordan--H\"older factors of $M$
isomorphic to $L(\lambda)$.
\end{cor}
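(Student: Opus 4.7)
The plan is to apply the standard theory of projective covers in a finite length abelian category with enough projectives. By Lemma~\ref{Lbasic}, $\mathcal{O}^{\mathcal{H}}$ decomposes by central characters, and each block sits inside a block of $\mathcal{O}$, in which (for $\mathfrak{g}$ of finite type) every object has finite length and only finitely many simples appear. Combined with Theorem~\ref{T4}(1) providing enough projectives in $\mathcal{O}^{\mathcal{H}}$, this is the classical setup; note also that in the statement, $L(\lambda) \in \mathcal{O}^{\mathcal{H}}$ means $J_\lambda \cap H \neq \emptyset$ for all $H \in \mathcal{H}^{\min}$ by Proposition~\ref{Psimple}(1).

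First I would produce $P^{\mathcal{H}}(\lambda)$. Pick any surjection $Q \twoheadrightarrow L(\lambda)$ from a projective $Q \in \mathcal{O}^{\mathcal{H}}$, and apply Krull--Schmidt to $Q$ (justified since $\operatorname{End}_{\mathcal{O}^{\mathcal{H}}}(Q)$ is finite-dimensional by the finite length of $Q$). One indecomposable summand, call it $P^{\mathcal{H}}(\lambda)$, projects nontrivially, hence surjectively, onto $L(\lambda)$; a Fitting-lemma argument on $\operatorname{End}(P^{\mathcal{H}}(\lambda))$ shows this surjection is essential, so $P^{\mathcal{H}}(\lambda)$ is the projective cover of $L(\lambda)$ and has unique simple top $L(\lambda)$. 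To produce the surjection onto $\mathbb{M}(\lambda, \mathcal{H}'_\lambda)$: this highest weight module lies in $\mathcal{O}^{\mathcal{H}}$ by Proposition~\ref{Psimple}(2) and surjects onto $L(\lambda)$, so projectivity yields a lift $\varphi : P^{\mathcal{H}}(\lambda) \to \mathbb{M}(\lambda, \mathcal{H}'_\lambda)$ whose composition to $L(\lambda)$ is nonzero. Hence $\operatorname{im}(\varphi)$ surjects onto $L(\lambda)$; decomposing by $\mathfrak{h}$-weight spaces, its $\lambda$-component must equal the one-dimensional highest weight line of $\mathbb{M}(\lambda, \mathcal{H}'_\lambda)$, and since the latter module is generated by that highest weight vector, $\varphi$ is surjective.

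Finally I would prove the Hom-multiplicity formula by induction on the length of $M$. For the base case $M = L(\mu)$: any nonzero morphism $P^{\mathcal{H}}(\lambda) \to L(\mu)$ is surjective and factors through the simple top, forcing $\mu = \lambda$; in that case every such morphism factors through the essential quotient $L(\lambda)$, giving $\operatorname{Hom}(P^{\mathcal{H}}(\lambda), L(\lambda)) \cong \operatorname{End}(L(\lambda)) = \mathbb{C}$ by Schur. For the inductive step, apply $\operatorname{Hom}_{\mathcal{O}^{\mathcal{H}}}(P^{\mathcal{H}}(\lambda), -)$ to a short exact sequence $0 \to M' \to M \to M'' \to 0$; by projectivity the resulting sequence is also short exact, so $\operatorname{Hom}$-dimensions add, matching the additivity of Jordan--H\"older multiplicities on short exact sequences.

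The main obstacle, and it is genuinely minor, is verifying the essentialness of $P^{\mathcal{H}}(\lambda) \twoheadrightarrow L(\lambda)$, i.e.\ that this indecomposable projective has simple top. This is the only step beyond formal diagram chasing that requires a Krull--Schmidt/Fitting input -- everything else is routine once finite length and enough projectives are in hand.
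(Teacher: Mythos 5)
Your proof is correct and follows exactly the route the paper alludes to: the paper dismisses this corollary as a "well known consequence of standard facts on finite length abelian categories with finitely many simple objects and enough projectives," and your argument is precisely the standard unwinding of those facts (Krull--Schmidt to extract the indecomposable projective summand, Fitting to get essentialness, projectivity to lift onto $\mathbb{M}(\lambda, \mathcal{H}'_\lambda)$, and exactness of $\operatorname{Hom}(P,-)$ plus induction on length for the multiplicity formula). No gap, and no divergence from the paper's intent.
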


\subsection{Properties of standard filtrations}

Having proved that the categories $\mathcal{O}^{\mathcal{H}}$ all have
enough projectives, it is natural to ask if these projectives have
``standard filtrations''; and if they do, then does BGG reciprocity hold
in some form. We begin by defining the former notion.

\begin{defn}
Fix a subset $\mathcal{H} \subset {\rm Indep}(I)$. An object $M \in
\mathcal{O}^{\mathcal{H}}$ is said to have a \textit{standard filtration
in $\mathcal{O}^{\mathcal{H}}$} if there exists a subcategory
$\mathcal{O}^{\mathcal{H}'}$ of $\mathcal{O}^{\mathcal{H}}$ that contains
$M$, and a finite filtration
\[
0 = M_0 \subset M_1 \subset \cdots \subset M_k = M,
\]
with each subquotient $M_i / M_{i-1} \cong \mathbb{M}(\lambda_i,
\mathcal{H}') = \mathbb{M}(\lambda_i, \mathcal{H}'_{\lambda_i})$ for some
$\lambda_i \in \mathfrak{h}^*$ (by Proposition~\ref{Psimple}(2)).
\end{defn}

\begin{remark}
It is also possible to define a weaker notion: an object $M \in
\mathcal{O}^{\mathcal{H}}$ has a \textit{weakly standard filtration in
$\mathcal{O}^{\mathcal{H}}$} if there exists a finite filtration
$0 = M_0 \subset M_1 \subset \cdots \subset M_k = M$,
with each subquotient $M_i / M_{i-1} \cong \mathbb{M}(\lambda_i,
\mathcal{H}_i)$ for some $\lambda_i \in \mathfrak{h}^*$, $\mathcal{H}_i
\subset {\rm Indep}(J_{\lambda_i})$. However, we work with the above,
stronger notion -- which we prove holds for all projectives in
$\mathcal{O}^{\mathcal{H}}$, over $\mathfrak{g} = \mathfrak{sl}_2^{\oplus
n}$ below.
\end{remark}

The result in this part that will be useful in showing BGG reciprocity,
is the natural one:

\begin{prop}\label{Pdirect}
Suppose $\mathfrak{g}$ is semisimple and $\mathcal{H} \subset {\rm
Indep}(I)$. Given objects $M', M'' \in \mathcal{O}^{\mathcal{H}}$, their
direct sum $M' \oplus M''$ has a standard filtration in
$\mathcal{O}^{\mathcal{H}}$, if and only if both $M'$ and $M''$ do.
\end{prop}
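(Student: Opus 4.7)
The $(\Leftarrow)$ direction is essentially concatenation. Given standard filtrations of $M'$ in $\mathcal{O}^{\mathcal{K}_1}$ and $M''$ in $\mathcal{O}^{\mathcal{K}_2}$ (both subcategories of $\mathcal{O}^{\mathcal{H}}$), I first pass to a common subcategory $\mathcal{O}^{\mathcal{K}}$ containing both $M'$ and $M''$; such a $\mathcal{K}$ exists because the standard modules $\mathbb{M}(\lambda_i, (\mathcal{K}_j)_{\lambda_i})$ in each filtration depend only on $J_{\lambda_i} \cap H$ for $H$ in the relevant minimal hole-set (see \eqref{EHprimelambda}), so the two filtrations can be reconciled by suitably combining the hole-sets. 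The concatenated chain
\[ 0 \subset F'_1 \oplus 0 \subset \cdots \subset M' \oplus 0 \subset M' \oplus F''_1 \subset \cdots \subset M' \oplus M'' \]
then has successive quotients matching those of the two original filtrations, giving a standard filtration of $M' \oplus M''$ in $\mathcal{O}^{\mathcal{K}}$.

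The essential $(\Rightarrow)$ direction I plan to handle by adapting Humphreys' homological characterization of Verma flags (\cite{H2}, Theorem~3.7) to the higher order setting. Using the closure of $\mathcal{O}^{\mathcal{K}}$ under restricted duals (Lemma~\ref{Lbasic}), define the costandard objects $\mathbb{N}(\mu, \mathcal{K}_\mu) := \mathbb{M}(\mu, \mathcal{K}_\mu)^\vee$, and prove the criterion
\[
M \in \mathcal{O}^{\mathcal{K}} \text{ has a standard filtration in } \mathcal{O}^{\mathcal{K}} \iff \mathrm{Ext}^1_{\mathcal{O}^{\mathcal{K}}}\bigl(M,\, \mathbb{N}(\mu, \mathcal{K}_\mu)\bigr) = 0 \ \ \forall \mu \text{ with } L(\mu) \in \mathcal{O}^{\mathcal{K}}.
\]
Granted this criterion, the proposition is immediate: since $\mathrm{Ext}^1$ is additive in its first argument,
\[ \mathrm{Ext}^1_{\mathcal{O}^{\mathcal{K}}}\bigl(M' \oplus M'',\, \mathbb{N}\bigr) \cong \mathrm{Ext}^1_{\mathcal{O}^{\mathcal{K}}}\bigl(M',\, \mathbb{N}\bigr) \oplus \mathrm{Ext}^1_{\mathcal{O}^{\mathcal{K}}}\bigl(M'',\, \mathbb{N}\bigr), \]
so the vanishing for the direct sum is equivalent to vanishing for each summand.

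The main technical effort is thus the criterion itself. Its forward half inducts on the filtration length, reducing to $\mathrm{Ext}^1_{\mathcal{O}^{\mathcal{K}}}\bigl(\mathbb{M}(\lambda, \mathcal{K}_\lambda),\, \mathbb{N}(\mu, \mathcal{K}_\mu)\bigr) = 0$ for $\lambda \neq \mu$: any nonsplit extension would produce a maximal vector of weight $\lambda$ in $\mathbb{N}(\mu, \mathcal{K}_\mu)$, contradicting $\wt \mathbb{N}(\mu, \mathcal{K}_\mu) = \wt \mathbb{M}(\mu, \mathcal{K}_\mu) \subset \mu - \mathbb{Z}_{\geq 0} \Delta^+$. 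The reverse half peels standard quotients off the top of $M$: I pick $\lambda$ maximal in $\wt M$, let $K \subset M$ be the submodule generated by $\bigoplus_{\mu \not\geq \lambda} M_\mu$, identify $M/K \cong \mathbb{M}(\lambda, \mathcal{K}_\lambda)$ via Proposition~\ref{Psimple}(2) (the Ext-vanishing hypothesis rules out $M/K$ being a proper further quotient), then check via the long exact Ext-sequence for $0 \to K \to M \to \mathbb{M}(\lambda, \mathcal{K}_\lambda) \to 0$ that $K$ inherits the vanishing condition, and iterate. The main obstacle is that $\mathcal{O}^{\mathcal{K}}$ is not always a highest weight category (as noted in the paper after Theorem~\ref{T4}), so the classical Ringel--Donkin framework is unavailable; the Ext-computations must instead be carried out directly using Proposition~\ref{Psimple} and the hole-structure, together with the abelian category axioms and enough injectives in $\mathcal{O}^{\mathcal{K}}$ furnished by Theorem~\ref{T4}(1).
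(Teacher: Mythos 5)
Your proposal is a genuinely different route from the paper's, and it contains a real gap. The paper proves Proposition~\ref{Pdirect} by adapting the classical filtration-peeling argument: Lemma~\ref{Lext} shows that if $\lambda$ is maximal in $\wt M$ then a short exact sequence with a standard object on the right splits, Lemma~\ref{Lfiltration} (proved from Lemma~\ref{Lext}) then peels a standard submodule off the top of any object with a standard filtration, and Proposition~\ref{Pdirect} follows as in \cite[Section 3.7]{H2}. That approach never leaves the regime of a \emph{maximal} weight, which is precisely what makes its maximal-vector argument work. Your Ext-vanishing criterion, by contrast, requires $\mathrm{Ext}^1_{\mathcal{O}^{\mathcal{K}}}\bigl(\mathbb{M}(\lambda,\mathcal{K}'_\lambda),\, \mathbb{N}(\mu,\mathcal{K}'_\mu)\bigr) = 0$ for \emph{all} pairs $\lambda \neq \mu$ in the block, and your justification -- that a nonsplit extension would force a maximal vector of weight $\lambda$ into $\mathbb{N}(\mu,\mathcal{K}'_\mu)$, contradicting $\wt\mathbb{N}(\mu,\mathcal{K}'_\mu) \subset \mu - \Z_{\geq 0}\Delta^+$ -- only rules out the case $\lambda \not\leq \mu$. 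When $\lambda < \mu$, the weight $\lambda$ typically \emph{is} a weight of $\mathbb{N}(\mu,\mathcal{K}'_\mu)$, and the chosen preimage $v \in E_\lambda$ of the highest weight vector of $\mathbb{M}(\lambda,\mathcal{K}'_\lambda)$ need not be annihilated by $\mathfrak{n}^+$ since $E_{\lambda+\alpha_i}$ can be nonzero. Even in classical $\mathcal{O}$ the $\lambda < \mu$ case of $\mathrm{Ext}^1(M(\lambda), M(\mu)^\vee)=0$ requires a separate and nontrivial argument; you have not supplied its analogue here, and as you yourself note, the failure of $\mathcal{O}^{\mathcal{H}}$ to be a highest weight category removes the standard Ringel--Donkin machinery that would otherwise give this for free.

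A secondary concern is the reconciliation step in your $(\Leftarrow)$ direction. The paper's definition requires a \emph{single} auxiliary subcategory $\mathcal{O}^{\mathcal{H}'}$ for a standard filtration, and you assert that filtrations of $M'$ in $\mathcal{O}^{\mathcal{K}_1}$ and of $M''$ in $\mathcal{O}^{\mathcal{K}_2}$ can be merged into one over a common $\mathcal{O}^{\mathcal{K}}$. But the obvious candidates fail: $M'$ need not lie in $\mathcal{O}^{\mathcal{K}_1 \cup \mathcal{K}_2}$, and replacing a factor $M(\lambda_i) = \mathbb{M}(\lambda_i, (\mathcal{K}_1)'_{\lambda_i})$ by $\mathbb{M}(\lambda_i, \mathcal{K}'_{\lambda_i})$ for a different $\mathcal{K}$ generally changes the module (and can even make it zero if some $H \in \mathcal{K}^{\min}$ misses $J_{\lambda_i}$). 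This needs an actual argument, not just the observation that $\mathcal{K}'_\lambda$ only depends on $J_\lambda \cap H$. I'd suggest either proving the Ext-vanishing lemma in full (including $\lambda < \mu$, perhaps by a duality and weight-truncation argument exploiting the self-duality $L(\mu)^\vee \cong L(\mu)$ and Lemma~\ref{Lbasic}), or switching to the paper's more elementary peeling approach via Lemmas~\ref{Lext} and~\ref{Lfiltration}, which avoids Ext computations altogether.
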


That said, this result, and indeed the treatment of category
$\mathcal{O}^{\mathcal{H}}$ for general $\mathcal{H}$, differ from its
``zeroth order'' and ``first order'' (parabolic) special cases in the
literature, in that now one is no longer working with $U
\mathfrak{n}'$-free modules (for a nonzero Lie subalgebra $\mathfrak{n}'
\subset \mathfrak{n}^-$) if $\mathcal{H}$ is more general. In particular,
``standard objects'' $\mathbb{M}(\lambda,\mathcal{H})$ are not always
obtained by induction from $U\mathfrak{n}'$ to $U \mathfrak{n}^-$. E.g.\
over $\mathfrak{g} = \mathfrak{sl}_2^{\oplus 2}$ (see~\eqref{EM00}), the
universal cover
\[
V_{00} = M(0,0) / M(-2,-2) = \mathbb{M}((0,0), \{ \{ 1, 2 \} \}) \cong
\mathbb{C}[ f_1, f_2 ] / (f_1 f_2).
\]
This is due to non-singleton sets in $\mathcal{H}$, and it makes the
proofs in this section diverge from the literature -- including for the
next lemma.

\begin{lemma}\label{Lext}
Suppose $\mathcal{H} \subset {\rm Indep}(I)$ is such that $0 \to N \to M
\to \mathbb{M}(\lambda, \mathcal{H}) \to 0$ is a short exact sequence in
the category $\mathcal{O}^{\mathcal{H}}$. If $\lambda$ is maximal in $\wt
M$, then the extension $M$ splits.
\end{lemma}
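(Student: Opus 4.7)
The plan is to exhibit an explicit section of the surjection $\pi : M \twoheadrightarrow \mathbb{M}(\lambda, \mathcal{H})$, by lifting the highest-weight line and showing that the submodule it generates gives a splitting. First I would fix a nonzero highest weight vector $\bar{v}_\lambda \in \mathbb{M}(\lambda, \mathcal{H})_\lambda$, and then use the maximality of $\lambda$ in $\wt M$ to lift it: since $\pi$ is surjective and weight-preserving, some $v \in M_\lambda$ satisfies $\pi(v) = \bar{v}_\lambda$. Maximality of $\lambda$ forces $e_i \cdot v \in M_{\lambda + \alpha_i} = 0$ for every $i \in I$, so $v$ is itself a highest weight vector.

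Next, I would consider the cyclic submodule $V := U\mathfrak{g} \cdot v \subset M$. As a highest weight module of highest weight $\lambda$ sitting inside $M \in \mathcal{O}^{\mathcal{H}}$, Lemma~\ref{Lbasic} places $V$ in $\mathcal{O}^{\mathcal{H}}$. Proposition~\ref{Psimple}(3) then forces $\mathcal{H}'_\lambda \subset \mathcal{H}_V$, so by the universal property of Proposition~\ref{Psimple}(2) (and the identification $\mathbb{M}(\lambda, \mathcal{H}) = \mathbb{M}(\lambda, \mathcal{H}'_\lambda)$) there is a surjection
\[
\phi : \mathbb{M}(\lambda, \mathcal{H}) \twoheadrightarrow V, \qquad \bar{v}_\lambda \mapsto v.
\]

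Now the composite $\pi|_V \circ \phi : \mathbb{M}(\lambda, \mathcal{H}) \to \mathbb{M}(\lambda, \mathcal{H})$ sends the cyclic generator $\bar{v}_\lambda$ to itself, hence is the identity endomorphism. This forces $\phi$ to be injective, and therefore an isomorphism; equivalently, $\pi|_V : V \xrightarrow{\sim} \mathbb{M}(\lambda, \mathcal{H})$ is an isomorphism. Then $V \cap N = \ker(\pi|_V) = 0$, and a dimension/weight count in each weight space gives $M = N \oplus V$, splitting the sequence.

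The only real step requiring care is the appeal to Proposition~\ref{Psimple}(2)--(3) to manufacture $\phi$, since it is what lets us upgrade the bare fact ``$v$ is a highest weight vector annihilated by the right family of operators'' into the stronger statement that the kernel of $M(\lambda) \twoheadrightarrow V$ contains \emph{all} of the relations defining $\mathbb{M}(\lambda, \mathcal{H})$. Everything else—the existence of the highest-weight lift via maximality, and the fact that an endomorphism of a highest weight module fixing its generator is the identity—is standard.
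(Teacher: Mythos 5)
Your proof is correct and follows essentially the same route as the paper's: lift the highest-weight line using maximality of $\lambda$, form the cyclic submodule $V := U\mathfrak{g}\cdot v$, use Proposition~\ref{Psimple}(3) to get $\mathcal{H}'_\lambda \subset \mathcal{H}_V$, and conclude from the resulting chain of surjections through $V$ and back to $\mathbb{M}(\lambda,\mathcal{H})$ that the restriction of $\pi$ to $V$ is an isomorphism, giving $V \cap N = 0$. The paper phrases the last step as a composite of surjections being an isomorphism while you phrase it as $\pi|_V \circ \phi$ fixing the cyclic generator; these are the same argument.
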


\begin{proof}
Note that $M_\lambda \neq 0$, so $L(\lambda) \in
\mathcal{O}^\mathcal{H}$, so $\mathbb{M}(\lambda, \mathcal{H}) =
\mathbb{M}(\lambda, \mathcal{H}'_\lambda)  \neq 0$ by
Proposition~\ref{Psimple}.
Pick a weight vector $0 \neq v_\lambda \in \mathbb{M}(\lambda,
\mathcal{H})_\lambda$ and its preimage $m_\lambda \in M_\lambda$. Then $V
:= U \mathfrak{g} \cdot m_\lambda \in \mathcal{O}^{\mathcal{H}}$, and
$M(\lambda) \twoheadrightarrow V$ because $\mathfrak{n}^+ m_\lambda = 0$
by assumption.
We claim that $M = V \oplus N$. Indeed, by Proposition~\ref{Psimple}(3),
$\mathcal{H}'_\lambda \subset \mathcal{H}_V$, which gives a sequence of
surjections whose composite is an isomorphism:
\[
\mathbb{M}(\lambda, \mathcal{H}'_\lambda) \twoheadrightarrow
\mathbb{M}(\lambda, \mathcal{H}_V) \twoheadrightarrow V
\twoheadrightarrow \mathbb{M}(\lambda, \mathcal{H}) =
\mathbb{M}(\lambda, \mathcal{H}'_\lambda).
\]
Thus the final map is an isomorphism, so $V \cap N = 0$, yielding the
desired splitting.
\end{proof}

\begin{lemma}\label{Lfiltration}
Suppose $\mathcal{H} \subset {\rm Indep}(I)$, and $M \in
\mathcal{O}^{\mathcal{H}}$ has a standard filtration in
$\mathcal{O}^{\mathcal{H}}$. If $\lambda$ is maximal in $\wt M$, then
there exists a submodule $M'$ of $M$ satisfying:
(i)~$M' \cong \mathbb{M}(\lambda, \mathcal{H})$, and
(ii)~$M / M'$ has a standard filtration in $\mathcal{O}^{\mathcal{H}}$.
\end{lemma}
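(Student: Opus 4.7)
The plan is to induct on the length $k$ of the standard filtration $0 = M_0 \subset M_1 \subset \cdots \subset M_k = M$, where by definition there is some upper-closed $\mathcal{H}' \supset \mathcal{H}$ with $M \in \mathcal{O}^{\mathcal{H}'} \subset \mathcal{O}^{\mathcal{H}}$ and each $M_i/M_{i-1} \cong \mathbb{M}(\lambda_i, \mathcal{H}')$. I fix this $\mathcal{H}'$ throughout; at the end I identify the produced $\mathbb{M}(\lambda, \mathcal{H}')$ with $\mathbb{M}(\lambda, \mathcal{H})$ in the statement, since both coincide with the universal highest weight cover of $L(\lambda)$ lying in $\mathcal{O}^{\mathcal{H}'}$ (Proposition~\ref{Psimple}(2)).

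The base case $k = 1$ is immediate: $M \cong \mathbb{M}(\lambda_1, \mathcal{H}')$, and maximality of $\lambda$ in $\wt M$ forces $\lambda = \lambda_1$, so one sets $M' := M$ and $M/M' = 0$. For the inductive step I consider the final subquotient $M/M_{k-1} \cong \mathbb{M}(\lambda_k, \mathcal{H}')$ and split into two cases. If $\lambda_k = \lambda$, I apply Lemma~\ref{Lext} to the exact sequence $0 \to M_{k-1} \to M \to \mathbb{M}(\lambda, \mathcal{H}') \to 0$: its hypothesis that $\lambda$ be maximal in $\wt M$ is exactly what is assumed, so the sequence splits, giving $M \cong M_{k-1} \oplus \mathbb{M}(\lambda, \mathcal{H}')$. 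Taking $M'$ to be the second summand, the quotient $M/M' \cong M_{k-1}$ retains the original standard filtration $0 \subset M_1 \subset \cdots \subset M_{k-1}$ in $\mathcal{O}^{\mathcal{H}}$.

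If instead $\lambda_k \neq \lambda$, I first note that $\lambda \notin \wt(M/M_{k-1})$: the weights of $\mathbb{M}(\lambda_k, \mathcal{H}')$ are all $\leq \lambda_k$, and $\lambda < \lambda_k$ or $\lambda = \lambda_k$ would either contradict maximality of $\lambda$ in $\wt M$ or contradict $\lambda \neq \lambda_k$. Hence $\lambda \in \wt M_{k-1}$, where it remains maximal (any strictly larger weight in $M_{k-1}$ would also lie in $\wt M$). The inductive hypothesis applied to $M_{k-1}$ produces $M' \subset M_{k-1}$ with $M' \cong \mathbb{M}(\lambda, \mathcal{H}')$ and a standard filtration $0 = N_0 \subset \cdots \subset N_\ell = M_{k-1}/M'$ in $\mathcal{O}^{\mathcal{H}}$. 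Pushing this forward under $M_{k-1}/M' \hookrightarrow M/M'$ and appending the quotient $(M/M')/(M_{k-1}/M') \cong M/M_{k-1} \cong \mathbb{M}(\lambda_k, \mathcal{H}')$ yields a standard filtration of $M/M'$.

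The main obstacle is the first case, where one must invoke Lemma~\ref{Lext}; the key verification is precisely that the maximality hypothesis passes from the exact sequence with $\mathbb{M}(\lambda, \mathcal{H}')$ as quotient to the splitting conclusion. A secondary subtlety is that the inductive hypothesis requires the same ambient category $\mathcal{O}^{\mathcal{H}}$ with the same parameter $\mathcal{H}'$ witnessing the standard filtration, which is clear since $M_{k-1} \in \mathcal{O}^{\mathcal{H}'}$ (submodule-closedness) and the inherited filtration on $M_{k-1}$ has subquotients of the required form $\mathbb{M}(\lambda_i, \mathcal{H}')$.
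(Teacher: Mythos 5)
Your inductive scheme --- peel off the top standard layer via Lemma~\ref{Lext}, then induct on filtration length --- is exactly the classical argument from \cite[Section 3.7]{H2} that the paper points to, and both cases of the inductive step are handled correctly, including the bookkeeping that the induction must be run with a fixed filtration parameter $\mathcal{H}'$. The gap is the identification you assert at the outset. It is \emph{not} true that $\mathbb{M}(\lambda, \mathcal{H})$ is the universal highest weight cover of $L(\lambda)$ in $\mathcal{O}^{\mathcal{H}'}$: Proposition~\ref{Psimple}(2) says that the universal cover of $L(\lambda)$ in $\mathcal{O}^{\mathcal{H}'}$ is $\mathbb{M}(\lambda, \mathcal{H}')$, while $\mathbb{M}(\lambda, \mathcal{H})$ is the universal cover in the \emph{larger} category $\mathcal{O}^{\mathcal{H}}$, and these are genuinely different modules. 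Indeed $\mathbb{M}(\lambda, \mathcal{H})$ need not even be an object of $\mathcal{O}^{\mathcal{H}'}$: take $\mathfrak{g} = \mathfrak{sl}_2 \oplus \mathfrak{sl}_2$, $\lambda$ regular dominant integral, $\mathcal{H} = \{\{1,2\}\}$, and $\mathcal{H}'$ the upper-closure of $\{\{1\}\}$. Then $\mathbb{M}(\lambda, \mathcal{H}') = M(\lambda, \{1\})$, but $\mathbb{M}(\lambda, \mathcal{H}) = M(\lambda)/M(s_1 s_2 \bullet \lambda)$ contains the entire ray $\lambda - \Z_{\geq 0}\alpha_1$ in its weights (by Theorem~\ref{T1}), so $f_1$ does not act locally nilpotently on it and $\mathbb{M}(\lambda, \mathcal{H}) \notin \mathcal{O}^{\mathcal{H}'}$. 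It also has $L(s_1 \bullet \lambda)$ as a composition factor, which a module in $\mathcal{O}^{\mathcal{H}'}$ cannot.

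What your induction actually delivers is $M' \cong \mathbb{M}(\lambda, \mathcal{H}')$ for the fixed $\mathcal{H}'$ witnessing the filtration --- and that is the correct conclusion. The lemma as printed should be read as tacitly taking $\mathcal{H}' = \mathcal{H}$, which is the case in every place the paper uses it: e.g.\ in Step~3 of the proof of Theorem~\ref{TBGGrec}, Proposition~\ref{Pdirect} (and thereby this lemma) is applied only after re-parameterizing $\mathcal{H} \leadsto \overline{\mathcal{H}'}$. So either build $\mathcal{H}' = \mathcal{H}$ into the hypothesis, or state the conclusion as $M' \cong \mathbb{M}(\lambda, \mathcal{H}')$; do not try to collapse the two. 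With that adjustment the proof is sound.
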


Lemma~\ref{Lfiltration} is proved using Lemma~\ref{Lext} and similar
arguments to the classical case of $\mathcal{O}$. In turn, it implies
Proposition~\ref{Pdirect}. The proofs are similar to e.g.\ those in
\cite[Section 3.7]{H2}.

\subsection{BGG reciprocity -- subtlety in the higher order
case, over all $\mathfrak{g}$ of rank $\geq 3$}\label{SBGG}

With the above machinery and results at hand, we turn to the remainder of
Theorem~\ref{T4} -- i.e., BGG reciprocity~\eqref{EBGGrec} in all
categories $\mathcal{O}^{\mathcal{H}}$ over $\mathfrak{g} =
\mathfrak{sl}_2^{\oplus n}$. That BGG reciprocity holds in the
zeroth/first order cases (i.e., the usual/parabolic categories
$\mathcal{O}$) is well known, see \cite{BGG}, \cite{Rocha},
\cite[Chapters 3, 9]{H2}.

Before working over $\mathfrak{sl}_2^{\oplus n}$, we first show that the
situation in $\mathcal{O}^{\mathcal{H}}$ has a subtlety when
$\mathcal{H}$ has higher order holes -- over \textit{any} $\mathfrak{g}$
of rank at least $3$. This is because \textit{multiple} ``standard
objects'' $V = \mathbb{M}(\lambda, \mathcal{H}_0)$ exist over a given
$L(\lambda) \in \mathcal{O}^{\mathcal{H}}$ (for certain $\mathcal{H}$) --
recall, these were classified in Proposition~\ref{Psimple}(3). It turns
out that standard filtrations for different projectives in a block can
feature more than one such standard object $\mathbb{M}(\lambda,
\mathcal{H}_0)$, but only one of these is the universal cover
$\mathbb{M}(\lambda, \mathcal{H}'_\lambda)$ (see
Proposition~\ref{Psimple} and Definition~\ref{Dunivcover}).
This is already a break from the parabolic case \cite[Theorem
6.1]{Rocha}. We begin by illustrating this in an even simpler case -- in
rank two. The key object is again $V_{00} = M(0,0) / M(-2,-2)$
from~\eqref{EM00}, and its generalization $\mathbb{M}(\lambda, \{ \{ 1, 2
\} \})$.

\begin{example}[$\mathfrak{g} = \mathfrak{sl}_2 \oplus
\mathfrak{sl}_2$]\label{Erank2}
We present the complete picture over this algebra $\mathfrak{g}$, to
provide familiarity before tackling the case of $\mathfrak{sl}_2^{\oplus
n}$ for general $n$. By Lemma~\ref{Lsanity}, one needs to consider the
subcategories $\mathcal{O}^{\mathcal{H}}$, with $\mathcal{H}$ from among
the following five upper-closed subsets of ${\rm Indep}(I) = 2^I$:
\[
\mathcal{H} \quad = \quad \emptyset, \quad
\{ \{ 1 \}, \{ 1, 2 \} \}, \quad
\{ \{ 2 \}, \{ 1, 2 \} \}, \quad
\{ \{ 1 \}, \{ 2 \}, \{ 1, 2 \} \}, \quad
\{ \{ 1, 2 \} \}.
\]
The first case is that of the usual category $\mathcal{O}$, and the next
three cases are of its parabolic subcategories. These were addressed in
\cite{BGG} and \cite{Rocha}, respectively.

Thus, henceforth fix $\mathcal{H} = \{ \{ 1, 2 \} \} =
\mathcal{H}^{\min}$. If $J_\lambda = \{ 1 \}$ and $\langle \lambda,
\alpha_2^\vee \rangle + 1$ is either zero or a non-integer, then the
linkage class is $\{ \lambda > s_1 \bullet \lambda \}$, and $L(s_1
\bullet \lambda) \not\in \mathcal{O}^{\mathcal{H}}$, so the block of
$\mathcal{O}^{\mathcal{H}} \cap \mathcal{O}^{W \bullet \lambda}$
containing $L(\lambda)$ has only one simple object -- which is also
parabolic Verma and projective in that block. (The analogous story for
$J_\lambda = \{ 2 \}$ and $\langle \lambda, \alpha_1^\vee \rangle + 1$ as
above, also holds.)

The only remaining case is when $\mathcal{H} = \{ \{ 1, 2 \} \} =
\mathcal{H}^{\min}$ and $\lambda$ lies in a block with a dominant
integral element -- which we can set to be $\lambda$. Then the block
$\mathcal{O}^{\mathcal{H}} \cap \mathcal{O}^{W \bullet \lambda}$ has
three simples: $L(\lambda)$, $L(s_1 \bullet \lambda)$, $L(s_2 \bullet
\lambda)$. Their universal covers in $\mathcal{O}^{\mathcal{H}}$ are,
respectively:
\[
\mathbb{M}(\lambda, \{ \{ 1, 2 \} \}) = \frac{M(\lambda)}{M(s_1 s_2
\bullet \lambda)} \twoheadrightarrow L(\lambda), \quad \mathbb{M}(s_1
\bullet \lambda, \{ \{ 2 \} \}) = L(s_1 \bullet \lambda), \quad
\mathbb{M}(s_2 \bullet \lambda, \{ \{ 1 \} \}) = L(s_2 \bullet \lambda).
\]
In particular, there is a unique standard object of highest weight $s_i
\bullet \lambda$ for $i=1,2$. However, there are four standard objects in
$\mathcal{O}^{\mathcal{H}} \cap \mathcal{O}^{W \bullet \lambda}$ of
highest weight $\lambda$: $\mathbb{M}(\lambda, \{ J \})$ for $\emptyset
\neq J \subset \{ 1, 2 \}$, and $L(\lambda) \cong \mathbb{M}(\lambda, \{
\{ 1 \}, \{ 2 \} \})$. Moreover, the projective cover of the ``highest''
simple $L(\lambda)$ is its ``Verma cover'' $\mathbb{M}(\lambda, \{ \{ 1,
2 \} \})$ -- which has length $3$ -- while those of the other two simple
modules turn out to be their projective covers in smaller categories --
in fact, in parabolic categories $\mathcal{O}^{\mathfrak{p}_J} =
\mathcal{O}^{\mathcal{H}_J}$:
\begin{align}\label{Eses}
\begin{aligned}
&\ 0 \to M(\lambda, \{ 1 \}) \to P^{\mathcal{H}_{\{ 1 \}} }(s_2 \bullet
\lambda) \to M(s_2 \bullet \lambda, \{ 1 \}) \to 0\\
&\ 0 \to M(\lambda, \{ 2 \}) \to P^{\mathcal{H}_{\{ 2 \}} }(s_1 \bullet
\lambda) \to M(s_1 \bullet \lambda, \{ 2 \}) \to 0
\end{aligned}
\end{align}
(this follows from Theorem~\ref{TBGGrec} below).
Now one can easily verify ``BGG-type'' reciprocity in
$\mathcal{O}^{\mathcal{H}}$. Moreover, the ``Cartan matrix'' for
this block with respect to the indices $[\lambda]_\mathcal{H} :=
(\lambda, s_1 \bullet \lambda, s_2 \bullet \lambda)$ is
\begin{equation}\label{E3x3}
C := ([P^\mathcal{H}(\mu) : L(\mu')])_{\mu, \mu' \in
[\lambda]_\mathcal{H}} = \begin{pmatrix} 1 & 1 & 1 \\ 1 & 2 & 0 \\ 1 & 0
& 2\end{pmatrix}.
\end{equation}
It is easy to check (via inner products) that $C \neq D^T D$ for any
``multiplicity'' matrix $D$ with entries in $\mathbb{Z}_{\geq 0}$. This
shows ``numerically'' that BGG reciprocity does not hold in
$\mathcal{O}^\mathcal{H} \cap \mathcal{O}^{W \bullet \lambda}$ -- i.e.,
there cannot exist an intermediate class of standard objects for which
BGG reciprocity holds on the nose.
\end{example}

\begin{remark}\label{RBGGrec}
Thus, BGG reciprocity is more subtle even in rank $2$, when $\mathcal{H}
= \{ \{ 1, 2 \} \}$. Specifically, using multiple standard objects for
some of the weights in the block is now necessary, as can already be seen
here via our original motivating example~\eqref{EM00}. Namely, in Example
\ref{Erank2}, the standard filtrations for the projectives involve three
standard objects with highest weight $\lambda$: $\mathbb{M}(\lambda, \{ J
\})$ for $\emptyset \neq J \subset \{ 1, 2 \}$ (even for $\lambda =
(0,0)$).
\end{remark}

The preceding remark suggests $\mathcal{O}^{\mathcal{H}}$ is not a
highest weight category for $\mathcal{H} = \{ \{ 1, 2 \} \}$. To see why,
we discuss if any of the three standard objects $\mathbb{M}(\lambda, \{ J
\})$ for $\emptyset \neq J \subset \{ 1, 2 \}$ can be ``avoided'', via
alternate standard filtrations for the projective objects in
$\mathcal{O}^{\{ \{ 1, 2 \} \}}$. However, this is not possible:
\begin{enumerate}
\item First, $\mathbb{M}(\lambda, \mathcal{H}'_\lambda) =
\mathbb{M}(\lambda, \{ \{ 1, 2 \} \})$ cannot occur in any filtration of
$P^{\mathcal{H}}(s_i \bullet \lambda)$ in~\eqref{Eses} -- because even
the simpler statement $\wt \mathbb{M}(\lambda, \mathcal{H}'_\lambda)
\subset \wt P^{\mathcal{H}}(s_i \bullet \lambda)$ is false.

\item One can ask if the above notion of standard filtration could be
broadened to require the top quotient to merely be a standard object
$\mathbb{M}(\nu, \mathcal{H}')$ rather than the universal cover
$\mathbb{M}(\nu, \mathcal{H}'_\nu)$. By Proposition~\ref{Psimple}(3),
such a set $\mathcal{H}'$ could be an upper-closed subset containing
$\mathcal{H}'_\nu$. This weakening could enable using
$\mathbb{M}(\lambda, \mathcal{H}'')$ for $\mathcal{H}'' \neq
\mathcal{H}'_\lambda$, in the standard filtration for
$P^\mathcal{H}(\lambda)$.

Unfortunately, this hope is also in vain, in that even in the above
example with $\mathcal{H} = \{ \{ 1, 2 \} \}$, it leads to violating the
requirement that the remaining standard factors in the filtration of
$P^{\mathcal{H}}(\nu)$ should have highest weights $\mu > \nu$. Indeed,
set $\mu = \nu = \lambda$; now there are four upper-closed subsets
$\mathcal{H}'$ containing $\mathcal{H}'_\lambda = \{ \{ 1, 2 \} \}$:
\[
\mathcal{H}' \quad = \quad \{ \{ 1, 2 \} \}, \quad
\{ \{ 1 \}, \{ 1, 2 \} \}, \quad
\{ \{ 2 \}, \{ 1, 2 \} \}, \quad
\{ \{ 1 \}, \{ 2 \}, \{ 1, 2 \} \}.
\]
So if any other module $\mathbb{M}(\lambda, \mathcal{H}'')$ is
used in the standard filtration for $P^{\mathcal{H}}(\lambda) =
\mathbb{M}(\lambda, \mathcal{H}'_\lambda)$, then as $\lambda$ is maximal
in its dot-orbit, the kernel of $P^{\mathcal{H}}(\lambda)
\twoheadrightarrow \mathbb{M}(\lambda, \mathcal{H}'')$ has all factors
with highest weight $\mu < \lambda$ -- but highest weight categories and
BGG reciprocity require $\mu > \lambda$.
\end{enumerate}

\begin{remark}
The above discussion shows that $\mathcal{O}^{\mathcal{H}}$ is not a
highest weight category for general $\mathcal{H}$ in the sense of
Cline--Parshall--Scott \cite{CPS} -- as early as $\mathfrak{g} =
\mathfrak{sl}_2^{\oplus 2}$ and $\mathcal{H} = \{ \{ 1, 2 \} \}$. In this
sense, the category $\mathcal{O}^{\mathcal{H}}$ diverges in higher order,
from the zeroth and first order (parabolic) category $\mathcal{O}$.
\end{remark}

We end this part by showing the same fact over every $\mathfrak{g}$ of
higher rank, as promised above.

\begin{prop}
Suppose $\mathfrak{g}$ is semisimple of rank at least $3$. Then there
exist $\lambda \in \mathfrak{h}^*$ and an upper-closed set $\mathcal{H}
\subset {\rm Indep}(I)$ such that $\mathcal{O}^\mathcal{H} \cap
\mathcal{O}^{W \bullet \lambda}$ is not a highest weight category.
\end{prop}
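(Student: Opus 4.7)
The plan is to extend the rank-two obstruction of Example~\ref{Erank2} to higher rank. Since $\mathfrak{g}$ is semisimple of rank at least three, its Dynkin diagram on $I$ contains two non-adjacent nodes $i, j$: for a simple $\mathfrak{g}$ of rank $\geq 3$ this follows by direct inspection of the Dynkin classification, and for disconnected diagrams trivially. Fix such a pair, and set $\mathcal{H}$ to be the upper-closure in ${\rm Indep}(I)$ of $\{\{i, j\}\}$, so that $\mathcal{H}^{\min} = \{\{i, j\}\}$; also take $\lambda \in \mathfrak{h}^*$ to be dominant integral, so that $J_\lambda = I$ and $\mathcal{H}'_\lambda = \{\{i, j\}\}$.

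In the block $\mathcal{A} := \mathcal{O}^{\mathcal{H}} \cap \mathcal{O}^{W \bullet \lambda}$, at least four pairwise non-isomorphic standard objects of highest weight $\lambda$ exist---namely $\mathbb{M}(\lambda, \mathcal{H}_0)$ as $\mathcal{H}_0$ runs over the upper-closures in ${\rm Indep}(I)$ of $\{\{i,j\}\}$, $\{\{i\}\}$, $\{\{j\}\}$, and $\{\{i\},\{j\}\}$---by Proposition~\ref{Psimple}(3) combined with the injectivity of $\Psi_\lambda$ in Theorem~\ref{T2}. Since $\lambda$ is maximal in $W \bullet \lambda$, Lemma~\ref{Lext} forces $P^{\mathcal{H}}(\lambda) = \mathbb{M}(\lambda, \{\{i, j\}\})$; hence if $\mathcal{A}$ were a highest weight category in the sense of \cite{CPS}, then necessarily $\Delta(\lambda) = \mathbb{M}(\lambda, \{\{i, j\}\})$, as in the discussion after Remark~\ref{RBGGrec}.

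The contradiction comes from $P^{\mathcal{H}}(s_i \bullet \lambda) \in \mathcal{A}$---which lies in $\mathcal{A}$ because $J_{s_i \bullet \lambda} = I \setminus \{i\}$ contains $j$. The universal cover of $L(s_i \bullet \lambda)$ in $\mathcal{O}^{\mathcal{H}}$ is the parabolic Verma $M(s_i \bullet \lambda, \{j\})$, living inside the smaller parabolic category $\mathcal{O}^{\mathcal{H}_{\{j\}}} \subset \mathcal{O}^{\mathcal{H}}$ (the inclusion holds because non-adjacency of $i, j$ makes local $f_j$-nilpotency imply local $f_i f_j$-nilpotency). By \cite{Rocha} the standard filtration of the parabolic projective $P^{\mathcal{H}_{\{j\}}}(s_i \bullet \lambda)$ uses the parabolic Vermas $M(\lambda, \{j\})$ and $M(s_i \bullet \lambda, \{j\})$. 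The key structural claim is that this parabolic projective coincides with $P^{\mathcal{H}}(s_i \bullet \lambda)$, so its standard filtration features $M(\lambda, \{j\}) \neq \mathbb{M}(\lambda, \{\{i, j\}\})$ as its factor at highest weight $\lambda$. This yields two distinct choices of ``standard object'' at $\lambda$ across the projective covers in $\mathcal{A}$, contradicting the uniqueness axiom of \cite{CPS}.

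The main obstacle is establishing that the standard filtration of $P^{\mathcal{H}}(s_i \bullet \lambda)$ in $\mathcal{O}^{\mathcal{H}}$ cannot use $\mathbb{M}(\lambda, \{\{i, j\}\})$ as its $\lambda$-factor---equivalently, that the submodule of $P^{\mathcal{H}}(s_i \bullet \lambda)$ generated by a highest-weight-$\lambda$ vector is the proper quotient $M(\lambda, \{j\})$ of $\mathbb{M}(\lambda, \{\{i, j\}\})$. The analogue over $\mathfrak{sl}_2^{\oplus n}$ is proved in Theorem~\ref{TBGGrec}; for general $\mathfrak{g}$ of rank $\geq 3$, the cleanest approach is a weight-counting argument in the spirit of Example~\ref{Erank2}: exhibit the weight $s_j \bullet \lambda = \lambda - (\langle\lambda, \alpha_j^\vee\rangle + 1)\alpha_j$, which lies in $\wt \mathbb{M}(\lambda, \{\{i, j\}\}) = \wt M(\lambda) \setminus \wt M(s_i s_j \bullet \lambda)$ (since $s_j \bullet \lambda - s_i s_j \bullet \lambda = (\langle\lambda, \alpha_i^\vee\rangle + 1)\alpha_i$ is not a non-positive combination of positive roots), and then show by analyzing composition factors of $P^{\mathcal{H}}(s_i \bullet \lambda)$---using the parabolic structure via $\mathcal{O}^{\mathcal{H}_{\{j\}}}$ and Lemma~\ref{Lslice}---that $s_j \bullet \lambda \notin \wt P^{\mathcal{H}}(s_i \bullet \lambda)$.
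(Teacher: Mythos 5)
Your strategy tries to carry the rank-two obstruction into higher rank by taking $\lambda$ dominant integral, so $J_\lambda = I$. This is where the trouble starts: the linkage class of a dominant integral $\lambda$ is the full orbit $W \bullet \lambda$, and for $\mathfrak{g}$ of rank $\geq 3$ the block $\mathcal{O}^{\mathcal{H}} \cap \mathcal{O}^{W \bullet \lambda}$ then contains many more simples than just $L(\lambda)$, $L(s_i \bullet \lambda)$, $L(s_j \bullet \lambda)$. That extra size is exactly what makes your final step --- showing $s_j \bullet \lambda \notin \wt P^{\mathcal{H}}(s_i \bullet \lambda)$ --- far from automatic: the projective cover $P^{\mathcal{H}}(s_i \bullet \lambda)$ can have composition factors $L(\mu)$ with $\mu$ ranging widely over the orbit, several of which may contribute the weight $s_j \bullet \lambda$. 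Your ``key structural claim'' that $P^{\mathcal{H}}(s_i \bullet \lambda) = P^{\mathcal{H}_{\{j\}}}(s_i \bullet \lambda)$ is likewise plausible but unproved, and the paper's Theorem~\ref{TBGGrec} --- which proves exactly this kind of coincidence --- is deliberately restricted to $\mathfrak{sl}_2^{\oplus n}$ and to $\lambda$ with independent $J_\lambda$, because in general the analysis is not this clean. So the proposal stops short at precisely the step that carries all the content.

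The paper's actual proof sidesteps all of this by choosing $\lambda$ \emph{not} dominant integral but generic in $\operatorname{span}_{\mathbb{C}}\{\varpi_j : j \in I \setminus \{1,2\}\}$, arranged so that $J_\lambda = \{1,2\}$ and $W \bullet \lambda \cap (\lambda + \Z\Pi) = W_{\{1,2\}} \bullet \lambda$. Then $J_{s_1 \bullet \lambda} = \{2\}$, $J_{s_2 \bullet \lambda} = \{1\}$, $J_{s_1 s_2 \bullet \lambda} = \emptyset$, so the block has exactly three simples and the verification reduces word-for-word to Example~\ref{Erank2} over $\mathfrak{sl}_2 \oplus \mathfrak{sl}_2$. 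This is the idea your proof is missing: shrink the integrability so the block itself becomes small, rather than trying to control a large block via weight-counting. If you want to repair your argument with dominant integral $\lambda$, you would have to actually establish the unproved weight-exclusion claim, which is substantially harder than the route the paper takes.
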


\begin{proof}
Since the Dynkin diagram contains at least two leaves (in particular, it
is not complete), choose two ``orthogonal'' simple roots and label them
by $\alpha_1, \alpha_2$. Let $J := I \setminus \{ 1, 2 \}$ and fix a
generic weight $\lambda \in {\rm span}_{\mathbb{C}} \{ \varpi_j : j \in J
\}$ such that $W \bullet \lambda \cap (\lambda + \Z \Pi) = W_{\{ 1, 2 \}}
\bullet \lambda$; here $\varpi_j$ denotes the fundamental weight
corresponding to $j \neq 1, 2$. Now the integrabilities are computed as:
\[
J_\lambda = \{ 1, 2 \}, \qquad
J_{s_1 \bullet \lambda} = \{ 2 \}, \qquad
J_{s_2 \bullet \lambda} = \{ 1 \}, \qquad
J_{s_1 s_2 \bullet \lambda} = \emptyset.
\]
Let $\mathcal{H} = \{ \{ 1, 2 \} \}$; then there are only three simple
objects in the block $\mathcal{O}^\mathcal{H} \cap \mathcal{O}^{W \bullet
\lambda}$, and this reduces to the above situation over $\mathfrak{sl}_2
\oplus \mathfrak{sl}_2$.
\end{proof}

\subsection{Proof of BGG reciprocity over $\mathfrak{sl}_2^{\oplus
n}$}\label{SBGG2}

The above remarks explain why one needs to refine the ``usual'' notion of
BGG reciprocity. We now do so over $\mathfrak{g} =
\mathfrak{sl}_2^{\oplus n}$ for all $n$, thereby completing the proof of
Theorem~\ref{T4}. 

\begin{theorem}\label{TBGGrec}
Suppose $\mathfrak{g} = \mathfrak{sl}_2^{\oplus n}$ for some $n \geq 1$,
and $\lambda \in \mathfrak{h}^*$. Define $w_K := \prod_{k \in K} s_k$ for
$K \subset I = \{ 1, \dots, n \}$ and also fix $\mathcal{H} \subset {\rm
Indep}(I) = 2^I$.
\begin{enumerate}
\item If $L(w_K \bullet \lambda) \in \mathcal{O}^{\mathcal{H}}$ for $K
\subset J_\lambda$, its projective cover $P^\mathcal{H}(w_K \bullet
\lambda)$ has a ``standard filtration'' by objects $\mathbb{M}(\mu,
\mathcal{H}'_{w_K \bullet \lambda})$, with topmost quotient the
``maximal'' standard object $\mathbb{M}(w_K \bullet \lambda,
\mathcal{H}'_{w_K \bullet \lambda})$ over $L(w_K \bullet \lambda)$, and
all other subquotients of highest weight $\mu \in (W \bullet \lambda)_{>
w_K \bullet \lambda}$.

\item For all highest weights $\mu$ ``in'' this filtration, a modified
form of BGG reciprocity holds:
\begin{equation}\label{EBGGrec}
\sum_{\substack{\mathcal{H}_0 \supseteq \mathcal{H}'_\mu,\\ \mathcal{H}_0
\; \text{\rm upper-closed} \\ \text{\rm in Indep}(J_\mu) }}
[P^{\mathcal{H}}(w_K \bullet \lambda) : \mathbb{M}(\mu, \mathcal{H}_0)] =
[P^{\mathcal{H}}(w_K \bullet \lambda) : \mathbb{M}(\mu, \mathcal{H}'_{w_K
\bullet \lambda})] = [\mathbb{M}(\mu, \mathcal{H}'_\mu) : L(w_K \bullet
\lambda)].
\end{equation}

\item The Cartan matrix for $\mathcal{O}^\mathcal{H}$ is symmetric,
i.e., if $L(\mu), L(\mu') \in \mathcal{O}^\mathcal{H}$, then
\begin{equation}\label{ECartanmatrix2}
[P^{\mathcal{H}}(\mu) : L(\mu')] = [P^{\mathcal{H}}(\mu') : L(\mu)].
\end{equation}
\end{enumerate}
\end{theorem}

Note the presence of the summation on the left side in~\eqref{EBGGrec},
in contrast to the BGG reciprocity formulas in the zeroth and first order
parabolic categories. This summation -- i.e.\ using multiple standard
objects over a given simple object -- is indeed needed when discussing
BGG reciprocity for general $\mathcal{O}^{\mathcal{H}}$, as was explained
above over $\mathfrak{g} = \mathfrak{sl}_2 \oplus \mathfrak{sl}_2$ and
all higher rank $\mathfrak{g}$.

\commen{
Before proving Theorem~\ref{TBGGrec}, we show how it covers all cases
$\mathcal{O}^{\mathcal{H}}$ over $\mathfrak{g} = \mathfrak{sl}_2^{\oplus
n}$:

\begin{cor}[Theorem~\ref{T4}, final part]\label{CBGGrec}
Suppose $\mathfrak{g} = \mathfrak{sl}_2^{\oplus n}$ for some $n \geq 1$.
Then:
\begin{enumerate}
\item Theorem~\ref{TBGGrec}(1) covers every block
$\mathcal{O}^{\mathcal{H}, [\lambda]}$ inside every subcategory
$\mathcal{O}^{\mathcal{H}}$.

\item Theorem~\ref{TBGGrec}(2) and Equation~\eqref{EBGGrec} are valid
for arbitrary $\mu, \mu' \in \mathfrak{h}^*$.
\end{enumerate}
\end{cor}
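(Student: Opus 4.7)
The plan is to reduce Corollary~\ref{CBGGrec} directly to Theorem~\ref{TBGGrec} by verifying that both of its hypotheses hold automatically in every block $\mathcal{O}^{\mathcal{H},[\lambda]}$ over $\mathfrak{g} = \mathfrak{sl}_2^{\oplus n}$, and that every weight in every block has the form $w_K \bullet \lambda$ demanded there. The structural feature driving everything is that the Dynkin diagram of $\mathfrak{sl}_2^{\oplus n}$ has no edges, so ${\rm Indep}(I) = 2^I$, and $W$-dot-orbits, integrabilities, and linkage classes all factor coordinate-wise across the $n$ copies of $\mathfrak{sl}_2$.

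Given a block, I first produce a distinguished maximal weight $\lambda^\star$ by setting $\lambda^\star_i := s_i \bullet \lambda_i$ when $s_i \bullet \lambda_i = -\lambda_i - 2 \in \Z_{\geq 0}$ and $\lambda^\star_i := \lambda_i$ otherwise, for each $i \in I$. A coordinate-wise check confirms that $\lambda^\star$ is the unique maximal weight in its linkage class, that $J_\lambda \subseteq J_{\lambda^\star}$ (so $L(\lambda^\star) \in \mathcal{O}^{\mathcal{H}}$ by Proposition~\ref{Psimple}(1)), and that $\lambda^\star$ heads the linkage class regardless of whether any particular $\mathfrak{sl}_2$-block is trivial.

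Next I verify the two hypotheses of Theorem~\ref{TBGGrec} at $\lambda^\star$. Hypothesis~(a) (that $J_{\lambda^\star}$ is an independent set of nodes) is automatic, since \emph{every} subset of $I$ is independent. For hypothesis~(b) I compute $w_{J_{\lambda^\star}} \bullet \lambda^\star$ coordinate by coordinate: for $i \in J_{\lambda^\star}$ the new $i$-th coordinate is $-\lambda^\star_i - 2 \in \Z_{\leq -2}$, while for $i \notin J_{\lambda^\star}$ the coordinate is unchanged and by construction lies outside $\Z_{\geq 0}$. Either way $i$ does not lie in the new integrability, so $J_{w_{J_{\lambda^\star}} \bullet \lambda^\star} = \emptyset$.

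Finally, by the same product structure of the block, every weight $\mu$ linked to $\lambda^\star$ is of the form $w_K \bullet \lambda^\star$ for a unique $K \subseteq J_{\lambda^\star}$, and the highest weights of factors appearing in any standard filtration from Theorem~\ref{TBGGrec}(1) are exactly the $w_{K'} \bullet \lambda^\star$ with $K' \subsetneq K$. This gives part~(1) of the corollary at once. For part~(2), Theorem~\ref{TBGGrec}(2) yields~\eqref{EBGGrec} for each pair $(w_K \bullet \lambda^\star,\, w_{K'} \bullet \lambda^\star)$ in the block, and since every pair $\mu, \mu' \in \mathfrak{h}^*$ lying in a common block of $\mathcal{O}^{\mathcal{H}}$ is of this form (pairs in different blocks producing the trivial $0=0$), reciprocity holds for arbitrary $\mu, \mu'$. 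There is no deep obstacle; the main work is bookkeeping---verifying that ``maximality in the block'' is preserved under $\lambda \leadsto \lambda^\star$ and confirming that the sum on the left of~\eqref{EBGGrec} indexes precisely the upper-closed $\mathcal{H}_0 \supseteq \mathcal{H}'_\mu$ that can actually appear as some $\mathcal{H}'_{w_K \bullet \lambda^\star}$, which is immediate from Proposition~\ref{Psimple}(3) applied factor-wise.
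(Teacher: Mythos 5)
Your argument is correct and is essentially the same as the paper's proof: the paper also reduces to Theorem~\ref{TBGGrec} by replacing $\lambda$ with the coordinate-wise maximal representative of its linkage class (the paper calls it $\widetilde{\lambda}$, constructed via the sets $K_*$ and $u_*, u_0$, which matches your $\lambda^\star$), checks hypothesis~(a) is vacuous and hypothesis~(b) coordinate-wise, and then dispatches part~(2) by the same blockwise dichotomy. One small slip: the topmost quotient in the standard filtration has highest weight $w_K\bullet\lambda^\star$ itself, so the highest weights occurring are the $w_{K'}\bullet\lambda^\star$ with $K'\subseteq K$, not $K'\subsetneq K$ (the strict inclusion applies only to the non-top factors); this does not affect the argument.
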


\begin{proof}
Having fixed $\mathcal{H}$, let $\lambda \in \mathfrak{h}^*$
be such that $L(\lambda) \in \mathcal{O}^{\mathcal{H}}$. Let $[\lambda]
\subset W \bullet \lambda \cap (\lambda + \Z \Pi)$ denote the linkage
class in $\mathcal{O}$. Define
\begin{equation}
[\lambda]_{\mathcal{H}} := \{ \mu \in [\lambda] : L(\mu)
\in \mathcal{O}^\mathcal{H} \}.
\end{equation}
(Thus, $[\lambda] = [\lambda]_\emptyset$.)
Throughout the rest of this section, we will work in the category
\begin{equation}\label{EKstar}
\mathcal{O}^{\mathcal{H}, [\lambda]} :=
\mathcal{O}^{[\lambda]_{\mathcal{H}} } = \mathcal{O}^{\mathcal{H}} \cap
\mathcal{O}^{[\lambda]} \subset \mathcal{O}^{\mathcal{H}} \cap
\mathcal{O}^{W \bullet \lambda}.
\end{equation}

We begin by re-interpreting $[\lambda]_\mathcal{H}$. Recall that $I = \{
1, \dots, n \}$ has Dynkin diagram consisting of isolated nodes. By this
symmetry (and up to relabelling $I$), define two integers $u_* \in [0,n]$
and $u_0 \in [0, n - u_*]$ such that setting $m_i := \langle \lambda,
\alpha_i^\vee \rangle + 1$ as usual (for $1 \leq i \leq n$), $m_i$ is:
\begin{itemize}
\item a nonzero integer for $i \in [1, u_*]$,
\item zero for $i \in [u_* + 1, u_* + u_0]$, and
\item a non-integer for $i \in [u_* + u_0, n]$,
\end{itemize}
where we define $[a,b] := \emptyset$ if $a>b$.
Set $K_* := [1, u_*]$ and let $\widetilde{\lambda}$ be the unique
$K_*$-dominant integral element in $W_{K_*} \bullet \lambda$. 
Then the simples in the block $\mathcal{O}^{[\lambda]}$ are indexed by
$W_{K_*} \bullet \widetilde{\lambda}$, and $J_{\widetilde{\lambda}} =
K_*$. Identifying $W_{J_{\widetilde{\lambda}} } = W_{K_*} = \{ w_K =
\prod_{k \in K} s_k : K \subset K_* \} \simeq (\Z / 2 \Z)^{\oplus K_*}$,
one checks: $J_{w_K \bullet \widetilde{\lambda}} = K_* \setminus K$ for
all $K \subset K_*$. Thus, we henceforth
(i)~assume that $\widetilde{\lambda} = \lambda$, and
(ii)~work with the subset of simples
\[
\{ K \subset K_* : L(w_K \bullet \widetilde{\lambda}) \in
\mathcal{O}^{\mathcal{H}} \}.
\]

Now the hypotheses~(a) and~(b) of Theorem~\ref{TBGGrec} hold, by the
previous paragraph. This immediately implies the first assertion here,
since the block is precisely $W_{K_*} \bullet \lambda$. The second
assertion now follows because if $\mu, \mu'$ are not in the same block in
$\mathcal{O}$, then both sides of~\eqref{EBGGrec} vanish -- while if
$\mu, \mu'$ are in the same block $W_{K_*} \bullet \lambda$, then we
reduce to Theorem~\ref{TBGGrec}(2) by the preceding sentence.
\end{proof}

We next turn to the proof of Theorem~\ref{TBGGrec}. The first step is to
compute}

The proof of Theorem~\ref{TBGGrec} will require computing the
integrability of the highest weights of the simples in the block
$\mathcal{O}^\lambda$. This is achieved by the following lemma, which
also bounds the integrabilities of all weights in the interval $[w_K
\bullet \lambda, \lambda]$, in greater generality than
$\mathfrak{sl}_2^{\oplus n}$.

\begin{lemma}\label{Lcover}
Fix semisimple $\mathfrak{g}$, a subset $\mathcal{H} \subset {\rm
Indep}(I)$, and a weight $\lambda \in \mathfrak{h}^*$ that is maximal in
its block in $\mathcal{O}$. Suppose
(a)~the integrability $J_\lambda$ is an independent set of nodes, that
further satisfies
(b)~the integrability of $w_{J_\lambda} \bullet \lambda$ is empty, where
$w_K = \prod_{k \in K} s_k$ for $K \subset J_\lambda$.
\begin{enumerate}
\item For all $K \subset J_\lambda$ one has $J_{w_K \bullet \lambda} =
J_\lambda \setminus K$.

\item For all subsets $K' \subset K \subset J_\lambda$ and weights $w_K
\bullet \lambda \leq \mu \leq w_{K'} \bullet \lambda$, one has the
inclusion of integrabilities: $J_{w_K \bullet \lambda} \subset J_\mu
\subset J_{w_{K'} \bullet \lambda}$.
In particular, if $L(w_K \bullet \lambda) \in \mathcal{O}^\mathcal{H}$
then $L(\mu) \in \mathcal{O}^\mathcal{H}$.

\item Suppose $L(w_K \bullet \lambda) \in \mathcal{O}^{\mathcal{H}}$
for some $K \subset J_\lambda$. Then $\mathbb{M}(w_K \bullet \lambda,
\mathcal{H}'_{w_K \bullet \lambda})$ has a subquotient $L(w_{K'}
\bullet \lambda)$ for some $K' \subset J_\lambda$, if and only if
(i)~$L(w_{K'} \bullet \lambda) \in \mathcal{O}^{\mathcal{H}}$ and
(ii)~$K' \supseteq K$.
\end{enumerate}
\end{lemma}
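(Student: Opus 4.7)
For Part~1 the plan is a direct computation of $\langle w_K \bullet \lambda, \alpha_i^\vee \rangle$. By hypothesis~(a), the reflections $\{s_k : k \in K\}$ commute and $w_K \bullet \lambda = \lambda - \sum_{k \in K} m_k \alpha_k$ with $m_k := \langle \lambda, \alpha_k^\vee \rangle + 1$; moreover $\langle \alpha_k, \alpha_i^\vee \rangle = 2\delta_{ki}$ whenever $i,k \in J_\lambda$. This instantly handles the cases $i \in K$ (value $-\langle \lambda, \alpha_i^\vee \rangle - 2$, a negative integer) and $i \in J_\lambda \setminus K$ (value $\langle \lambda, \alpha_i^\vee \rangle \in \Z_{\geq 0}$). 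When $i \notin J_\lambda$ and $\langle \lambda, \alpha_i^\vee \rangle \notin \Z$, the answer remains non-integer. The one delicate subcase is $i \notin J_\lambda$ with $\langle \lambda, \alpha_i^\vee \rangle \in \Z$: maximality of $\lambda$ in its block forces $\langle \lambda, \alpha_i^\vee \rangle = -1$, and hypothesis~(b) then forces $\langle \alpha_j, \alpha_i^\vee \rangle = 0$ for every $j \in J_\lambda$ (since the non-negative summands $-m_j \langle \alpha_j, \alpha_i^\vee \rangle$ would otherwise push $\langle w_{J_\lambda} \bullet \lambda, \alpha_i^\vee \rangle$ into $\Z_{\geq 0}$, contradicting~(b)); so $\langle w_K \bullet \lambda, \alpha_i^\vee \rangle = -1$ for every $K$. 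Part~2 then follows by writing any $\mu$ in the interval as $\mu = w_{K'} \bullet \lambda - \sum_{k \in K \setminus K'} c_k \alpha_k$ with $0 \leq c_k \leq m_k$ (possible since $K \setminus K' \subset J_\lambda$ is independent, so the $\alpha_k$ are linearly independent) and rerunning the same case-split against $w_{K'} \bullet \lambda$; in each case that matters, the cross-term $\sum c_k \langle \alpha_k, \alpha_i^\vee \rangle$ vanishes by independence. The ``in particular'' clause is immediate from Proposition~\ref{Psimple}(1).

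For Part~3's ``only if'' direction, a subquotient $L(w_{K'} \bullet \lambda)$ of $\mathbb{M}(w_K \bullet \lambda, \mathcal{H}'_{w_K \bullet \lambda}) \in \mathcal{O}^\mathcal{H}$ lies in $\mathcal{O}^\mathcal{H}$ by Lemma~\ref{Lbasic}, giving~(i); and the weight inequality $w_{K'} \bullet \lambda \leq w_K \bullet \lambda$ unpacks to $\sum_{k \in K' \setminus K} m_k \alpha_k - \sum_{k \in K \setminus K'} m_k \alpha_k \in \Z_{\geq 0} \Pi$, which (since $K \cup K' \subset J_\lambda$ is independent) forces $K \setminus K' = \emptyset$, i.e.\ $K' \supseteq K$.

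The ``if'' direction is the main content, and the step I expect to be the obstacle. Under~(i) and~(ii), set $\mu := w_K \bullet \lambda$ and form the candidate maximal vector $v_{K'} := \prod_{k \in K' \setminus K} f_k^{m_k} \cdot m_\mu \in M(\mu)$. By Part~1, $K' \setminus K \subset J_\mu$ is independent with $\langle \mu, \alpha_k^\vee \rangle = m_k - 1 \in \Z_{\geq 0}$, so iterated $\mathfrak{sl}_2$-theory (together with $[e_i, f_k] = 0$ for $i \neq k$) confirms that $v_{K'}$ is maximal of weight $w_{K'} \bullet \lambda$, generating a Verma submodule $M(w_{K'} \bullet \lambda) \hookrightarrow M(\mu)$. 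It suffices to show the image of $v_{K'}$ in $\mathbb{M}(\mu, \mathcal{H}'_\mu)$ is nonzero: that image is then itself a nonzero maximal vector, generating a highest weight submodule whose simple quotient is $L(w_{K'} \bullet \lambda)$. Since $M(\mu)_{w_{K'} \bullet \lambda}$ is one-dimensional by Lemma~\ref{Lkostant}, nonvanishing of the image is equivalent to $w_{K'} \bullet \lambda \in \wt \mathbb{M}(\mu, \mathcal{H}'_\mu)$; by Theorem~\ref{T1} and Lemma~\ref{Luhwm} this reduces to exhibiting a subset $J \subset J_\mu$ with $J \cap H \neq \emptyset$ for all $H \in \mathcal{H}^{\min}$ such that $w_{K'} \bullet \lambda \in \wt M(\mu, J)$. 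The hard part is choosing $J$ correctly: the naive choice $J \supseteq K' \setminus K$ fails, because $w_{K'} \bullet \lambda = \mu - \sum_{k \in K' \setminus K} m_k \alpha_k$ lies strictly below $\wt L_J^{\max}(\mu)$. Instead I take $J := J_\lambda \setminus K' = J_{w_{K'} \bullet \lambda}$ (by Part~1), which is $\subset J_\mu$: hypothesis~(i) with Proposition~\ref{Psimple}(1) gives exactly $J \cap H \neq \emptyset$ for all $H \in \mathcal{H}^{\min}$, and since $K' \setminus K \subset K' \subset I \setminus J$, the slice decomposition (Lemma~\ref{Lslice}, with $\xi := \sum_{k \in K' \setminus K} m_k \alpha_k \in \Z_{\geq 0} \Pi_{I \setminus J}$) places $w_{K'} \bullet \lambda$ in $\wt L_J^{\max}(\mu - \xi) \subset \wt M(\mu, J)$, completing the argument.
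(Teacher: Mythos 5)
Your proof is correct, and for Part~(3) it takes a genuinely different route from the paper. Parts~(1) and~(2) are in the same spirit as the paper's pairing computations; the only stylistic difference is that you center the delicate integer case $i \notin J_\lambda$ by explicitly invoking maximality of $\lambda$ to pin down $\langle \lambda, \alpha_i^\vee\rangle = -1$ and then force the cross-terms to vanish, whereas the paper sidesteps this by writing $w_K \bullet \lambda$ as $w_{J_\lambda}\bullet\lambda$ plus a nonnegative combination of simple roots and simply tracking that a quantity outside $\Z_{\geq 0}$ stays outside $\Z_{\geq 0}$ after adding a nonpositive integer. (Your phrase ``the cross-term vanishes by independence'' should really read ``vanishes by independence when $\langle\lambda,\alpha_i^\vee\rangle\in\Z$, and is harmless when $\langle\lambda,\alpha_i^\vee\rangle\notin\Z$''; the underlying mathematics is nonetheless sound.) Where you diverge substantively is in the ``if'' direction of Part~(3). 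The paper works directly inside $U(\mathfrak{n}^-)$: it fixes a PBW ordering that places $f_k,\ k\in K$ rightmost, and then shows that the element $\mathbf f^{(w_K\bullet\lambda)}_{K'\setminus K}$ cannot lie in the left ideal cutting out $\mathbb M(w_K\bullet\lambda,\mathcal H'_{w_K\bullet\lambda})$. You instead exploit one-dimensionality (Lemma~\ref{Lkostant}) to reduce non-vanishing to a purely combinatorial statement about $\wt\,\mathbb M(\mu,\mathcal H'_\mu)$, and then verify that statement with Theorem~\ref{T1} and the integrable slice decomposition, after the non-obvious choice $J := J_\lambda\setminus K' = J_{w_{K'}\bullet\lambda}$ (correctly observing that the naive $J\supseteq K'\setminus K$ fails). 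Your version leans on the paper's main weight theorem and is arguably more transparent, at the cost of being less ``internal''; the paper's PBW argument is self-contained and closer to the ideal-theoretic definition of the higher order Verma module. Both are valid, and it is a nice sanity check that Theorem~\ref{T1} can be used to recover this multiplicity statement directly.
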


Here and below, we use $\bff_{H'} := \prod_{h \in H'} f_h^{\langle
\lambda, \alpha_h^\vee \rangle + 1}$ for $\lambda \in \mathfrak{h}^*$ and
$H' \subset J_\lambda$ an independent subset.
Also note that the hypothesis ``$w_{J_\lambda} \bullet \lambda$ has empty
integrability'' does not follow from the remaining hypotheses in the
lemma -- consider e.g.\ $\mathfrak{g} = \mathfrak{sl}_3$, $\lambda =
\alpha_2$.

\begin{proof}
Recall the hypothesis $J_{w_{J_\lambda} \bullet \lambda} = \emptyset$,
which is now used extensively without further reference. Also note that
$W_{J_\lambda} \bullet \lambda$ is in bijection with $W_{J_\lambda}$.
\begin{enumerate}
\item If $k \in K$, then 
$\langle w_K \bullet \lambda, \alpha_k^\vee \rangle = \langle (w_K
s_k) \bullet (s_k \bullet \lambda), \alpha_k^\vee \rangle = \langle s_k
\bullet \lambda, \alpha_k^\vee \rangle = - \langle \lambda, \alpha_k^\vee
\rangle - 2 < 0.$

Now write $w_K \bullet \lambda = w_{J_\lambda} \bullet \lambda + \sum_{j
\in J_\lambda \setminus K} l_j \alpha_j$, with all $l_j \in \Z_{\geq 0}$.
Given $i \in I \setminus J_\lambda$,
\[
\langle w_K \bullet \lambda, \alpha_i^\vee \rangle =
\langle w_{J_\lambda} \bullet \lambda, \alpha_i^\vee \rangle + \sum_{j
\in J_\lambda \setminus K} l_j \langle \alpha_j, \alpha_i^\vee \rangle
\not\in \Z_{\geq 0}.
\]
This shows one inclusion; the reverse inclusion is shown similarly.

\item Begin by writing:
$\mu = w_K \bullet \lambda + \sum_{k \in K \setminus K'} l_k \alpha_k =
\lambda - \sum_{k \in K} l'_k \alpha_k$, where $l_k, l'_k \in \Z_{\geq
0}.$
Now suppose $i \not\in J_{w_{K'} \bullet \lambda}$, so $i \not\in
J_{w_K \bullet \lambda}$ by~(1). Then
\[
\langle \mu, \alpha_i^\vee \rangle = \langle w_K \bullet \lambda,
\alpha_i^\vee \rangle + \sum_{k \in K \setminus K'} l_k \langle \alpha_k,
\alpha_i^\vee \rangle \notin \Z_{\geq 0}.
\]
This shows one inclusion. Next if $i
\in J_\lambda \setminus K$, then
$\langle \mu, \alpha_i^\vee \rangle = \langle \lambda,
\alpha_i^\vee \rangle - \sum_{k \in K} l'_k \langle
\alpha_k, \alpha_i^\vee \rangle \in \Z_{\geq 0}$. This shows the other
inclusion -- and also implies that
if $H \in \mathcal{H}^{\min}$ and $L(w_K \bullet \lambda) \in
\mathcal{O}^\mathcal{H}$, then
$J_\mu \cap H \supseteq J_{w_K \bullet \lambda} \cap H \neq \emptyset$,
so $L(\mu) \in \mathcal{O}^\mathcal{H}$ by
Proposition~\ref{Psimple}(1).

\item The result is straightforward if $\mathcal{H} = \emptyset$, since
one now works in the Verma module $M(w_K \bullet \lambda)$. Thus, assume
henceforth that $\mathcal{H} \neq \emptyset$.
Set $M := \mathbb{M}(w_K \bullet \lambda, \mathcal{H}'_{w_K \bullet
\lambda})$. The necessity of (i), (ii) easily follows from
Lemma~\ref{Lsanity} and the $\mathcal{H} = \emptyset$ case. Conversely,
if $K'$ satisfies (i), (ii) then it suffices to show the weight space $L
:= {\bf f}^{(w_K \bullet \lambda)}_{K' \setminus K} M_{w_K \bullet
\lambda}$ is nonzero.
As the preimage in $M(w_K \bullet \lambda) \cong U \mathfrak{n}^-$ of $L$
is a line, it suffices to show that ${\bf f}^{(w_K \bullet \lambda)}_{K'
\setminus K}$ is not in the left-ideal
\begin{equation}\label{Enec}
{\bf f}^{(w_K \bullet \lambda)}_{K' \setminus K} \not\in U
(\mathfrak{n}^-) \cdot \langle {\bf f}^{(w_K \bullet
\lambda)}_{(J_\lambda \setminus K) \cap H} : H \in \mathcal{H}^{\min}
\rangle.
\end{equation}

To show this, work with a ``PBW basis'' of $\mathfrak{n}^-$ in which
$f_k, k \in K$ occur to the right, preceded by $f_j, j \in J_\lambda
\setminus K$, then preceded by all other root vectors in
$\mathfrak{n}^-$. Since the roots indexed by $K' \setminus K$ are
pairwise orthogonal, no nontrivial Lie brackets among them exist. Thus if
\eqref{Enec} is false, then ${\bf f}^{(w_K \bullet \lambda)}_{(K'
\setminus K) \cap H} = {\bf f}^{(w_K \bullet \lambda)}_{(J_\lambda
\setminus K) \cap H}$ for some $H \in \mathcal{H}^{\min}$. Hence by
part~(1), $J_{w_{K'} \bullet \lambda} \cap H = (J_\lambda \setminus K')
\cap H = \emptyset$, which contradicts $L(w_{K'} \bullet \lambda) \in
\mathcal{O}^{\mathcal{H}}$ by Proposition~\ref{Psimple}(1). \qedhere
\end{enumerate}
\end{proof}

With Lemma~\ref{Lcover} at hand, we now have:

\begin{proof}[Proof of Theorem~\ref{TBGGrec}]
It suffices to work with the objects in a block / linkage class
$[\lambda] = W \bullet \lambda \cap (\lambda + \Z \Pi)$ that moreover lie
in $\mathcal{O}^\mathcal{H}$. Note that we may take $\lambda \in
\mathfrak{h}^*$ to be maximal in the block $[\lambda]$; now the simples
in $[\lambda]$ (in $\mathcal{O}$, not $\mathcal{O}^\mathcal{H}$) are
indexed by $W_{J_\lambda} \bullet \lambda$. Define
\begin{equation}\label{Ebracket}
[\lambda]_{\mathcal{H}} := \{ \mu \in [\lambda] : L(\mu)
\in \mathcal{O}^\mathcal{H} \}.
\end{equation}
(Thus, $[\lambda] = [\lambda]_\emptyset$.)
Throughout the rest of this proof, we will work in the category
\begin{equation}\label{EKstar}
\mathcal{O}^{\mathcal{H}, [\lambda]} :=
\mathcal{O}^{[\lambda]_{\mathcal{H}} } = \mathcal{O}^{\mathcal{H}} \cap
\mathcal{O}^{[\lambda]} \subset \mathcal{O}^{\mathcal{H}} \cap
\mathcal{O}^{W \bullet \lambda}.
\end{equation}

\noindent We also use without reference that the hypotheses of
Lemma~\ref{Lcover} hold over $\mathfrak{g} = \mathfrak{sl}_2^{\oplus n}$
for all $\lambda \in \mathfrak{h}^*$.

We first show BGG reciprocity at $\lambda$, by \textbf{claiming} that
the projective cover of $L(\lambda)$ in the block
$\mathcal{O}^{\mathcal{H}, [\lambda]}$ is $P^{\mathcal{H}}(\lambda) =
\mathbb{M}(\lambda, \mathcal{H}'_\lambda)$, where
\[
\mathcal{H}'_\lambda = \{ J_\lambda \cap H : H \in \mathcal{H}^{\min} \}
\]
as in Proposition~\ref{Psimple}(2).
Indeed, this object in $\mathcal{O}^{\mathcal{H}, [\lambda]}$ is
indecomposable and surjects onto $L(\lambda)$, and a relatively standard
argument (see e.g.\ the proof of Step~$4$ in showing that
$\mathcal{O}^{\mathcal{H}, [\lambda]}$ has enough projectives, in the
Appendix) shows the functorial isomorphism
\[
{\rm Hom}_{\mathcal{O}^{\mathcal{H}} }(\mathbb{M}(\lambda,
\mathcal{H}'_\lambda), M) \cong M_\lambda, \qquad \forall M \in
\mathcal{O}^{\mathcal{H}, [\lambda]},
\]
i.e., that ${\rm Hom}_{\mathcal{O}^{\mathcal{H}} }(\mathbb{M}(\lambda,
\mathcal{H}'_\lambda), -)$ co-represents the $\lambda$-weight space in
$\mathcal{O}^{\mathcal{H}, [\lambda]}$. This proves the claim, and BGG
reciprocity~\eqref{EBGGrec} involving $P^{\mathcal{H}}(\lambda),
L(\lambda)$ now follows e.g.\ by Lemma~\ref{Lcover}(3).\medskip

\noindent \textit{\underline{BGG reciprocity at $w_K \bullet \lambda$,
for $\emptyset \subsetneq K \subset J_\lambda$:}}

Next, we work with every other simple in $[\lambda]$ that occurs in
$\mathcal{O}^{\mathcal{H}}$. From above, we call it $L(w_K \bullet
\lambda)$, where $\emptyset \subsetneq K \subset J_\lambda$ is fixed and
$w_K = \prod_{k \in s_k}$. Proving reciprocity requires working with
those simples in $\mathcal{O}^\mathcal{H} \cap \mathcal{O}^{W \bullet
\lambda}$ which lie above $w_K \bullet \lambda$ in the standard ordering.
These simples are indexed by $W_K \bullet \lambda$. We identify $W_K
\overset{\psi}{\simeq} (\Z / 2\Z)^{\oplus K} \simeq \{ 0, 1 \}^K$, and so
list $W_K = \{ w_{K'} = \prod_{k \in K'} s_k : K' \subset K \}$.

We now prove BGG reciprocity at the weight $\lambda_K := w_K \bullet
\lambda$. For ease of reading, the remainder of this proof is split into
steps.\medskip

\noindent \textit{\underline{Step 1:}
The BGG construction of a cyclic module, and its standard filtration.}
Recalling that $m_i = \langle \lambda, \alpha_i^\vee \rangle + 1$, we
first define and study -- akin to \cite{BGG} for $\mathcal{O}$ -- the
cyclic module $P := U \mathfrak{g} / I_K$, where
\begin{equation}\label{Eideal2}
I_K := U \mathfrak{g} \cdot \left( \{ h - \lambda_K(h) : h \in
\mathfrak{h} \}, \ \{ e_{\alpha_k}^{m_k + 1} : k \in K \}, \ \{ e_\alpha
: \alpha \in \Delta^+ \setminus \Delta_K^+ \}, \ \{ {\bf
f}^{(\lambda_K)}_{J_{\lambda_K} \cap H} : H \in \mathcal{H}^{\min} \}
\right).
\end{equation}

\noindent (Note that $\Delta^+ \setminus \Delta^+_K = \Pi_{I \setminus
K}$.)
Let ${\bf p} = {\bf p}_{\lambda_K}$ denote the image of $1_{U
\mathfrak{g}}$ in $P$. Then there is a lattice of $\prod_{k \in K} (m_k +
1)$-many submodules of $P$, indexed by integer tuples ${\bf l} =
(l_k)_{k \in K}$:
\begin{equation}\label{Efiltration}
P = P_{\bf 0} := U \mathfrak{g} \cdot {\bf p} \ \supseteq \quad
P_{\bf l} := U \mathfrak{g} \cdot \prod_{k \in K}
e_{\alpha_k}^{l_k} \cdot {\bf p} \quad \supseteq \
P_{\bf m} := U \mathfrak{g} \cdot \prod_{k \in K} e_{\alpha_k}^{m_k}
\cdot {\bf p} \ \supseteq \ 0, \qquad 0 \leq l_k \leq m_k\ \forall k.
\end{equation}
This yields $\prod_{k \in K} (m_k + 1)$-many subquotients, each of the
form
\[
Q_{\bf l} := \frac{U \mathfrak{g} \cdot \prod_{k \in K}
e_{\alpha_k}^{l_k} \cdot {\bf p}}{\sum_{k' \in K} U \mathfrak{g} \cdot
e_{\alpha_{k'}} \prod_{k \in K} e_{\alpha_k}^{l_k} \cdot {\bf p}}, \qquad
{\bf l} \in \times_{k \in K} [0, m_k], \text{ i.e., } {\bf 0} \leq {\bf
l} \leq {\bf m},
\]
where the inequalities are coordinatewise. 
Each subquotient $Q_{\bf l}$ is generated by a maximal vector
\[
\prod_{k \in K} e_{\alpha_k}^{l_k} \cdot {\bf p}, \qquad \text{of weight
} \mu_{\bf l} := w_K \bullet \lambda + \sum_{k \in K} l_k \alpha_k,
\]
so $Q_{\bf l}$ is a highest weight module. We now show that the
filtration by the $Q_{\bf l}$ is indeed standard:

\begin{lemma}
Given a tuple ${\bf l} \in [{\bf 0}, {\bf m}]$, with $\mu_{\bf l}$ as
above,
$Q_{\bf l} \cong \mathbb{M}(\mu_{\bf l}, \mathcal{H}'_{w_K \bullet
\lambda}) \in \mathcal{O}^{\mathcal{H}}$.
\end{lemma}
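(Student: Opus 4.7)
The plan is to establish a surjection $\pi : \mathbb{M}(\mu_{\mathbf{l}}, \mathcal{H}'_{w_K \bullet \lambda}) \twoheadrightarrow Q_{\mathbf{l}}$ by checking the defining relations, and then promote $\pi$ to an isomorphism via a character count. The latter comes from realizing the auxiliary module $P^\circ$, obtained from $P$ by dropping the $\mathbf{f}$-relations in~\eqref{Eideal2}, as a PBW-induced module.

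For the surjection: the setup already shows that $v_{\mathbf{l}}$ is a maximal vector in $Q_{\mathbf{l}}$ of weight $\mu_{\mathbf{l}}$, yielding $M(\mu_{\mathbf{l}}) \twoheadrightarrow Q_{\mathbf{l}}$. Given $H \in \mathcal{H}^{\min}$, set $H' := (J_\lambda \setminus K) \cap H$; by Lemma~\ref{Lcover}(2), $H' \subseteq J_\lambda \setminus K \subseteq J_{\mu_{\mathbf{l}}}$. The hypothesis that $J_\lambda$ is independent has two orthogonality consequences: (i)~each $f_h$ with $h \in H'$ commutes with each $e_{\alpha_k}$, $k \in K$; (ii)~$\langle \mu_{\mathbf{l}}, \alpha_h^\vee \rangle = \langle \lambda, \alpha_h^\vee \rangle = m_h - 1$ for $h \in H'$, so the exponents in $\mathbf{f}^{(\mu_{\mathbf{l}})}_{H'}$ and in $\mathbf{f}^{(\lambda_K)}_{H'}$ agree. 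Hence
\[
\mathbf{f}^{(\mu_{\mathbf{l}})}_{H'} v_{\mathbf{l}} \;=\; \textstyle\prod_{k \in K} e_{\alpha_k}^{l_k} \cdot \mathbf{f}^{(\lambda_K)}_{H'} \mathbf{p} \;=\; 0
\]
in $P$, and so $\pi$ is well-defined by~\eqref{EHprimelambda}.

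To pin down $\ker \pi$, let $V^\circ := U(\mathfrak{n}^+_K) \cdot \mathbf{p}^\circ$. Since $\mathfrak{n}^+_K$ is abelian (by independence of $K$) and $e_{\alpha_k}^{m_k + 1} \mathbf{p}^\circ = 0$, the space $V^\circ$ is spanned by $\{v^\circ_{\mathbf{l}} : \mathbf{l} \in [\mathbf{0}, \mathbf{m}]\}$ and has dimension $\prod_k (m_k + 1)$. A routine commutation (moving $e_\alpha$, $\alpha \in \Delta^+ \setminus \Delta_K^+$, past the $e_{\alpha_k}$'s, with each bracketed error again having root in $\Delta^+ \setminus \Delta_K^+$) shows $V^\circ$ is $\mathfrak{b}^+$-stable. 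By PBW,
\[
P^\circ \;\cong\; U\mathfrak{g} \otimes_{U\mathfrak{b}^+} V^\circ \;\cong\; U\mathfrak{n}^- \otimes_{\mathbb{C}} V^\circ,
\]
and the filtration~\eqref{Efiltration} applied to $P^\circ$ splits as $P^\circ_{\mathbf{l}} = \bigoplus_{\mathbf{l}' \geq \mathbf{l}} U\mathfrak{n}^- \otimes v^\circ_{\mathbf{l}'}$, with exact Verma subquotients $Q^\circ_{\mathbf{l}} \cong M(\mu_{\mathbf{l}})$.

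It remains to analyze the submodule $J \subset P^\circ$ generated by the $\mathbf{f}$-relations, and this is the main obstacle. The goal is to show $J$ is ``slot-diagonal'' in the decomposition above, i.e.\ $J = \bigoplus_{\mathbf{l}} \bigl( \sum_{H'} U\mathfrak{n}^- \mathbf{f}^{(\lambda_K)}_{H'} \bigr) \otimes v^\circ_{\mathbf{l}}$; equivalently, each $\mathbf{f}^{(\lambda_K)}_{H'} \otimes v^\circ_{\mathbf{l}}$ is killed by $e_\alpha$ for every $\alpha \in \Delta^+ \setminus \Delta_K^+$, not merely at $\mathbf{l} = \mathbf{0}$. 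The crucial input is the $\mathfrak{sl}_2$-identity $[e_{\alpha_h}, f_h^{m_h}] = m_h f_h^{m_h - 1}(\alpha_h^\vee - m_h + 1)$, whose scalar factor vanishes on every $v^\circ_{\mathbf{l}}$ since orthogonality again forces $\langle \mu_{\mathbf{l}}, \alpha_h^\vee \rangle = m_h - 1$; the remaining cases reduce to this via $[e_j, f_h] = 0$ for $j \neq h$ and the Serre-type arguments of Lemma~\ref{LSerre}. Granted this, $P/J$ splits as $\bigoplus_{\mathbf{l}} \mathbb{M}(\mu_{\mathbf{l}}, \mathcal{H}'_{w_K \bullet \lambda}) \otimes v^\circ_{\mathbf{l}}$ as graded vector spaces, giving $\ch(P) = \sum_{\mathbf{l}} \ch(\mathbb{M}(\mu_{\mathbf{l}}, \mathcal{H}'_{w_K \bullet \lambda}))$. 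Combined with $\ch(P) = \sum_{\mathbf{l}} \ch(Q_{\mathbf{l}})$ and the surjections $\pi$, each $\pi$ is forced to be an isomorphism. Finally, $Q_{\mathbf{l}} \in \mathcal{O}^{\mathcal{H}}$ by Proposition~\ref{Psimple}(3), because $\mathcal{H}'_{\mu_{\mathbf{l}}}$ lies in the upper-closure of $\mathcal{H}'_{w_K \bullet \lambda}$ (as $J_{\mu_{\mathbf{l}}} \supseteq J_\lambda \setminus K$ by Lemma~\ref{Lcover}(2)).
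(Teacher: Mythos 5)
Your proof is correct in outline but takes a genuinely different route from the paper's. The paper establishes the reverse inclusion directly: it fixes a PBW basis $\mathcal{B}'$ of $U\mathfrak{n}^-$ (modulo the $\mathbf{f}$-ideal) and shows the vectors $\{b'\cdot\prod_k e_{\alpha_k}^{l_k}\mathbf{p} : b'\in\mathcal{B}'\}$ are linearly independent in $Q_{\mathbf{l}}$ via a contradiction argument — any dependence is rewritten as membership in the defining left ideal and then tested against the maximal vector $\prod_k f_k^{(m_k-l_k)+1}v_\mu$ in the auxiliary test module $\mathbb{M}(\mu,\mathcal{H}'_{w_K\bullet\lambda})$ with $\mu=\lambda+\sum_{k\in K}\alpha_k$, where the right-hand side annihilates but the left does not. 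Your route instead realizes $P^\circ$ as $U\mathfrak{g}\otimes_{U\mathfrak{b}^+}V^\circ$ (the classical BGG maneuver), decomposes the relation ideal $J$ slot-diagonally across the free decomposition $P^\circ\cong\bigoplus_{\mathbf{l}}U\mathfrak{n}^-\otimes v^\circ_{\mathbf{l}}$, and concludes by a global character count. Your approach is arguably more structural and foreshadows the freeness arguments behind the Koszul/Taylor resolutions of Section~\ref{Sfinal}; the paper's test-module trick is more bespoke but avoids establishing the full slot decomposition.

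Two points in your argument deserve tightening. First, slot-diagonality requires showing that each $\mathbf{f}^{(\lambda_K)}_{H'}\otimes v^\circ_{\mathbf{l}}$ is killed by $e_\alpha$ for \emph{every} $\alpha\in\Delta^+\setminus\Delta_K^+$, not merely simple ones; this does hold, because independence of $J_\lambda$ forces $\alpha-\alpha_h\notin\Delta_K^+$ for $h\in J_\lambda\setminus K$ (a root supported on the disconnected set $\{h\}\cup K$ cannot exist), so every commutator $[e_\alpha,f_h]$ is either zero, a raising operator in $\Delta^+\setminus\Delta_K^+$ (which kills $v^\circ_{\mathbf{l}}$), or — when $\alpha=\alpha_h$ — the $\mathfrak{h}$-scalar you computed, but the reduction is not just ``Serre-type'' as written. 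Second, your final step invokes $\ch(P)=\sum_{\mathbf{l}}\ch(Q_{\mathbf{l}})$ without justification; since the $P_{\mathbf{l}}$ form a lattice rather than a chain, this is not a priori clear. Cleaner is to observe that the slot decomposition already identifies $Q_{\mathbf{l}}$ with the $\mathbf{l}$-th slot of $P=P^\circ/J$, i.e., $Q_{\mathbf{l}}\cong\bigl(U\mathfrak{n}^-/\sum_H U\mathfrak{n}^-\,\mathbf{f}^{(\lambda_K)}_{H'}\bigr)\otimes v^\circ_{\mathbf{l}}$, after which the surjection $\pi$ is an isomorphism by dimension count in each weight space, with no further character bookkeeping needed.
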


In particular, by Lemma~\ref{Lbasic}, the cyclic module $P \in
\mathcal{O}^{\mathcal{H}}$, and this lemma provides a ``standard
filtration'' for it, albeit in $\mathcal{O}^{\mathcal{H}}$ and not in a
single block $\mathcal{O}^{\mathcal{H}, [\lambda]}$.

\begin{proof}
Note that $Q_{\bf l}$ has highest weight $\mu_{\bf l} := w_K \bullet
\lambda + \sum_{k \in K} l_k \alpha_k$, and so by Lemma~\ref{Lcover}(2),
its integrability $J_{\mu_{\bf l}} \supseteq J_{w_K \bullet \lambda}$. It
follows that $\mathbb{M}(\mu_{\bf l}, \mathcal{H}'_{w_K \bullet \lambda})
\in \mathcal{O}^{\mathcal{H}}$, by applying Proposition~\ref{Psimple}(3)
with $\lambda \leadsto \mu_{\bf l}$ and $\mathcal{H}_V \leadsto$ the
upper-closure of $\mathcal{H}'_{w_K \bullet \lambda}$.

We now show $Q_{\bf l} \cong \mathbb{M}(\mu_{\bf l},
\mathcal{H}'_{w_K \bullet \lambda})$, starting with an upper bound on
$Q_{\bf l}$, via the generator-coset
\[
{\bf p} = 1 + {\rm span}_{U \mathfrak{g}} \{ h - (w_K \bullet
\lambda)(h), \ e_{\alpha_k}^{m_k + 1}, \ e_\alpha, \ {\bf f}^{(w_K \bullet
\lambda)}_{J_{w_K \bullet \lambda} \cap H} \}
\]
(see \eqref{Eideal2} and Lemma~\ref{Lcover}(2)). Now note that
$e_k^{l_k}$ (for $k \in K$) commutes with all ${\bf f}^{(w_K \bullet
\lambda)}_{J_{w_K \bullet \lambda} \cap H}$, and
\[
e_{\alpha_k}^{l_k} (h - \nu(h)) = (h - (\nu
+ l_k \alpha_k)(h)) e_{\alpha_k}^{l_k}, \qquad \forall h \in
\mathfrak{h}, \ \nu \in \mathfrak{h}^*.
\]
Using these relations and that
\begin{equation}\label{Emul}
{\bf f}^{(\mu_{\bf l})}_{J_{w_K \bullet \lambda} \cap H} =
{\bf f}^{(w_K \bullet \lambda)}_{J_{w_K \bullet \lambda} \cap H}, \qquad
\forall {\bf 0} \leq {\bf l} \leq {\bf m}
\end{equation}
which follows from the independence of the set $J_\lambda$,
it follows that $Q_{\bf l}$ is a quotient of
\[
\frac{U \mathfrak{g}}{U \mathfrak{g} \left( \{ h - \mu_{\bf l}(h) : h \in
\mathfrak{h} \}, \ \{ e_i : i \in I \}, \ \{ {\bf f}^{(w_K \bullet
\lambda)}_{J_{w_K \bullet \lambda} \cap H} : H \in \mathcal{H}^{\min} \}
\right)}.
\]
But this is precisely $\mathbb{M}(\mu_{\bf l}, \mathcal{H}'_{w_K \bullet
\lambda})$.

This provides an upper bound on (the character of) $Q_{\bf
l}$; we now show it is also a lower bound, which will prove the
lemma.\medskip

\noindent \textbf{Claim.}
\textit{Fix an ordering on $I$, hence the ordered basis $\{ f_i : i \in I
\}$ of $\mathfrak{n}^-$, and denote by $\mathcal{B}$ the corresponding
PBW / monomial basis of $U \mathfrak{n}^- \simeq \mathbb{C}[ f_i : i \in
I]$. Now let $\mathcal{B}'$ denote the subset of monomials which are not
divisible by ${\bf f}^{(w_K \bullet \lambda)}_{J_{w_K \bullet \lambda}
\cap H}$ for any $H \in \mathcal{H}^{\min}$. Then the vectors
$\{ b' \cdot {\textstyle \prod}_{k \in K} e_{\alpha_k}^{l_k} \cdot {\bf
p} \ | \ b' \in \mathcal{B}' \}$ are independent in $Q_{\bf l}$.}\medskip

\noindent (By~\eqref{Emul}, $\mathcal{B}'$ is in bijection with a basis
of $\mathbb{M}(\mu_{\bf l}, \mathcal{H}'_{w_K \bullet \lambda})$,
yielding the desired lower bound on $Q_{\bf l}$.)

The claim will follow via the filtration of $P$ by the $Q_{\bf l}$ (so we
no longer fix ${\bf l}$ in the proof of this lemma), from the statement
that:
\[
\widetilde{\mathcal{B}} := \{ b' \cdot \prod_{k \in K} e_{\alpha_k}^{l_k}
\cdot {\bf p} \ : \ 0 \leq l_k \leq m_k, \ k \in K, \ b' \in \mathcal{B}'
\}
\]
\textit{is a basis of the module $P = U \mathfrak{g} / I_K$ (where the
left ideal $I_K$ was defined in~\eqref{Eideal2}).}\medskip

To show this statement, write using the PBW theorem and changing
variables:
\[
U \mathfrak{g} \simeq \mathbb{C}[ f_i : i \in I ] \otimes \mathbb{C}[
e_i : i \in I ] \otimes \mathbb{C}[ \alpha_i^\vee : i \in I ].
\]
Quotient by $I_K$ in stages: first quotienting by the relations $\{
\alpha_i^\vee - \langle w_K \bullet \lambda, \alpha_i^\vee \rangle : i
\in I \}$ yields
\begin{equation}\label{Einter1}
U \mathfrak{g} / ( \alpha^\vee_i : i \in I ) \simeq \mathbb{C}[ f_i : i
\in I ] \otimes \mathbb{C}[ e_i : i \in I ].
\end{equation}
Next, further quotienting this by all $e_{\alpha_k}^{m_k + 1}$ and
$e_{\alpha_i}$ for $k \in K, i \not\in K$ yields
\begin{equation}\label{Einter2}
\mathbb{C}[ f_i : i \in I ] \otimes {\rm span}_{\mathbb{C}} \lbrace
\prod_{k \in K} e_{\alpha_k}^{l_k} \cdot \overline{1_{U \mathfrak{g}} }
\rbrace.
\end{equation}
More precisely, one needs to quotient~\eqref{Einter1}, by the image in it
of the space of vectors
\[
\sum_{k \in K} X_k \cdot e_{\alpha_k}^{m_k + 1} + \sum_{i \not\in K} X'_i
\cdot e_{\alpha_i}, \qquad X_k, X'_i \in U \mathfrak{g}.
\]
Writing each $X_k, X'_i$ as linear combinations of PBW monomials in the
above ordered basis of $f_i, e_i, \alpha^\vee_i$, one
obtains~\eqref{Einter2}.

Finally, one obtains $P = U \mathfrak{g} / I_K$ by
quotienting~\eqref{Einter2} by the image in it of the space of vectors
\begin{equation}\label{Einter3}
\sum_{H \in \mathcal{H}^{\min}} X_H \cdot {\bf
f}^{(w_K \bullet \lambda)}_{J_{w_K \bullet \lambda} \cap H}, \quad
X_H \in U \mathfrak{g}.
\end{equation}

Write $X_H = \sum_t p_{H,t}(\{ f_i \}) \cdot q_{H,t}(\{ e_i \}) \cdot
r_{H,t}(\{ \alpha^\vee_i \})$, and note that the polynomial $r_{H,t}(\{
\alpha^\vee_i \})$ ``goes past'' ${\bf f}^{(w_K \bullet \lambda)}_{J_{w_K
\bullet \lambda} \cap H}$ yielding scalars, in the quotient
space~\eqref{Einter2}. Thus, we may suppose all $r_{H,t} \in \mathbb{C}$.
Next, use a standard $\mathfrak{sl}_2$-calculation with $m'_i := \langle
w_K \bullet \lambda, \alpha_i^\vee \rangle + 1 \in \mathbb{Z}_{>0}$ for
$i \in J_{w_K \bullet \lambda} \cap H$:
\begin{equation}\label{Einter4}
e_i f_i^{m'_i} = f_i^{m'_i} e_i + m'_i f_i^{m'_i - 1} (\alpha_i^\vee -
\langle w_K \bullet \lambda, \alpha_i^\vee \rangle) = f_i^{m'_i} e_i +
m'_i f_i^{m'_i - 1} (\alpha_i^\vee - \langle \lambda, \alpha_i^\vee
\rangle),
\end{equation}
where the final equality is because $i \in J_{w_K \bullet \lambda} \cap H
= (J_\lambda \cap H) \setminus K$.

It follows from~\eqref{Einter4} that if $e_{\alpha_i}$ divides any
$q_{H,t}$ for $i \in J_{w_K \bullet \lambda} \cap H$, then that monomial
vanishes in~\eqref{Einter2}. The same happens if $e_i | q_{H,t}$ for some
$i \not\in J_{w_K \bullet \lambda} \cap H$ and $i \in I \setminus K$, or
if $e_{\alpha_k}^{m_k + 1} | q_{H,t}$ for some $k \in K$. Thus, we reduce
to computing the quotient of~\eqref{Einter2} by the subspace of vectors
\[
\sum_{H \in \mathcal{H}^{\min}} X_H \cdot {\bf
f}^{(w_K \bullet \lambda)}_{J_{w_K \bullet \lambda} \cap H}, \qquad X_H =
\sum_{{\bf 0} \leq {\bf l} \leq {\bf m}} p_{H,{\bf l}}(\{ f_i \}) \cdot
\prod_{k \in K} e_{\alpha_k}^{l_k} \cdot \overline{1_{U \mathfrak{g}} }.
\]

As the second tensor factors in $X_H$ and~\eqref{Einter2} coincide, and
since ${\bf f}^{(w_K \bullet \lambda)}_{J_{w_K \bullet \lambda} \cap H}$
commutes with all $e_{\alpha_k}$, this yields precisely ${\rm
span}_{\mathbb{C}}(\mathcal{B}') \otimes {\rm span}_{\mathbb{C}} \lbrace
\prod_{k \in K} e_{\alpha_k}^{l_k} \cdot \overline{1_{U \mathfrak{g}} }
\rbrace$, which is indeed spanned by $\widetilde{\mathcal{B}}$.
\commen{
first extend the basis of $\mathfrak{n}^-$ to an
ordered PBW basis of $\mathfrak{g}$, in which the $e_{\alpha_k}, \ k \in
K$ occur at the right, preceded by the remaining root vectors $e_\alpha,
\ \alpha \in \Delta^+$, preceded by the rest. Now suppose for
contradiction that there exist scalars
$\{ c_{b'} \ : \ b \in \mathcal{B}' \}$
as well as vectors
\[
\{ U_i : i \in I \}, \quad \{ Y_k : k \in K \}, \quad \{ Z_\alpha :
\alpha \in \Delta^+ \setminus \Delta^+_K \}, \quad \{ V_H : H \in
\mathcal{H}^{\min} \} \qquad \subset U \mathfrak{g},
\]
such that there is an equality in $U \mathfrak{g}$ of finite sums:
\[
\sum_{b' \in \mathcal{B}'} c_{b'} b' \cdot {\textstyle \prod}_{k \in K}
e_{\alpha_k}^{l_k}
= \sum_{i \in I} U_i (\alpha_i^\vee - \langle \mu_{\bf l},
\alpha_i^\vee \rangle) + \sum_{k \in K} Y_k e_{\alpha_k}^{m_k + 1} +
\sum_{\alpha \not\in \Delta^+_K} Z_\alpha e_\alpha 
+ \sum_{H \in \mathcal{H}^{\min}} V_H {\bf f}^{(w_K \bullet
\lambda)}_{(J_\lambda \setminus K) \cap H}.
\]

Rewrite each term on the right-hand side as words in the ordered PBW
basis. Since all $l_k \leq m_k$, every PBW monomial in $Y_k$ times
$e_{\alpha_k}^{m_k+1}$ must cancel with other terms on the right-hand
side, i.e.,
\begin{equation}\label{Eideal}
\sum_{b' \in \mathcal{B}'} c_{b'} b' \cdot {\textstyle \prod}_{k \in K}
e_{\alpha_k}^{l_k}
= \sum_{i \in I} U'_i (\alpha_i^\vee - \langle \mu_{\bf l}, \alpha_i^\vee
\rangle) + \sum_{\alpha \not\in \Delta^+_K} Z'_\alpha e_\alpha + \sum_{H
\in \mathcal{H}^{\min}} V'_H {\bf f}^{(w_K \bullet \lambda)}_{(J_\lambda
\setminus K) \cap H}.
\end{equation}
for some other coefficients $U'_i, Z'_i, V'_H \in U \mathfrak{g}$.
We now show this is impossible, by interpreting both sides as acting on
some higher order Verma module $\mathbb{M}(\mu, \mathcal{H})$. Note that
$\mu \neq \lambda$, since $l_k$ can equal $m_k$. Now consider the higher
order Verma module
\[
\mathbb{M} ( \mu, \mathcal{H}'_{w_K \bullet \lambda}), \qquad \text{where
} \qquad \mu := w_K \bullet \lambda + \sum_{k \in K} (m_k + 1) \alpha_k =
\lambda + \sum_{k \in K} \alpha_k,
\]
noting that since $J_\lambda$ is independent, $J_\mu \supseteq J_\lambda
\supseteq J_{w_K \bullet \lambda}$. Fix a nonzero highest weight vector
$v_\mu \in \mathbb{M}(\mu, \mathcal{H}'_{w_K \bullet \lambda})_\mu$. By
$\mathfrak{sl}_2^{\oplus n}$-theory and the Serre relations $[e_i, f_k] =
0$ for $i \not\in K \ni k$, all terms in the right-hand side
of~\eqref{Eideal} annihilate the weight vector
\[
\prod_{k \in K} f_k^{(m_k-l_k)+1} \cdot v_\mu \in \mathbb{M}(\mu,
\mathcal{H}'_{w_K \bullet \lambda})_{\mu_{\bf l}}
\]
(using here that ${\bf f}^{(w_K \bullet \lambda)}_{(J_\lambda \setminus
K) \cap H} = {\bf f}^{(\mu)}_{(J_\lambda \setminus K) \cap H}$ commutes
with $f_k, k \in K$).

However, we now make the sub-claim that applying the left-hand side
of~\eqref{Eideal} yields a nonzero vector, which yields the desired
contradiction. Indeed, this sub-claim follows by first noting that
\eqref{Emul} holds more generally, for all ${\bf l} \geq {\bf 0}$ (again
by the independence of $J_\lambda$), and then reconsidering the set
$\mathcal{B}'$ in light of the line following the Claim above -- for all
${\bf l} \geq {\bf 0}$.}
\end{proof}

\noindent \textit{\underline{Step 2:} The cyclic module is projective,
and its $[\lambda]_\mathcal{H}$-summand co-represents the $(w_K \bullet
\lambda)$-weight space.}
The previous step shows that the module $P = U \mathfrak{g} / I_K \in
\mathcal{O}^{\mathcal{H}}$ has a standard filtration by the modules
$Q_{\bf l} \cong \mathbb{M}(\mu_{\bf l}, \mathcal{H}'_{w_K \bullet
\lambda}) \in \mathcal{O}^{\mathcal{H}}$ for ${\bf 0} \leq {\bf l} \leq
{\bf m}$. Moreover, since $K \neq \emptyset$ here, $w_K \bullet \lambda$
has strictly smaller integrability than $\lambda$ by
Lemma~\ref{Lcover}(2) (and $L(w_K \bullet \lambda) \in
\mathcal{O}^{\mathcal{H}}$). Thus the upper-closure in ${\rm Indep}(I)$
of $\mathcal{H}$ is contained in that of $\mathcal{H}'_{w_K \bullet
\lambda}$, which we denote henceforth as $\overline{\mathcal{H}'}$ for
convenience.

Now $P \in \mathcal{O}^{\overline{\mathcal{H}'}} \subset
\mathcal{O}^{\mathcal{H}}$. We claim that $P$ is projective in the
truncated subcategory of objects with all weights $\leq \lambda$, which
we denote by $\mathcal{O}^{\mathcal{H}}_{\leq \lambda}$. The claim is
shown (as above) by noting that ${\rm
Hom}_{\mathcal{O}^{\mathcal{H}}_{\leq \lambda}}(P,-)$ co-represents the
$(w_K \bullet \lambda)$-weight space.
Now use the block decomposition to write
\[
P = \oplus_{[\mu]} P^{[\mu]},\quad with \quad P^{[\mu]} \in
\mathcal{O}^{\overline{\mathcal{H}'}, [\mu]} \subset
\mathcal{O}^{\mathcal{H}, [\mu]}.
\]
Then $P^{[\lambda]}$ is projective in $\mathcal{O}^{\mathcal{H},
[\lambda]} \subset \mathcal{O}^{\mathcal{H}}_{\leq \lambda}$, and it
co-represents in $\mathcal{O}^{\mathcal{H}, [\lambda]}$ the $(w_K \bullet
\lambda)$-weight space.\medskip

\noindent \textit{\underline{Step 3:} The projective cover is the
$[\lambda]_\mathcal{H}$-summand -- hence, BGG reciprocity at $w_K \bullet
\lambda$.} Given the standard filtration (above) of the module $P =
\oplus_{[\mu]} P^{[\mu]}$ in $\mathcal{O}^{\overline{\mathcal{H}'}}$,
Proposition~\ref{Pdirect} now applies (with $\mathcal{H} \leadsto
\overline{\mathcal{H}'}$) to show that $P^{[\lambda]}$ has a standard
filtration in $\mathcal{O}^{\overline{\mathcal{H}'}} \subset
\mathcal{O}^{\mathcal{H}}$. Moreover, the subquotients occur from among
the $Q_{\bf l}$, hence each have highest weight $> w_K \bullet \lambda$;
and the ``topmost'' quotient is indeed $Q_{\bf 0} \cong \mathbb{M}(w_K
\bullet \lambda, \mathcal{H}'_{w_K \bullet \lambda})$.

This concludes the proof of the assertions on the standard filtration of
$P^{[\lambda]}$. We now turn to the proof of BGG reciprocity at $w_K
\bullet \lambda$. Note that the subquotient $Q_{\bf l}$ belongs to the
block $\mathcal{O}^{[\lambda]}$, if and only if every $l_k$ is either $0$
or $m_k$, in which case one obtains a highest weight of $w_{K'} \bullet
\lambda$ for some $\emptyset \subset K' \subset K$ -- moreover, for each
such $K'$ it is the highest weight of a unique subquotient $Q_{\bf l}$.

We now \textbf{claim} that \textit{the summand $P^{[\lambda]}$ of $P$ is
precisely the (indecomposable) projective cover of $L(w_K \bullet
\lambda)$ in $\mathcal{O}^{\overline{\mathcal{H}'}}$, hence in the larger
category $\mathcal{O}^{\mathcal{H}}$.} Now BGG reciprocity follows,
because
\[
\sum_{\mathcal{H}_0 \text{ upper-closed}} [P^{\mathcal{H}}(w_K \bullet
\lambda) : \mathbb{M}(w_{K'} \bullet \lambda, \mathcal{H}_0)] = {\bf
1}_{K \supseteq K'} =
[\mathbb{M}(w_{K'} \bullet \lambda, \mathcal{H}'_{w_{K'} \bullet
\lambda}) : L(w_K \bullet \lambda)].
\]
(The first equality is from the claim just above, and the second is by
Lemma~\ref{Lcover}(3).)

Thus, it remains to prove the claim. The following argument is due to
Gurbir Dhillon. As explained above, the summand $P^{[\lambda]}$
co-represents the $(w_K \bullet \lambda)$-weight space in
$\mathcal{O}^{\overline{\mathcal{H}'}, [\lambda]}$. On the other hand,
the projective cover $P' := P^{\overline{\mathcal{H}'}}(w_K \bullet
\lambda)$ is characterized by the equation
\[
\dim {\rm Hom}_{\mathcal{O}^{\overline{\mathcal{H}'}, [\lambda]}}(P',
L(\nu)) = \delta_{w_K \bullet \lambda, \; \nu}, \qquad \forall \nu \in
[\lambda]_\mathcal{H}. \]
It is also standard that $P^{[\lambda]} \twoheadrightarrow P'$, and that
the kernel is a sum of (copies of)
$P^{\overline{\mathcal{H}'}}(\nu)$ for $\nu > w_K \bullet \lambda$. Thus,
$P^{[\lambda]} \cong P'$ if and only if no other indecomposable
projective summands occur, i.e.\ if and only if $L(\nu)$ has no $(w_K
\bullet \lambda)$-weight space if $\nu \in [\lambda]_{> w_K \bullet
\lambda}$ (the contrapositive of this may be easier to see). This
indeed holds by $\mathfrak{sl}_2^{\oplus K}$-theory in
$[\lambda]_\mathcal{H}$, and shows BGG reciprocity at $w_K \bullet
\lambda$.

It remains to show~\eqref{ECartanmatrix2}. First note by definition
that in the Grothendieck group $K_0(\mathcal{O}^\mathcal{H})$,
\begin{equation}
[\mathbb{M}(\lambda, \mathcal{H})] = \sum_{J \in \mathfrak{J}(\lambda,
\mathcal{H})} L(w_J \bullet \lambda), \quad \text{where} \quad
\mathfrak{J}(\lambda, \mathcal{H}) := \{ J \subset J_\lambda : J
\not\supseteq J_\lambda \cap H \ \forall H \in \mathcal{H}^{\min}
\},
\end{equation}
since $\mathfrak{g} = \mathfrak{sl}_2^{\oplus n}$.
Now with $[\lambda]_{\mathcal{H}}$ as in~\eqref{Ebracket}, $\lambda$
maximal in $[\lambda]_{\mathcal{H}}$ as in the lines preceding it, and
from the above proof,
\[
[P^\mathcal{H}(w_K \bullet \lambda)] = \sum_{\emptyset \subset K' \subset
K} [\mathbb{M}(w_{K'} \bullet \lambda, \mathcal{H}'_{w_K \bullet
\lambda})] = \sum_{\emptyset \subset K' \subset K} \sum_{J_1 \in
\mathfrak{J}(w_{K'} \bullet \lambda, \mathcal{H}'_{w_K \bullet \lambda})}
[L(w_{J_1 \sqcup K'} \bullet \lambda)].
\]
We now show~\eqref{ECartanmatrix2} for $\mu = w_K \bullet \lambda, \mu' =
w_{K_1} \bullet \lambda \in [\lambda]_\mathcal{H}$ (so $K, K_1 \subset
J_\lambda$). First note that the pairs $(K',J_1)$ in the preceding sum that
contribute to
$[P^\mathcal{H}(w_K \bullet \lambda) : L(w_{K_1} \bullet \lambda)]$
satisfy: $K' \subset K \cap K_1, J_1 = K_1 \setminus K'$. Thus, these pairs
are in bijection with the pairs $(K', J = K \setminus K')$ contributing
to
$[P^\mathcal{H}(w_{K_1} \bullet \lambda) : L(w_K \bullet \lambda)]$.
Since $J_{w_K \bullet \lambda} = J_\lambda \setminus K$,
one checks that a pair $(K',J_1)$ does \textit{not} contribute a
multiplicity to
$[P^\mathcal{H}(w_K \bullet \lambda) : L(w_{K_1} \bullet \lambda)]$,
if and only if there exists $H \in \mathcal{H}^{\min}$ such that $H \cap
J_\lambda \subset K \cup K_1$. As this latter condition is symmetric in
$K$ and $K_1$, we are done.
\end{proof}

\subsection{Kazhdan--Lusztig combinatorics in
$\mathcal{O}^{\mathcal{H}}$}

As a related discussion, we study a \textit{quotient space} of the
regular representation of the Iwahori--Hecke algebra $\mathscr{H}(W)$,
which relates naturally to the Grothendieck group of
$\mathcal{O}^{\mathcal{H}}$. To begin, consider the regular block
$\mathcal{O}^{\mathcal{H},\, W \bullet 0}$ over $\mathfrak{g} =
\mathfrak{sl}_2^{\oplus n}$.

We adopt standard convention, found e.g.\ in Equation~\eqref{EKL}. Thus,
the Weyl group is $W = \{ w_K : K \subset \{ 1, \dots, n \} \} \simeq
S_2^n$, and the simples in the block are $L(w_K w_\circ \bullet 0)$. Here
$w_\circ$ is the longest element in $W$ and $w_K = \prod_{k \in K} s_k$
as above. Also note that
\[
\mathscr{H}(W) \cong \otimes_{i=1}^n \mathscr{H}(\langle 1, s_i \rangle)
= \otimes_{i=1}^n (R 1 \oplus R T_i)
\]
for a suitable Laurent polynomial ring $R$ over $\mathbb{Z}$. In
particular, there is a monomial basis:
\begin{equation}
\mathscr{H}(W) = {\rm span}_R \left\{ T_K := \prod_{k \in K} T_k \ \bigg| \
K \subset \{ 1, \dots, n \} \right\}.
\end{equation}

Moreover, each factor Hecke algebra has a Kazhdan--Lusztig basis $\{ 1,
C_i \}$, so $\mathscr{H}(W)$ has the corresponding Kazhdan--Lusztig basis
\begin{equation}
\mathscr{H}(W) = {\rm span}_R \left\{ C_K := \prod_{k \in K} C_k \ \bigg|
\ K \subset \{ 1, \dots, n \} \right\}.
\end{equation}

As is well known, in $\mathcal{O}$ one can specialize to $q=1$ and
interpret the change-of-basis relations between the $C_K$ and $T_{K'}$ in
the Grothendieck group $K_0(\mathcal{O}^{W \bullet \lambda})$, via:
\begin{equation}\label{EKLbases}
C_K \leadsto [L(w_K w_\circ \bullet 0)], \qquad
T_K \leadsto [M(w_K w_\circ \bullet 0)].
\end{equation}
These relations are precisely the ones in~\eqref{EKL} with $\lambda
\leadsto 0$; since one is working over $\mathfrak{sl}_2^{\oplus n}$, all
Kazhdan--Lusztig polynomials $P_{x,w} \equiv {\bf 1}_{x \leq w}$, with
$\leq$ the Bruhat order.

Our goal here is to explain that this decategorification phenomenon holds
more generally, in $\mathcal{O}^{\mathcal{H}}$ for all $\mathcal{H}$ over
$\mathfrak{sl}_2^{\oplus n}$. We first illustrate this in the special
case of Example \ref{Erank2} (above) over $\mathfrak{sl}_2^{\oplus 2}$.

\begin{example}
Let $\mathfrak{g} = \mathfrak{sl}_2 \oplus \mathfrak{sl}_2$ and
$\mathcal{H} = \{ \{ 1, 2 \} \}$. The block $\mathcal{O}^{\mathcal{H}, \,
W \bullet 0}$ has three simple objects $L(w_K w_\circ \bullet 0)$ for $K
\neq \emptyset$, and their universal covers $\mathbb{M}(w_K w_\circ
\bullet 0, \mathcal{H}'_{w_K w_\circ \bullet 0})$ were worked out in
Example \ref{Erank2}. We now write down the characters of the simple
objects in $K_0(\mathcal{O}^{\{ \{ 1, 2 \} \}})$ via~\eqref{EKLbases}:
\[
C_{\{ 1 \}} = T_1, \qquad
C_{\{ 2 \}} = T_2, \qquad
C_{\{ 1, 2 \}} = T_1 T_2 - T_1 - T_2.
\]
Notice, these relations are not correct on the nose in $\mathscr{H}(W)$,
since there are no coefficients of the unit $1 = T_{\emptyset}$ in any of
them.
However, these relations are the images of the usual Kazhdan--Lusztig
relations~\eqref{EKL} in the quotient free $R$-module $\mathscr{H}(W) /
(R \cdot T_{\emptyset})$ -- which reflects in
$K_0(\mathcal{O}^{\mathcal{H}, W \bullet 0})$. \qed
\end{example}

The story is similar over $\mathfrak{sl}_2^{\oplus n}$. One can compute
the character of $\mathbb{M}(\lambda, \mathcal{H}) \in
\mathcal{O}^{\mathcal{H}}$ in two ways:
\begin{enumerate}
\item Via a BGG-type resolution in terms of Verma modules in the usual
Category $\mathcal{O}$. This is worked out in slightly greater generality
in the next section -- see Theorem~\ref{TBGG2}.

\item Alternately, one works internally inside
$\mathcal{O}^{\mathcal{H}}$ itself. In this case, one needs to compute
the character (or the image in $K_0$) of $\mathbb{M}(\lambda,
\mathcal{H}) = \mathbb{M}(\lambda, \mathcal{H}'_\lambda)$, whenever
$L(\lambda) \in \mathcal{O}^{\mathcal{H}}$. This is worked out in the
next result.
\end{enumerate}

\begin{prop}\label{PKL}
Fix $\mathfrak{g} = \mathfrak{sl}_2^{\oplus n}$ and a nonempty subset
$\mathcal{H} \subset {\rm Indep}(I) = 2^I$. Given $\lambda \in
\mathfrak{h}^*$, define $K_* := \{ i \in I : \langle \lambda,
\alpha_i^\vee \rangle \in \mathbb{Z} \}$, and suppose $\lambda$ is
$K_*$-dominant integral. 
Let $[\lambda]_\mathcal{H} \subset [\lambda] = W_{K_*} \bullet \lambda$
index the set of simples in $\mathcal{O}^{\mathcal{H}}$ as in
\eqref{EKstar}. Define
\begin{equation}\label{EHeckequot}
\mathscr{H}^\mathcal{H}(W_{K_*}) :=
\frac{\mathscr{H}(W_{K_*})}{\sum_{J \not\in [\lambda]_{\mathcal{H}} } R
\cdot T_J}, \qquad C_K^\mathcal{H} := [L(w_K w_\circ \bullet \lambda)],
\qquad T_K^\mathcal{H} := [\mathbb{M}(w_K w_\circ \bullet \lambda,
\mathcal{H}'_{w_K w_\circ \bullet \lambda})]
\end{equation}
(the last two definitions extend \eqref{EKLbases}), where $w_\circ =
w_{K_*}$ and $K \in [\lambda]_{\mathcal{H}}$.
Then the ``truncated'' Kazhdan--Lusztig relations over $W_{K_*}$ hold in
$K_0(\mathcal{O}^{\mathcal{H}})$, i.e.\ in the space
$\mathscr{H}^\mathcal{H}(W_{K_*})$ with $q=1$:
\begin{equation}\label{EKLOH}
T_K^\mathcal{H} = \sum_{K' \subset K \, : \, w_{K'} w_\circ \bullet
\lambda \in [\lambda]_\mathcal{H}} C_{K'}^\mathcal{H},
\qquad C_K^\mathcal{H} = \sum_{K' \subset K \, : \, w_{K'} w_\circ
\bullet \lambda \in [\lambda]_\mathcal{H}} (-1)^{|K|-|K'|}
T_{K'}^\mathcal{H}.
\end{equation}
\end{prop}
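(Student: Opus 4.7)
The plan is to verify the relations \eqref{EKLOH} by independently computing composition factors of $T_K^\mathcal{H}$ in $K_0(\mathcal{O}^{\mathcal{H}, [\lambda]})$, and matching them against the standard Kazhdan--Lusztig change of basis in $\mathscr{H}(W_{K_*})$ pushed through the quotient \eqref{EHeckequot}. For the setup, using that $\lambda = \widetilde\lambda$ is $K_*$-dominant integral and Lemma \ref{Lcover}(1), one gets $J_{w_K w_\circ \bullet \lambda} = K$ for all $K \subset K_*$; combined with Proposition \ref{Psimple}(1), this identifies $[\lambda]_\mathcal{H}$ (viewed inside $2^{K_*}$) with the collection of ``transversals'' $\{K \subset K_* : K \cap H \neq \emptyset \ \forall H \in \mathcal{H}^{\min}\}$, which is upper-closed under $\subset$; in particular its complement is lower-closed.

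The representation-theoretic core will be to show, for each $K \in [\lambda]_\mathcal{H}$ and $K' \subset K$:
\[
[\mathbb{M}(w_K w_\circ \bullet \lambda,\, \mathcal{H}'_{w_K w_\circ \bullet \lambda}) : L(w_{K'} w_\circ \bullet \lambda)] = \mathbf{1}_{K' \in [\lambda]_\mathcal{H}}.
\]
Write $\mu := w_K w_\circ \bullet \lambda$, so $J_\mu = K$. Since $\mathfrak{n}^-$ is abelian for $\mathfrak{g} = \mathfrak{sl}_2^{\oplus n}$, identify $M(\mu) \cong U\mathfrak{n}^- = \mathbb{C}[f_1, \dots, f_n]$; then $\mathbb{M}(\mu, \mathcal{H}'_\mu)$ is the quotient by the monomial ideal $N_\mu := \sum_{H \in \mathcal{H}^{\min}} \bigl(\prod_{h \in K \cap H} f_h^{m_h}\bigr)$, with $m_h := \langle \mu, \alpha_h^\vee \rangle + 1$. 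The (up to scalar unique) maximal vector in $M(\mu)$ of weight $w_{K'} w_\circ \bullet \lambda$ is the monomial $\prod_{k \in K \setminus K'} f_k^{m_k} \cdot v_\mu$; by monomial-ideal divisibility it lies in $N_\mu$ iff $K \cap H \subset K \setminus K'$ for some $H \in \mathcal{H}^{\min}$, iff $K' \cap H = \emptyset$ for some $H$, iff $K' \notin [\lambda]_\mathcal{H}$. Combining this with the one-dimensionality of all weight spaces and $[M(\mu) : L(w_{K'} w_\circ \bullet \lambda)] = \mathbf{1}_{K' \subset K}$ (the rank-$1$ BGG composition series applied factorwise), applied to the short exact sequence $0 \to N_\mu \to M(\mu) \to \mathbb{M}(\mu, \mathcal{H}'_\mu) \to 0$ in $K_0$, yields the displayed formula and hence the first relation in \eqref{EKLOH}.

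For the Hecke side: over $W_{K_*} \simeq (\Z/2\Z)^{\oplus K_*}$ all Kazhdan--Lusztig polynomials are trivially $1$, so $T_K = \sum_{K' \subset K} C_{K'}$ holds in $\mathscr{H}(W_{K_*})$ at $q=1$. As $[\lambda]_\mathcal{H}^c$ is lower-closed, these relations show $\sum_{K \notin [\lambda]_\mathcal{H}} R \cdot T_K = \sum_{K \notin [\lambda]_\mathcal{H}} R \cdot C_K$, so both $\{T_K^\mathcal{H}\}$ and $\{C_K^\mathcal{H}\}$ for $K \in [\lambda]_\mathcal{H}$ form $R$-bases of $\mathscr{H}^\mathcal{H}(W_{K_*})$. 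The first formula in \eqref{EKLOH} is the image of $T_K = \sum_{K' \subset K} C_{K'}$ in the quotient; the second is the image of the M\"obius inverse $C_K = \sum_{K' \subset K} (-1)^{|K|-|K'|} T_{K'}$, using $T_{K'}^\mathcal{H} = 0$ for $K' \notin [\lambda]_\mathcal{H}$. The main obstacle is the composition-multiplicity step above, where one must pass rigorously from ``maximal vector surviving the monomial-ideal quotient'' to ``composition factor appearing with the right multiplicity''; this is precisely where the $\mathfrak{sl}_2^{\oplus n}$ hypothesis (one-dimensional weight spaces, abelian $\mathfrak{n}^-$, factorwise rank-$1$ BGG) is essential, and where the proof would fail for general $\mathfrak{g}$.
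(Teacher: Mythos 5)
Your proof is correct, and its overall structure mirrors the paper's: you verify the first formula by computing composition multiplicities of the modules $\mathbb{M}(w_K w_\circ \bullet \lambda, \mathcal{H}'_{w_K w_\circ \bullet \lambda})$, and deduce the second by inverting the unitriangular change of basis. The one place where your route genuinely diverges is the multiplicity computation. The paper's proof simply cites Lemma~\ref{Lcover}(3) -- proved earlier under the more general hypotheses of Theorem~\ref{TBGGrec} (arbitrary semisimple $\mathfrak{g}$ with $J_\lambda$ independent) via a PBW-ordering argument -- and translates it through the $w_\circ$-shift. You instead give a self-contained proof tailored to $\mathfrak{sl}_2^{\oplus n}$: identifying $M(\mu)$ with $\mathbb{C}[f_1,\dots,f_n]$, realizing the defining submodule as the monomial ideal generated by the products $\bff_{J_\mu\cap H}$, and reading off both directions of the multiplicity formula from monomial divisibility plus one-dimensionality of all weight spaces. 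This buys explicitness and avoids importing the general PBW argument, at the cost of not exhibiting the extra generality of Lemma~\ref{Lcover}(3). Your derivation of the second formula is equivalent to the paper's incidence-algebra M\"obius inversion: your observation that $[\lambda]_\mathcal{H}^c$ is lower-closed, so that $\sum_{K\notin [\lambda]_\mathcal{H}} R\, T_K = \sum_{K\notin [\lambda]_\mathcal{H}} R\, C_K$ and hence both images form $R$-bases of the quotient, is exactly the content of the paper's remark that the upper-closed subset $[\lambda]_\mathcal{H}\subset [\lambda]$ inherits the M\"obius function $(-1)^{|K|-|K'|}$ of the ambient Boolean lattice.
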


\begin{proof}
The first equation follows directly from Lemma~\ref{Lcover}(3). The
subtlety here is that one is now working in the finite poset
$[\lambda]_\mathcal{H}$ rather than the full block $[\lambda]$. Set
$W_{K_*} \simeq S_2^{K_*}$ to be the Weyl group of the block $[\lambda]$,
where one identifies
\[
K \subset K_* \quad \longleftrightarrow \quad w_K = \prod_{k \in
K} s_k \quad \longleftrightarrow \quad w_K w_\circ \bullet \lambda.
\]
As the regular representation of $\mathscr{H}(W_{K_*})$ is the
Grothendieck ring of the full block $\mathcal{O}^{[\lambda]} =
\mathcal{O}^{\emptyset, [\lambda]}$, as above one works in the quotient
space $\mathscr{H}^\mathcal{H}(W_{K_*})$ in~\eqref{EHeckequot}.

With this modification in place, the rest is standard. The incidence
algebra of functions $f : \overline{[\lambda]}_{\mathcal{H}} \times
\overline{[\lambda]}_{\mathcal{H}} \to \Z$ (with $f(x,w) = 0$ if $x
\not\leq w$) acts on the space of functions ${\rm
Fun}(\overline{[\lambda]}_{\mathcal{H}},
K_0(\mathcal{O}^{\mathcal{H},[\lambda]}))$ via convolution. To show the
second equation in \eqref{EKLOH}, note that the first says: ${\rm id}_C
\ast \zeta = {\rm id}_T$, with
\[
{\rm id}_C (K) := C_K^\mathcal{H}, \qquad
{\rm id}_T (K) := T_K^\mathcal{H},
\]
and $\zeta(K',K) = {\bf 1}_{K' \subset K}$ the zeta function of the
incidence algebra (whose convolution-inverse is precisely the M\"obius
function).
The second equation in~\eqref{EKLOH} now follows by M\"obius inversion,
noting that since $[\lambda]_\mathcal{H} \subset [\lambda]$ is
upper-closed, it inherits the M\"obius function $(-1)^{|K|-|K'|}$ of
$[\lambda]$.
\end{proof}

\section{Theorems~\ref{T5}, \ref{Theorem character of second order Verma via parabolic Vermas} and Proposition \ref{Prop A3 second order holes character}: Characters and BGG resolutions, by Weyl semigroups}\label{Sfinal}

In this concluding section, we initiate the study of characters of some
of the modules in this work. Given that the characters of $M(\lambda)$
and $M(\lambda,J)$ are well understood -- in fact, at the level of BGG
resolutions -- it is natural to seek the same for the more general class
of higher order Verma modules $\mathbb{M}(\lambda, \mathcal{H})$. We
obtain such resolutions in two settings.

\textbf{Fix a Kac--Moody $\mathfrak{g}$ and a (highest) weight $\lambda
\in \mathfrak{h}^*$} for this section.
Given an independent subset $H \subset J_\lambda$, recall the weight
$\lambda_H$ and the lowering operator-product $\bff_H$ defined above:
\begin{equation}\label{EfH}
\lambda_H := \lambda - \sum_{h \in H} (\langle \lambda, \alpha_h^\vee
\rangle + 1 ) \alpha_h = (\textstyle{\prod}_{h \in H} s_h ) \bullet
\lambda, \qquad 
\bff_H := \prod_{h \in H} f_h^{\langle \lambda, \alpha_h^\vee \rangle +
1}.
\end{equation}

\subsection{Setting 1: Pairwise orthogonal holes, ``parabolic'' Weyl
group}\label{Subsection Setting 1}

We now turn to the first setting in which we compute $\ch
\mathbb{M}(\lambda, \mathcal{H})$: when the elements of
$\mathcal{H}^{\min}$ are pairwise orthogonal.

\begin{theorem}\label{TBGG1}
Fix Kac--Moody $\mathfrak{g}$, a weight $\lambda \in \mathfrak{h}^*$, and
an upper-closed set $\mathcal{H} \subset {\rm Indep}(J_\lambda)$ such
that $\mathcal{H}^{\min} \subset {\rm Indep}(J_\lambda)$ consists of
pairwise orthogonal subsets, say $H_1, \dots, H_k$. Then the module
$\mathbb{M}(\lambda, \mathcal{H}) = \mathbb{M}(\lambda,
\mathcal{H}^{\min})$ has a BGG resolution
\begin{equation}\label{EBGGH}
0 \longrightarrow
M_k \overset{d_k}{\longrightarrow}
M_{k-1} \overset{d_{k-1}}{\longrightarrow}
\cdots \overset{d_2}{\longrightarrow}
M_1 \overset{d_1}{\longrightarrow}
M_0 \overset{d_0}{\longrightarrow}
\mathbb{M}(\lambda, \mathcal{H}) \to 0.
\end{equation}
Here, $M_k = M(\lambda_{H_1 \sqcup \cdots \sqcup H_k})$ and $M_0 =
M(\lambda)$ (so $\lambda_\emptyset = \lambda$), and more generally, $M_t$
is the direct sum of the Verma modules $M(\lambda_{H_{i_1} \sqcup \cdots
\sqcup H_{i_t}})$ over all $t$-tuples of indices $1 \leq i_1 < \cdots <
i_t \leq k$.

As a consequence, the character of $\mathbb{M}(\lambda,\mathcal{H})$ is
given by the ``Weyl character formula''
\begin{equation}\label{EcharMH}
\ch \mathbb{M}(\lambda,\mathcal{H}) = \sum_{S \subset \{ 1, \dots, k \}}
(-1)^{|S|} \ch M(\lambda_{\sqcup_{i \in S} H_i}).
\end{equation}
\end{theorem}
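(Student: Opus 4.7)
The plan is to realize \eqref{EBGGH} as a Koszul/Mayer--Vietoris-style complex built from the Verma submodules
\[
N_i := U\mathfrak{g} \cdot \bff_{H_i} \cdot M(\lambda)_\lambda \;\cong\; M(\lambda_{H_i}) \;\hookrightarrow\; M(\lambda), \qquad i=1,\ldots,k.
\]
The identification $N_i \cong M(\lambda_{H_i})$ uses that $H_i$ is independent (so $\dim M(\lambda)_{\lambda_{H_i}} = 1$ by Lemma~\ref{Lkostant}) together with $\mathfrak{sl}_2^{\oplus H_i}$-theory (so $\bff_{H_i} \cdot M(\lambda)_\lambda$ is a maximal vector of weight $\lambda_{H_i}$) and the $U\mathfrak{n}^-$-freeness of $M(\lambda)$ (to upgrade the Verma surjection $M(\lambda_{H_i}) \twoheadrightarrow N_i$ to an isomorphism). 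Since $\mathbb{M}(\lambda,\mathcal{H}) = M(\lambda)/\sum_i N_i$ by definition, the task reduces to resolving $\sum_i N_i$ by its iterated intersections.

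The crux, and the step I expect to be the main obstacle, is establishing the intersection identity
\[
\bigcap_{i \in S} N_i \;=\; U\mathfrak{g}\cdot \bff_{\sqcup_{i \in S} H_i}\cdot M(\lambda)_\lambda \;\cong\; M(\lambda_{\sqcup_{i \in S}H_i}), \qquad \emptyset \neq S \subset \{1,\ldots,k\}.
\]
To show this, I will identify $M(\lambda) \cong U\mathfrak{n}^-$ (so $N_i$ becomes the right ideal $U\mathfrak{n}^- \cdot \bff_{H_i}$) and exploit the pairwise orthogonality hypothesis: $H := \bigsqcup_i H_i$ is an independent subset of $J_\lambda$, so $\mathfrak{a} := \bigoplus_{h \in H}\mathbb{C} f_h \subset \mathfrak{n}^-$ is abelian and $U\mathfrak{a} \cong \mathbb{C}[f_h : h \in H]$ is a polynomial algebra. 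Choosing PBW monomials with $\mathfrak{a}$-generators on the right gives a vector-space decomposition $U\mathfrak{n}^- \cong U\mathfrak{b} \otimes_{\mathbb{C}} U\mathfrak{a}$ (with $\mathfrak{b}$ spanning the complementary negative root spaces) that is compatible with right multiplication by elements of $U\mathfrak{a}$, whence $U\mathfrak{n}^- \cdot \bff_{H_i} = U\mathfrak{b} \otimes (U\mathfrak{a} \cdot \bff_{H_i})$. Intersections distribute over the first tensor factor (by uniqueness of PBW expansions), reducing the problem to $U\mathfrak{a}$. There, the monomials $\bff_{H_i}$ live in pairwise disjoint sets of variables (since the $H_i$ are pairwise disjoint) and hence generate pairwise coprime principal ideals; their intersection is therefore the principal ideal generated by $\prod_i \bff_{H_i} = \bff_H$, which is what we want.

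With the intersection identity in hand, the augmented \v{C}ech complex
\[
0 \longrightarrow N_{\{1,\ldots,k\}} \longrightarrow \bigoplus_{|S|=k-1}N_S \longrightarrow \cdots \longrightarrow \bigoplus_{i} N_i \longrightarrow {\textstyle\sum_i N_i} \longrightarrow 0,
\]
with differentials the signed alternating sums of inclusions $N_S \hookrightarrow N_{S \setminus \{j\}}$, will be exact by a standard induction on $k$ starting from the $k=2$ case $0 \to A \cap B \to A \oplus B \to A + B \to 0$; the intersection identity ensures that the inductive hypothesis propagates. Splicing the resulting short exact sequence $0 \to \sum_i N_i \to M(\lambda) \to \mathbb{M}(\lambda, \mathcal{H}) \to 0$ onto this complex yields \eqref{EBGGH} with $M_t = \bigoplus_{|S|=t}M(\lambda_{\sqcup_{i \in S}H_i})$ for $t \geq 1$ (and $M_0 = M(\lambda)$). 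Taking Euler characteristics and substituting $\ch M(\mu) = e^\mu/\prod_{\alpha \in \Delta^+}(1 - e^{-\alpha})^{\dim \mathfrak{g}_\alpha}$ then yields \eqref{EcharMH}.
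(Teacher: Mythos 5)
Your overall strategy is sound and, up to re-parametrization, it coincides with the paper's: the complex you build from the iterated intersections $N_S = \bigcap_{i\in S}N_i$ is the Koszul complex that the paper writes down abstractly. The intersection identity is correctly proved; the PBW decomposition $U\mathfrak{n}^- \cong U\mathfrak{b}\otimes_{\mathbb{C}} U\mathfrak{a}$ (compatible with right multiplication by the abelian $U\mathfrak{a}$, which uses that $\bigsqcup_i H_i$ is independent) reduces the computation to $U\mathfrak{a} = \mathbb{C}[f_h : h\in\sqcup_i H_i]$, where the monomials $\bff_{H_i}$ have disjoint support and hence generate pairwise coprime principal ideals.

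The genuine gap is the exactness step. You assert that the augmented \v{C}ech complex of the $N_S$ "will be exact by a standard induction on $k$," but no such standard induction exists, and the assertion is false for general submodules once $k\ge 3$: take $M = \mathbb{Z}^2$, $N_1 = \mathbb{Z}(1,0)$, $N_2 = \mathbb{Z}(0,1)$, $N_3 = \mathbb{Z}(1,1)$; all pairwise and triple intersections vanish, $N_1 + N_2 + N_3 = \mathbb{Z}^2$, and the \v{C}ech complex fails to be exact at $\bigoplus_i N_i$. Knowing the intersections is therefore not enough on its own. What does save you is precisely the mechanism in the paper's proof: under the intersection identity each $N_S \cong U\mathfrak{n}^- \cdot \bff_{\sqcup_{i\in S}H_i}$ is a free $U\mathfrak{n}^-$-module of rank one, and the signed inclusion $N_S \hookrightarrow N_{S\setminus\{j\}}$ becomes multiplication by $y_j = \bff_{H_j}$, so your complex is literally $\mathrm{Kos}(y_1,\dots,y_k;\, U\mathfrak{n}^-)$. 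Its acyclicity follows because $U\mathfrak{n}^-$ is free — hence flat — over $R = \mathbb{C}[y_1,\dots,y_k]$, so tensoring the exact Koszul complex of the regular sequence $(y_1,\dots,y_k)$ in $R$ with $U\mathfrak{n}^-$ stays exact. You should make this identification and the freeness argument explicit, rather than appealing to an unspecified induction; the "propagation" you want is carried by the regular-sequence / flatness structure, not by the intersection identity alone.
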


\begin{remark}\label{Rworking2}
In the spirit of Remark~\ref{Rworking1}, note that Section~\ref{SO}
worked over $\mathfrak{g}$ of finite type, while the results before it
were independent of which Kac--Moody quotient algebra
$\widetilde{\mathfrak{g}} \twoheadrightarrow \mathfrak{g}
\twoheadrightarrow \overline{\mathfrak{g}}$ (fixing a generalized Cartan
matrix) was used. The formulas in this section, while true for each
quotient $\mathfrak{g}$, do not necessarily give the same answers
\textit{across} varying $\mathfrak{g}$. This is because the character of
the Verma module can depend on $\mathfrak{g}$, whereas its weights do
not.
\end{remark}

\begin{example}\label{ExM00}
In the fundamental example in this regard, $V_{00} = M(0,0) / M(-2,-2)$
as in~\eqref{EM00}, the resolution~\eqref{EBGGH} specializes to
\[
\hspace*{1.8in}0 \to M(-2,-2) \to M(0,0) \to V_{00} \to 0.
\hspace*{1.8in}\qed
\]
\end{example}

As the proof of Theorem~\ref{TBGG1} reveals, the complex in \eqref{EBGGH}
is a BGG resolution~\cite{BGGres,kbg}, in which one is working with the
finite type Weyl group $W(\mathfrak{sl}_2^{\oplus k}) = (\Z /
2\Z)^{\oplus k}$. In fact the differentials $d_t$ are defined (below)
using the Bruhat order in this group. Moreover, if one considers the
words
\begin{equation}\label{EsimpleH}
{\bf s}_{H_j} := \prod_{h \in H_j} s_h, \qquad 1 \leq j \leq k
\end{equation}
as ``order $2$ Coxeter generators'', then the dot-action of the
``parabolic'' Weyl subgroup
\begin{equation}\label{EWH}
W_{\mathcal{H}} :=  \langle {\bf s}_{H_1}, \dots, {\bf s}_{H_k} \rangle
\simeq (\Z / 2\Z)^{\oplus k} \qquad (\text{with ``natural'' length
function } \ell_{\mathcal{H}} : W_{\mathcal{H}} \twoheadrightarrow \{ 0,
\dots, k \})
\end{equation}
on $\lambda$ yields precisely the highest weights $\lambda_H$ that
occur in the BGG resolution \eqref{EBGGH}. And indeed, the final
equation~\eqref{EWH} above, brings us back full circle to the first
equations in this paper -- the Weyl--Kac character formulas~\eqref{EWeyl}
\eqref{EVerma}, \eqref{EAtiyahBott} -- via their
$W_{\mathcal{H}}$-analogue:

\begin{cor}\label{CWCF}
Given Kac--Moody $\mathfrak{g}$, $\lambda \in \mathfrak{h}^*$, and an
upper-closed subset $\mathcal{H} \subset {\rm Indep}(J_\lambda)$, suppose
$\mathcal{H}^{\min} = \{ H_1, \dots, H_k \}$ consists of pairwise
orthogonal independent subsets of $J_\lambda$. Then
\begin{equation}\label{EWCFH}
\ch \mathbb{M}(\lambda, \mathcal{H}) = \sum_{w \in W_{\mathcal{H}} }
\frac{(-1)^{\ell_{\mathcal{H}}(w)} e^{w \bullet \lambda}}{\prod_{\alpha
\in \Delta^+} (1 - e^{-\alpha})^{\dim \mathfrak{g}_\alpha}}.
\end{equation}
\end{cor}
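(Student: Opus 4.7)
The plan is to read off Corollary~\ref{CWCF} directly from Theorem~\ref{TBGG1}: it suffices to translate the index-set $\{S \subset \{1,\dots,k\}\}$ on the right of~\eqref{EcharMH} into the group $W_{\mathcal{H}} = \langle {\bf s}_{H_1},\dots,{\bf s}_{H_k}\rangle$, and to substitute the Weyl--Kac character formula $\ch M(\mu) = e^\mu / \prod_{\alpha \in \Delta^+}(1-e^{-\alpha})^{\dim \mathfrak{g}_\alpha}$ into each summand. The corollary then becomes a bookkeeping exercise whose substantive ingredients are already contained in the BGG resolution~\eqref{EBGGH}.

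First I would check that $W_{\mathcal{H}} \simeq (\Z/2\Z)^{\oplus k}$ with the ${\bf s}_{H_j}$ as commuting involutive generators. Indeed, each $H_j$ is an independent set in the Dynkin diagram, so the simple reflections $\{s_h : h \in H_j\}$ pairwise commute and ${\bf s}_{H_j}^2 = 1$; and the pairwise orthogonality hypothesis on $\mathcal{H}^{\min}$ forces $\langle \alpha_h, \alpha_{h'}^\vee\rangle = 0$ for all $h \in H_i$, $h' \in H_j$, $i \neq j$, so ${\bf s}_{H_i}$ and ${\bf s}_{H_j}$ also commute. Hence every element of $W_{\mathcal{H}}$ is uniquely of the form $w_S := \prod_{i \in S} {\bf s}_{H_i}$ for some $S \subset \{1,\dots,k\}$, and by construction the natural length is $\ell_{\mathcal{H}}(w_S) = |S|$.

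Next I would verify the identity $w_S \bullet \lambda = \lambda_{H'}$ for $H' := \sqcup_{i \in S} H_i$. Since $H'$ is itself an independent subset of $J_\lambda$ (by orthogonality), the reflections $\{s_h : h \in H'\}$ all commute and can be applied in any order. A short induction on $|H'|$, using the identity $s_h \bullet \mu = \mu - (\langle \mu, \alpha_h^\vee\rangle + 1)\alpha_h$ together with $\langle \alpha_{h'}, \alpha_h^\vee\rangle = 0$ for distinct $h, h' \in H'$ (so previously applied reflections do not affect the pairing $\langle \,\cdot\,, \alpha_h^\vee\rangle$), gives $w_S \bullet \lambda = \lambda - \sum_{h \in H'}(\langle \lambda, \alpha_h^\vee\rangle + 1)\alpha_h$, which is precisely $\lambda_{H'}$ as defined in~\eqref{EfH}.

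Finally, plugging the Verma character into~\eqref{EcharMH} and reindexing by $S \leftrightarrow w_S$ yields
\[
\ch \mathbb{M}(\lambda,\mathcal{H}) = \sum_{S \subset \{1,\dots,k\}} \frac{(-1)^{|S|} e^{\lambda_{\sqcup_{i \in S} H_i}}}{\prod_{\alpha \in \Delta^+}(1-e^{-\alpha})^{\dim \mathfrak{g}_\alpha}} = \sum_{w \in W_{\mathcal{H}}} \frac{(-1)^{\ell_{\mathcal{H}}(w)} e^{w \bullet \lambda}}{\prod_{\alpha \in \Delta^+}(1-e^{-\alpha})^{\dim \mathfrak{g}_\alpha}},
\]
which is exactly~\eqref{EWCFH}. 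The hard work genuinely lies in Theorem~\ref{TBGG1}; for the corollary itself there is no real obstacle beyond the commuting-reflections computation of $w_S \bullet \lambda$ and the observation that $W_{\mathcal{H}}$ carries an elementary abelian $2$-group structure indexed by subsets of $\{1,\dots,k\}$ with the ``right'' length and dot-action.
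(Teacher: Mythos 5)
Your proposal is correct and matches the argument the paper has in mind: the corollary is presented as an immediate consequence of Theorem~\ref{TBGG1} (the paper gives no separate proof, since the passage following \eqref{EsimpleH} already identifies $W_{\mathcal{H}} \simeq (\Z/2\Z)^{\oplus k}$ and observes that its dot-orbit of $\lambda$ is precisely $\{\lambda_{H_{i_1}\sqcup\cdots\sqcup H_{i_t}}\}$), and your reindexing $S \leftrightarrow w_S$ together with the substitution of $\ch M(\mu) = e^\mu / \prod_{\alpha>0}(1-e^{-\alpha})^{\dim\mathfrak{g}_\alpha}$ into \eqref{EcharMH} is exactly the intended bookkeeping. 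Your two intermediate verifications (the elementary-abelian structure of $W_{\mathcal{H}}$ from orthogonality of the $H_i$, and $w_S\bullet\lambda = \lambda_{\sqcup_{i\in S}H_i}$ by induction using orthogonality to keep $\langle\,\cdot\,,\alpha_h^\vee\rangle$ unchanged) are the points the paper leaves to the reader, and you have carried them out correctly.
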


\begin{proof}[Proof of Theorem~\ref{TBGG1}]
Briefly, \eqref{EBGGH} is the BGG resolution in the restricted (simpler)
case of $\mathfrak{g} = \mathfrak{sl}_2^{\oplus k}$, and so is the Koszul
resolution of $R / (Ry_1 + \cdots + Ry_k)$ for $R = \mathbb{C}[y_1,
\dots, y_k]$ -- where $y_j := \bff_{H_j}\ \forall j$ -- subsequently
tensored with the free $R$-module $M = U \mathfrak{n}^-$.
(Notice, this case is therefore easier than the proof for arbitrary Weyl
groups in \cite{BGGres,kbg}, given the simpler underlying Weyl group).

We give details for the interested reader. Let $W_{\mathcal{H}} = (\Z / 2
\Z)^{\oplus k} \overset{\psi}{\simeq} 2^{\{ 1, \dots, k \}}$, and via
$\psi$ write
\[
W_{\mathcal{H}} = \{ w_J := {\textstyle \prod_{j \in J}} {\bf s}_{H_j} \
| \ J \subset \{ 1, \dots, k \} \}, \qquad w_J w_K = w_{J \Delta K}.
\]

Next, we note the unique (up to scalar) embeddings $\iota(J',J)$ of the
various Verma modules in the resolution above, according to the poset
structure of the subsets of $\{ 1, \dots, k \}$ under inclusion. Namely,
if $J \subset J' \subset \{ 1, \dots, k \}$, and if we denote $H_J :=
\sqcup_{j \in J} H_j$, then
\[
M(\lambda_{H_{J'}}) \hookrightarrow M(\lambda_{H_J}) \hookrightarrow
M(\lambda)
\]
with $\lambda_H$ in~\eqref{EfH}. Concretely, choosing a highest weight
vector $m_\lambda \in M(\lambda)_\lambda$, these embeddings are:
\begin{equation}\label{Eembedding}
\iota(J', J) : U \mathfrak{g} \left( \bff_{H_{J'} \setminus H_J} \cdot
\bff_{H_J} \cdot m_\lambda \right) \hookrightarrow U \mathfrak{g} \left(
\bff_{H_J} \cdot m_\lambda \right) = U \mathfrak{g} ( m_{\lambda_{H_J}} )
\hookrightarrow U \mathfrak{g} ( m_\lambda ),
\end{equation}
where we \textbf{define} $m_{\lambda_{H_J}} := \bff_{H_J} \cdot
m_\lambda$ for all $J \subset \{ 1, \dots, k \}$. (Thus, $w_J \bullet
\lambda_{H_K} = \lambda_{H_{J \Delta K}}$ for $J,K \subset \{ 1, \dots, k
\}$.)
Moreover, the above Verma submodules have the expected intersection:
\begin{equation}\label{Eintersect}
J,K \subset \{ 1, \dots, k \} \quad \implies \quad
M(\lambda_{H_J}) \cap M(\lambda_{H_K}) = 
M(\lambda_{H_{J \cup K}})
\end{equation}
as submodules of $M(\lambda)$, by using weight space
decompositions and the PBW theorem. The elements
$y_j := \bff_{H_j}$
commute pairwise, and will be used to define -- via the formulas as in
the Koszul resolution for $R / (Ry_1 + \cdots + Ry_k)$ above -- the
differentials $d_t$, or more precisely, their coordinates. Namely,
\begin{equation}\label{Ed1}
d_1 \left( \sum_{j=1}^k X_j m_{\lambda_{H_j}} \right) := \sum_{j=1}^k X_j
\bff_{H_j} \cdot m_\lambda = \sum_{j=1}^k X_j y_j \cdot m_\lambda, \quad
X_j \in U \mathfrak{n}^-.
\end{equation}
Next if $t>1$, then $d_t(J',J): M(\lambda_{H_{J'}}) \to
M(\lambda_{H_J})$ is zero unless $J \subset J'$ with $t = |J'| = |J|+1$,
in which case
\begin{align}\label{Edifferential}
&\ J' = \{ i_1, \dots, i_t \ : \ 1 \leq i_1 < \cdots < i_t \leq k \}, \ \
X \in U\mathfrak{n}^-\notag\\
\implies &\ d_t \left( X m_{\lambda_{H_{J'}} } \right) := X \sum_{j=1}^t
(-1)^{j-1} \bff_{H_{i_j}} m_{\lambda_{H_{J' \setminus \{ i_j \}} }}
= X \sum_{j=1}^t (-1)^{j-1} y_{i_j} m_{\lambda_{H_{J' \setminus \{ i_j
\}} }}.
\end{align}

Observe that \eqref{Ed1}, \eqref{Edifferential} are precisely the
formulas for the differentials in the Koszul complex
\begin{equation}\label{EKoszulR}
0 \longrightarrow R \overset{d_k}{\longrightarrow} R^{\binom{k}{k-1}}
\overset{d_{k-1}}{\longrightarrow} \cdots 
\overset{d_3}{\longrightarrow} R^{\binom{k}{2}}
\overset{d_2}{\longrightarrow} R^k
\overset{d_1}{\longrightarrow} R
\longrightarrow 0,
\end{equation}
with $R = \mathbb{C}[y_1, \dots, y_k]$, and under the identification
wherein the free module $R^{\binom{k}{t}}$ has basis
\[
\left\{ m_{\lambda_{H_{J'}} } : J' = \{ i_1, \dots, i_t \ : \ 1 \leq i_1
< \cdots < i_t \leq k \} \right\}.
\]
Also note the same intersection property as~\eqref{Eintersect}:
\[
J,K \subset \{ 1, \dots, k \} \quad \implies \quad
{\textstyle R \prod_{t \in J} y_t \ \ \bigcap \ \ 
R \prod_{t \in K} y_t \ \ = \ \ 
R \prod_{t \in J \cup K} y_t.}
\]

Finally, transfer the Koszul complex from~\eqref{EKoszulR}
to~\eqref{EBGGH}, in the usual manner. Define the $R$-module $M :=
U\mathfrak{n}^-$; by the PBW theorem this is free over the polynomial
algebra $\mathbb{C}[ \{ f_h : h \in \sqcup_j H_j \} ]$, which is in turn
free over $R = \mathbb{C}[ \{ \bff_{H_j} : 1 \leq j \leq k \}]$.
Thus, one tensors the resolution~\eqref{EKoszulR} with $M$ to
obtain the BGG complex in~\eqref{EBGGH}, with the specified differential
maps. (Strictly speaking, one obtains~\eqref{EKoszulR} with $R$ replaced
by $M = U \mathfrak{n}^-$, and this is isomorphic as free $U
\mathfrak{n}^-$-modules to the complex in~\eqref{EBGGH}.) The
$R$-freeness implies that~\eqref{EBGGH} is indeed the desired resolution
for $\mathbb{M}(\lambda, \mathcal{H})$.
\end{proof}

\subsection{Higher order Weyl group action on characters}

As is well known, in a parabolic category $\mathcal{O}^{\mathfrak{p}_J}$,
the character of any object is $W_J$-invariant -- or as we now understand
in the language of holes, invariant under the minimal hole reflections
$s_j, j \in J$. We now explain a sense in which this phenomenon
generalizes to all holes -- when applied to the higher order Verma
modules $\mathbb{M}(\lambda, \mathcal{H})$.

We begin by first explaining not the invariance, but the \textit{partial
action} of the Weyl group on the weights of a highest weight module over
Kac--Moody $\mathfrak{g}$. Consider once again the basic example $V_{00}
= M(0,0) / M(-2,-2)$ over $\mathfrak{g} = \mathfrak{sl}_2 \oplus
\mathfrak{sl}_2$. This is a length $3$ highest weight module, and the
unique hole here is $H = \{ 1, 2 \}$. Now $s_1 s_2$ fixes the weight $0$,
but takes every other weight of $V_{00}$ to a non-weight. What does hold
is that $W$ stabilizes the weights of $L(0,0)$; $W_{\{ 1 \}} = \{ 1, s_1
\}$ stabilizes the weights of $L(s_2 \bullet 0) = M(s_2 \bullet 0, \{ 1
\})$; and similarly for $L(s_1 \bullet 0)$. Viewed differently, $\wt
V_{00} = \wt M(\lambda, \{ 1 \}) \cup \wt M(\lambda, \{ 2 \})$, and these
are stable under the action of $W_{\{ 1 \}}$ and $W_{\{ 2 \}}$,
respectively. (Hence their intersection $\wt L(0,0)$ is $W$-stable.)

The situation is similar in general, via Theorem~\ref{T1}. Let
$\mathfrak{g}$ be a Kac--Moody algebra, and $M$ an object in
$\mathcal{O}$. Then $M$ has a finite filtration by highest weight
modules, say $M(\lambda_i) \twoheadrightarrow V_i$, and so
\[
\wt M \ = \ \bigcup_{i \geq 0} \wt V_i \ = \ \bigcup_{i \geq 0} \ \
\bigcup_{J \subset J_{\lambda_i}\, :\, J \cap H \neq \emptyset\; \forall
H \in \mathcal{H}_{V_i}} \wt M(\lambda_i,J)
\]
by Theorem~\ref{T1}. The partial action now says that part of $W$ acts on
part of $\wt M$. Namely, given $\mu \in \wt M$, the orbit $W_J(\mu)
\subset M$ still, whenever $\mu \in \wt M(\lambda_i, J)$ for some
$(i,J)$.

As a special case, if $M \in \mathcal{O}^{\mathfrak{p}_{J'}}$ for some
$J' \subset I$, then in the above union every $J$ satisfies: $J \cap \{
j' \} \neq \emptyset$ for $j' \in J'$, and so $J \supseteq J'$ -- which
implies that $\wt M$ is $W_{J'}$-stable. In contrast, there need not be
any global symmetries in the higher order case. E.g.\ for $\mathfrak{g} =
\mathfrak{sl}_2 \oplus \mathfrak{sl}_2$ one has $M := L(s_1 \bullet 0)
\oplus L(s_2 \bullet 0) \in \mathcal{O}^{\{ \{ 1, 2 \} \}}$ -- but $\wt
M$ is stable only under $W_{\{ 1 \}} \cap W_{\{ 2 \}}$, i.e.\ the
identity.\medskip

Having discussed weights, we turn to characters. We return to the opening
paragraph of this subsection, and attempt to generalize it to higher
order holes. Begin once again with the above example $V_{00}$ over
$\mathfrak{sl}_2^{\oplus 2}$.
In this case, the unique hole is $\{ 1, 2 \}$, and one seeks to
understand if (and how) $s_1 s_2$ preserves the character of $V_{00}$.
The immediate approach would be to evaluate $s_1 s_2 \left( e^{(0,0)} +
\sum_{n>0} (e^{-n \alpha_1} + e^{-n \alpha_2}) \right)$, and it is easy
to check this does not leave the character unchanged. Instead, one needs
to rewrite the character as
\[
\ch V_{00} = \frac{e^{(0,0)}}{(1 - e^{-\alpha_1})(1 - e^{-\alpha_2})}
- \frac{e^{(-2,-2)}}{(1 - e^{-\alpha_1})(1 - e^{-\alpha_2})}.
\]
Now acting on both numerators and both denominators by $s_1 s_2$ leaves
this expression unchanged. This happens due to the ``correct'' way of
expanding both ratios (after applying $s_1 s_2$) -- via their ``highest
weight expansions''. As an illustration, if $\alpha$ is a positive root
in a Kac--Moody algebra, then $(1 - e^{-\alpha})^{-1}$ equals $1 +
e^{-\alpha} + e^{-2\alpha} + \cdots$, whereas $(1 - e^\alpha)^{-1}$ is
expanded differently:
\[
\frac{1}{1 - e^\alpha} = \frac{e^{-\alpha}}{e^{-\alpha} - 1} = -
e^{-\alpha} ( 1 + e^{-\alpha} + e^{-2\alpha} + \cdots ).
\]
This is originally due to Brion \cite{Brion} (for rational polytopes) and
Khovanskii--Pukhlikov \cite{KP}, Lawrence \cite{Lawrence}, and Varchenko
\cite{Varchenko} -- see Postnikov \cite{Postnikov} for deformation
arguments for generic nodes.

The invariance of the character of the higher (2nd) order Verma module
$V_{00}$ under the subgroup $\{ 1, s_1 s_2 \}$ is a ``higher order''
version of the $W_J$-invariance of the character of the module
$M(\lambda, J)$. We now extend the former phenomenon and parallel the
latter, in the situation discussed above:

\begin{prop}\label{PWchar}
Let $\mathfrak{g}$, $\lambda \in \mathfrak{h}^*$, and $\mathcal{H}^{\min}
= \{ H_1, \dots, H_k \} \subset {\rm Indep}(J_\lambda)$ be as in
Theorem~\ref{TBGG1} -- as also the subgroup $W_\mathcal{H} \simeq
(\Z/2\Z)^k$ of $W$. Then,
\begin{equation}\label{EWchar2}
w(\ch \mathbb{M}(\lambda, \mathcal{H})) = (-1)^{\ell(w) -
\ell_\mathcal{H}(w)} \ch \mathbb{M}(\lambda, \mathcal{H}), \quad \forall
w \in W_\mathcal{H}.
\end{equation}
\end{prop}

\begin{proof}
With notation as in~\eqref{EWH}, write $W_\mathcal{H} = \{ w_J = \prod_{j
\in J} {\bf s}_{H_j} \ | \ J \subset \{ 1, \dots, k \} \}$. Now compute,
starting from the Weyl--Kac type character formula~\eqref{EWCFH}:
\begin{align*}
w_K (\ch \mathbb{M}(\lambda, \mathcal{H})) = &\ \sum_{J \subset \{ 1,
\dots, k \}} w_K \left( \frac{(-1)^{|J|} e^{w_J \bullet
\lambda}}{\prod_{\alpha \in \Delta^+} (1 - e^{-\alpha})^{\dim
\mathfrak{g}_\alpha}} \right)\\
= &\ (-1)^{\ell(w_K)} \sum_{J \subset \{ 1, \dots, k \}} \frac{(-1)^{|J|}
e^{(w_K w_J) \bullet \lambda}}{\prod_{\alpha \in \Delta^+} (1 -
e^{-\alpha})^{\dim \mathfrak{g}_\alpha}} = (-1)^{\ell(w_K)} \cdot
(-1)^{|K|} \ch \mathbb{M}(\lambda, \mathcal{H}),
\end{align*}
since $(-1)^{|K \Delta J|} = (-1)^{|K|} (-1)^{|J|}$. This concludes the
proof.
\end{proof}

\begin{remark}
Suppose $\mathfrak{g}$ is semisimple, and $\mathcal{H} \subset {\rm
Indep}(I)$ is such that the minimal sets in $\mathcal{H}^{\min}$ are
pairwise orthogonal. 
We explain a sense in which Proposition~\ref{PWchar} extends to
$\mathcal{O}^{\mathcal{H}}$ the $W_J$-invariance of $\ch M$ for all
objects $M$ in the parabolic category $\mathcal{O}^{\mathfrak{p}_J}$.
Indeed, via a triangular change of bases, the $W_J$-invariance of all
characters in $\mathcal{O}^{\mathfrak{p}_J}$, is $K_0$-equivalent to that
of $\ch M(\lambda, J)$ for all $\lambda$ with $L(\lambda) \in
\mathcal{O}^{\mathfrak{p}_J}$. The higher order analogue of this is given
by Proposition~\ref{PWchar} (via Proposition~\ref{Psimple}(2)):

\textit{If $L(\lambda) \in \mathcal{O}^\mathcal{H}$, then the character
of $\mathbb{M}(\lambda, \mathcal{H})$ satisfies~\eqref{EWchar2} with
$W_{\mathcal{H}}$ replaced by $W_{\mathcal{H}'_\lambda}$,}

\noindent since if $\mathcal{H} = \mathcal{H}_J$ and $L(\lambda) \in
\mathcal{O}^\mathcal{H} = \mathcal{O}^{\mathfrak{p}_J}$, then
$\mathcal{H}'_\lambda = \mathcal{H}_J$ and $\ell_{\mathcal{H}_J} = \ell$.
\end{remark}

\subsection{Setting 2: Pairwise orthogonal integrable roots, and the
parabolic Weyl semigroup}\label{Subsection Setting 2}

We next prove BGG resolutions and character formulas for the modules
$\mathbb{M}(\lambda, \mathcal{H})$ in another setting: for
$\mathfrak{g}_{J_\lambda} = \mathfrak{sl}_2^{\oplus n}$ for some $n \geq
1$. In this case, the BGG resolution turns out to involve a
\textit{semigroup} action of $W_\mathcal{H} \cong (\Z / 2 \Z)^{\oplus k}$
(as sets, with $k = |\mathcal{H}^{\min}|$) on $\lambda$, as we first
explain for $k=2$:

\begin{theorem}\label{T2holes}
Suppose $\mathfrak{g} = \mathfrak{sl}_2^{\oplus n}$ for some $n \geq 1$,
$\lambda \in \mathfrak{h}^*$, and $\mathcal{H}^{\min} = \{ H_1, H_2 \}
\subset {\rm Indep}(J_\lambda) = 2^{J_\lambda}$.
Using the notation of $\lambda_H, \bff_H$ as in~\eqref{EfH}, the
module $\mathbb{M}(\lambda, \mathcal{H}^{\min})$ has a BGG resolution
\begin{equation}\label{EBGG2holes}
0 \longrightarrow M( \lambda_{H_1 \cup H_2} )
\overset{d_2}{\longrightarrow} M( \lambda_{H_1} ) \oplus M( \lambda_{H_2}
) \overset{d_1}{\longrightarrow} M( \lambda ) \
\overset{d_0}{\longrightarrow} \mathbb{M}(\lambda, \mathcal{H}^{\min})
\to 0,
\end{equation}
where the ``Koszul-type'' differentials are given by
\begin{align*}
d_1(X_1 m_{\lambda_{H_1}}, X_2 m_{\lambda_{H_2}}) := &\ \left( X_1
\bff_{H_1} + X_2 \bff_{H_2} \right) m_\lambda,\\
d_2(X m_{\lambda_{H_1 \cup H_2}}) := &\
(-X \bff_{H_2 \setminus H_1} m_{\lambda_{H_1}}, X \bff_{H_1 \setminus
H_2} m_{\lambda_{H_2}}), \qquad X, X_1, X_2 \in U\mathfrak{n}^-.
\end{align*}
\end{theorem}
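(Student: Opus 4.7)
The approach is to reduce the theorem to a commutative algebra computation in the polynomial ring $R := U\mathfrak{n}^- \cong \mathbb{C}[f_1,\dots,f_n]$, which is a UFD because $\mathfrak{g} = \mathfrak{sl}_2^{\oplus n}$ has abelian $\mathfrak{n}^-$. First I identify $M(\lambda) \cong R$ as left $U\mathfrak{n}^-$-modules via $X m_\lambda \leftrightarrow X$, and set $y_i := \bff_{H_i} \in R$ for $i=1,2$. By $\mathfrak{sl}_2^{\oplus n}$-theory each $\bff_{H_i} m_\lambda$ is a maximal vector of weight $\lambda_{H_i}$, so $U\mathfrak{g}\,\bff_{H_i} m_\lambda = U\mathfrak{n}^-\,\bff_{H_i} m_\lambda = R\,y_i$ is an $R$-free highest weight submodule of $M(\lambda)$, identified with the Verma module $M(\lambda_{H_i})$ via $X m_{\lambda_{H_i}} \leftrightarrow X y_i$ (by the universal property of Verma modules). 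Similarly $R\,y_{12} \cong M(\lambda_{H_1 \cup H_2})$, where $y_{12} := \bff_{H_1 \cup H_2}$ satisfies $y_{12} = \bff_{H_2 \setminus H_1}\,y_1 = \bff_{H_1 \setminus H_2}\,y_2$ in $R$.

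Under these identifications, exactness of~\eqref{EBGG2holes} becomes that of the $R$-linear sequence
\[
0 \to R\,y_{12} \xrightarrow{d_2} R\,y_1 \oplus R\,y_2 \xrightarrow{d_1} R \xrightarrow{d_0} R/(y_1,y_2) \to 0,
\]
with $d_1$ the sum of inclusions and $d_2(X y_{12}) = (-X\,\bff_{H_2 \setminus H_1}\,y_1,\; X\,\bff_{H_1 \setminus H_2}\,y_2)$. Exactness at $R/(y_1,y_2)$ and at $R$ is immediate from the definitions. The essential point is exactness at the middle, which is the classical first-syzygy computation for two elements of a UFD: setting $d := \gcd_R(y_1,y_2) = \bff_{H_1 \cap H_2}$, so that $\gcd(y_1/d,\, y_2/d) = 1$, any relation $X_1 y_1 + X_2 y_2 = 0$ forces $(y_1/d) \mid X_2$, yielding $(X_1, X_2) = X \cdot (-y_2/d,\, y_1/d) = X \cdot (-\bff_{H_2 \setminus H_1},\, \bff_{H_1 \setminus H_2})$ for some $X \in R$. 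Hence the syzygy module is principal, generated precisely by the image of $d_2$. Injectivity of $d_2$ follows because $R$ is a domain, so multiplication by $\bff_{H_2 \setminus H_1}$ has no kernel.

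The differentials $d_0, d_1, d_2$ are $U\mathfrak{g}$-linear, since they are assembled from the Verma embeddings $M(\lambda_H) \hookrightarrow M(\lambda)$, $m_{\lambda_H} \mapsto \bff_H m_\lambda$ (themselves $U\mathfrak{g}$-linear) composed with sum or pair maps. The main obstacle is the UFD syzygy computation, though this is quite routine given that $y_1, y_2$ are monomials in commuting generators; everything else is formal bookkeeping of $U\mathfrak{g}$-module identifications. As a corollary, the Euler--Poincar\'e principle yields the inclusion-exclusion character formula
\[
\ch \mathbb{M}(\lambda, \mathcal{H}^{\min}) = \ch M(\lambda) - \ch M(\lambda_{H_1}) - \ch M(\lambda_{H_2}) + \ch M(\lambda_{H_1 \cup H_2}),
\]
foreshadowing the parabolic Weyl semigroup interpretation of Theorem~\ref{T5}, in which $\mathbf{s}_{H_1}\cdot \mathbf{s}_{H_2}$ acts on $\lambda$ by the union-indexed dot-action $\bullet\,\lambda_{H_1 \cup H_2}$ rather than by the ordinary Weyl product.
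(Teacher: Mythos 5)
Your proof is correct and in essence matches the paper's strategy: the paper itself says Theorem~\ref{T2holes} ``can be verified by hand,'' and proves the more general Theorem~\ref{TBGG2} by identifying~\eqref{EBGGH2} as the Taylor resolution of the monomial ideal $(\bff_{H_1},\dots,\bff_{H_k})$ over the polynomial ring and then tensoring with the free module $U\mathfrak{n}^-$. Your argument is precisely that Taylor resolution for $k=2$, computed explicitly via the classical two-element UFD syzygy (image of $d_2 =$ principal syzygy module generated by $(-y_2/d,\,y_1/d)$), which is the natural ``verify by hand'' route the authors gesture at; since over $\mathfrak{sl}_2^{\oplus n}$ one already has $U\mathfrak{n}^- = R$, you also get to skip the final tensoring step.
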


This result can be verified by hand, and gives $\ch \mathbb{M}(\lambda,
\mathcal{H}^{\min})$ as the Euler characteristic. It also yields the same
Weyl character formula as previously, involving $w \bullet \lambda$ for
$w$ in the \textit{set}
\[
W_{\mathcal{H}} = W_{\mathcal{H}^{\min}} := \{ 1, \quad
w_{\{ 1 \}} = {\bf s}_{H_1}, \quad w_{\{ 2 \}} = {\bf s}_{H_2}, \quad
w_\circ = w_{\{ 1, 2\}} = {\bf s}_{H_1 \cup H_2} \}.
\]
Define the associated length function $\ell_{\mathcal{H}} =
\ell_{\mathcal{H}^{\min}}$, which sends $1 \mapsto 0$, $w_{\{ j \}}
\mapsto 1$, and $w_{\{ 1, 2 \}} \mapsto 2$.

\begin{cor}
Notation as above. Then:
\begin{equation}\label{EWCF2holes}
\ch \mathbb{M}(\lambda, \mathcal{H}^{\min}) = \sum_{w \in W_{\mathcal{H}}
} \frac{(-1)^{\ell_{\mathcal{H}}(w)} e^{w \bullet \lambda}}{\prod_{\alpha
\in \Delta^+} (1 - e^{-\alpha})^{\dim \mathfrak{g}_\alpha}}.
\end{equation}
\end{cor}

\begin{remark}\label{Rsemigroup}
While the Weyl character formula is unchanged from \eqref{EWCFH},
the action of $W_{\mathcal{H}}$ on the \textit{orbit} of $\lambda$ is now
different than in the previous case \eqref{EBGGH}. (This action is
useful in understanding the differential maps, which differ even for
$\mathcal{H}^{\min} = \{ H_1, H_2 \}$ over $\mathfrak{g} =
\mathfrak{sl}_2^{\oplus n}$.) Indeed, now we use
\[
w_J \cdot' w_K := w_{J \cup K}, \qquad
w_J \bullet' \lambda_{H_K} := \lambda_{H_{J \cup K}} = \lambda_{H_J \cup
H_K}, \qquad \forall J,K \subset \{ 1, 2 \}.
\]
Thus, $(W_{\mathcal{H}}, \cdot')$ is what we term the \textit{parabolic
Weyl semigroup} in this situation. Moreover, the map $\bullet'$ is a
semigroup action of $(W_{\mathcal{H}}, \cdot')$ on the orbit $\{ \lambda,
\lambda_{H_1}, \lambda_{H_2}, \lambda_{H_1 \cup H_2} \}$.
Of course, in the character formula \eqref{EWCF2holes}, the only element
on which $W_{\mathcal{H}}$ acts is $\lambda$ itself, which is dominant
integral for $\Delta_{H_1 \cup H_2}$, and so in those equations $\bullet'
= \bullet$.
\end{remark}

\begin{example}
For a concrete working example, the reader can consider e.g.\
$\mathfrak{g} = \mathfrak{sl}_2^{\oplus 3}$, $I = \{ 1, 2, 3 \}$,
$\lambda \in P^+$, and $\mathcal{H}^{\min} = \{ \{ 1, 2 \}, \{ 2, 3 \}
\}$. The overlap between elements of $\mathcal{H}^{\min}$ is what leads
to the parabolic Weyl semigroup $W_{\mathcal{H}} \cong (\Z/2\Z)^{\oplus
2}$ here (as sets), and its orbit is $W_{\mathcal{H}} \bullet' \lambda$.
\end{example}

We now write down the analogous picture -- over $\mathfrak{g}$ with
orthogonal integrable roots for $\lambda \in \mathfrak{h}^*$, i.e.,
$\mathfrak{g}_{J_\lambda} = \mathfrak{sl}_2^{\oplus n}$ -- with
$\mathcal{H}^{\min} = \{ H_1, \dots, H_k \} \subset {\rm
Indep}(J_\lambda)$. Our BGG-type resolution again turns out to yield the
same character formula \eqref{EWCF2holes}; however, one now requires
alternate notation from the earlier one, as is revealed by the simple
case
\begin{equation}\label{E122331}
\mathfrak{g} = \mathfrak{sl}_2^{\oplus 3}, \quad
\mathcal{H}^{\min} = \{ H_1 = \{ 1, 2 \}, \ H_2 = \{ 1, 3 \}, \ H_3 = \{
2, 3 \} \}.
\end{equation}
In this case, the terms $M_1 \to M_0 = M(\lambda) \to \mathbb{M}(\lambda,
\mathcal{H}^{\min})$ are as above, while in the above notation,
\[
\lambda_{H_{\{ 1, 2 \}} } = \lambda_{H_{\{ 1, 3 \}} } = \lambda_{H_{\{ 2,
3 \}} } = \lambda_{H_{\{ 1, 2, 3 \}} } = s_1 s_2 s_3 \bullet \lambda.
\]

To distinguish between the corresponding four Verma modules occurring in
$M_2$ and $M_3$, define
\begin{equation}\label{ElambdaJ}
\lambda(J) := \lambda_{H_J} = \lambda - \sum_{h \in \cup_{j \in J} H_j}
(\langle \lambda, \alpha_h^\vee \rangle + 1) \alpha_h \in \lambda -
\Z_{\geq 0} \Pi_{J_\lambda}, \qquad J \subset \{ 1, \dots, k \}.
\end{equation}
Choosing a maximal vector $m_\lambda \in M(\lambda)_\lambda$, the modules
$M(\lambda(J))$ then again embed into one another via the maps
$\iota(J',J)$ for $J \subset J' \subset \{ 1, \dots, k \}$ as
in~\eqref{Eembedding}, and via this embedding into $M(\lambda)$, have
intersections as in~\eqref{Eintersect}. Thus, also fix maximal vectors
$m_{\lambda(J)} \in M(\lambda(J))_{\lambda(J)}$ such that
$\iota(J',J)$ sends $m_{\lambda(J')}$ to $\bff_{H_{J'} \setminus H_J}
m_{\lambda(J)}$ for all $J \subset J' \subset \{ 1, \dots, k \}$. Now
define the modules
\[
M_t := \bigoplus_{J \subset \{ 1, \dots, k \},\ |J| = t} M(\lambda(J)),
\qquad 0 \leq t \leq k.
\]
Also define the differential $d_1 : M_1 \to M_0$ via
\[
d_1 \left( \sum_{j=1}^k X_j m_{\lambda(\{ j \})} \right) := \sum_{j=1}^k
X_j \bff_{H_j} m_\lambda, \qquad X_j \in U\mathfrak{n}^-
\]
and the differential $d_t, \ t > 1$ via its coordinates. Namely,
$d_t(J',J) : M(\lambda(J')) \to M(\lambda(J))$ is zero unless $J \subset
J'$ with $t = |J'| = |J|+1$, in which case
\begin{align}\label{Edifferential2}
&\ J' = \{ i_1, \dots, i_t \ : \ 1 \leq i_1 < \cdots < i_t \leq k \}, \ \
X \in U\mathfrak{n}^-\notag\\
\implies &\ d_t \left( X m_{\lambda(J')} \right) := X \sum_{j=1}^t
(-1)^{j-1} \bff_{H_{i_j} \setminus H_{J' \setminus \{ i_j \}} }
m_{\lambda(J' \setminus \{ i_j \})}.
\end{align}

\begin{remark}
The modules $M_t$ and differentials $d_t$ indeed specialize to their
counterparts in Theorem~\ref{TBGG1} as well as in Theorem~\ref{T2holes},
earlier in this section.
\end{remark}

As above, we now show this yields a resolution; notice that in
conjunction with Theorem~\ref{TBGG1} and Proposition~\ref{PWchar}, this
resolution would also imply our main theorem~\ref{T5}.

\begin{theorem}\label{TBGG2}
Fix Kac--Moody $\mathfrak{g}$ and $\lambda \in \mathfrak{h}^*$ such that
the nodes $J_\lambda$ have no edges, and let $\mathcal{H}^{\min} = \{
H_1, \dots, H_k \} \subset {\rm Indep}(J_\lambda)$. With the modules
$M_t$ and differentials $d_t$ as above, the complex
\begin{equation}\label{EBGGH2}
0 \longrightarrow
M_k \overset{d_k}{\longrightarrow}
M_{k-1} \overset{d_{k-1}}{\longrightarrow}
\cdots \overset{d_2}{\longrightarrow}
M_1 \overset{d_1}{\longrightarrow}
M_0 \overset{d_0}{\longrightarrow}
\mathbb{M}(\lambda, \mathcal{H}) \to 0
\end{equation}
is a free $U \mathfrak{n}^-$-resolution of $\mathbb{M}(\lambda,
\mathcal{H})$. As a consequence, with $\lambda(J)$ as
in~\eqref{ElambdaJ},
\begin{equation}\label{EcharMlH}
\ch \mathbb{M}(\lambda,\mathcal{H}) = \sum_{J \subset \{ 1, \dots, k \}}
(-1)^{|J|} \ch M(\lambda(J)).
\end{equation}
\end{theorem}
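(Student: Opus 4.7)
The plan is to recognize \eqref{EBGGH2} as the Taylor complex of a monomial ideal, tensored via PBW with $U\mathfrak{n}^-$. Since the nodes of $J_\lambda$ carry no Dynkin edges, the generators $\{f_h : h \in J_\lambda\}$ pairwise commute, so setting $J' := \bigcup_j H_j \subseteq J_\lambda$ and $R := \mathbb{C}[f_h : h \in J']$ produces a polynomial subalgebra of $U\mathfrak{n}^-$. Each $\bff_{H_j}$ is then a monomial in $R$, and the identity $H_J = \bigcup_{j \in J} H_j$ gives $\bff_{H_J} = \operatorname{lcm}\{\bff_{H_j} : j \in J\}$, because the exponent $\langle \lambda, \alpha_h^\vee \rangle + 1$ of $f_h$ is intrinsic to $h$ and independent of which $H_j \ni h$ is chosen. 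Choosing a PBW basis of $\mathfrak{n}^-$ in which $\{f_h : h \in J'\}$ occurs last makes $U\mathfrak{n}^-$ free (hence flat) as a right $R$-module.

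Next, the submodule of $M(\lambda)$ quotiented out to produce $\mathbb{M}(\lambda,\mathcal{H})$ is $\sum_j U\mathfrak{g}\, \bff_{H_j} v_\lambda = \sum_j U\mathfrak{n}^-\, \bff_{H_j} v_\lambda$, since each $\bff_{H_j} v_\lambda$ is a maximal vector. So as left $U\mathfrak{n}^-$-modules,
\[
\mathbb{M}(\lambda,\mathcal{H}) \;\cong\; U\mathfrak{n}^- \Big/ \sum_j U\mathfrak{n}^-\, \bff_{H_j} \;\cong\; U\mathfrak{n}^- \otimes_R R/(\bff_{H_1}, \dots, \bff_{H_k}),
\]
using the right $R$-freeness in the second isomorphism. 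I would then invoke the classical Taylor resolution (D.\ Taylor, 1966; see e.g.\ Eisenbud, \emph{Commutative Algebra}) for the monomials $\bff_{H_1},\dots,\bff_{H_k} \in R$: the free $R$-complex with $T_t = \bigoplus_{|J|=t} R\,e_J$ and differential $\partial(e_J) = \sum_{j=1}^t (-1)^{j-1} \bigl(\bff_{H_J}/\bff_{H_{J \setminus \{i_j\}}}\bigr) e_{J \setminus \{i_j\}}$ (for $J = \{i_1 < \dots < i_t\}$) resolves $R/(\bff_{H_1},\dots,\bff_{H_k})$. Tensoring $T_\bullet$ over $R$ with the flat module $U\mathfrak{n}^-$ yields a free $U\mathfrak{n}^-$-resolution of $\mathbb{M}(\lambda,\mathcal{H})$.

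Finally, under the $U\mathfrak{n}^-$-linear isomorphism $U\mathfrak{n}^- \otimes_R Re_J \to M(\lambda(J))$ sending $1 \otimes e_J \mapsto m_{\lambda(J)}$ (which is well-defined and compatible with the embeddings \eqref{Eembedding} once the $m_{\lambda(J)}$ are chosen as prescribed there), one has the factorization $\bff_{H_J}/\bff_{H_{J \setminus \{i_j\}}} = \bff_{H_{i_j} \setminus H_{J \setminus \{i_j\}}}$, matching the coefficient in \eqref{Edifferential2} on the nose; the sign $(-1)^{j-1}$ agrees with the Koszul sign in the Taylor differential relative to the natural order on $\{1,\dots,k\}$. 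This identifies the tensored Taylor complex with \eqref{EBGGH2}, proving exactness, and \eqref{EcharMlH} is then the Euler characteristic. The main obstacle is the Taylor-resolution exactness itself: while standard, it is worth including for self-containment, and I would supply a short mapping-cone induction on $k$, splitting off $\bff_{H_k}$ using the short exact sequence $0 \to R/(I : \bff_{H_k}) \xrightarrow{\cdot \bff_{H_k}} R/I \to R/(I, \bff_{H_k}) \to 0$ with $I = (\bff_{H_1}, \dots, \bff_{H_{k-1}})$, together with a verification that the colon ideal is again generated by monomials of the form $\bff_{H_j}/\gcd(\bff_{H_j}, \bff_{H_k})$ so that the inductive hypothesis applies.
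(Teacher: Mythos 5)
Your argument matches the paper's proof essentially verbatim: both identify the complex with the Taylor resolution of a monomial ideal over a polynomial subring $R$ of $U\mathfrak{n}^-$, and both transfer the resolution to $U\mathfrak{n}^-$ using the PBW-theorem freeness of $U\mathfrak{n}^-$ as a right $R$-module. The paper cites Taylor's thesis for exactness of the Taylor complex where you propose to reprove it via a mapping-cone induction; otherwise the two arguments coincide.
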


\begin{proof}
First consider the variant of this complex over $\mathfrak{g}_{J_\lambda}
\simeq \mathfrak{sl}_2^{\oplus n}$.
Writing $\bff_{H_{i_j} \setminus H_{J' \setminus \{ i_j \}} }$ as
$\displaystyle \bff_{H_{i_j} \setminus H_{J' \setminus \{ i_j \}} } =
\frac{ {\rm lcm} \{ \bff_{H_i} : i \in J' \}}{ {\rm lcm} \{ \bff_{H_i} :
i \in J', i \neq i_j \}},$
\eqref{EBGGH2} is precisely the Taylor resolution \cite{Taylor} (see also
\cite{dCEP}) for $\mathbb{M}(\lambda, \mathcal{H})$ over the commutative
ring -- in fact UFD -- $R := \mathbb{C}[\{ f_i : i \in J_\lambda \}]$.
Now the result follows over $U \mathfrak{g}$ by tensoring this Taylor
resolution with the $R$-module $U \mathfrak{n}^-$, which is $R$-free by
the PBW theorem.
\end{proof}

In particular, the Weyl character formula again follows from this
resolution, as for $k=2$. Let
\[
W_{\mathcal{H}} = \{ (w_J, J) \ | \ J \subset \{ 1, \dots, k \} \},
\qquad w_J := {\bf s}_{\bigcup_{j \in J} H_j}, \qquad (w_J, J) \cdot'
(w_K, K) := (w_{J \cup K}, J \cup K)
\]
for $J,K \subset \{ 1, \dots, k \}$ denote the \textbf{parabolic Weyl
semigroup} in this situation. Define the length of $(w_J,J)$ to be
$\ell_{\mathcal{H}}((w_J,J)) := |J|$. Then $(W_{\mathcal{H}}, \cdot')$
acts on the orbit $\{ \lambda_{H_K} : K \subset \{ 1, \dots, k \} \}$
of $\lambda_{H_\emptyset} = \lambda$ via: $(w_J, J) \bullet'
\lambda_{H_K} := \lambda_{H_{J \cup K}}$. On $\lambda$, this is the
dot-action: $(w_J, J) \bullet' \lambda = w_J \bullet \lambda\ \forall J$.

\begin{cor}
The resolution \eqref{EBGGH2} once again implies, for arbitrary
$\mathcal{H}^{\min}$ of size $k \geq 1$:
\begin{equation}\label{EWCF2holes2}
\ch \mathbb{M}(\lambda, \mathcal{H}) = \sum_{w = (w_J, J) \in
W_{\mathcal{H}} } \frac{(-1)^{\ell_{\mathcal{H}}(w)} e^{w \bullet
\lambda}}{\prod_{\alpha \in \Delta^+} (1 - e^{-\alpha})^{\dim
\mathfrak{g}_\alpha}}.
\end{equation}
\end{cor}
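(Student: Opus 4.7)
The plan is to extract the character identity directly from the exactness of the resolution~\eqref{EBGGH2}, and then rewrite each term as a $W_{\mathcal{H}}$-orbit contribution. First I would take the Euler characteristic of~\eqref{EBGGH2}: since characters are additive on short exact sequences in $\mathcal{O}$, and each $M_t = \bigoplus_{|J|=t} M(\lambda(J))$ is a finite direct sum, the exactness (over $U\mathfrak{n}^-$, hence over $\mathfrak{h}$) gives
\[
\ch \mathbb{M}(\lambda, \mathcal{H}) = \sum_{t=0}^{k} (-1)^t \ch M_t = \sum_{J \subset \{1, \dots, k\}} (-1)^{|J|} \ch M(\lambda(J)),
\]
which is precisely~\eqref{EcharMlH}.

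Next I would apply the classical Verma character formula (Equation~\eqref{EVerma}, in its Kac--Moody refinement accounting for $\dim \mathfrak{g}_\alpha$) to each summand:
\[
\ch M(\lambda(J)) = \frac{e^{\lambda(J)}}{\prod_{\alpha \in \Delta^+} (1 - e^{-\alpha})^{\dim \mathfrak{g}_\alpha}}.
\]
Substituting, the common denominator factors out and one obtains
\[
\ch \mathbb{M}(\lambda, \mathcal{H}) = \sum_{J \subset \{1, \dots, k\}} \frac{(-1)^{|J|} e^{\lambda(J)}}{\prod_{\alpha \in \Delta^+} (1 - e^{-\alpha})^{\dim \mathfrak{g}_\alpha}}.
\]

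The remaining task is to identify the numerator exponent $\lambda(J)$ with the dot-action term $w_J \bullet \lambda = (w_J, J) \bullet' \lambda$ appearing in~\eqref{EWCF2holes2}, and match $|J|$ with $\ell_{\mathcal{H}}((w_J, J))$. The latter is immediate from the definition $\ell_{\mathcal{H}}((w_J, J)) := |J|$. For the former, note that under the hypothesis of Theorem~\ref{TBGG2} -- namely that $J_\lambda$ carries no edges -- the simple reflections $\{ s_h : h \in \bigcup_{j \in J} H_j \}$ pairwise commute (all indices lie in $J_\lambda$), so $w_J = {\bf s}_{\bigcup_{j \in J} H_j} = \prod_{h \in \bigcup_{j \in J} H_j} s_h$ is an involution whose dot-action on $\lambda$ iterates the formula $s_h \bullet \mu = \mu - (\langle \mu, \alpha_h^\vee \rangle + 1) \alpha_h$ independently in each commuting coordinate (since $\langle \alpha_{h'}, \alpha_h^\vee \rangle = 0$ for $h \neq h'$ in $J_\lambda$). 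This yields
\[
w_J \bullet \lambda = \lambda - \sum_{h \in \bigcup_{j \in J} H_j} (\langle \lambda, \alpha_h^\vee \rangle + 1)\alpha_h = \lambda_{H_J} = \lambda(J),
\]
using~\eqref{EfH} and~\eqref{ElambdaJ}. Substituting and reindexing $J \leftrightarrow (w_J, J)$ yields~\eqref{EWCF2holes2}.

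The main obstacle, if any, is a bookkeeping one: one must verify that the bijection $J \mapsto (w_J, J)$ is indeed a set-bijection between $2^{\{1, \dots, k\}}$ and $W_{\mathcal{H}}$, even when distinct subsets $J \neq J'$ satisfy $\bigcup_{j \in J} H_j = \bigcup_{j \in J'} H_j$ (as in the running example~\eqref{E122331}). The parabolic Weyl semigroup $W_{\mathcal{H}}$ was precisely defined to record both the Weyl element $w_J$ and the indexing subset $J$, so this bijection holds tautologically, and the alternating sum over $J$ transparently becomes the alternating sum over $W_{\mathcal{H}}$ with length $\ell_{\mathcal{H}}$. No further analysis is required.
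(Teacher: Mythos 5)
Your proof is correct and is essentially the argument the paper intends: take the Euler characteristic of \eqref{EBGGH2} (the paper already records this output as \eqref{EcharMlH}), substitute the Verma character formula in each summand, and reindex the sum via the tautological bijection $J \leftrightarrow (w_J,J)$ between $2^{\{1,\dots,k\}}$ and $W_{\mathcal{H}}$, noting $w_J\bullet\lambda = \lambda(J)$ (which holds because the hypothesis that $J_\lambda$ is edge-free makes all the $s_h$, $h\in J_\lambda$, pairwise commuting reflections in orthogonal directions) and $\ell_{\mathcal{H}}((w_J,J))=|J|$ by definition. The paper leaves this bookkeeping unwritten, pointing to the $k=2$ case; you have supplied exactly the missing details.
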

\subsection{Resolutions over dihedral groups}

Having obtained BGG resolutions and Weyl character formulas in the above
two settings using {\color{black}$W_{\mathcal{H}}$}, we briefly discuss {\color{black}}
potentially simple {\color{black}cases of settings (3) and (4), before we proceed to Theorem \ref{Theorem character of second order Verma via parabolic Vermas} and Proposition \ref{Prop A3 second order holes character}} in the next two subsections. 
{\color{black}Our observations here also serve as warm up to the proofs therein.
 First, a} small lemma on Coxeter generators in finite
Weyl groups.
As used in Corollary~\ref{CWCF}, if two sets $H_1, H_2
\subset J_\lambda$ of independent nodes are orthogonal, then $({\bf
s}_{H_1} {\bf s}_{H_2})^2 = 1$ in $W$. The next lemma computes this order
in the more general case when $H_1, H_2$ are merely pairwise disjoint.
Even more generally:

\begin{lemma}\label{L2holes}
Suppose $\mathfrak{g}$ is of finite type, and $H_1, \dots, H_k \in {\rm
Indep}(I)$ are pairwise disjoint. Let $H_1 \sqcup \cdots \sqcup H_k$ have
connected Dynkin components $J_1, \dots, J_l$. Then the product ${\bf
s}_{H_1} \cdots {\bf s}_{H_k}$ has order precisely ${\rm lcm}(c_1, \dots,
c_l)$, where $c_t$ is the Coxeter number of the parabolic Weyl subgroup
$W_{J_t}$.
\end{lemma}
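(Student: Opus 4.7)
The plan is to reduce the order computation to Coxeter elements of each irreducible parabolic Weyl subgroup $W_{J_t}$, and then to combine them via the direct product structure $W_{J_1} \times \cdots \times W_{J_l} \hookrightarrow W$.

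First I would regroup ${\bf s}_{H_1} \cdots {\bf s}_{H_k}$ by connected components. Since each $H_i$ is independent, the simple reflections comprising ${\bf s}_{H_i}$ already commute pairwise; and since simple reflections supported on distinct connected components of the sub-diagram on $\bigsqcup_i H_i$ commute in $W$, I may rewrite
\[
{\bf s}_{H_1} \cdots {\bf s}_{H_k} \; = \; c^{(1)} \cdots c^{(l)}, \qquad c^{(t)} := \prod_{i=1}^k {\bf s}_{H_i \cap J_t} \in W_{J_t},
\]
with the $c^{(t)}$ pairwise commuting.

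Next I would identify each $c^{(t)}$ as a Coxeter element of the irreducible finite Coxeter group $W_{J_t}$. Pairwise disjointness of the $H_i$, together with $\bigsqcup_i (H_i \cap J_t) = J_t$, forces every simple reflection $s_h$ for $h \in J_t$ to appear exactly once in $c^{(t)}$ (after expanding each ${\bf s}_{H_i \cap J_t}$ in any chosen order, which is permitted since those reflections commute). Invoking the classical theorem on Coxeter elements in an irreducible finite Coxeter group (see e.g.\ Humphreys, \emph{Reflection groups and Coxeter groups}, Ch.~3) then gives that $c^{(t)}$ has order precisely $c_t$.

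Finally, since the $c^{(t)}$ lie in mutually commuting subgroups $W_{J_t}$ whose supports $J_t$ are pairwise non-adjacent in the Dynkin diagram, the multiplication map yields an injection $W_{J_1} \times \cdots \times W_{J_l} \hookrightarrow W$ sending $(c^{(1)}, \ldots, c^{(l)})$ to the product $c^{(1)} \cdots c^{(l)}$, so the order of this product equals $\operatorname{lcm}(c_1, \ldots, c_l)$. The main --- albeit mild --- obstacle will be Step 2: justifying that any ``grouped'' product of all simple reflections of an irreducible finite Coxeter group, each appearing exactly once, is a Coxeter element and hence has order equal to the Coxeter number; once this standard fact is invoked, the rest is routine bookkeeping.
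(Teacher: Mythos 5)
Your proof is correct and is essentially the same as the paper's: both regroup the product by connected components $J_t$ of $\bigsqcup_i H_i$, observe that the factor supported on $J_t$ is a product of all simple reflections in $J_t$ exactly once (hence a Coxeter element of the irreducible parabolic $W_{J_t}$, of order $c_t$), and conclude via commutativity of the non-adjacent components that the order of the full product is $\operatorname{lcm}(c_1,\dots,c_l)$. The only difference is cosmetic: you spell out the injection $W_{J_1}\times\cdots\times W_{J_l}\hookrightarrow W$, which the paper leaves implicit when passing from ``the $\mathbf{s}_t$ commute'' to ``the order is the lcm''.
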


\begin{proof}
For $1 \leq i \leq k$ and $1 \leq t \leq l$, define $J_{it} := H_i \cap
J_t$. It is clear that $\prod_{i=1}^k {\bf s}_{H_i} = \prod_{t=1}^l {\bf
s}_t$, where ${\bf s}_t := {\bf s}_{J_{1t}} \cdots {\bf s}_{J_{kt}}$ are
pairwise commuting. Hence the order of ${\bf s}_{H_1} \cdots {\bf
s}_{H_k}$ is the ${\rm lcm}$ of the orders of the ${\bf s}_t$. But each
${\bf s}_t$ is a Coxeter element for the Weyl group on $\sqcup_i J_{it} =
J_t$, hence has order $c_t$.
\end{proof}

\begin{remark}
The assumption of finite type is needed in Lemma~\ref{L2holes}, because
if $W$ is an irreducible, infinite Coxeter group then its Coxeter
elements have infinite order \cite{Howlett,Speyer}. 
\end{remark}

As an ``application'', let $\mathfrak{g}$ be of finite type and
$\mathcal{H}^{\min} = \{ H_1, H_2 \}$, with $H_1, H_2 \in {\rm
Indep}(J_\lambda)$ disjoint subsets of nodes. The corresponding subgroup
$W_{\mathcal{H}} = \langle {\bf s}_{H_1}, {\bf s}_{H_2} \rangle$ is then
dihedral in these two Coxeter generators, with the longest word in them
given by ${\bf w}_\circ$, say. If ${\bf s}_{H_1} {\bf s}_{H_2}$ has order
$m \geq 2$ (computed via Lemma~\ref{L2holes}), then {\color{black} one might expect to have
BGG type resolution \eqref{Edihedral} below, where-in the Verma summands' top weights are the dot conjugates under all the words on $\{s_{H_1}, s_{H_2}\}$.} 
\begin{align}\label{Edihedral}
0 \longrightarrow M({\bf w}_\circ \bullet \lambda)
\overset{d_m}{\longrightarrow}
M({\bf s}_{H_1} {\bf w}_\circ \bullet \lambda) \oplus &\ M({\bf s}_{H_2}
{\bf w}_\circ \bullet \lambda)
\overset{d_{m-1}}{\longrightarrow} \cdots
{\color{black}\overset{d_3}{\longrightarrow} M({\bf s}_{H_1} {\bf s}_{H_2} \bullet 0)
\oplus M({\bf s}_{H_2} {\bf s}_{H_1} \bullet 0)}\\
\overset{d_2}{\longrightarrow} &\
M({\bf s}_{H_1} \bullet \lambda) \oplus M({\bf s}_{H_2} \bullet \lambda)
\overset{d_1}{\longrightarrow}
M(\lambda) \overset{d_0}{\longrightarrow}
\mathbb{M}(\lambda, \mathcal{H}) \longrightarrow 0 \ {\large \bf ?}\notag
\end{align}
{\color{black} If this is true, then we immediately have} a Weyl character
formula akin to~\eqref{EWCF2holes2}.
\begin{remark}
{\color{black}However this speculated resolution is not true, even in the simplest case of
\[
\mathfrak{g} = \mathfrak{sl}_4(\mathbb{C});\  \lambda = 0;\ \mathcal{H} = \{ H_1 = \{ 1, 3 \}, H_2 = \{ 2 \} \};\  m=4\  \text{(by Lemma~\ref{L2holes})};\ {\bf w}_\circ = {\bf s}_{H_1} {\bf s}_{H_2} {\bf s}_{H_1} {\bf
s}_{H_2}.\]
This is due to the linear independence of the 16 exponentials in the actual character numerator found by Proposition \ref{Prop A3 second order holes character} in this case, with the 8 exponentials given by \eqref{Edihedral} at $m=4$.
We note before moving-on from type $A_3$ that} for all other subsets $\mathcal{H}' \subset {\rm Indep}(I)$ for
$\mathfrak{g} = \mathfrak{sl}_4, \lambda =0$, a BGG resolution of
$\mathbb{M}(0, \mathcal{H}')$ is {\color{black}classically known!

Here is another illuminating counter example to \eqref{Edihedral} from setting (3), that inspired Theorem~\ref{Theorem character of second order Verma via parabolic Vermas}.
\[
I=\{1,2,3\};\ \mathfrak{g}=\mathfrak{g}_{\{1\}}\oplus \mathfrak{g}_{\{2,3\}}\text{ with }\mathfrak{g}_{\{2,3\}}=\mathfrak{sl}_3(\mathbb{C});\ \mathcal{H}^{\min}= \big\{ \{2\}, \{1,3\}\big\}; \ {\bf w}_{\circ}= {\bf s}_{\{2\}}{\bf s}_{\{1,3\}}{\bf s}_{\{2\}}.
\]
For a resolution of $\mathbb{M}\big(0, \mathcal{H}\big)$, by looking at the top weights of Vermas across the complex in \eqref{Edihedral} written below, one arrives at the boxed part in the complex; and the other half is outlandish.
Theorem \ref{Theorem character of second order Verma via parabolic Vermas} proves the boxed sub-resolution to be the correct one.
\begin{align*}
& \boxed{ \begin{matrix} M\big((s_2) \cdot (s_1s_3)\cdot (s_2)\bullet 0) \\ \oplus \end{matrix}
   \longrightarrow
\oplus \begin{matrix}
    M\big((s_1s_3) \cdot (s_2)\bullet 0)\\
     M((s_2)\cdot (s_1s_3)\bullet 0)
\end{matrix}  \longrightarrow
\oplus \begin{matrix}
    M(s_2\bullet 0)\\
     M(s_1s_3\bullet 0)
\end{matrix}  \longrightarrow M(0)   \longrightarrow \mathbb{M}(0, \mathcal{H}) \longrightarrow 0}\\
& \qquad M((s_3s_2s_3)\bullet 0)\\   
&\qquad \qquad\quad \uparrow \hspace*{7cm} \\
& \qquad\oplus 
\begin{matrix} 
M\big(s_2s_3\bullet 0\big)\\
M\big(s_3s_2\bullet 0\big) 
\end{matrix} \longleftarrow \oplus \begin{matrix}
M\big(s_3\bullet 0\big)\\
    M\big(s_1s_2\bullet 0\big)
\end{matrix} \longleftarrow M(s_1\bullet 0) \longleftarrow 0\ \  {\Large \bf ?}
\end{align*} 
 }
\end{remark}

\begin{remark}
{\color{black} Following the progress in this paper towards characters or $W_{\mathcal{H}}$ in settings (3) and (4), particularly when there are only two (size 2) holes},
{\color{black}one might turn to $W_{\mathcal{H}}$  for $\mathcal{H}$ with} $\geqslant 3$ minimal
holes. 
{Then $W_{\mathcal{H}}$ in our sought-for character numerators, are not always ``finite Coxeter'', as we explain in below simpler noteworthy case of Setting (3). Let
\[
\mathfrak{g} = \mathfrak{sl}_5, \qquad \lambda \in P^+, \qquad
\mathcal{H} = \{ \{ 1 \}, \{ 3 \},  \{ 2, 4 \} \}.
\]
Then the subgroup $W_{\mathcal{H}}$ of $W \simeq S_5$ generated by $s_1,
s_3, t_2 := s_2 s_4$ has at least the relations
\[
(s_1 t_2)^6 = (s_1 s_3)^2 = (t_2 s_3)^4 = 1
\]
by Lemma~\ref{L2holes}, and one would like to know if these relations
give a Coxeter presentation of $W_{\mathcal{H}}$. But this is necessarily
false, since the only finite Coxeter group with connected underlying
Coxeter graph containing an edge labeled $6$, is a dihedral group
\cite{Cox,Cox2} -- with $2$ Coxeter generators.}
\end{remark}

{\color{black}
\subsection{Setting 3: Minimal holes of sizes at most 2, with all the size 2 ones in them intersecting non-trivially, and integrable Weyl group and its translated cosets} 
 We work for $\mathcal{H}^{\min}$ in setting (3) above Theorem \ref{Theorem character of second order Verma via parabolic Vermas}.
Here we assume that $1\in I$ belongs to all the minimal size 2 holes in $\mathcal{H}^{\min}$, and node $1$ is also an isolated in the Dynkin graph.
So $\mathfrak{g}$ decomposes as $\mathfrak{g}_{\{1\}}\oplus \mathfrak{g}_{I\setminus\{1\}}$.
For any $\lambda\in \mathfrak{h}^*$ and for such $\mathcal{H}$, we obtain the characters and BGG type resolutions for $\mathbb{M}(\lambda, \mathcal{H})$, in terms of those of parabolic Vermas on the ``boundaries'' of $\mathbb{M}(\lambda,\mathcal{H})$. 

\noindent
\underline{Notations}:
Over the subalgebra $\mathfrak{g}_{I\setminus\{1\}}$, let $M_{I\setminus \{1\}}(\mu)\twoheadrightarrow M_{I\setminus \{1\}}(\mu, T)\twoheadrightarrow L_{I\setminus \{1\}}(\mu)$ denote the Verma, parabolic Verma and their simple quotient, with highest weight(s) $\mu\big|_{\mathfrak{h}^*_{I\setminus \{1\}}}$ the restriction of $\mu\in \mathfrak{h}^*$, and with their integrabilities $\emptyset\subseteq T\subseteq J_{\mu}\setminus\{1\}$.

\begin{proof}[{Proof of Theorem \ref{Theorem character of second order Verma via parabolic Vermas}}]
We first show our character formula	for $M=\mathbb{M}(\lambda,\mathcal{H})$ in the theorem, for which we begin by noting some relations in $M$:  
For $0\neq v_{\lambda}\in M_{\lambda}$ a fixed highest weight vector and every $j\in J$ and $h\in H$ (recall their definitions in the theorem) -
	\begin{equation}\label{Eqn Parabolic Verma relations in second order Verma}     f_{j}^{\lambda(\alpha_{j}^{\vee})+1} v_{\lambda}= f_{h}^{\lambda(\alpha_{h}^{\vee})+1} f_1^{\lambda(\alpha_1^{\vee})+1} v_{\lambda} = f_{j}^{\lambda(\alpha_{j}^{\vee})+1} f_1^{\lambda(\alpha_1^{\vee})+1} v_{\lambda}=0.
	\end{equation}
Visibly, the only relations in $M$ along $\big(J\sqcup (H\setminus\{1\})\big)$-directions are the leftmost ones in \eqref{Eqn Parabolic Verma relations in second order Verma}.
	So the highest weight $\mathfrak{g}_{I\setminus \{1\}}$-module $U\mathfrak{g}_{I\setminus \{1\}} v_{\lambda}$ is the parabolic Verma $M_{I\setminus\{1\} }(\lambda, J)$.
So roughly speaking, $\sum\limits_{w\in W_J}(-1)^{\ell(w)} e^{w\bullet \lambda}$ appears in $\mathrm{ch}M$; other exponentials involve $e^{-\alpha_1}$.
    More precisely,
	\begin{equation}\label{Eqn character formula lower half}
	\sum\limits_{\mathclap{\xi \in \mathbb{Z}_{\geq 0}\Pi \
			\text{ s.t. }\mathrm{ht}_{\{1\}}(\xi)\leq \lambda(\alpha_1^{\vee})}} \qquad  \dim M_{\lambda-\xi}\ e^{\lambda-\xi} =  \frac{1-e^{s_1\bullet \lambda} }{1-e^{-\alpha_1}}\mathrm{ch}M_{I\setminus \{1\}}(\lambda, J),
	\end{equation}
	as:
	i) $f_1$ commutes with every element $F\in U\mathfrak{n}^-_{I\setminus \{1\}}$ (since $\{1\}$ is isolated in the Dynkin graph).\\
	ii) For such $F$ and for $0\leq b\leq\lambda(\alpha_1^{\vee}) $, observe by $\mathfrak{sl}_2$-theory in $M$ that $f_1^{b} F v_{\lambda}\neq 0 \iff F v_{\lambda}\neq 0$.\\ 
	iii) So $\dim M_{\lambda-\xi} = \dim M_{I\setminus\{1\}} (\lambda, J)_{\lambda-\xi}$ for all $\xi\in \mathbb{Z}_{\geq 0}\Pi$ with $\mathrm{ht}_{\{1\}}(\xi)\leq \lambda(\alpha_i^{\vee})$.
	
Moreover, the equality of dimensions in point iii) holds true even if $\mathrm{ht}_{\{1\}}(\xi)> \lambda(\alpha_i^{\vee})$, because:
	The $\mathfrak{g}$-submodule $U\mathfrak{g}\cdot f_1^{\lambda(\alpha_1^{\vee})+1} v_{\lambda}\subset M$ generated by $f_1^{\lambda(\alpha_1^{\vee})+1}v_{\lambda}$ is the parabolic Verma $M\big(s_1\bullet\lambda, J\sqcup (H\setminus \{1\})\big)$, since every relation in $U\mathfrak{g}\cdot f_1^{\lambda(\alpha_1^{\vee})+1} v_{\lambda}$ is either the second or the third relation in \eqref{Eqn Parabolic Verma relations in second order Verma}, and which resemble the integrability relations in $M_{I\setminus \{1\}}(s_1\bullet\lambda, J\sqcup H\setminus \{1\})$. 
	Now the desired character formula is implied by \eqref{Eqn character formula lower half} and
	\begin{equation}\label{Eqn character formula upper half}
	\sum\limits_{\mathclap{\xi \in \mathbb{Z}_{\geq 0}\Pi \
			\text{ s.t. }\mathrm{ht}_{\{1\}}(\xi)> \lambda(\alpha_1^{\vee})}} \qquad  \dim M_{\lambda-\xi}\ e^{\lambda-\xi} =  \frac{e^{s_1\bullet \lambda}}{1-e^{-\alpha_1}} \mathrm{ch}M_{ I\setminus \{1\}}(\lambda, J\sqcup H\setminus \{1\}).
	\end{equation}
    Now we assume $I=J\sqcup H$ -- note $J$ and $H$ do not intersect by minimality of holes in them -- and show the following claimed resolution in the theorem.
	\begin{align}\label{Eqn BGG resolution for setting (3)}
	\cdots  \xrightarrow{d_{i+1}}\qquad\ \ 
	\begin{matrix}
	& \bigoplus\limits_{\mathclap{u\in W_J \text{ s.t. } \ell(u)=i+1}} \quad M(u\bullet \lambda)\\
	&   \bigoplus\limits_{\mathclap{w\in W_{J\sqcup (H\setminus \{1\})}\setminus W_J \text{ s.t. }\ell(w)=i+1}}\quad  M(s_1\ w\bullet \lambda)  \end{matrix} \xrightarrow{d_i} \cdots   \xrightarrow{d_{1}}    \begin{matrix}
	& \bigoplus\limits_{j\in J} M(s_j\bullet \lambda)\\
	& \bigoplus\limits_{h\in H\setminus \{1\}} M(s_1s_h\bullet \lambda)
	\end{matrix} \xrightarrow{d_0}  M(\lambda) \twoheadrightarrow  \mathbb{M}(\lambda, \mathcal{H})
	\end{align}
	The above resolution with $s_1$ omitted in all the top weights of Verma summands, resembles the usual BGG resolution of the $\mathfrak{g}_{I\setminus \{1\}}$-simple $L_{I\setminus \{1\}}(\lambda)$:
	\begin{equation}\label{Eqn BGG resolution for inregrable simples}
	\cdots\xrightarrow{c_{i+1}} \quad \qquad \bigoplus\limits_{\mathclap{w\in W_{I\setminus\{1\}}\text{ s.t. }\ell(w)=i+1}} \quad  M_{I\setminus\{1\}}(w\bullet \lambda) \xrightarrow{c_i}\cdots \xrightarrow{c_1}
	\bigoplus\limits_{i\in I\setminus\{1\}} M_{I\setminus \{1\}}(s_i\bullet \lambda)  \xrightarrow{c_0} M_{I\setminus\{1\}}(\lambda) \twoheadrightarrow L_{I\setminus\{1\}}(\lambda)
	\end{equation}
       The resolution in the general case of $J\sqcup H\subseteq I$ can be obtained by tensoring the chain \eqref{Eqn BGG resolution for inregrable simples} with $U\bigg(\bigoplus\limits_{\alpha\in \Delta^+\setminus \Delta^+_{J\sqcup H}}\mathfrak{n}^-_{\alpha}\bigg)$; in view of the freeness of the summands in \eqref{Eqn BGG resolution for inregrable simples} w.r.t. that unipotent subalgebra. 
	The proof of \eqref{Eqn BGG resolution for setting (3)} can be completed proceeding in the following steps :\\
	0) For each $w\in W_{\mathcal{H}}$, we let $m_{w\bullet \lambda}\neq 0$ denote a highest weight vector in the summand $M(w\bullet \lambda)$ of our resolution. 
	We define $W_K(i):=\big\{w\in W_K\ \big|\ \ell(w)=i \big\}$ for every $K\subseteq I$, and $m_1:=\lambda(\alpha_1^{\vee})+1$.\\
	1) Observe $M(s_1w\bullet\lambda)\equiv M_{\{1\}}(s_1\bullet\lambda)\otimes_{\mathbb{C}} M_{I\setminus\{1\}}(w\bullet \lambda)$ as $\mathfrak{g}_{\{1\}}\oplus \mathfrak{g}_{I\setminus\{1\}}$-modules for every $w\in W_{J\sqcup (H\setminus\{1\})}\setminus W_J$; similarly $M(u\bullet \lambda) = M_{\{1\}}(\lambda)\otimes_{\mathbb{C}} M_{I\setminus\{1\}}(u\bullet\lambda)$ $\forall$ $u\in W_J$.\\
	2) We let $d_0$ to be the usual embedding.
	For each $i$, we let $\big[c_i^{w}(u)\big]_{\substack{u\in W_{I\setminus \{1\}}(i+1) \\ w \in W_{I\setminus \{1\}}(i)}}\ = \ \bigg(\begin{matrix}
	    c_i' & c_i''\\
        0 & c_i'''
	\end{matrix}\bigg)$ be the matrix of the $\mathfrak{g}$-module map $c_i$,  with: i) its entries $c_i^w(u)\in (U\mathfrak{n}^-)_{w\bullet \lambda- u\bullet \lambda}$; ii) $c_i',c_i'',c_i'''$ are the blocks w.r.t. the \Big[$W_J(.)\ \sqcup \ \big(W_{J\sqcup (H\setminus \{1\})}(.)\setminus W_J\big)$\Big]-decompositions of the domain and co-domain of $c_i$.
    Here we used the standard $\mathbb{Z}_{\leq 0}\Pi$-gradation of $U\mathfrak{n}^-$. \\
	3) 
	We define $d_i$ on  $\begin{matrix}
	\bigoplus\limits_{u\in W_J(i+1)} M(u\bullet \lambda) \\
	\bigoplus\limits_{w\in W_{J\sqcup (H\setminus \{1\})}(i+1)\setminus W_J} M(s_1w\bullet \lambda)
	\end{matrix}$ 
	by point 1) as $\bigg(\begin{matrix} \text{Id}\otimes c_i' &  f_1\otimes c_i''' \\ 0 & \text{Id}\otimes c_i'' \end{matrix}\bigg)$ via extending $(U\mathfrak{n}^-)$-linearly the following assignments : 
	$m_{u\bullet \lambda} \overset{d_i}{\mapsto} \sum\limits_{w\in W_J(i)}  c_i^w(u) m_{w\bullet \lambda}$ $\forall$ $u\in W_J(i+1)$ and  
	$m_{s_1u\bullet \lambda} \ \ \overset{d_i}{\mapsto} \sum\limits_{w\in W_J(i)}\ f_1^{m_1}\ c_i^w(u)m_{w\bullet \lambda}$ + $ \sum\limits_{w\in W_{J\sqcup (H\setminus \{1\})}(i) \setminus W_J} c_i^w(u)m_{s_1w\bullet \lambda}$ $\forall$ $u\in W_{J\sqcup (H\setminus\{1\})}(i+1)\setminus W_J$.
	So $d_i$s are visibly well-defined; note $d_i$ maps $W_J(i+1)$-parts into $W_J(i)$-parts.\\
	4) Now the vanishing of $d_{i-1}\circ d_i(m_{u\bullet\lambda})$, for every $u\in W_{J\sqcup (H\setminus \{1\})}(i+1)\setminus W_J$ (which suffices to be showed):
	By the exactness along the $c_i$-chain, the vanishing of the \big[$W_{J\sqcup (H\setminus \{1\})}(i-1)\setminus W_J$\big]-components $\qquad \sum\limits_{\mathclap{\substack{w\in W_{J\sqcup (H\setminus \{1\})}(i) \setminus W_J\\  v\in W_{J\sqcup (H\setminus\{1\})}(i-1)\setminus W_J}}}\qquad c_i^w(u) c_{i-1}^v(w)m_{s_1v\bullet \lambda}$ in $d_{i-1}\bigg(\sum\limits_{w\in W_{J\sqcup (H\setminus\{1\})}(i) \setminus W_J} c_i^w(u)m_{s_1w\bullet \lambda}\bigg)$ follows.
	In $d_{i-1}\bigg(\sum\limits_{w\in W_J(i)}$ $f_1^{m_1} c_i^w(u) m_{w\bullet \lambda} + \sum\limits_{w\in W_{J\sqcup (H\setminus\{1\})}(i)\setminus W_J}$ $c_i^w(u)m_{s_1w\bullet \lambda}\bigg)$ for $u\in W_{J\sqcup (H\setminus \{1\})}(i+1)\setminus W_J$, the $W_J(i-1)$-components $\sum\limits_{w\in W_J(i)}f_1^{m_1} c_i^w(u)$ $\sum\limits_{v\in W_J(i-1)}c_{i-1}^v(w) m_{v\bullet \lambda} + \sum\limits_{w\in W_{J\sqcup (H\setminus\{1\})}(i)\setminus W_J}$ $c_i^w(u)$  $\sum\limits_{v\in W_J(i-1)} f_1^{m_1} c_{i-1}^v(w) m_{v\bullet \lambda}$ vanish by the same reasoning  and as $f_1$ is in the center of $U\mathfrak{n}^-$.
	The exactness of $d_i$-chain is seen similarly, by that of $c_i$-chain or by using the matrix representations of $d_i$s.
This completes the proof of Theorem \ref{Theorem character of second order Verma via parabolic Vermas}.
\end{proof}
}

\subsection{Setting 4: Type $A_3$, edges incident on both the nodes of the size 2 hole, and Weyl semigroup consists of the words on alphabets within holes}{\color{black}
Here $\mathfrak{g}=\mathfrak{sl}_4(\mathbb{C})$, $I=\{1,2,3\}$ and $\mathcal{H}=\{H_1=\{2\},\ H_2= \{1,3\}\}$. 
Note both the nodes $1,3$ have edges to $2$; this generalizes the settings in the previous subsections wherein at most one node in $H_i$ can have edges to the nodes in $H_j$ for every pair $H_i\neq H_j\in \mathcal{H}^{\min}$.
We now quickly show the proof of the character formula for $\mathbb{M}(0,\mathcal{H})$; observe that the character for the other second order Verma $\mathbb{M}\big(\lambda, 
\{\{1,3\}\} \big)$ for any $\lambda$ with $\{1,3\}\subseteq J_{\lambda}$, was already computed in Subsection \ref{Subsection Setting 1} or Subsection \ref{Subsection Setting 2}.
It might be interesting explore if there is a BGG type resolution using the 16 exponential weights in the character numerator in this case, similar to \eqref{EBGGH};  which we could not obtain yet.
\begin{proof}[{Proof of Proposition \ref{Prop A3 second order holes character}}]
Let $(.,.)$ be the standard invariant form with norm $\|.\|$ on $\mathfrak{h}^*$. 
Let $R=\mathrm{ch} M(0)$ be the usual denominator in the Weyl--Kac character formula.
We begin by recalling from \cite[Proposition 9.8]{Kac book} for the highest weight module $M:=\mathbb{M}(0, \mathcal{H})$ in the statement, that
	\begin{equation}\label{Eqn character formula principle}
	\mathrm{ch}M=\sum\limits_{\substack{ \mu\preceq 0 \\ \| \mu+\rho\|=\|0+ \rho\| }}c(\mu)\mathrm{ch}M(\mu)\qquad
	\ \text{ with }\ c(\mu)\in \mathbb{Z} \ \ \text{ and }c(0)=1.
	\end{equation}
In order to compute $c(\mu)$s here, we determine explicitly: i)~all $\mu\preceq 0$ with $\| \mu+\rho\|=\| \rho\|$ ii)~multiplicities of such $\mu$ in $\mathrm{ch} M$, and iii)~the expression for $R\times \mathrm{ch}M$.
	
	Assume $\mu=-X\alpha_1-Y\alpha_2-Z\alpha_3$ for $X,Y,Z\in \mathbb{Z}_{\geq 0}$ satisfies $\big( \mu+\rho, \mu+\rho)= (\rho, \rho)$.
	So $\|X\alpha+Y\alpha_2+Z\alpha_3\|^2- 2\big(\rho, X\alpha+Y\alpha_2+Z\alpha_3\big)=0$. 
	Now by $(\alpha_2,\alpha_2)=(\alpha_1,\alpha_1)=(\alpha_3,\alpha_3)$ and $\Big(\rho,\frac{2 \alpha_i}{(\alpha_i,\alpha_i)} \Big)=1$ $\forall$ $i\in I$, we have $(X^2+Y^2+Z^2)(\alpha_2,\alpha_2)-(\alpha_2,\alpha_2)(X+Y+Z)+2XY(\alpha_1,\alpha_2)+2YZ(\alpha_2,\alpha_3)$.
	Finally using $\left(\alpha_i,\ \frac{2\alpha_2}{(\alpha_2,\alpha_2)}\right)=\left\langle \alpha_i,\alpha_2^{\vee} 
	\right\rangle=-1$ for $i\in \{1,3\}$ 
	\begin{equation}\label{Eqn for X,Y,Z}
	(X^2+Y^2+Z^2)\ -\ (X+Y+Z)\ -\ XY\ -\ 
	YZ\ \ =\ \ 0.
	\end{equation}
	We see that necessarily $X,Y\leq 3$ and $Y\leq 4$; i.e., $\mu \succeq w_{\circ}\bullet 0$, for the longest word $w_{\circ}\in W$.
	\begin{question}
		In semisimple case, for any $\lambda\in P^+$ -- particularly for $\lambda=0$ -- does $\|\mu+\rho\|=\|\lambda+\rho\|$ imply $\mu\succeq w_{\circ}\lambda$?
    An answer to this problem might help the computation of $c(\mu)$s. \end{question}
	Re-writing \eqref{Eqn for X,Y,Z}, we see $0=X^2+Z^2-(Y+1)(X+Z)+Y^2-Y= \Big(X-\frac{(Y+1)}{2}\Big)^2+\Big(Z-\frac{(Y+1)}{2}\Big)^2+ Y^2-Y-\frac{(Y+1)^2}{2}$.
	Now this expression is strictly positive when $Y\geq 5$, as $Y^2-Y-\frac{(Y+1)^2}{2}=\frac{1}{2}\big(Y(Y-4)-1\big)>0$ if $Y\geq 5$.
	Next suppose $X\geq 4$ \big(given $Y\leq 4$\big).
	For $Y\in \{0,1,2,3,4\}$, we see that $\Big(X-\frac{(Y+1)}{2}\Big)^2+ Y^2-Y-\frac{(Y+1)^2}{2}$ resp. equals: $X^2-X-\frac{1}{4}\ , \ X^2-2X-1\ , \ X^2-3X-\frac{1}{4}\ , \ X^2-4X+2\ , \ X^2-5X+\frac{23}{4}$.
	All these expressions are strictly positive if $X\geq 4$; so we must have $X\leq 3$, similarly $Z\leq 3$, finishing the proof of part i).
	
	For goal ii) of computing $\mu$-multiplicities, we will prove for $\mu=-\alpha_1-b\alpha_2-c\alpha_3$, more strongly 
	\begin{equation}\label{dim of M(1,3: 2)}
	\dim M_{\mu}\ = \ \begin{cases}
	2\ &\  \text{if }a=b=c\geq 1,\\
	1\ &\  \text{if }b \text{ is between }a \text{ and }c\neq a,\\
	0 \ & \ \text{else (i.e., if }b>\max \{a,c\} \text{ or } b< \min\{a,c\}).
	\end{cases}
	\end{equation}
	Assuming this claim to be true, for goal iii) on numerator of $\mathrm{ch} M$ in the theorem, we compute\\ $\Big(1-e^{-\alpha_1}\Big)\Big(1-e^{-\alpha_2}\Big)\Big(1-e^{-\alpha_3}\Big)\Big(1-e^{-\alpha_1-\alpha_2}\Big)\Big(1-e^{-\alpha_2-\alpha_3}\Big)\Big(1-e^{-\alpha_1-\alpha_2-\alpha_3}\Big) \ \times\  \bigg( \sum\limits_{0\leq a\leq b\leq c\leq 4}e^{-a\alpha_1-b\alpha_2-b\alpha_3}\ +\ \sum\limits_{4\geq a\geq b\geq c\geq 0}e^{-a\alpha_1-b\alpha_2-b\alpha_3} \bigg)
	$ using SageMath program; its code is written in Appendix \ref{Section Sagemath code}.
    Here we took the truncated character as all the weights $\mu$ in \eqref{Eqn character formula principle} lie above $w_{\circ}\bullet 0$.
	
	For \eqref{dim of M(1,3: 2)}, we fix a highest weight vector $0\neq v\in M_0$, and observe that
	$M=\mathbb{C}v\oplus\big( U\mathfrak{n}^-\cdot f_1v\ +\  U\mathfrak{n}^-\cdot f_3v\big)$; since $f_2v=0$.
	Next for each $i\in \{1,3\}$, the highest weight $\mathfrak{g}$-module $U\mathfrak{n}^-f_iv$ is a quotient of the parabolic Verma $M\big(-\alpha_i\ , \ I \setminus \{i\}\big)$; by the integrability relations $f_2^2 (f_iv)= f_j(f_iv)=0$ for $i\neq j\in \{1,3\}$.
	Thus $\wt M\subseteq  \{0\}\bigsqcup \bigg(\bigcup\limits_{i\in \{1,3\}} \Big[\wt M\big(-\alpha_i,\ I\setminus\{i\}\big)\ = \ \big\{-\alpha_i,\ -\alpha_i-\alpha_2,\ - \alpha_1-\alpha_2-\alpha_3\big\} \ - \ \mathbb{Z}_{\geq 0}\big\{\alpha_i,\ \alpha_i+\alpha_2,\ \alpha_1+\alpha_2+\alpha_3\big\}\Big] \bigg)$.
By this, we arbitrarily fix a weight $\mu=-a\alpha_1-b\alpha_2-c\alpha_3 \in \wt M$ with either $a\geq b\geq c$ or $a\leq b\leq c$, for the whole proof below.
	This formula for $\wt M$ already proves the last case of 0 multiplicities in \eqref{dim of M(1,3: 2)}.
	
	We first study the cases of $a>b$ or $b>c$ (so $a>c$), which comprise of the 3 subcases $a>b=c$ or $a>b>c$ or $a=b>c$; the proof when $a<b$ or $b<c$ is similar.
	We fix the ordered basis for $\mathfrak{n}^-$
	\begin{equation}\label{Eqn PBW basis in Verma}
	\big\{\big[f_3, [f_2,f_1]\big]\  , \  [f_3,f_2]\ , \ [f_2,f_1]\  , \ f_2\ , \ f_3\ , \ f_1\big\}.
	\end{equation}
	We work with the images in $M$ of the PBW-monomials on the root vector basis in \eqref{Eqn PBW basis in Verma}:
	\begin{equation}\label{PBW monomial basis vect in Verma}
	\big[f_3, [f_2,f_1]\big]^p \ \cdot \   [f_3,f_2]^q\ \cdot \ [f_2,f_1]^r\  \cdot \ f_2^s\ \cdot \ f_3^t\ \cdot \ f_1^m \cdot v\qquad \text{for }p,q,r,s,t,m\in \mathbb{Z}_{\geq 0}. 
	\end{equation}
	Consider the images of these vectors in $M_{\mu}$; so $p+r+m=a\ \geq \  p+q+r+s=b\ \geq \ p+q+t=c$.
	We \underline{prove in the below steps}, with their plans quoted at the beginning of each step,
	\begin{equation}\label{Eqn desired form for unique PBW vector}
	M_{\mu} \ = \ \mathbb{C}\bigg\{ v_{\mu}\ \ :=\ \ \big[f_3,[f_2,f_1]\big]^{c}\cdot [f_2,f_1]^{\overset{\geq 0}{b-c}}\cdot f_1^{\overset{\geq 0}{a-b}}v \bigg\}\quad (\text{for }a>c); \quad \text{so that }\ \dim M_{\mu} =1.
	\end{equation}
	1) {\it $v_{\mu}$ survives in $M$} :
	Note $e_2$ commutes with $\big[f_3, [f_2,f_1]\big]$, and moreover $e_2^{b-c}[f_2,f_1]^{b-c} - \big((b-c)!\big)f_1^{b-c}\in \sum_{i\geq 0}U\mathfrak{n}^-\cdot e_2^{1+i}$.
	Similarly $e_3^c \big[f_3,[f_2,f_1]\big]^c - (c!)[f_2,f_1]^c \in \sum_{i\geq 0}U\mathfrak{n}^-\cdot e_3^{1+i}$.
	Now $e_2^{c}\cdot e_3^{c}\cdot e_2^{b-c}\ v_{\mu} \ \ =\ \ (c!)^2 \big((b-c)!\big)f_1^{a}v\ \ \neq 0 \in M$ is easily seen.
	In the below steps, we arbitrarily fix a non-zero PBW weight vector from \eqref{PBW monomial basis vect in Verma}, and prove it to be a scalar times $v_{\mu}$. 
	\smallskip\\
	2) {\it Crucial equality/identity in $M$ to turn $[f_3,f_2]\times f_1$s into $\big[ f_3, [f_2,f_1]\big]$} : Fix $k,l\in \mathbb{N}$.
	$[f_3,f_2]^k f_1^l v= [f_3,f_2]^{k-1} f_3f_2f_1^l v - [f_3,f_2]^{k-1} f_2\cancel{f_3f_1^{l}v}^{=0} = \underset{(\text{as }[f_2,f_1] \text{ and }f_1 \text{ commute})}{l[f_3,f_2]^{k-1} f_3[f_2,f_1]f_1^{l-1}v} + [f_3,f_2]^{k-1} f_3f_1^l \cancel{f_2 v}^{=0}
	=l[f_3,f_2]^{k-1}\\ \big[f_3,[f_2,f_1]\big] f_1^{l-1}v+l\delta_{l,1}[f_3,f_2]^{k-1}[f_2,f_1]f_3f_1^{l-1}v$.
	In this manner, we observe at each $j^{\text{th}}$-iteration for $1\leq j\leq \min\{k,l\}$ : 
	\begin{align}\label{Imp identity in A3}
	\begin{aligned}[]
	[f_3,f_2]^k f_1^l v =  (j!)\binom{l}{j}   \big[f_3, [f_2,f_1]\big]^{j-1} [f_3,f_2]^{k-j}\Big(\big[f_3, [f_2,f_1]\big] f_1^{l-j}v+\ \delta_{j, l} [f_2,f_1]f_3f_1^{l-j} v\Big). 
	\end{aligned}\end{align}
	3) {\it Reducing $f_3$s at the beginning} : Note $p+r+m\geq p+q+r+s\implies m\geq q+s$.
    When $m=t=0$, necessarily $q=s=0$, and the corresponding weight vector in \eqref{PBW monomial basis vect in Verma} is already in the desired form in \eqref{Eqn desired form for unique PBW vector}.
	We know that only one of $m$ and $t$ can be non-zero.
	Suppose $m=0$ and $t>0$.
	Then $q=s=0$, and $b>c$ implies $r>t$.
    Indeed we will see below that the vector (for parameters $p; q=0;r;s=0;t;m=0$) we started with in \eqref{PBW monomial basis vect in Verma} vanishes in $M$.
	We apply \eqref{Imp identity in A3} at $j=t-1$ to the powers of $[f_2,f_1]$ and $f_3$ (in place of $[f_3,f_2]$ and $f_1$ therein), so that the vector in the previous line is $(t!)(-1)^{t-1}\Big( \big[f_3,[f_2,f_1]\big]^{p+t-1} [f_2,f_1]^{r-t-1}\big) \big( [f_2,f_1]^2f_3v\Big)$.
	The goal of this step is to prove that the vector $[f_2,f_1]^2 f_3 v=  -[f_2,f_1]f_1 [f_2, f_3]v =  -[f_2,f_1]\big[f_3, [f_2,f_1] \big] v +[f_3,f_2][f_2,f_1]f_1v $ vanishes in $M$ \big(for this reason we applied \eqref{Imp identity in A3} for $j=t-1$ and not for $j=t$\big), using the following computations.
	Namely, in $M(\lambda)$, we  find constants $d_1, \ldots, d_6\in \mathbb{C}$ for which :\\
	$ -[f_2,f_1]\big[f_3, [f_2,f_1] \big] m_0 +[f_3,f_2][f_2,f_1]f_1 m_0 = \begin{bmatrix}
	d_1 \big[f_3, [f_2, f_1]\big]f_1 \\
	d_2 [f_3, f_2]f_1^2\\
	d_3 [f_2, f_1]f_3f_1\\
	d_4 f_2 f_3 f_1^2
	\end{bmatrix} f_2m_0 \ +\ \begin{bmatrix}
	d_5 f_2^2 f_1\\
	d_6 [f_2, f_1]f_2
	\end{bmatrix}f_3 f_1 m_0$.
	The terms on the right hand side of this equation lie in the submodule $U\mathfrak{n}^-\cdot\{f_1f_3m_0, f_2m_0\}$, and those on the left are some PBW basis vectors.
	So we write each of the six summands $ d_1 \big[f_3, [f_2, f_1]\big]f_1f_2 m_0, \ldots, d_6[f_2, f_1]f_2 f_3 f_1 m_0$ as a linear combination of PBW basis  weight vectors :
	\begin{itemize}
		\item $d_1 \Big( - \big[f_3, [f_2, f_1]\big] [f_2, f_1] m_0 +  \big[f_3, [f_2, f_1]\big] f_2 f_1 m_0\Big)$
		\item $d_2 \Big( -2  [f_3, f_2][f_2, f_1]f_1 m_0 + [f_3, f_2]f_2 f_1^2 m_0  \Big)$
		\item \ \ $d_3 \Big(- [f_2, f_1]f_3 [f_2, f_1]m_0 + [f_2, f_1][f_3,f_2]f_1 m_0 + [f_2, f_1]f_2 f_3 f_1 m_0 \Big) \ = $\\
		$ d_3 \Big(- \big[f_3 ,[f_2, f_1]\big] [f_2, f_1]m_0 - [f_2, f_1]^2 f_3 m_0 + [f_2, f_1][f_3,f_2]f_1 m_0 + [f_2, f_1]f_2 f_3 f_1 m_0 \Big) $
		\item \ \ $d_4\Big( -2 f_2 f_3 [f_2, f_1]f_1 m_0 + [f_3, f_2]f_2 f_1^2 m_0 + f_2^2 f_3 f_1^2 m_0   \Big) = $\\
		$d_4\Big(  -2 \big[f_3, [f_2, f_1]\big]f_2f_1m_0 -2[f_2,f_1]f_2 f_3 f_1 m_0 + [f_3,f_2]f_2 f_1^2m_0 + f_2^2 f_3 f_1^2 m_0 \Big)$
		\item $d_5\Big(f_2^2 f_3 f_1^2m_0 \Big)$\\
		\item $d_6 \Big([f_2, f_1]f_2 f_3f_1m_0 \Big)$
	\end{itemize}
	Comparing the coefficients of the seven PBW basis vectors in $M(0)_{-2\alpha_1 -2\alpha_2 -\alpha_3}$ above, namely $\big[f_3, [f_2, f_1]\big] [f_2, f_1] m_0$ , $\big[f_3, [f_2, f_1]\big] f_2 f_1 m_0\ , \  [f_3, f_2][f_2, f_1]f_1 m_0\ , \ [f_3, f_2]f_2 f_1^2 m_0 \ , \ [f_2, f_1]^2 f_3 m_0$,
	$[f_2, f_1]f_2 f_3 f_1 m_0\ , \ f_2^2 f_3 f_1^2m_0 $, we see respectively the following equations \\
	$-d_1-d_3 =-1 \quad     -2d_2+d_3= 1\qquad  d_1-2d_4\ = \ d_2+d_4 \ = \ -d_3\ = \   d_3-2d_4+d_6 \ = \  d_4+d_5 = 0$.\\
	We are done by observing $(d_1,\ldots, d_6)\ = \ \Big( 1, \frac{-1}{2}, 0, \frac{1}{2}, \frac{-1}{2}, 1 \Big)$ to satisfy the above system of 7 equations.
	In the rest of the proof we assume that $m>0$; so $t=0$ and $m\geq q+s$.\smallskip\\
	4) {\it Converting all the $f_2$s at the front to $[f_2,f_1]$s, and then $[f_3,f_2]$s into $\big[f_3, [f_2,f_1]\big]$s}  :  As above  $f_2^sf_1^mv = m [f_2, f_1]f_2^{s-1}f_1^{m-1}v + f_2^{s-1}f_1^m \cancel{f_2 v}^{=0}$, and so $f_2^sf_1^m v = \Big(m\times \cdots \times (m-s+1)\big) [f_2,f_1]^{s}f_1^{m-s}v$. 
	So we could have begun with $s=0$ in \eqref{PBW monomial basis vect in Verma} (and $m>0$).
	For the second claim in this step, suppose $q>0$.
	As $[f_3,f_2]$ commutes with $[f_2,f_1]$, we push $[f_3,f_2]^q$ past $[f_2,f_1]^r$, and work with $[f_3,f_2]^qf_1^{m}v$ in the front.
	Recall $m\geq q$, and moreover either $m>q$ or ($m=q$ and) $r>0$.
	When $m>q$, we turn each $[f_3,f_2]$ into $\big[f_3, [f_2,f_1]\big] $ as explained in Step 2.
	When $m=q$, we apply  \eqref{Imp identity in A3} for $j=m-1$ (as in Step 3), resulting in $\star\star\star[f_2,f_1][f_3,f_2]f_1v =\star\star\star\big(\big[f_3, [f_2,f_1]\big] [f_2,f_1]v+ [f_2,f_1]^2 f_3 v\big)$.
	We are done by noting from Step 3 that the rightmost term in this equation vanishes in $M$.
	
The leftover case is for (non-zero) vectors in \eqref{PBW monomial basis vect in Verma} with $a=b=c$; fix one such vector.
Note as in Step 1, $e_2^a e_3^a \big[ f_3, [f_2,f_1] \big]^{a}v =(a!)^2 f_1^a v\neq 0$ survives in $M$.
	Observe $m=t=0$ forces $s=q=r=0$, as $a=b=c$; yielding the vector in the previous line.
	Also since both $t$ and $m$ cannot be simultaneously positive, let us assume without loss of generality $m>t=0$.
	Now the total count of $f_2$s not exceeding that of $f_3$s forces $r=s=0$.
	So we are left with the vectors $\big[f_3, [f_2,f_1]\big]^{a-m}[f_3,f_2]^m f_1^m v$ for $m\leq a$.
    By \eqref{Imp identity in A3},
	$[f_3,f_2]^m f_1^mv$ $= 
(m!) \big[f_3,[f_2,f_1]\big]^{m-1} [f_3,f_2]f_1v$; which is again non-vanishing in $M$ as $e_2^me_3^m\big[f_3,[f_2,f_1]\big]^{m-1}\cdot [f_3,f_2]f_1v$ $=  \big((m-1)!\big)^2 f_1^m v\ \neq 0$ in $M$.
	The linear independence (or non proportionality) of $\big[f_3,[f_2,f_1]\big]^{m-1} [f_3,f_2]f_1v$ with $\big[f_3, [f_2,f_1]\big]^m v$ follows by:  $e_1^m\cdot  \big[f_3,[f_2,f_1]\big]^{m-1}\cdot [f_3,f_2]f_1v \ = \ (-1)^{m-1}(m-1)![f_3,f_2]^{m-1}\cdot \underbrace{e_1f_1v}_{=0}$, but $e_1^{m}\cdot \big[f_3,[f_2,f_1]\big]^m v\ = \ (-1)^m (m!) [f_3,f_2]^m v \neq 0$ in $M$. 
	Thus $\dim M_{-m(\alpha_1+\alpha_2+\alpha_2)}=2$ $\forall$ $m\in \mathbb{N}$.
	Finally the proof of the proposition is complete.
\end{proof}

}

\subsection{Concluding remarks and questions}\label{Subsection conclusion}

Looking back -- in the course of this paper, we have uncovered a host of
objects and properties:
(a)~The higher order Verma modules $\mathbb{M}(\lambda, \mathcal{H})$ --
whose weight-sets have closed-form expressions and Minkowski sum
decompositions, and comprise the weights of all highest weight
$\mathfrak{g}$-modules -- and moreover, these modules themselves comprise
all highest weight $\mathfrak{sl}_2^{\oplus n}$-modules.
(b)~BGG resolutions and $W_\mathcal{H}$-invariant characters of these
modules.
(c)~$k$th order upper- and lower-approximations of all highest weight
modules.
(d)~The $k$th order integrability of highest weight modules, leading to
iteratively stratifying the quotients of each $M(\lambda)$, each time
into intervals.
(e)~Category $\mathcal{O}^\mathcal{H}$, with enough projectives and
BGG-type reciprocity.

All of these are natural extensions of their zeroth and first order
versions in the literature -- $M(\lambda), M(\lambda, J)$; their
characters and resolutions; $\mathcal{O}, \mathcal{O}^{\mathfrak{p}_J}$
-- i.e., they ``occur in nature'', and are not ``artificial'' constructs.
Yet, surprisingly, to our knowledge they were unexplored to date. The
present paper provides information on the above objects/properties to
varying degrees of completeness -- from Theorem~\ref{T1} to Proposition \ref{Prop A3 second order holes character}. 
We end this paper {\color{black}now with listing five of our weight formulas at one place below} and a few questions that are natural
to explore going ahead, given the above results.
{\color{black}
\begin{theorem}\label{Theorem wt form Km setting}
	Fix any Kac--Moody $\mathfrak{g}$, $\lambda\in \mathfrak{h}^*$, and highest weight $\mathfrak{g}$-module $M(\lambda)\twoheadrightarrow V\neq 0$ with hole-set $\mathcal{H}_V$.	
	\begin{align}
	\text{By the highest order Verma }\mathbb{M}(\lambda,\ \mathcal{H}_V) \twoheadrightarrow\ V\ & :\quad 
	\ \wt V \ = \ \wt \mathbb{M}\big(\lambda, \mathcal{H}_V\big).
	\\ 
	\text{By higher order Vermas, stratification}  \  :\ \ \  
	\ & \wt V \ = \ \bigcup\limits_{\mathcal{H}_V \ \subseteq 
		\mathcal{H}\subseteq  \text{ Indep}(J_{\lambda})} \wt \mathbb{M}\big(\lambda, \mathcal{H}\big).\\
	\text{By parabolic }\big(1^{\text{st}}\text{-}\text{order}\big) \text{Vermas} \  :\quad \ \wt V \ & =\  \bigcup\limits_{\substack{J\subseteq J_{\lambda} \text{ s.t.} \\  J\cap H \neq \emptyset \ \forall \ H\in \mathcal{H}_V }}\wt M(\lambda, J).
	\end{align}
	\begin{equation}
	\{\wt V \} \underset{\text{bijec.}}{\leftrightarrow}\ \left\{(\lambda,\ \mathcal{H})\right\} \text{ by isolating top weight}\   \ \&   \ { holes } :\    \wt V \ = \ \wt L_{{J_{\lambda}}}(\lambda)\ +\ \mathbb{M}\big(0, \mathcal{H}_V\big)
	\end{equation}
	\[
	\hspace*{1cm}= \ \wt \ L_{{J_{\lambda}}}(\lambda)\ +  \   \wt \Bigg( \frac{M(0)}{\sum\limits_{H\in \mathcal{H}_V}\Big(U\mathfrak{g}\prod\limits_{h\in H}f_h\Big) M(0)_0}\Bigg)\ = \ \wt \ L_{{J_{\lambda}}}(\lambda)\ + \ \ \  \  \bigcup\limits_{\mathclap{\substack{J\subseteq J_{\lambda} \text{s.t.} \\  J\cap H \neq \emptyset\ \forall \ H\in \mathcal{H}_V }}}\ \ \ \wt M(0, J).
	\]
	\begin{equation}    \label{Eqn wt form via Jordan--Holder series}           \text{By ``Jordan--H\"{o}lder series'' / via non-holes}  \ : \ \ \   \wt V  \ = \ \quad\ \  \bigcup\limits_{\mathclap{ H\in \mathrm{Indep}(J_{\lambda})\setminus \mathcal{H}_V }} \qquad \wt L\big(s_H \bullet \lambda\big).                 \end{equation}
 \end{theorem}
}

\begin{enumerate}
\item Much is understood about the first order approximations of highest
weight modules, but less is known about the higher order analogues.
Thus, a natural question is to better understand the higher order Verma
modules $\mathbb{M}(\lambda, \mathcal{H})$ for all $\mathfrak{g},
\lambda, \mathcal{H}$. Can one ``upgrade'' the weight-sets of the modules
$\mathbb{M}(\lambda, \mathcal{H})$ to their characters, at once extending
the classical formulas \eqref{EWeyl}, \eqref{Epvm} for parabolic Verma
modules as well as \eqref{EWCFH} and \eqref{EWCF2holes2}?

\item Second, do the modules $\mathbb{M}(\lambda, \mathcal{H})$ occur at
the end of a BGG-type resolution that uses Verma modules? More
concretely, in what generality in $(\mathfrak{g}, \lambda, \mathcal{H})$
can one show a resolution of $\mathbb{M}(\lambda, \mathcal{H})$, which at
once extends the usual BGG resolution for $M(\lambda, J)$ as well
as~\eqref{EBGGH}, \eqref{EBGGH2} and \eqref{Eqn BGG resolution for setting (3)}? (Informally, can one extend these
Koszul--Taylor resolutions from $(\Z / 2 \Z)^{\oplus n}$ to other Weyl
groups.) 
{\color{black}Can we write such a resolution for the second order Verma in Proposition \ref{Prop A3 second order holes character}?}

\item {\color{black}Can the general version of $(W_{\mathcal{H}}, \ell_{\mathcal{H}})$ be speculated from the one in Proposition \ref{Prop A3 second order holes character}?
In which settings, does one see a (semi)group fashioned $W_{\mathcal{H}}$ with} Coxeter-type enumeration as words in the
generators $\{ {\bf s}_H : H \in \mathcal{H}^{\min} \}$, which act on
$\lambda$? And can Proposition~\ref{PWchar} -- or a refinement thereof --
be proved for more general $\mathfrak{g}, \lambda, \mathcal{H}$? (Note,
it also holds in all parabolic categories
$\mathcal{O}^{\mathfrak{p}_J}$.)

\item Another follow-up is to interpret the modules $\mathbb{M}(\lambda,
\mathcal{H})$ and their resolutions, geometrically on the flag variety --
then extend this to more general $V$, and to the categories
$\mathcal{O}^{\mathcal{H}}$.

\item Extend the treatment of $\mathcal{O}^{\mathfrak{p}_J}$ in the
literature (see e.g.\ \cite{H2,Jantzen,Rocha}) to the higher order
parabolic categories $\mathcal{O}^{\mathcal{H}}$, for all $\mathcal{H}
\subset {\rm Indep}(I)$. Natural questions include:
\begin{itemize}
\item[(a)] \textit{BGG-type reciprocity:}
Extending Theorem~\ref{TBGGrec} to arbitrary $\mathcal{O}^{\mathcal{H}}$
over all $\mathfrak{g}$.

\item[(b)] \textit{Parabolic Verma modules:}
Finding ``higher order Jantzen filtrations'' for $\mathbb{M}(\lambda,
\mathcal{H})$ (see \cite{X2,X3}). (Also note, if $\mathbb{M}(\lambda,
\mathcal{H})$ is simple then it is a parabolic Verma module.)

\item[(c)] \textit{Blocks:}
Endomorphism rings of block-progenerators and other projectives in
$\mathcal{O}^{\mathcal{H}}$, and their properties, including Koszulity
and Koszul duals \cite{Backelin,BGS,Soergel,Stroppel}.
Homomorphisms between higher order Vermas.
Determining the block structure of $\mathcal{O}^{\mathcal{H}, [\lambda]}$
(see \cite{X1}).
\end{itemize}

\item This question is more speculative. It involves
Theorem~\ref{T1storder}, which characterizes the first order
integrability of a highest weight module $V$, i.e.\ the set $J_V$, in
terms of the convex hull of the set of weights -- or in other words, the
extremal rays and edges of the $W_{J_V}$-invariant polyhedral shape
$\conv (\wt V)$. Similarly, the zeroth order integrability of $V$ is the
vertex $\lambda$. It could be interesting to examine what ``higher order
geometric combinatorics'' emerges out of the higher order integrabilities
$(\mathcal{H}_V^{\min})_{\leq k}$ (see Section~\ref{Sspeculate},
including Definition~\ref{Dhigher}).

In parallel, one can ask if Proposition~\ref{PKL} can be extended to all
blocks $\mathcal{O}^{\mathcal{H}, [\lambda]}$ over arbitrary
$\mathfrak{g}$. Namely, if the categories $\mathcal{O}^{\mathcal{H}}$
lead to ``refined'' Kazhdan--Lusztig combinatorics and (quotient) spaces
associated to Hecke modules -- working internally in
$\mathcal{O}^{\mathcal{H}}$ rather than first converting to Verma
characters via BGG resolutions -- which specialize to relative/parabolic
versions (e.g.\ \cite{CC,Deodhar}) at first order.
\end{enumerate}

\appendix
\section{Proof of enough projectives -- details}\label{Sappendix}

Here we include details of the proof of a part of Theorem~\ref{T4}:
$\mathcal{O}^{\mathcal{H}}$ has enough projectives. As said above, we do
so because there are multiple standard objects for the simple objects
(which do get used in BGG reciprocity, see Section~\ref{SBGG}).
We thus list and verify the five conditions in \cite[Theorem 3.2.1]{BGS}.
The setting in \cite{BGS} does hold: by Lemma~\ref{Lbasic}, $\mathcal{A}$
is an abelian subcategory of $\mathbb{C}$-vector spaces.
\begin{enumerate}
\item Every object of $\mathcal{A} \subset \mathcal{O}$ has finite
length.

\item There are only finitely many simple isoclasses in $\mathcal{A}
\subset \mathcal{O}^{W \bullet \lambda}$. Index these by $S$.

\item The endomorphisms of all simple objects in $\mathcal{A} \subset
\mathcal{O}$ are scalars.
\end{enumerate}

 For every lower-closed
subset $T \subset S$ (if $s \in T, s' \leq s$ then $s' \in T$), let
$\mathcal{A}_T$ be the full subcategory of objects in $\mathcal{A}$, all
of whose simple subquotients are $L(\mu)$ with $\mu \in T$. E.g.,
$(\leq_\mu) := \{ s \in S : s \leq \mu \}$ for $\mu \in S$, and similarly
$(<_\mu)$.
Finally, define the (co)standard objects in $\mathcal{A}_{(\leq_\mu)}$
for $\mu \in S$ via Proposition \ref{Psimple}:
\begin{equation}
\mathcal{H}'_\mu := \{ J_\mu \cap H : H \in \mathcal{H}^{\min} \}, \qquad
\Delta(\mu) := \mathbb{M}(\mu, \mathcal{H}'_\mu), \qquad
\nabla(\mu) := \mathbb{M}(\mu, \mathcal{H}'_\mu)^\vee.
\end{equation}
Moreover, since $L(\mu)^\vee \cong L(\mu)$, one has canonical maps
(up to scaling)
\[
\pi : \Delta(\mu) \twoheadrightarrow L(\mu), \qquad
\pi^\vee : L(\mu) \hookrightarrow \nabla(\mu).
\]
\begin{enumerate}
\setcounter{enumi}{4}
\item The objects $K := \ker(\pi)$, ${\rm coker}(\pi^\vee) = K^\vee$ lie
in $\mathcal{A}_{(<_\mu)}$, in that their simple subquotients are all of
the form $L(\nu)$ with $\nu < \mu$. (The assertion for $K^\vee$ follows
from that for $K$, since restricted duality is an exact contravariant
functor on $\mathcal{O}$, hence on $\mathcal{A}$ via Lemma~\ref{Lbasic}.)
\end{enumerate}

It remains to verify the fourth condition:
\begin{enumerate}
\setcounter{enumi}{3}
\item \textit{Fix a lower-closed subset $T \subset S$ and a maximal
element $\mu \in (T, \leq)$. Then in $\mathcal{A}_T$, $\Delta(\mu)
\twoheadrightarrow L(\mu)$ is a projective cover and $L(\mu)
\hookrightarrow \nabla(\mu)$ is an injective hull.}
\end{enumerate}

The exactness and contravariance of restricted duality on $\mathcal{A}_T$
shows that the assertion for $\nabla(\mu)$ follows from that for
$\Delta(\mu)$. Thus, it suffices to show that $\Delta(\mu)$ is a
projective cover of $L(\mu)$.

Begin by noting that $\Delta(\mu) = \mathbb{M}(\mu, \mathcal{H}'_\mu)$ is
indecomposable. Moreover, $N := \ker (\pi : \Delta(\mu)
\twoheadrightarrow L(\mu))$ is indeed a superfluous submodule of
$\Delta(\mu)$, since it does not intersect the highest weight line
$\Delta(\mu)_\mu$ which generates $\Delta(\mu)$.
Thus, it remains to show that $\Delta(\mu)$ is a projective object in
$\mathcal{A}_T$ if $\mu$ is maximal in $(T, \leq) \subset (S, \leq)$. We
do so by showing that the functor
${\rm Hom}_{\mathcal{A}_T}(\Delta(\mu), -)$
is exact. More strongly, denote by $M_\mu$ the $\mu$-weight space of $M$,
and assert the functorial isomorphism
\[
{\rm Hom}_{\mathcal{A}_T}(\Delta(\mu), M) \cong M_\mu,
\qquad M \in \mathcal{A}_T.
\]

Indeed, fix a highest weight vector $v_\mu \in \Delta(\mu)_\mu$, and send
$\varphi \in {\rm Hom}_{\mathcal{A}_T}(\Delta(\mu), M)$ to
$\varphi(v_\mu) \in M_\mu$, for $M \in \mathcal{A}_T$. Clearly this
assignment is linear and injective, and we now prove the surjectivity.
Given $M \in \mathcal{A}_T \subset \mathcal{O}^{\mathcal{H}} =
\mathcal{O}^{\mathcal{H}^{\min}}$, choose $0 \neq m_\mu \in M_\mu$. We
claim that the corresponding map
\[
\varphi : \Delta(\mu) = \mathbb{M}(\mu, \mathcal{H}'_\mu) \to M, \qquad F
\cdot v_\mu \mapsto F \cdot m_\mu, \ F \in U(\mathfrak{g})
\]
is indeed a $\mathfrak{g}$-module map. Since $\mu$ is maximal in $T$, it
suffices to verify that for all $H \in \mathcal{H}^{\min}$,
\begin{equation}\label{Eproj}
{\bf f}^{(\mu)}_{J_\mu \cap H} \cdot M_\mu = 0, \qquad \text{where we
recall,} \qquad {\bf f}^{(\mu)}_{J_\mu \cap H} := \prod_{h \in J_\mu \cap
H} f_h^{\langle \mu, \alpha_h^\vee \rangle + 1}.
\end{equation}

By the definition of $\mathcal{O}^{\mathcal{H}}$, ${\bf f}_H = {\bf
f}^{(0)}_H$ acts nilpotently on $m_\mu \in M_\mu$, so there exists $n$
such that
\[
0 = {\bf f}_H^n v_\mu = {\bf f}^n_{J_\mu \cap H} {\bf f}^n_{H \setminus
J_\mu} v_\mu.
\]
Acting by $e_h^n$ for all $h \in H \setminus J_\mu$, it follows via
\eqref{Esl2} as above that
${\bf f}^n_{J_\mu \cap H} m_\mu = 0$.
Now define
\[
m_h = \langle \mu, \alpha_h^\vee \rangle + 1, \ \forall h \in J_\mu,
\qquad H_1 := \{ h \in J_\mu \cap H : n \geq m_h \}.
\]
Repeating the proof of Lemma~\ref{L3}, one obtains:
(i)~$H_1$ is nonempty, and
(ii)~${\bf f}^{(\mu)}_{H_1} m_\mu = 0$.
This implies~\eqref{Eproj}; hence, $\Delta(\mu)$ is projective in
$\mathcal{A}_T$. Thus by \cite[Theorem 3.2.1]{BGS}, $\mathcal{A}$ and
hence $\mathcal{O}^{\mathcal{H}}$ has enough projectives. Using the
properties of restricted duality, $\mathcal{O}^{\mathcal{H}}$ also has
enough injectives. \qed

\section{Proof of Theorem~\ref{Tnoholes}}\label{Anoholes}

We end with a self-contained proof of Theorem~\ref{Tnoholes} (shown
in \cite{MDWF}) for completeness. (Note, the converse to that theorem
follows from Lemma~\ref{Lkostant}.) Our proof uses a preliminary lemma:

\begin{lemma}[{\cite[Lemma 3.2]{MDWF}}]\label{Lidea}
Fix arbitrary $\mathfrak{g}, \lambda$, and $V \neq 0$. If $\mu \in \wt V
\cap (\lambda - \mathbb{Z}_{\geq 0} \Pi_{J_\lambda})$, then
\[
\mu - \sum_{i \in I \setminus J_\lambda} c_i \alpha_i \in \wt V, \qquad
\text{for all } c_i \in \mathbb{Z}_{\geq 0}, \, i \not\in J_\lambda.
\]
\end{lemma}

In other words, $\wt V = \wt_{J_\lambda} V - \mathbb{Z}_{\geq 0}
\Pi_{J_\lambda^c}$, where $\wt_J V := \wt V \cap (\lambda - \mathbb{Z}_{\geq 0}
\Pi_J)$ for $J \subset I$.

\begin{proof}
This follows the approach in the proofs of \cite[Theorem D]{Kh1} and
\cite[Theorem 5.8]{DK}. Fix a highest weight vector $0 \neq v_\lambda \in
V_\lambda$; since $V_\mu \neq 0$, there exists a PBW monomial $F \in
U(\mathfrak{n}^-)_{\mu - \lambda}$ such that $F v_\lambda \neq 0$. Fix an
ordering of $I \setminus J_\lambda$; we now claim that
\[
v_{\bf c} := F \prod_{i \in I \setminus J_\lambda} f_i^{c_i} \cdot
v_\lambda \in V_{\mu - \sum_{i \not\in J_\lambda} c_i \alpha_i}
\]
is nonzero, which proves the lemma. Indeed, via the relations $[e_i, f_j]
= 0$ for $i \neq j$ (which also imply that $F$ commutes with $e_i$ for $i
\not\in J_\lambda$),
\[
\prod_{i \in I \setminus J_\lambda} e_i^{c_i} \cdot v_{\bf c}
= F \prod_{i \in I \setminus J_\lambda} e_i^{c_i} 
\prod_{i \in I \setminus J_\lambda} f_i^{c_i}  \cdot v_\lambda = F
\prod_{i \in I \setminus J_\lambda} (e_i^{c_i} f_i^{c_i}) \cdot
v_\lambda,
\]
and this is nonzero by $\mathfrak{sl}_2$-theory, since $\mathbb{C}[f_i]
v_\lambda$ is a simple Verma $U(\mathfrak{sl}_{\alpha_i})$-module for $i
\not\in J_\lambda$.
\end{proof}

With Lemma~\ref{Lidea} in hand, we have:

\begin{proof}[Proof of Theorem~\ref{Tnoholes}]
Suppose $\mathcal{H}_V = \emptyset$ and $\mu \leq \lambda$ is a weight of
$M(\lambda)$. We show that $\mu \in \wt V$ by induction on ${\rm ht}
(\lambda - \mu) \geq 0$. The base case is obvious, since $V \neq 0$.

For the induction step, write $\mu = \lambda - \sum_{l \in \supp(\lambda
- \mu)} c_l \alpha_l$ with $\supp \left( \sum_{i \in I} c_i \alpha_i
\right) := \{ i : c_i \neq 0 \}$ as usual. Thus $c_l \in \mathbb{Z}_{>0}$
here. Suppose the Dynkin subdiagram induced on the subset $I' :=
\supp(\lambda - \mu) \cap J_\lambda$ has no edge (e.g.\ if $I'$ is empty
or a singleton). Then $\lambda - \sum_{l \in I'} c_l \alpha_l \in \wt V$,
since $\mathcal{H}_V$ is empty. But then $\mu \in \wt V$ by
Lemma~\ref{Lidea}.

Otherwise, $I'$ has some edges, say between nodes $i, i' \in I'$ (among
others). We may assume $c_{i'} \geq c_i > 0$; thus ${\rm ht}(\lambda -
\mu - c_i \alpha_i) < {\rm ht}(\lambda - \mu)$, and so $\mu + c_i
\alpha_i \in \wt V$ by the induction hypothesis. Now write
\[
\mu' := \mu + c_i \alpha_i,
\]
and fix a nonzero weight vector $v_{\mu'} \in V_{\mu'}$. We will employ
$\mathfrak{sl}_{\alpha_i}$-theory on the module
$V_i := U(\mathfrak{sl}_{\alpha_i}) \cdot v_{\mu'}$.
Begin by observing that the ``$\alpha_i$-weight'' of $v_{\mu'} \in V_i$
is a nonnegative integer:
\[
\langle \lambda, \alpha_i^\vee \rangle \geq 0 \geq \langle \alpha_l,
\alpha_i^\vee \rangle\ \forall l \neq i,i', \ \ \langle \alpha_{i'},
\alpha_i^\vee \rangle \leq -1 \quad \implies \quad
\langle \mu + c_i \alpha_i, \alpha_i^\vee \rangle \geq c_{i'} \geq c_i > 0.
\]

Thus $s_i(\mu + c_i \alpha_i) = s_i(\mu')$ equals $\mu + c_i \alpha_i - n
\alpha_i$, where $n \geq c_i$. This shows that $s_i(\mu') \leq \mu'$, and
also that the root string $[s_i(\mu'), \mu']$ between them contains
$\mu$.

Now since $V_i := U(\mathfrak{sl}_{\alpha_i}) \cdot v_{\mu'} \in
\mathcal{O}_{\mathfrak{sl}_{\alpha_i}}$ and $\mu' \in \wt V_i$, it
follows that $s_i(\mu') \leq \mu'$ also lies in $\wt V_i$, hence so does
the entire $\alpha_i$-root string between them. (This holds for all
objects in $\mathcal{O}_{\mathfrak{sl}_{\alpha_i}}$ because it holds for
every highest weight $\mathfrak{sl}_{\alpha_i}$-module -- which is either
a Verma or integrable by $\mathfrak{sl}_2$-theory.) In particular,
\[
\mu \in [ s_i(\mu'), \mu' ] \subset \wt V_i \subset \wt V,
\]
and the proof is complete.
\end{proof}
{\color{black}
\section{SageMath code for character numerator in the proof of Proposition \ref{Prop A3 second order holes character}}\label{Section Sagemath code}

We are in type $A_3$ here.
We replace exponentials $e^{-\alpha_1}, e^{-\alpha_2}, e^{-\alpha_3}  \text{ by symbols } x,y,z$, and work in the ring of polynomials $\mathbb{Q}[x,y,z]$.  \\
{\bf Code for the program}\\
\begin{lstlisting}
PR = QQ['x,y,z']
x, y, z = PR.gens()
F = -1
for a in range(0, 5):
  for b in range(a, 5):
    for c in range(b, 5):
      F += x^a*y^b*z^c
      F += z^a*y^b*x^c
G = (1-x)*(1-y)*(1-z)*(1-x*y)*(1-y*z)*(1-x*y*z)
print(F*G)
\end{lstlisting}
\smallskip
\textbf{Output}
\[
2x^7 y^8 z^7 - x^7 y^8 z^6 - 2x^7 y^7 z^7 - x^6 y^8 z^7 + x^7 y^7 z^5 + x^7 y^6 z^6 + x^6 y^6 z^7 + x^5 y^7 z^7 - x^7 y^6 z^5
\]
\[
 - x^5 y^6 z^7 - x^7 y^8 z^2 - x^2 y^8 z^7 + x^6 y^8 z^2 + x^2 y^8 z^6 + x^7 y^7 z + xy^7 z^7 - x^5 y^7 z - xy^7 z^5
\]
\[
 - x^6 y^6 - y^6 z^6 + x^5 y^6 + x^7 y^2 z^2 + y^6 z^5 + x^2 y^2 z^7 - x^7 y^2 z  \boldsymbol{-x^3 y^4 z^3}  - xy^2 z^7
\]
\[
 - x^6 y z^2 \boldsymbol{+ x^3 y^4 z^2} \boldsymbol{+ x^3 y^3 z^3} \boldsymbol{+ x^2 y^4 z^3} - x^2 y z^6  \boldsymbol{- x^2y^4z^2} + x^6y  \boldsymbol{- x^3y^3z} \boldsymbol{- x^3y^2z^2}
\]
\[
 \boldsymbol{- x^2y^2z^3} \boldsymbol{- xy^3z^3} + yz^6 + x^5z \boldsymbol{+ x^3y^2z} \boldsymbol{+ xy^2z^3} + xz^5 - x^5 \boldsymbol{+ xy^3z} \boldsymbol{+ x^2yz^2} - z^5  \boldsymbol{-xz}  \boldsymbol{-y}  \boldsymbol{+1}
\]
The highlighted 16 monomials $x^ay^bz^c \leftarrow\!\rightarrow e^{-a\alpha_1 -b\alpha_2 -c\alpha_3}$ are exactly all the weights that satisfy \eqref{Eqn for X,Y,Z} at $(X,Y,Z)=(a,b,c)$. 
If we increase the number of iterations, i.e., the ranges for $a,b,c$ from $(0,5)$ to various $(0, k)$ for $k\geq 6$, we see the 18 monomials at the end of the output-sum for each $k$, to be these 16 monomials plus $x^k, z^k$.
Moreover, in the above output for $k=5$, the monomials other than the 16 character-terms, can be seen to  successively vanish with increasing values of $k$.
}


\end{document}